\theoremstyle{plain}
\newtheorem{thm}{Theorem}[section]
\newtheorem{defn}[thm]{Definition}
\newtheorem{lemma}[thm]{Lemma}
\newtheorem{prop}[thm]{Proposition}
\newtheorem{remark}[thm]{Remark}
\newtheorem*{thm*}{Main Theorem}
\newtheorem{notn}[thm]{Notation}
\newtheorem{context}[thm]{Context}
\newtheorem{question}[thm]{Question}
\definecolor{darkgreen}{rgb}{0.0, 0.7, 0.0}
\newenvironment{??}{\noindent \color{darkgreen}{\bf ???:} \footnotesize}{}
\definecolor{cyan}{cmyk}{1,0,0,0}
\newcommand{\bdg}{\begin{dg}}
\tikzset{
  symbol/.style={
    draw=none,
    every to/.append style={
      edge node={node [sloped, allow upside down, auto=false]{$#1$}}}
  }
}
\newcommand{\cC}{\mathcal{C}}
\newcommand{\cE}{\mathcal{E}}
\newcommand{\cG}{\mathcal{G}}
\newcommand{\cH}{\mathcal{H}}
\newcommand{\cI}{\mathcal{I}}
\newcommand{\cJ}{\mathcal{J}}
\newcommand{\cM}{\mathcal{M}}
\newcommand{\cO}{\mathcal{O}}
\newcommand{\cS}{\mathcal{S}}
\newcommand{\cZ}{\mathcal{Z}}
\newcommand{\sA}{\mathscr{A}}
\newcommand{\sD}{\mathscr{D}}
\newcommand{\sG}{\mathscr{G}}
\newcommand{\sH}{\mathscr{H}}
\newcommand{\sL}{\mathscr{L}}
\newcommand{\sM}{\mathscr{M}}
\newcommand{\sP}{\mathscr{P}}
\newcommand{\sS}{\mathscr{S}}
\newcommand{\sT}{\mathscr{T}}
\newcommand{\sX}{\mathscr{X}}
\newcommand{\Fb}{\mathfrak{b}}
\newcommand{\Fc}{\mathfrak{c}}
\newcommand{\Ff}{\mathfrak{f}}
\newcommand{\Fg}{\mathfrak{g}}
\newcommand{\Fl}{\mathfrak{l}}
\newcommand{\Fo}{\mathfrak{o}}
\newcommand{\Fp}{\mathfrak{p}}
\newcommand{\Fs}{\mathfrak{s}}
\newcommand{\Ft}{\mathfrak{t}}
\newcommand{\Fas}{\mathfrak{a}\mathfrak{s}}
\newcommand{\Fgl}{\Fg\Fl}
\newcommand{\Fsl}{\Fs\Fl}
\newcommand{\bA}{\mathbb{A}}
\newcommand{\bC}{\mathbb{C}}
\newcommand{\bG}{\mathbb{G}}
\newcommand{\bF}{\mathbb{F}}
\newcommand{\bQ}{\mathbb{Q}}
\newcommand{\bN}{\mathbb{N}}
\newcommand{\bZ}{\mathbb{Z}}
\newcommand{\oql}{\overline{\bQ_{\ell}}}
\newcounter{sarrow}
\newcommand{\ov}{\overline}
\newcommand{\wt}{\widetilde}
\newtheorem{cor}[thm]{Corollary}
      \newcommand{\git}{/\! /}
\newcommand{\IC}{\cI\cC}
\begin{document}

\title{Logarithmic Non-Abelian Hodge Theory \\ for curves  in prime characteristic}
\author{Mark Andrea de Cataldo and Siqing Zhang}
\date{}

\maketitle

\begin{abstract}
For a curve $C$ and a reductive group $G$ in prime characteristic, we relate the de Rham moduli of logarithmic $G$-connections on $C$ to the Dolbeault moduli of 
logarithmic $G$-Higgs bundles on the Frobenius twist of $C$. 
We name this result the Log-$p$-NAHT.
It is a logarithmic version of Chen-Zhu's characteristic $p$ Non Abelian Hodge Theorem ($p$-NAHT) in \cite{chen-zhu}. 
In contrast to the no pole case, the two moduli stacks in the log case are not isomorphic \'etale locally over the Hitchin base.
Instead, they differ by an  Artin-Schreier type Galois cover of the base.
In contrast to the case over the complex numbers, where some parabolic/parahoric data are needed to specify the boundary behavior of the tame harmonic metrics, no parabolic/parahoric data are needed in Log-$p$-NAHT.
We also establish a semistable version of the Log-$p$-NAHT, and deduce several geometric and cohomological consequences.
In particular, when $G=GL_r$, the Log-$p$-NAHT induces an embedding of the intersection cohomology of the degree $d$ Dolbeault moduli to that of the degree $pd$ de Rham moduli, and the embedding is an isomorphism when $r$ is coprime to $d$ and $p>r$.
\end{abstract}

\tableofcontents

\section{Introduction}

Let $X$ be a compact Riemann surface.
The Non Abelian Hodge Theorem (NAHT) of C. Simpson et. al. establishes a
canonical homeomorphism between the moduli spaces of semistable
Higgs bundles of degree zero  and   algebraic flat connections on $X$. 
If we allow log-poles at some fixed punctures on $X$,
Simpson has established  an equivalence of categories relating  the {\it parabolic} variants of such objects on $X$; this result, and its vast generalization due to T. Mochizuki is usually referred to as the Log-NAHT. However, it
is well-known that there is no natural version of the Log-NAHT relating semistable  Higgs bundles
with log poles to flat connections with log poles in the form of a natural bijection: one needs to fix the behaviour at the punctures in terms of parabolic data.

There is a natural analogue
of the NAHT result for a  nonsingular projective irreducible curve $C$ over an algebraically closed field of positive characteristic: this analogue is  called the $p$-NAHT and it relates 
Higgs bundles on the Frobenius twist $C'$ of $C$ and flat connections
on the curve $C$: the two moduli stacks are \'etale
locally equivalent over the Hitchin base, target of the Hitchin morphism from the Higgs moduli stack and of the de Rham-Hitchin morphism from the moduli stack of conenctions.

In this paper, we establish a non parabolic/parahoric Log-$p$-NAHT for curves $C$ over algebraically closed fields of characteristic $p$.
The first novelty is that the Log-$p$-NAHT is about Higgs bundles/flat connections without the need of additional structure of parabolic/parahoric data.  Unlike the case where there are no poles, the fibers of the  de Rham-Hitchin morphism are disconnected, which prohibits an \'etale local equivalence over the Hitchin base as in the case with no poles.  The second novelty is that 
then
the Log-$p$-NAHT takes place  in the form of an \'etale local equivalence over a Galois branched cover
of the Hitchin base, obtained by means of an Artin-Schreier-type morphism arising from the relation between the residues of
the log-connections and the residues of their $p$-curvatures.

In the remainder of this introduction, we provide context and summarize the results of this paper.

NAHT.
For a complex projective smooth connected curve $C$ and a reductive group $G$, the Non Abelian Hodge Theory (NAHT) as in \cite{hitchin1987self, Simpson-repnI, simpson-repnII} shows that the Dolbeault moduli space $M_{Dol}^{ss}(C)$ of semistable degree zero $G$-Higgs bundles, the de Rham moduli space $M_{dR}(C)$ of $G$-connections, and the character variety $M_B(C)$, are canonically diffeomorphic.
NAHT has wide applications in areas such as mirror symmetry \cite{hausel-thaddeus-04, hausel2005mirror}, geometric Langlands program \cite{donagi2006langlands}, and the $P=W$ phenomenon   \cite{de2012topology, de2022abeliansurface, maulik2024p, hausel2022p}; see the Seminaire Bourbaki \cite{hoskins2023two}.

$p$-NAHT.
The characteristic $p>0$ analogue of NAHT, $p$-NAHT, states that the Dolbeault moduli stack $\sM_{Dol}(C')$ of Higgs bundles on the Frobenius twist $C'$ of $C$, and the de Rham moduli stack $\sM_{dR}(C)$,   sit over the same Hitchin base $A$, admit an action of a natural Picard stack $\sP$ over $A$, and they differ by a twist of a $\sP$-torsor $\sH$, i.e. there is a canonical $A$-isomorphism $\sH\times^{\sP}\sM_{Dol}(C')\xrightarrow{\sim}\sM_{dR}(C)$, \cite{ov07, bezrukavnikov2007geometric, groechenig-moduli-flat-connections, chen-zhu}.
The $p$-NAHT is related to prismatization \cite{bhatt2022prismatization, ogus2022crystalline}, and has applications to the geometric Langlands program \cite{chzh17}, Bogomolov-Miyaoka type inequalities \cite{langer2015bogomolov}, Simpson motivicity conjecture \cite{esnault2020rigid}, and the $P=W$ phenomenon \cite{dCMSZ}.

Log-NAHT.
The quest  for a logarithmic version of NAHT (Log-NAHT) started with \cite{simpson1990harmonic} and  has been carried forward by many  authors, see \cite{mochizuki2004kobayashi, boalch2018wild, huang2022tame} and references therein. 
 As it turns out,
the correct object of study are the filtered, or rather the parahoric, analogues
of  logarithmic Higgs bundles and connections, i.e. the boundary behavior near the log-poles should be fixed.
To the best of our knowledge, if the boundary behavior is not fixed, a Log-NAHT seems unlikely, see \Cref{rmk: parabolic}.
The Log-NAHT has been used crucially in the monograph on Hecke eigensheaves \cite{donagi2024twistor}.

Parahoric Log-$p$-NAHT.
The  parahoric Log-$p$-NAHT has been pursued  \cite{li2024tame}, where a kind of Log-$p$-NAHT relating certain parahoric connections and certain parahoric Higgs bundles has been established.

The search for a Log-$p$-NAHT.
 On the other hand, as it is suggested by some results in \cite{schepler2005logarithmic, shen2018tamely}, a  non-parahoric version of Log-$p$-NAHT, connecting  logarithmic Higgs bundles and  logarithmic connections, might be possible. 
Instead of  keeping track of  the  parahoric data at  the punctures,  one may merely keep track of  the eigenvalues/adjoint orbits.
In the case of fixed residues, this non-parahoric  Log-$p$-NAHT in the $GL_r$-case has  been established and applied to the geometric Langlands program in \cite{shen2018tamely}.

As mentioned above, in this paper we establish:
(the non-parahoric version of) the Log-$p$-NAHT; a refinement of 
the Log-$p$-NAHT  in the context of semistable objects;  geometric and cohomological consequences for the corresponding moduli spaces.

Let $C$ be a nonsingular integral and projective curve over an algebraically closed field of positive characteristic $p>0$. Let $D>0$ be a reduced strictly effective divisor on $C$ (the divisor of poles of the log-connections). Let $C'$ be the Frobenius twist of $C$ and let $D'>0$ be divisor on $C'$ pull-back of $D$ via the natural
scheme isomorphism $C'\stackrel{\sim}\to C$. Recall that the fundamental group $\pi_1(G)$ is in natural bijection with the connected components of $Bun_G(C)$, of the Higgs moduli stack, and of the  Higgs moduli space of semistable objects.
 For $G=GL_r$, the element $d \in \pi_1(G)$ is naturally identified with the degree of the underlying vector bundle.

The de Rham-Hitchin morphism $h_{dR}: \sM_{dR}(C,D)\to A(\omega_{C'}(D'))$, factors as $\sM_{dR}(C,D)\to \Fc A\to A(\omega_{C'}(D'))$,
where $\Fc A \to A(\omega_{C'}(D'))$ is the branched Galois cover defined in  \Cref{defn: base cA} by means of the constructions in
\Cref{subs: residues}, culminating in diagram \eqref{diag: hdr fiber not conn}.
The morphism $\Fc A\to A(\omega_{C'}(D'))$ is the Artin-Schreier-type morphism mentioned above as a main novelty introduced in this document.

The main result of this paper is the following theorem that summarizes 
 some of the results proved here.

\begin{thm}[{\bf (Semistable) Log-$p$-NAHT}]\label{thm: 1st main thm}
The following statements hold:

 {\rm {\bf Log-$p$-NAHT} (\Cref{thm: main thm general g}, \Cref{prop: H pstorsor}, \Cref{prop: pstorsor ho} and \Cref{cor: not surj})}.
 Assume that $(p,G)$ satisfies the assumption in \Cref{context: G}.
Let $\wt{\sP}$ (resp. $\wt{\sM}_{Dol}(C',D')$) be the base change of $\sP$ (resp. $\sM_{Dol}(C',D')$) to $\Fc A$. 
    
    Then, there is a non-empty $\wt{\sP}$-pseudo-torsor  $\sH$ smooth over $\Fc A$, and a morphism of $\Fc A$-stacks
\begin{equation}\label{eq: thm 1.1}
            \mathbf{n}: \mathscr{H}\times^{\widetilde{\mathscr{P}}} \widetilde{\mathscr M}_{Dol}(C',D')\to \mathscr{M}_{dR}(C,D), 
        \end{equation}
which is an isomorphism over the nonempty open image of $\sH\to \Fc A$; this open image surjects onto $A(\omega_{C'}(D'))$.

{\rm {\bf Semistable Log-$p$-NAHT} (\Cref{thm: main sst})}. Assume that $(p,G)$ is good enough as in \Cref{defn: good enough}. 
Let $\wt{\sP^o}$ be the base change of the neutral component $\sP^o$ of $\sP$ to $\Fc A$. 
There is a nonempty substack $\sH^o\hookrightarrow\sH$ which is a $\wt{\sP^o}$-pseudo-torsor over $\Fc A$. 
For each degree $d\in \pi_1(G)$, the morphism $\mathbf{n}$ restricts to a morphism of $\Fc A$-stacks
 \begin{equation}\label{eq: sst nss in main thm1}
        \mathbf{n}^{ss}: \sH^o\times^{\wt{\sP}^{o}} \wt{\sM}_{Dol}^{ss}(C',D',d) \to \sM_{dR}^{ss}(C, D,pd),
    \end{equation}
    which is an isomorphism over the open image of $\sH^o\to \Fc A$. 
    If $G=GL_r$ or $G$ satisfies (LH) as in \Cref{def: low height}, then each stack in the display above admits a quasi-projective adequate moduli space, and $\mathbf{n}^{ss}$ descends to a morphism between the moduli spaces.

\end{thm}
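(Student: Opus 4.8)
The plan is to build the morphism $\mathbf{n}$ out of the Azumaya nature of the sheaf of logarithmic differential operators, and then to upgrade the construction to the semistable setting. First I would recall that a flat log-connection on $C$ is the same datum as a module over the ring $\cD_{C}(\log D)$ of logarithmic differential operators, and that over a curve in characteristic $p$ this ring is an Azumaya algebra over the total space $V$ of the line bundle $\omega_{C'}(D')$ (the log-cotangent space of the Frobenius twist), whose Brauer class is the obstruction to producing a splitting module. Dually, a log-Higgs bundle on $(C',D')$ is the pushforward to $C'$ of a coherent sheaf on its spectral curve inside $V$. Thus both moduli problems are organized by spectral data in $V$, and the gap between them is precisely the failure of $\cD_{C}(\log D)$ to split along the relevant spectral curves. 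I would let $\sH$ be the stack parametrizing such splittings; by general Azumaya theory it is a pseudo-torsor under the Picard stack $\wt{\sP}$ acting by tensoring line bundles on the spectral curve, and the associated-bundle construction $\sH\times^{\wt{\sP}}\wt{\sM}_{Dol}(C',D')$ produces, wherever a splitting exists, a category equivalent to $\cD_{C}(\log D)$-modules, i.e. to $\sM_{dR}(C,D)$. This yields $\mathbf{n}$ together with its isomorphism property over the open image of $\sH$.

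The genuinely new input, and the step I expect to be the main obstacle, is the behavior at the punctures. The residue of a log-connection and the residue of its $p$-curvature are not independent: they are tied by an Artin--Schreier-type relation of the shape $\on{res}(\psi)=\on{res}(\nabla)^{\,p}-\on{res}(\nabla)$ at each point of $D$. Because the characteristic polynomial of the $p$-curvature records only $\on{res}(\psi)$, a single point of $A(\omega_{C'}(D'))$ admits several incompatible residue choices for the connection; this is exactly why the fibers of $h_{dR}$ are disconnected over $A(\omega_{C'}(D'))$, so that an \'etale-local equivalence there is impossible. The remedy is to pass to the branched Galois cover $\Fc A\to A(\omega_{C'}(D'))$ of \Cref{defn: base cA}, which by construction remembers a compatible choice of connection-residues solving the Artin--Schreier equations, and to base-change everything to $\Fc A$. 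I would then establish nonemptiness of $\sH$: over a suitable open subset of $\Fc A$ the Azumaya algebra splits along the spectral curve, and I would verify that the image of this open set still surjects onto $A(\omega_{C'}(D'))$, so that no spectral datum is lost, by exhibiting for each such datum at least one residue lift over which a splitting exists.

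For the semistable refinement I would first check that $\mathbf{n}$ is compatible with stability under the degree shift $d\mapsto pd$: the spectral--Frobenius construction multiplies degrees by $p$, and under the hypothesis that $(p,G)$ is good enough (\Cref{defn: good enough}) the slope inequalities cutting out $\wt{\sM}_{Dol}^{ss}(C',D',d)$ match those cutting out $\sM_{dR}^{ss}(C,D,pd)$, so the equivalence carries semistable objects to semistable objects. Restricting to the neutral component, I would cut out the sub-pseudo-torsor $\sH^o\hookrightarrow\sH$ under $\wt{\sP^o}$ in order to fix connected components, obtaining $\mathbf{n}^{ss}$ and its isomorphism property over the open image of $\sH^o\to\Fc A$. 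Finally, for $G=GL_r$ or $G$ satisfying (LH) as in \Cref{def: low height}, both sides admit quasi-projective adequate moduli spaces, by GIT in the $GL_r$ case and by the adequate-moduli-space machinery otherwise; since $\mathbf{n}^{ss}$ is an isomorphism of stacks over the relevant open locus, the universal property of adequate moduli spaces produces the induced morphism on the moduli spaces, completing the statement.
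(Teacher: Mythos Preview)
Your Azumaya-splitting approach is essentially the $GL_r$ picture, and it does not generalize to the reductive group $G$ in the statement. The paper's construction of $\sH$ is not as a stack of splittings of $\cD_C(\log D)$; for general $G$ there is no such Azumaya algebra to split. Instead, following Chen--Zhu, the paper works with the regular centralizer group scheme $J\to\Fc$ and its Chen--Zhu section $\tau:\Fc\to\mathrm{Lie}(J)$, and defines $\sH$ as the stack of log-connections on $J_{a^p}$-torsors whose $p$-curvature equals $Fr^*\tau(a')$ (\Cref{defn: def H}). The morphism $\mathbf{n}$ is then built by twisting such $J$-log-connections against Higgs bundles via the canonical connection on Frobenius pullbacks. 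Even in the $GL_r$ case, $\cD_C(\log D)$ is only a \emph{parabolic} Azumaya algebra over the log-space (\cite{hablicsek2020hodge}, quoted in \S\ref{subs: logdrbnr}), so ``general Azumaya theory'' does not hand you the pseudo-torsor for free.

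The more serious gap is your treatment of the pseudo-torsor property over $\Fc A$. You assert this follows from general principles, but it is the technical heart of the log case and is exactly where the no-pole argument breaks. The $\sP$-action on $\sH$ preserves residues (\Cref{rmk: preserves residue}), so transitivity on a fiber $\sH_{a',s}$ requires knowing that \emph{all} objects in that fiber already have the same residue in $\Gamma(D,\mathrm{Lie}(J_{a^p})|_D)$. This is \Cref{lemma: unique res}, and its proof reduces to a nontrivial Lie-theoretic statement (\Cref{lemma: lie lemma}): for $x\in\Fg^{reg}$ and $\tau\in\mathrm{Lie}(I_x)$, the Chevalley map $\chi$ is injective on the Artin--Schreier fiber $(AS|_{\mathrm{Lie}(I_x)})^{-1}(\tau)$. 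The paper's proof uses the Jordan decomposition in $\mathrm{Lie}(I_x)$, injectivity of $AS$ on the nilpotent part, and a Weyl-group argument on the semisimple part invoking the structure of reflection stabilizers. Your proposal contains no hint of this step. Similarly, nonemptiness of $\sH^o$ is not automatic: the paper produces objects in the nilpotent fiber via $W_2(k)$-lifts of $(C',D')$ through Schepler's log-Ogus--Vologodsky theory when $p\geq h(G)$ (\Cref{lem: compare schepler}), and by an explicit matrix construction for $GL_r$ (\Cref{prop: pstorsor ho}). Finally, the semistable compatibility is proved via $\Theta$-stability and weights of determinant line bundles under Frobenius pullback, not by a direct slope comparison.
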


We show in \Cref{lemma: constr m} that, when $\deg(D)$ is even, the $\Fc A$-stack $\sH$ is isomorphic to the substack $\sM_{dR}^{reg}(C,D)\subset \sM_{dR}(C,D)$ given by log-connections whose $p$-curvature is regular in the Lie-theoretic sense.

 In the case $G=GL_r$, we also calculate how the degrees of the objects change under
the morphism $\mathbf{n}$, giving rise to a more refined version of \eqref{eq: thm 1.1}; see \Cref{thm: main thm glr}.

In the $G=GL_{r<p}$ case, although the base $\Fc A$ did not appear in \cite{shen2018tamely}, the morphism $\mathbf{n}$ is constructed over various closed subschemes of $\Fc A$ in \cite[Thm. 4.3]{shen2018tamely}.
Furthermore, our morphism $\mathbf{n}$ is compatible with Schepler's Log-$p$-NAHT as in \cite{schepler2005logarithmic} in the case of nilpotent $p$-curvature and nilpotent residue.
For $G$ reductive, a different version of Log-$p$-NAHT is established in \cite{li2024tame}, which firstly is not over $\Fc A$, but rather over $A(\omega_{C'}(D'))$
and secondly, in place of the Dolbeault moduli stack as in \eqref{eq: thm 1.1}, it has a stack of certain triples.
 Moreover, the results in \cite{schepler2005logarithmic}, \cite{shen2018tamely}, and \cite{li2024tame},
have no analogue for semistable objects; see \eqref{eq: sst nss in main thm1}.
We refer the reader to \Cref{subs: comparison} for a more detailed comparison between \Cref{thm: 1st main thm} and the aforementioned works.

We now state the two main geometric applications that
we derive from \Cref{thm: 1st main thm}.

In the $G=GL_r$ case, using the Log-$p$-NAHT \Cref{thm: 1st main thm} above, we can transfer many geometric properties of $\sM_{Dol}(C',D')$ to $\sM_{dR}(C,D)$, and
deduce the following theorem. 
Note that, when $G=GL_r$, there is a decomposition of $\Fc A$ into connected components $\Fc A=\coprod_{\delta\in \bF_p}\Fc A^{\delta}$.
For each $d\in \bZ$, we denote by $\ov{d}\in\bF_p$ its image in $\bF_p$. The morphism $h_{dR}^{\Fc A}: \sM_{dR}(C,D)\to \Fc A$ restricts to $\sM_{dR}(C,D,d)\to \Fc A^{-\ov{d}}$.
\begin{thm}[{\bf Geometric properties of the de Rham-Hitchin morphism}: \Cref{prop: irreducible}, \Cref{prop: surj of drhr}, \Cref{prop: surj and conn fib}, \Cref{cor: fiber dim of hdrc}, \Cref{prop: weak ab fib}]
Assume that $G=GL_r$  and let $d \in \bZ$.
    The stack $\sM_{dR}(C,D,d)$ is integral and normal.
    The morphism $h_{dR}^{\Fc A}:\sM_{dR}(C,D,d)\to \Fc A^{-\ov{d}}$ is surjective.
   The morphism $h_{dR}^{\Fc A}: M_{dR}^{ss}(C,D,pd)\to \Fc A^{\ov{0}}$
    is proper and surjective with connected equidimensional fibers of the same dimension, and  it gives rise to a weak abelian fibration in the sense of Ng\^o \cite[\S7.1.1]{ngo-lemme-fondamental}.
    \end{thm}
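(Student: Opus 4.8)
The plan is to deduce every assertion from the Log-$p$-NAHT of \Cref{thm: 1st main thm} by transporting the corresponding property from the Dolbeault side to the de Rham side. Over the nonempty open image $U\subseteq\Fc A$ of $\sH\to\Fc A$ the morphism $\mathbf{n}$ is an isomorphism $\sH\times^{\wt{\sP}}\wt{\sM}_{Dol}(C',D',d)\xrightarrow{\sim}\sM_{dR}(C,D,d)|_U$, and, since $\sH$ is smooth over $\Fc A$ and a $\wt{\sP}$-pseudo-torsor, the source inherits \'etale-locally on $U$ the geometry of $\wt{\sM}_{Dol}(C',D',d)$. I would therefore first collect the relevant facts on the Higgs side for $G=GL_r$: that $\sM_{Dol}(C',D',d)$ is integral and normal, that the Hitchin morphism $\sM_{Dol}^{ss}(C',D',d)\to A(\omega_{C'}(D'))$ is proper, surjective and flat with connected equidimensional fibres, and that $(\sM_{Dol}^{ss},\sP^o)$ is a weak abelian fibration over $A(\omega_{C'}(D'))$ in the sense of Ng\^o. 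These are either classical or follow from the spectral correspondence, the properness of the Hitchin map, and Ng\^o's polarizability theorem. By transport, all of them hold over $U$ for the de Rham objects; the whole difficulty is to globalize them over $\Fc A$.

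\textbf{Integrality and normality of $\sM_{dR}(C,D,d)$.} For normality I would prove the stronger statement that the stack is smooth. The deformation theory of a log connection $(E,\nabla)$ with free residues is governed by the log de Rham complex $[\mathcal{E}nd(E)\xrightarrow{[\nabla,-]}\mathcal{E}nd(E)\otimes\omega_C(D)]$, whose obstruction space $\bH^2$ is, by Serre duality, dual to the space of $\nabla$-flat endomorphisms of $E$ lying in $\mathcal{E}nd(E)(-D)$; since $D\neq 0$, a flat endomorphism vanishing at a point of $D$ vanishes identically, so $\bH^2=0$ and the stack is smooth, hence normal. For integrality it remains to prove irreducibility. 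Smoothness exhibits $\sM_{dR}(C,D,d)$ as the disjoint union of its connected, hence irreducible, components, and the open substack over $U$ is irreducible by transport, so it lies in a single component. To exclude further components I would show that $h_{dR}^{\Fc A}$ is flat over the regular base $\Fc A^{-\ov d}$ — by miracle flatness, once its fibres are seen to be equidimensional of the expected dimension — so that its image is open; a superfluous component would then have open image contained in the nowhere-dense complement $Z=\Fc A^{-\ov d}\setminus U$, a contradiction.

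\textbf{Surjectivity and the semistable fibration.} The image of $h_{dR}^{\Fc A}$ surjects onto $A(\omega_{C'}(D'))$ by \Cref{thm: 1st main thm}, and it is invariant under the action of degree-zero rank-one twists, which translate the residues of the $p$-curvature by the Artin-Schreier expressions $\lambda_i^p-\lambda_i$ and hence act transitively on the fibres of $\Fc A^{-\ov d}\to A(\omega_{C'}(D'))$; a fibrewise translation-invariant subset meeting every fibre is the whole of $\Fc A^{-\ov d}$, so $h_{dR}^{\Fc A}$ is surjective. For the semistable moduli space $M_{dR}^{ss}(C,D,pd)$ I would obtain properness over $\Fc A^{\ov 0}$ from the properness of the semistable Dolbeault Hitchin map via the valuative criterion, using boundedness and separatedness of semistable objects; surjectivity is then automatic, a proper dominant morphism onto the irreducible $\Fc A^{\ov 0}$ being surjective. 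Connectedness of all fibres follows from normality of $M_{dR}^{ss}$, properness, and connectedness of the generic fibre, through Stein factorization and Zariski's connectedness theorem; equidimensionality of the fibres is forced by properness together with the $\wt{\sP}^o$-action, whose fibres are equidimensional of the same dimension $\delta$ as the generic Hitchin fibre.

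\textbf{Weak abelian fibration, and the main obstacle.} For the last assertion I would verify Ng\^o's axioms for $\bigl(h_{dR}^{\Fc A}\colon M_{dR}^{ss}(C,D,pd)\to\Fc A^{\ov 0},\ \wt{\sP}^o\to\Fc A^{\ov 0}\bigr)$: properness of the total morphism and the equality of relative dimensions are the semistable statements just proved; $\wt{\sP}^o$ is smooth and commutative, being a base change of $\sP^o$; and both the affineness of the stabilizers and the polarizability of the Tate module are inherited verbatim from the Dolbeault side, because $\sP^o$ and its action are literally the base change to $\Fc A$ of the Picard/Prym group scheme of Ng\^o's Hitchin fibration. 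I expect the main obstacle to be precisely what happens over the bad locus $Z=\Fc A\setminus U$, where the NAHT isomorphism is unavailable: namely proving that the de Rham-Hitchin fibres remain equidimensional there — equivalently, that $h_{dR}^{\Fc A}$ is flat — since this single input underlies both the irreducibility of the full stack and the equidimensionality of the semistable fibration, and it must be established through the global $\wt{\sP}^o$-action and properness rather than by transport from $\wt{\sM}_{Dol}$.
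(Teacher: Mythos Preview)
Your overall architecture---transport from the Dolbeault side via the Log-$p$-NAHT, then globalize---is reasonable, and your treatment of the connected-fiber and weak-abelian-fibration parts is close to the paper's. But the proposal has two genuine gaps, and it misses the key tool the paper uses to get past the bad locus $Z=\Fc A\setminus U$.

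\textbf{The missing tool.} The paper does \emph{not} globalize by using the $\wt{\sP}^o$-action and properness, as you suggest at the end. Instead, for irreducibility (\Cref{prop: irreducible}), surjectivity (\Cref{prop: surj of drhr}), and equidimensionality of fibers (\Cref{prop: fiber dim of hdr}), the paper systematically exploits the Hodge moduli stack $\sM_{Hod}$ over $\bA^1$ with its $\bG_m$-action, which interpolates between $\sM_{dR}$ (at $t=1$) and $\sM_{Dol}$ (at $t=0$). Concretely: if $\sM_{dR}^{ss}$ had two components, their $\bG_m$-orbit closures would make the smooth connected $\sM_{Hod}^{ss}$ reducible; if some fiber $F_{dR}$ of $h_{dR}$ were too big, its $\bG_m$-limit would land in the nilpotent cone $N_{Dol}^{ss}$ and violate upper semicontinuity of fiber dimension for the proper $h_{Hod}$. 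Your proposal never mentions $\sM_{Hod}$, and your substitute---bounding fiber dimension via the $\wt{\sP}^o$-action---does not work: the $\wt{\sP}^o$-orbits being equidimensional gives a lower bound on the fiber dimension, not the upper bound you need. This leaves your irreducibility argument circular (it needs flatness, which needs equidimensionality, which you cannot prove).

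\textbf{The surjectivity argument fails for $r\geq 2$.} You claim that degree-zero rank-one twists act transitively on the fibers of $\Fc A^{-\ov d}\to A(\omega_{C'}(D'))$. But a rank-one log-connection with residue $\lambda_i\in\bF_p$ at $q_i$ (you need $\lambda_i\in\bF_p$ to keep the $p$-curvature, hence $a'$, fixed) shifts \emph{all $r$ eigenvalues} of $\mathrm{res}_{q_i}(\nabla_E)$ by the \emph{same} scalar $\lambda_i$. The fiber of $AS_{\Fc}$ over a point with distinct coordinates has $p^r/|W|$-many points, whereas your diagonal shifts produce at most $p$ of them; for $r\geq 2$ the action is not transitive. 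The paper instead proves surjectivity by a dimension count: $M_{dR}^s$ and $M_{Dol}^s$ are fibers of the smooth morphism $M_{Hod}^s\to\bA^1$, hence equidimensional, and if $h_{dR}$ were not dominant some fiber would be too big, again contradicting upper semicontinuity via the $\bG_m$-flow into $N_{Dol}^{ss}$.

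A smaller point: your obstruction-vanishing argument (``a flat endomorphism vanishing on $D$ vanishes identically'') is not correct for arbitrary $(E,\nabla)$; horizontal sections of $\mathcal{E}nd(E)$ can vanish to positive order at a log pole when the residue has integer eigenvalue gaps. The paper simply cites an external smoothness result for $\sM_{Hod}^{ss}$.
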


Using the connectedness of fibers of $\sP^o$ over $\Fc A$ and the Decomposition Theorem \cite{bbdg}, we obtain the following cohomological consequence:
\begin{thm}[{\bf The splitting injection for $GL_r$}: \Cref{thm: injection} and \Cref{thm: coh pnaht2}]
    Assume that $G=GL_{r<p}$.
    The semistable Log-$p$NAHT morphism \eqref{eq: sst nss in main thm1} induces a split monomorphism  in $D^b_c(A(\omega_{C'}(D')),\oql)$  of derived direct image complexes
    \begin{equation*}
    h_{Dol,*}\IC_{M_{Dol}^{ss}(C',D',d)}\hookrightarrow h_{dR,*}\IC_{M_{dR}^{ss}(C,D,pd),}
    \end{equation*}
     hence a split  filtered injection of intersection cohomology groups
    \begin{equation*}
    I\!H^* (M_{Dol}^{ss}(C',D',d))
    \hookrightarrow 
    I\!H^* (M_{dR}^{ss}(C,D,pd)).
    \end{equation*}

    Furthermore, if $(pd,r)=1$, then the embedding above induces an isomorphism of filtered cohomology groups
    \[ \Big(H^*(M_{Dol}^{ss}(C',D',d)),P^h\Big)\cong 
        \Big(H^*(M_{dR}^{ss}(C,D,pd)), P^{h_{dR}}\Big).\]
\end{thm}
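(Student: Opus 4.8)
The plan is to compare the two direct images first over the branched cover $\Fc A$, where the semistable comparison \eqref{eq: sst nss in main thm1} is defined, and then to descend along the finite morphism $q\colon\Fc A\to A(\omega_{C'}(D'))$. Write $j\colon U\hookrightarrow\Fc A^{\ov{0}}$ for the open image of $\sH^o$ and let $\wt h_{Dol}\colon\wt{M}_{Dol}^{ss}(C',D',d)\to\Fc A^{\ov{0}}$ be the base change of the Dolbeault--Hitchin morphism. By \Cref{thm: main sst} the morphism $\mathbf{n}^{ss}$ identifies $(h_{dR}^{\Fc A})^{-1}(U)$ with the twisted product $\sH^o\times^{\wt{\sP}^o}\wt{M}_{Dol}^{ss}(C',D',d)$ over $U$. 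The first, and only, step that uses the torsor structure is that this twist is invisible to the intersection-complex pushforward: because the fibres of $\sP^o\to\Fc A$ are connected (cf.\ \Cref{prop: weak ab fib}), translation by $\sP^o$ acts trivially on the cohomology sheaves of the Hitchin direct image, so the \'etale-local trivialisations of the $\wt{\sP}^o$-torsor $\sH^o$ glue to a canonical isomorphism
\begin{equation*}
h_{dR,*}^{\Fc A}\IC_{M_{dR}^{ss}(C,D,pd)}\big|_U\ \cong\ \wt h_{Dol,*}\IC_{\wt{M}_{Dol}^{ss}(C',D',d)}\big|_U .
\end{equation*}

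Next I would invoke the Decomposition Theorem. Since $\wt h_{Dol}$ and $h_{dR}^{\Fc A}$ are proper on the semistable loci, \cite{bbdg} shows that both $\IC$-pushforwards are semisimple, i.e.\ finite direct sums of shifted intersection complexes of irreducible closed subvarieties of $\Fc A^{\ov{0}}$ with local-system coefficients. Any semisimple $K$ of this kind has a canonical splitting $K\cong j_{!*}(K|_U)\oplus K_{\mathrm{out}}$, the first summand collecting the constituents whose support meets $U$ (each being the intermediate extension of its own restriction to $U$) and $K_{\mathrm{out}}$ those supported in the complement $\Fc A^{\ov{0}}\setminus U$. Applying this to $L:=h_{dR,*}^{\Fc A}\IC_{M_{dR}^{ss}(C,D,pd)}$ and feeding in the isomorphism above gives $L\cong j_{!*}\big(\wt h_{Dol,*}\IC_{\wt{M}_{Dol}^{ss}}|_U\big)\oplus L_{\mathrm{out}}$. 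Hence the split monomorphism over $\Fc A^{\ov{0}}$ will follow as soon as I show that $K:=\wt h_{Dol,*}\IC_{\wt{M}_{Dol}^{ss}(C',D',d)}$ has no constituents off $U$, i.e.\ $K=j_{!*}(K|_U)$.

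This last vanishing is the heart of the argument, and I expect it to be the main obstacle. The Dolbeault--Hitchin morphism is a weak abelian fibration in the sense of Ng\^o (the Dolbeault analogue of \Cref{prop: weak ab fib}), a property preserved by the base change to $\Fc A^{\ov{0}}$, and Ng\^o's support theorem \cite[\S7]{ngo-lemme-fondamental} — once its hypotheses (polarisability of the associated Picard scheme and the codimension/$\delta$-regularity bounds on the discriminant) are verified in the logarithmic characteristic-$p$ setting — forces every simple constituent of $K$ to have full support, so that $K=j_{!*}(K|_U)$ and $K\hookrightarrow L$ is a split monomorphism. To descend to $A(\omega_{C'}(D'))$ I push forward along $q$: with $q'\colon\wt{M}_{Dol}^{ss}\to M_{Dol}^{ss}(C',D',d)$ the finite base change of $q$ one has $q_*K=h_{Dol,*}\big(q'_*\IC_{\wt{M}_{Dol}^{ss}}\big)$, and since $q'$ is finite the complex $q'_*\IC_{\wt{M}_{Dol}^{ss}}$ is semisimple and contains $\IC_{M_{Dol}^{ss}(C',D',d)}$ as its deck-transformation-invariant (trivial-isotypic) summand, the averaging projector $\tfrac1{\deg q'}\sum_{\gamma}\gamma$ being defined because $\deg q'$ is invertible in $\oql$. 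Composing the split inclusions produces the asserted split monomorphism
\begin{equation*}
h_{Dol,*}\IC_{M_{Dol}^{ss}(C',D',d)}\ \hookrightarrow\ h_{dR,*}\IC_{M_{dR}^{ss}(C,D,pd)}\qquad\text{in }D^b_c(A(\omega_{C'}(D')),\oql),
\end{equation*}
and, as a split inclusion of semisimple complexes strictly respects the perverse (Leray) filtration, taking hypercohomology yields the filtered injection $I\!H^*(M_{Dol}^{ss})\hookrightarrow I\!H^*(M_{dR}^{ss})$.

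For the final statement, assume $(pd,r)=1$; since $r<p$ this gives $(d,r)=1$, both moduli spaces are smooth, $\IC$ is the shifted constant sheaf and $I\!H^*=H^*$. Here I would run the same argument on the de Rham side: the full-support statement for the de Rham--Hitchin weak abelian fibration (\Cref{prop: weak ab fib}, coprime case) gives $L_{\mathrm{out}}=0$, whence $L=j_{!*}(L|_U)=j_{!*}(K|_U)=K$ already over $\Fc A^{\ov{0}}$, so the two Hitchin pushforwards agree over the neutral component. Granting that the base change along $\Fc A^{\ov{0}}\to A(\omega_{C'}(D'))$ does not alter the $\oql$-cohomology of these families — a property of the neutral component coming from its construction in \Cref{defn: base cA} — the descent identifies the two filtered cohomologies, so the split injection is surjective and one obtains the filtered isomorphism $\big(H^*(M_{Dol}^{ss}(C',D',d)),P^{h}\big)\cong\big(H^*(M_{dR}^{ss}(C,D,pd)),P^{h_{dR}}\big)$. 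The de Rham full-support statement is the remaining obstacle, to be proved in parallel with its Dolbeault counterpart.
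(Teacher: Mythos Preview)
Your argument for the split monomorphism is essentially correct and parallels the paper's, with the order of two steps swapped: the paper first descends from an open in $\Fc A$ to an open $A^U\subset A(\omega_{C'}(D'))$ (by pushing forward along $q$ and using that $\IC_{M_{Dol}}$ is a summand of $\xi_*\IC_{\wt M_{Dol}}$, since $\xi$ is a quotient by the finite group $(\bZ/p\bZ)^{r\deg D}$), and only then invokes full support of $h_{Dol,*}\IC_{M_{Dol}}$ on $A(\omega_{C'}(D'))$ to extend. You instead propose full support for $\wt h_{Dol,*}\IC_{\wt M_{Dol}}$ on $\Fc A$ first, then descend. Both orderings work, but the paper's is more economical because full support is proved for $h_{Dol,*}\IC_{M_{Dol}}$ over $A(\omega_{C'}(D'))$ (their \Cref{tanto}, via Maulik--Shen plus Ng\^o plus the Severi inequality), and deducing it on $\Fc A$ would require an extra compatibility argument for $\IC$ under the finite flat base change $q'$.

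Your argument for the filtered isomorphism has a genuine gap. Two problems. First, \Cref{prop: weak ab fib} only establishes that $(\Fc A^{\ov 0},\wt P^{o,\ov 0},M_{dR}^{ss})$ is a weak abelian fibration; it does \emph{not} give full support for $h_{dR,*}^{\Fc A}\IC$, which would require a separate $\delta$-regularity/Severi-type argument on the de Rham side that the paper neither proves nor claims. Second, even granting $K=L$ on $\Fc A^{\ov 0}$, taking hypercohomology yields $I\!H^*(\wt M_{Dol}^{ss})\cong I\!H^*(M_{dR}^{ss})$, not the desired statement: $\wt M_{Dol}^{ss}\to M_{Dol}^{ss}$ is a nontrivial finite cover of degree $>1$ (the neutral component $\Fc A^{\ov 0}\to A(\omega_{C'}(D'))$ is still a genuine branched cover), so $I\!H^*(\wt M_{Dol}^{ss})$ is strictly larger. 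Your sentence ``base change along $\Fc A^{\ov 0}\to A(\omega_{C'}(D'))$ does not alter the $\oql$-cohomology'' is false as stated. The paper's route is entirely different and avoids any de Rham full-support statement: it appeals to the abstract comparison results collected in \Cref{thm: summary} (ultimately the $\chi$-independence of \cite{maulik-shen-chi} extended to characteristic $p>r$ via \Cref{tanto}, combined with the curve/degree-change isomorphisms) to conclude that $\dim I\!H^i(M_{Dol}^{ss}(C',D',d))=\dim I\!H^i(M_{dR}^{ss}(C,D,pd))$ for all $i$, whence the split filtered injection is forced to be an isomorphism.
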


\Cref{section: two views on residues} is devoted to developing the theory of log-connections on torsors on affine group schemes over a base.
This theory is rooted in \cite[\S2.6]{beilinson-drinfeldquantization}. In the case with no poles and thus no residues,
this theory is developed in \cite[\S A]{chen-zhu}.
In the case with poles, this theory is used in \cite{li2024tame}.
We need this theory in the course of the proof of \Cref{thm: 1st main thm}, as we need to keep track of various types of residues under the Log-$p$-NAHT morphism $\mathbf n$ \eqref{eq: thm 1.1}.

{\bf Acknowledgments.}
We thank Bhargav Bhatt, Michael Groechenig,
Andres Fernandez Herrero, Pengfei Huang, Jie Liu, Davesh Maulik, Weite Pi, Junliang Shen, Shiyu Shen, Hao Sun, Longke Tang, Qixiang Wang, and Daxin Xu for useful conversations.
The first-named author has been supported by a grant from the Simons Foundation (672936, de Cataldo)
 and by NSF Grants DMS 1901975 and DMS 2200492. 
This material is based
upon work supported by the National Science Foundation under Grant No. DMS-1926686.

\subsection{Notation}

\subsubsection{Algebraic Geometry} $\;$
We work over an algebraically closed field $k$ of positive characteristic $p>0$.
Let ${\rm fr}_k: \mathrm{Spec}(k)\to \mathrm{Spec}(k)$ denote the absolute Frobenius.

Let $C$ be an integral nonsingular projective curve over $k$ with genus $g(C)\geq 2$.
Let $\omega_C$ be the cotangent sheaf of $C$.
Let $D = \sum_i q_i >0$ be a reduced strictly effective divisor on $C$. 
We denote by $\omega_C(D)$ the
rank one locally free sheaf of rational $1$-forms on $C$ with at worst log poles a $D$.

Let $C':= C\times_{\mathrm{Spec}(k), {\rm fr}_k} \mathrm{Spec}(k)$ be the Frobenius twist of $C$.
Let $Fr_C: C\to C'$ be the relative Frobenius $k$-morphism.
We often denote by $(-)'$ the pull-back to $C'$ of an object $(-)$ on $C$
via the natural projection  $C'\to C$.
For example, $D'$ is $D\times_C C'$.
Sometimes we use $Fr$ to denote $Fr_C$ when it is clear from the context.

 Unless otherwise stated, by stack we mean an algebraic stack over $k$. If $G$ is a group scheme over $k$ and $X$ is a $G$-variety, then  the quotient stack is denoted by $X/G$ and the adequate moduli space, if it exists, is denoted by $X/G \to X\git G$.

Given two morphisms of scheme $X\to Y\leftarrow Z$, we sometimes use $X_Z$ to denote the base change $X\times_Y Z$ when it is clear from the context.

\subsubsection{Group Theory} $\;$

Let $G$ be a connected smooth reductive group over $k$.
Let $r(G)$ be the reductive rank of $G$.
Let $B\subset G$ be a Borel subgroup.
Let $T\subset B$ be a maximal torus.
Let $\Ft\subset \Fb\subset \Fg$ be the Lie algebras of $T\subset B\subset G$.
Let $W$ be the Weyl group for $(G,T)$.
Let $B^+\subset G$ be the Borel subgroup opposite to $B$ with respect to $T$. 
Let $\Fb^+$ be the Lie algebra of $B^+$.
Let $X^*(T)$ and $X_*(T)$ be the character and cocharacter lattices.
Let $\Phi$ be the root system for $(G,T)$.
Let $\Phi^+\subset \Phi$ be the positive roots, i.e., by definition, the roots whose root spaces are in $\Fb^+$.
Let $\Delta\subset \Phi^+$ be the corresponding simple roots.
Let $\check{\Phi}$ be the coroot lattice in $X_*(T)$.
Let $\pi_1(G):=X_*(T)/\check{\Phi}$ be the fundamental group of $G$ as in \cite[Def. 5.4]{hoffmann-connected-components}.

\begin{context}\label{context: G}
    We assume that either $p\nmid |W|$, or $G=\prod_{i=1}^m GL_{r_i}, r_i\in\bN_{>0}$.
\end{context}

For the convenience of the reader, we record that for simple $G$, the order $|W|$ according to the type of $G$ is: $A_n: |W|=(n+1)!$; $B_n, C_n: |W|=2^nn!$; $D_n: |W|=2^{n-1}n!$; $G_2: |W|=12$; $F_4: |W|=2^7\times 3^2$; $E_6: |W|=72\times 6!$; $E_7: |W|=72\times 8!$; $E_8: |W|=192\times 10!$.

Let $\Fc:= \Fg\git G$ be the GIT quotient of the adjoint action of $G$ on $\Fg$.
There is the canonical Chevalley isomorphism $\Ft\git W\cong \Fc$.

Given a group scheme $\cG$ over a $k$-scheme $S$, we denote by $\mathrm{Lie}(\cG)$ the sheaf of $p$-Lie algebras over $S$.
We use $x\mapsto x^{[p]}$ to denote the $p$-operation in the $p$-Lie algebras.
When $\cG$ is smooth, we do not distinguish between the locally free sheaf $\mathrm{Lie}(\cG)$ and its associated vector bundle scheme over $S$.

\begin{defn}\label{defn: cox num}
    Let $h(G)$ be the number defined in \cite[\S5.1]{serre2005complete}, 
i.e. if $G$ is a torus, then $h(G)=1$, otherwise $h(G)$ the maximum of the Coxeter numbers of the simple quotients of $G$.
\end{defn}
For simple groups, the number $h(G)$ according to the type of $G$ is: $A_n, h=n+1; B_n, C_n: h=2n; D_n: h=2n-2; G_2: h=6; F_4, E_6: h=12; E_7: h=18; E_8: h=30$.

Sometimes we also require that $p\geq h(G)$. 
This condition implies that $p\nmid |W|$, except for the cases $A_{p-1}$ and $D_{2=p}$.

\begin{defn}
\label{defn: good enough}
    We say that $(p,G)$ is good enough if $p\geq h(G)$, or if $G=\prod_{i=1}^m GL_{r_i}, r_i\in \bN_{>0}$.
\end{defn}

By \cite[(5.2.6)]{serre2005complete}, we have that $2h(G)-2=\mathrm{ht}(\Fg)$, the height of the adjoint representation of $G$ on $\Fg$, see \cite[\S5.2]{serre2005complete} and \cite[Def. 4.4]{balaji2017complete} for the notion of the height of a $G$-representation.
\begin{defn}\label{def: low height}
    We say that $G$ satisfies Low Height (LH) if $p> \mathrm{ht}(\Fg)$.
\end{defn}
We see that if $G$ satisfies (LH), then $p\geq h(G)$ and $p\nmid |W|$.

\section{Hitchin morphisms and residues}

\subsection{Hitchin-type morphisms}\label{subs: moduli}$\;$

Let $L$ be an invertible sheaf on $C$.
Let $L^{\times}:=\mathrm{Spec}_{\cO_C}(\bigoplus_{n\in \bZ}L^{\otimes n})$ be the associated $\bG_m$-torsor on $C$.
The natural $\bG_m$-action on $\Fg$ gives rise to natural $\bG_m$-actions on $\Ft, \Fg/G, \Fc$.
We denote the corresponding $\bG_m$-twists $(-)\times^{\bG_m} L^{\times}$ by $\Fg_L, \Fc_L, \Ft_L, (\Fg/G)_L$.
The first three are schemes over $C$ and $(\Fg/G)_L$ is a stack over $C$.

\subsubsection{Higgs bundles and connections with log-poles} $\;$

An $L$-Higgs bundle on $C$ is a section of the natural morphism $(\Fg/G)_L\to C$.
Equivalently, it is a pair $(E, \phi)$, where $E$ is a $G$-bundle and $\phi\in H^0(\mathrm{ad}(E)\otimes L)$, where $\mathrm{ad}(E)$ is the adjoint bundle. We call $\phi$ the $L$-twisted Higgs field.
A Higgs (resp. log-Higgs) field/bundle corresponds to the special case $L=\omega_C$ (resp. $L=\omega_C(D)$).
Given a $G$-bundle $E$ on $C$, the notion of a (log-)connection $\nabla$ on $E$ is reviewed in \Cref{section: two views on residues}. 
By abuse of notation, we also call the pair $(E,\nabla)$ a (log-)connection.

\subsubsection{Degrees of $G$-bundles} $\;$

Let $Bun_C(G)$ be the moduli stack of $G$-bundles on $C$.
By \cite[Thm. 5.8]{hoffmann-connected-components}, the set of connected components $\pi_0(Bun_C(G))$ is in canonically isomorphic to $\pi_1(G)$.
Given a $G$-bundle $E$ on $C$ and $d\in \pi_1(G)$, we say that $E$ is of degree of $d$ if the point in $Bun_G(C)$ defined by $E$ lies in the connected component in  $\pi_0(Bun_C(G)) = \pi_1(G)$ defined by $d$.
We say that a log-Higgs bundle $(E,\phi)$ or a log-connection $(E,\nabla)$ is of degree $d$ if $E$ has degree $d$.

\subsubsection{Moduli stacks and spaces} $\;$

In this paper, we consider the moduli of log-Higgs bundles and log-connections on $C$ of degree $d$, which are denoted by $\sM_{Dol}(C,G,D,d)$ and $\sM_{dR}(C,G, D,d)$, respectively.
We denote by  $\sM^{ss}_{(-)} (C,G, D,d)$  the corresponding open substacks of slope-semistable objects.
If $G$ satisfies the Low Height (LH) property as in \Cref{def: low height}, or if $G=GL_r$, then $\sM^{ss}_{(-)} (C,G, D,d)$ admits a quasi-projective adequate moduli space 
$M_{(-)}^{ss}(C,G, D,d)$, see \cite[Thm. 1.1]{langer2014semistable} for the existence of these moduli spaces in the vector bundle case, and \cite[Thm. 2.26]{herrero-zhang-mero} for the general $G$-bundle case.

For any invertible sheaf $L$, we use $\sM_{Dol}(C,G, L,d)$ to denote the moduli stack of $L$-twisted Higgs bundles of degree $d$ on $C$.
In particular, we have that $\sM_{Dol}(C,G, \omega_C(D),d)$ is the same as $\sM_{Dol}(C,G, D,d)$.

Unless otherwise stated, the group $G$ is fixed and we omit
it from the notation.
At times, we omit the decorations $C$ and $D$ when they are clear from the context.
At times, we omit the decoration $d$ to denote the whole moduli stack of objects of all possible degrees. We have $\sM_{(-)}(C,D) = \coprod_{d \in \pi_1(G)} \sM_{(-)}(C,D,d)$, etc.
We refer to the  case  when $D=\emptyset$ as to the ``case with no poles."

\subsubsection{The Hitchin morphism} $\;$

Let $A(\omega_C(D))$ be the functor that sends a $k$-scheme $R$ to the set of sections of the natural morphism $\Fc_{\omega_{C}(D)}\times_{\mathrm{Spec}(k)} R\to C\times_{\mathrm{Spec}(k)} R$.
The functor $A(\omega_C(D))$ is represented by an affine space over $k$, which is called the Hitchin base.
The quotient morphism $\chi: \Fg/G\to \Fc$ induces the Hitchin morphism $h_{Dol}: \sM_{Dol}(C,D)\to A(\omega_C(D))$.
Similarly, we also have the Hitchin morphisms 
 $h_{Dol}:\sM_{Dol}(C,D)\to A(D)$ and $h_{Dol}:\sM_{Dol}(C',D')\to A(\omega_{C'}(D'))$, etc.

\subsubsection{The de Rham-Hitchin morphism} $\;$

As recalled in \Cref{section: two views on residues}, especially \Cref{sssec: def p curv}, given a log-connection $(E,\nabla)$, the associated $p$-curavture $(E,\Psi(\nabla))$ is an $\omega_{C}^p(pD)$-twisted Higgs bundle, i.e., a section of $(\Fg/G)_{\omega_C^p(pD)}$ over $C$.
The Hitchin image of $(E,\Psi(\nabla))$, which a priori is a section of $\Fc_{\omega_{C}^p(pD)}$ over $C$, is indeed the $Fr_C$-pullback of a section $(\chi(E,\Psi(\nabla)))^{1/p}$ of $\Fc_{\omega_{C'}(D')}$ over $C'$, see \cite[\S5.2]{li2024tame} and \cite[Prop. 5.7]{herrero-zhang-mero}.
The  resulting de Rham-Hitchin morphism $h_{dR}:\sM_{dR}(C,D)\to A(\omega_{C'}(D'))$ is the morphism that sends $(E,\nabla)$ to $(\chi(E,\Psi(\nabla)))^{1/p}$.
 
For some basic geometric properties of these moduli stacks/spaces and of these Hitchin-type morphisms, see 
$\S$\ref{subs: geom conseq},
\ref{subs: fltns} and
\ref{subs: waf}.

\subsection{The Picard stack}\label{subs: act}\;

Let $L$ be an invertible sheaf on $C$.
Let $R$ be a $k$-scheme.

\subsubsection{Regular centralizers} $\;$

Let $J$ be the smooth commutative group scheme of regular centralizers over $\Fc$ as in \cite[Lem. 2.1.1]{ngo-lemme-fondamental}. 
By \cite[the paragraph above \S2.2]{ngo-lemme-fondamental}, the $\Fc$-group scheme $J$ is $\bG_m$-equivariant for the $\bG_m$-action on $\Fc$ induced from the scaling $\bG_m$-action on $\Fg$; therefore,  $J$ descends to a group scheme $J/\bG_m$ over the stack $\Fc/\bG_m$.
We can thus twist the group scheme $J$ by the $\bG_m$-torsor $L^{\times}$ and obtain the group scheme $J_{L}$ over $\Fc_{L}$.
For each section $a: C_R\to (\Fc_L)_R$, let $J_a:= a^* (J_L)_R$ be the pullback group scheme over $C_R$.
We emphasize that, to define $J_a$, we do not need to use the Kostant section, thus do not need to require that $L$ has a square root.

\subsubsection{Picard stacks and their torsors } $\;$

Let $\sS$ be a site and let $\sG$ be a stack on $\sS$.
Following the terminology in \cite[\S A.1]{chzh17}, we say that $\sG$ is a Picard stack if it is a strictly commutative Picard stack as in \cite[XVIII, Def. 1.4.5]{SGA4}. 
Let $\sT$ be a stack over $\sS$.
Let $a: \sG\times \sT\to \sT$ be a bi-functor.
We say that $a$ defines an action of $\sG$ on $\sT$ if the condition (i) in \cite[\S A.5]{chzh17} is satisfied.
We say that $a$ makes $\sT$ a $\sG$-pseudo-torsor  if the conditions (i), (iii) in loc. cit. are satisfied.
We say that $a$ makes $\sT$ a
$\sG$-torsor
 if the conditions (i), (ii), (iii) in loc. cit. are satisfied.
In particular, a $\sG$-torsor is a $\sG$-pseudo-torsor that is locally nonempty.

 \subsubsection{The Picard stack $\sP(L)$} $\;$
 
The $A(L)$-stack $\sP(L)$ sends the point $a: R\to A(L)$ to the groupoid of $J_a$-torsors on $C_R$.
Since $J$ is commutative, we have that $\sP(L)$ is a Picard stack over $A(L)$.

 \subsubsection{The  action of  the Picard stack $\sP(L)$  on $\sM_{Dol}(L)$}\label{sssec: action of p on mdol} $\;$

 Let $(E,\phi)$ be a Higgs bundle over $C_R$ with Hitchin image $a\in A(L)(R)$.
 Let $Aut(E,\phi)$ be the group scheme over $C_R$ of automorphisms of $(E,\phi)$.
 There is a canonical morphism of group schemes $c_{E,\phi}: J_a\to Aut(E,\phi)$ on $C_R$, which enables us to twist $(E,\phi)$ by a $J_a$-torsor $F$ to obtain a new Higgs bundle $(E\times^{J_a}_{c_{E,\phi}} F, \phi_F)$ whose Hitchin image is still $a$, see, e.g. \cite[\S2.6]{chzh17}.
 In this way, we obtain an action of $\sP(L)$ on $\sM_{Dol}(L)$:
 \begin{equation}
 \label{eq: act p mdol}
     \sP(L)\times_{A(L)}\sM_{Dol}(L)\to \sM_{Dol}(L),\quad (F, (E,\phi))\mapsto (E\times_{c_{E,\phi}}^{J_a} F,\phi_F).
 \end{equation}

  \subsubsection{The Kostant section}$\;$
  
 Assume that the invertible sheaf has a square root $L^{1/2}$.
 Then the Hitchin morphism $\sM_{Dol}(L)\to A(L)$ has a section $\eta_{L^{1/2}}$, called the Kostant section, see \cite[\S2.3]{chzh17}.
 Via $\eta_{L^{1/2}}$, the action \eqref{eq: act p mdol} defines an open embedding 
 \begin{equation}
 \label{eq: kostant embed}
     \iota_{L^{1/2}}: \sP(L)\hookrightarrow\sM_{Dol}(L).
 \end{equation}

 \subsubsection{The Picard stack $\sP$}$\;$
 
 We denote by $\sP$ the Picard stack $\sP(\omega_{C'}(D'))$ over $A(\omega_{C'}(D'))$.

 \subsubsection{The action of $\sP$ on $\sM_{dR}(C,D)$}$\;$
 
 Let $a'\in A(\omega_{C'}(D'))(R)$.
 Let $F$ be a $J_{a'}$-torsor over $C'$.
 Then both the Frobenius pullbacks $Fr^*J_{a'}$ and $Fr^*F$ admit canonical connections $\nabla^{can}_J$ and $\nabla_F^{can}$, and $Fr^*J_{a'}$ acts on $Fr^*F$ in a way that is compatible with the connections \cite[Ex. A.5]{chen-zhu}. See \Cref{section: two views on residues} for a review of connections on affine (group) schemes/torsors.
 Let $(E,\nabla_E)$ be a log-connection on $C_R$ with de Rham-Hitchin image $a'$.
 Then the $p$-curvature $\Psi(\nabla_E)$ is an $\omega_C^p(pD)$-twisted Higgs bundle whose Hitchin image is the section $a^p:=Fr^*a': C_R\to (\Fc_{\omega_C^p(pD)})_R$.
 Note that the $C$-group schemes $J_{a^p}$ and $Fr^*J_{a'}$ are canonically identified.
 By the formalism developed in \Cref{section: two views on residues}, we can form the twist log-connection
 $(E_F,\nabla_{E_F}):=(E\times_{c_{E,\Psi(\nabla_E)}}^{J_{a^p}} Fr^*F, \nabla_E\otimes \nabla_F^{can})$.
 Since $\Psi(\nabla_F^{can})=0\in \Gamma(C, \mathrm{Lie}(J_{a^p})\otimes\omega_C^p(pD))$ \cite[Ex. A.5]{chen-zhu}, the same argument as in the penultimate paragraph of the proof of \cite[Thm. 3.12]{chen-zhu} shows that $(E_F,\nabla_{E_F})$ has $a'$ as its de Rham-Hitchin image.
 Therefore, we obtain an action of $\sP$ on $\sM_{dR}(C,D)$:
 \begin{equation}
     \sP\times_{A(\omega_{C'}(D'))}\sM_{dR}(C,D)\to \sM_{dR}(C,D),\quad (F,(E,\nabla_E))\mapsto (E_F,\nabla_{E_F}).
 \end{equation}

\subsubsection{The $GL_r$ case}$\;$

We consider the special case $G=GL_r$.
Let $X$ be a smooth projective curve over $k$ and 
let $L$ be a line bundle over $X$.
The Hitchin base $A(L)$ parametrizes the spectral curves in the total space of $L$, see \cite[Lem. 6.10]{simpson-repnII}.
Let $\cS(L)\to A(L)$ be the universal spectral curve.
We have that $\sP(L)$ is the $A(L)$-stack parametrizing line bundles on $\cS(L)$ relative to $A(L)$, see \cite[\S4.4.1]{ngo-lemme-fondamental}.
By the Beauville-Narasimhan-Ramanan (BNR) correspondence \cite[Prop. 3.6]{bnr-spectral-curves}, \cite[Prop. 2.1]{schaub-courbes-spectrales}, the $A(L)$-stack $\sM_{Dol}(L)$ of $L$-twisted Higgs bundles on the curve $X$ is canonically equivalent to the $A(L)$-stack of torsion free rank 1 sheaves on the spectral curves.
Therefore, we have that $\sP(L)$ is an open substack  of the stack of $\sM_{Dol}(L)$, and that $\sP(L)$ acts on $\sM_{Dol}(L)$ via the tensor products of sheaves on the spectral curves:
\begin{equation}
    \label{eq: pl act on mdoll}
    \sP(L)\times_{A(L)}\sM_{Dol}(L)\to \sM_{Dol}(L), 
    \quad  (\cM, \cE)
    \longmapsto \cE \otimes \cM.
\end{equation}

\begin{remark}\label{sssec: two embeddings}
    Note that the embedding $\sP(L)\hookrightarrow\sM_{Dol}(L)$ is induced by the section $s: A(L)\to \sM_{Dol}(L)$ sending $a\in A(L)$ to the structural sheaf of the spectral cover $X_a$, which is different from the Kostant section $\eta_{L^{1/2}}$. Indeed, when $r=2$, the vector bundle underlying the Higgs bundle $s(a)$ is $\cO_X\oplus L^{-1}$, while the one underlying $\eta_{L^{1/2}}(a)$ is $L^{-1/2}\oplus L^{1/2}$ \cite[\S7.1]{dalakov2017lectures}.
\end{remark}

When $X$ is the Frobenius twist $C'$ of $C$ and $L=\omega_{C'}(D')$, we denote by $\sP$ the Picard stack $\sP(\omega_{C'}(D'))$ over $A(\omega_{C'}(D'))$.
We have the canonical decomposition by degrees of line bundles $\sP = \coprod_{d \in \bZ} \sP(d)$.

Let $a'$ be a geometric point of $A(\omega_{C'}(D'))$.
Let $S_{a'}$ be the corresponding spectral curve over $C'$.
There is the central $\cO_{S_{a'}}$-algebra $\sD(-D)$ over $S_{a'}$   \eqref{diag: four corners on sp curves}.
By the Log-de Rham-BNR \Cref{lemma: log dr bnr}, the objects in $h_{dR}^{-1}(a')\subset \sM_{dR}(C,D)$ are given by torsion free rank $p$ sheaves on $S_{a'}$ with a $\sD(-D)$-module structure.
Let $\cE$ be such a $\sD(-D)$-module on $S_{a'}$. 
Given a line bundle $M$ on $S_{a'}$, by using that
$\sD(-D)$ over $S_{a'}$ is a central $\cO_{S_{a'}}$-algebra,
we have that $\cE\otimes M$ is  a $\sD(-D)$-module, hence an object in $h_{dR}^{-1}(a').$
We thus obtain the action of $\sP$ on $\sM_{dR}(C,D)$: 
\begin{equation}
\label{eq: p action mdr a}
\sP\times_{A(\omega_{C'}(D'))}\sM_{dR}(C,D)\to \sM_{dR}(C,D),
\quad 
(M, \cE) \longmapsto (\cE \otimes M).
\end{equation}

\subsection{Residues}\label{subs: residues}$\;$

Throughout this paper, given an integer $d \in \bZ$, we  denote by $\ov{d}$ the image of $d$ in $\bF_p$.

The goal of this section is to construct diagram \eqref{diag: res of p curv}, and its more precise version \eqref{diag: hdr fiber not conn} in the case of  $G=GL_r$, 
 relating the charactrisitc polynomials of the residues of log-connections to the ones of the residues of their $p$-curvature.

\subsubsection{Twists}$\;$

The invertible sheaf $\omega_{C}(D)|_D$ on $D$ has a canonical trivialization $\omega_{C}(D)|_D=\cO_D$, given by $\frac{dx}{x}$ in local coordinates.
Consider the twist $\Fg \times^{\mathbb G_m} (\omega_C(D)|_D)^\times$
of $\Fg$ by the line bundle $\omega_C(D)|_D = \cO_D$;
it is canonically isomorphic to $\Fg\times_k D$ and we denote it
by $\Fg_D$. Similarly, we obtain
  $\Fc_D =\Fc \times_k D$. We also need the twists of $\Fg$ and $\Fc$ with respect to the canonically trivialized line bundle 
$\omega^p_C(pD)|_D$ on $D$; 
these twists are canonically isomorphic to
the previous ones $\Fg_D$
and $\Fc_D$ via the 
$p$-th iteration of the residue isomorphism.
We set
$G_D:= G\times_k D$.

\subsubsection{Section stack over poles}$\;$

Given a stack $X$ over $D$, we denote by  $\Gamma(D,X)$ the $k$-stack of sections of $X$, i.e., for any $k$-scheme $S$, the groupoid $\Gamma(D,X)(S)$ is given by the sections $\Gamma(D\times_k S, X\times_k S)$.
If the stack $X$ is represented by a vector bundle of rank $r$ over $D$, which is necessarily trivial, then $\Gamma(D,X)$ is isomorphic to the $k$-affine space $\bA^{r\deg(D)}$.

\subsubsection{Residues of connections}$\;$

Given a connection $(E,\nabla)$ on $C$ with log-poles along $D$,
the residue $\mathrm{res}_D(\nabla)$ is a section in $H^0(D,\mathrm{ad}(E|_D))$.
We thus have the following two morphisms

\begin{equation}
\label{eq: resd chid}
\xymatrix{
\mathscr{M}_{dR}(C,D) \ar[r]^-{\mathrm{res}_D} &\Gamma(D, \Fg_{D}/G_{D}) \ar[r]^-{\chi_D} &
\Gamma(D, \Fc_D),
}    
\end{equation}
where $\mathrm{res}_D$ takes $(E,\nabla)$ to $(E|_D, \mathrm{res}_D(\nabla))$, and $\chi_D$ is induced by $\chi:\Fg/G\to \Fc$.

\subsubsection{Residues of Higgs bundles}$\;$

If we have a log-Higgs bundle
$(E, \phi\in H^0(\mathrm{ad}(E)\otimes\omega_C(D))$ on $C$, then we can form a residual Higgs bundle on $D$ by using 
the residue isomorphism and obtain $\mathrm{res}_D(\phi):=\phi|_D\in H^0(D,\mathrm{ad}(E)\otimes\omega_C(D)|_D)=H^0(D,\mathrm{ad}(E|_D))$.

By considering characteristic polynomials, we get the commutative diagram of $k$-stacks:
\begin{equation}\label{eq:resdol1}
\xymatrix{
\mathscr{M}_{Dol}(C,D)
\ar[r]^-{\mathrm{res}_D} \ar[d]^-{h_{Dol}}
&
\Gamma(D,\Fg_{D}/G_{D})
\ar[d]^-{\chi_D}
\\
A(\omega_C(D)) \ar[r]^-{ev_D = |_D}
&
\Gamma(D,\mathfrak{c}_D),
}
\end{equation}
where $\mathrm{res}_D((E,\phi)):=(E|_D,\mathrm{res}_D(\phi))$.

\subsubsection{The $GL_r$-case}$\;$

When $G=GL_r$, by \cite[Thm. 1.2]{manikandan2022criterion}, for each $d\in \bZ=\pi_1(GL_r)$, the morphism $\chi_D\circ Res(D)$ maps $\sM_{dR}(C,GL_r, D,d)$ onto the codimension one affine linear subspace $\Gamma(D,\Fc_D)^{-\overline{d}}\subset \Gamma(D,\mathfrak{c}_D)$ defined by the Fuchs relations, i.e. by the sum of traces $\sum_{q\in D}tr|_{q}$ being equal, in the field $k$, to $-\ov{d}$, i.e. minus the residual class modulo $p$  of the degree $d$. If we take the degree to be a multiple of $char(k)=p$, then this  value is zero in $k$ and the affine linear subspace is a linear subspace, which we denote by 
\begin{equation}\label{eq: gammazero}
\Gamma(D,\Fc_D)^{\ov 0}
\subset 
\Gamma(D,\Fc_D).
\end{equation}

\begin{remark}
    In the $GL_r$ case, since $H^1(C,\omega_C) =k$, the image of $ev_D$ coincides with the subscheme $\Gamma(D,\Fc_D)^{\ov 0}$.
\end{remark}

Using the canonical identification $\omega_C^p(pD)|_D=(\omega_C(D)|_D)^{\otimes p}=\cO_D$, we have a  commutative diagram of $k$-stacks for $\omega_C^p(pD)$-Higgs bundles
similar to \eqref{eq:resdol1}
\begin{equation}\label{eq:resdr1}
\xymatrix{
\mathscr{M}_{Dol}(\omega^p_C(pD))
\ar[r]^-{\mathrm{res}_D} \ar[d]^-{h_{Dol}^p}
&\ar[d]^-{\chi_D} \Gamma(D,\Fg_{D}/G_{D})
\\
A(\omega^p_C(pD)) \ar[r]^-{ev_D(-)}
&
\Gamma(D,\Fc_D).
}
\end{equation}

\subsubsection{Residues of $p$-curvature} $\;$

Consider the ring $k[x]$ and the derivation $x\partial_x$.
The $p$-th iterate derivation $(x\partial_x)^{[p]}$ equals $x\partial_x$ itself.
Therefore, given a log-connection $(E,\nabla)$ on $C$, we have the equality 
\begin{equation}\label{eq:respcurv}
\mathrm{res}_D(\Psi(\nabla))=\mathrm{res}_D(\nabla)^{[p]}-\mathrm{res}_D(\nabla)\in H^0(D, \mathrm{ad}(E)|_D).
\end{equation}

The assignment $A\mapsto A^{[p]}-A$, where $(-)^{[p]}$ is the $p$-operation on the $p$-Lie algebra $\Ft$, defines a surjective and \'etale $k$-morphism $AS_{\Ft}: \Ft \to \Ft$ which is equivariant with respect to the $W$-action \cite[p.279, Rmk. (3)]{mcninch2002abelian},
thus inducing a surjective  $k$-morphism $ AS_{\Fc}:\Fc \to \Fc$
of degree $p^{r(G)}$, where $r(G)$ is the reductive rank.

 \begin{lemma}
 \label{lemma: no conn fib}
    The morphism $AS_{\Fc}:\Fc\to \Fc$ is finite, surjective, Galois and  with disconnected geometric fibers. When $G=GL_{r\geq 2}$, the morphism $AS_{\Fc}$ is not \'etale. 
\end{lemma}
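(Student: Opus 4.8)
The plan is to deduce every assertion from the analogous, more transparent properties of $AS_{\Ft}$ by descent along the finite surjective Chevalley quotient $\pi\colon\Ft\to\Ft\git W=\Fc$, using throughout the defining identity $\pi\circ AS_{\Ft}=AS_{\Fc}\circ\pi$. The key structural point is that $\Ft$ is the Lie algebra of a torus, so in coordinates $\Ft\cong\bA^{r(G)}$ that diagonalise the $p$-operation the map $x\mapsto x^{[p]}$ is the coordinatewise Frobenius; thus $AS_{\Ft}\colon x\mapsto x^{[p]}-x$ is an isogeny of the additive group $(\Ft,+)$, finite étale of degree $p^{r(G)}$, with kernel the finite constant group scheme $V:=\ker(AS_{\Ft})=\Ft(\bF_p)\cong\bF_p^{r(G)}$. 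In particular $AS_{\Ft}$ is a $V$-torsor, i.e. a finite surjective Galois cover, and $W$ normalises $V$ in $\Aut(\Ft)$ via its linear action. Surjectivity of $AS_{\Fc}$ is then immediate, since $AS_{\Fc}\circ\pi=\pi\circ AS_{\Ft}$ and $\pi$ are surjective; and finiteness follows because, writing $R=k[\Fc]=k[\Ft]^W$, the map $R\xrightarrow{AS_{\Fc}^*}R\hookrightarrow k[\Ft]$ coincides with $R\hookrightarrow k[\Ft]\xrightarrow{AS_{\Ft}^*}k[\Ft]$, which is module-finite (Chevalley plus finiteness of $AS_{\Ft}^*$); hence $k[\Ft]$, and so its submodule $R$, is finite over $AS_{\Fc}^*(R)$.

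For the remaining two properties I would pass to function fields. As $AS_{\Ft}$ is the quotient by $V$, one has $AS_{\Ft}^*k(\Ft)=k(\Ft)^V$, and $W$-equivariance together with injectivity of $AS_{\Ft}^*$ gives
\[
AS_{\Fc}^*k(\Fc)=AS_{\Ft}^*\big(k(\Ft)^W\big)=\big(k(\Ft)^V\big)^W=k(\Ft)^{V\rtimes W},
\]
while $k(\Fc)=k(\Ft)^W$. Thus $AS_{\Fc}$ corresponds to the extension $k(\Ft)^W/k(\Ft)^{V\rtimes W}$ of degree $|V|=p^{r(G)}$, an intermediate layer of the Galois extension $k(\Ft)/k(\Ft)^{V\rtimes W}$ with group $V\rtimes W$; the Galois assertion I would read off from this $V$-torsor structure of $AS_{\Ft}$. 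For disconnectedness, $k(\Ft)/k(\Ft)^{V\rtimes W}$ is separable, hence so is its sub-extension $k(\Fc)/AS_{\Fc}^*k(\Fc)$, so $AS_{\Fc}$ is generically étale of degree $p^{r(G)}>1$ and its generic geometric fibre is a set of $p^{r(G)}>1$ reduced points; the geometric fibres are therefore disconnected.

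For non-étaleness when $\Phi\neq\varnothing$ (in particular for $GL_{r\geq2}$) I would compare non-étale loci through the square. A root $\alpha$ gives an $\bF_p$-rational linear form $d\alpha$ on $\Ft$, so $d\alpha\circ AS_{\Ft}=(d\alpha)^p-d\alpha$ and hence $AS_{\Ft}^{-1}\{d\alpha=0\}=\bigsqcup_{c\in\bF_p}\{d\alpha=c\}$. The reflection hyperplane $\{d\alpha=0\}$ lies in the non-étale locus of $\pi$ (its points have the nontrivial stabiliser $s_\alpha$), whereas $\pi$ is étale off $\bigcup_\beta\{d\beta=0\}$. Were $AS_{\Fc}$ étale, the non-étale locus of $AS_{\Fc}\circ\pi=\pi\circ AS_{\Ft}$ would be at once contained in $\bigcup_\beta\{d\beta=0\}$ and equal to $AS_{\Ft}^{-1}$ of it (as $AS_{\Ft}$ is étale), the latter containing the affine hyperplane $\{d\alpha=1\}$; since $\{d\alpha=1\}$ misses the origin it cannot sit inside a finite union of linear hyperplanes through the origin, a contradiction. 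Hence $AS_{\Fc}$ is not étale.

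I expect the Galois assertion to be the only delicate point: since $W$ acts non-trivially on $V=\Ft(\bF_p)$, it is not normal in $V\rtimes W$, so $k(\Fc)/AS_{\Fc}^*k(\Fc)$ is not normal in the naive sense and the genuinely Galois object is $AS_{\Ft}$ upstairs; making the claim precise for $AS_{\Fc}$ requires fixing which group furnishes the deck transformations and which convention for a branched Galois cover is in force. The other three properties, by contrast, are robust and follow from the descent and ramification comparisons above.
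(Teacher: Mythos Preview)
Your argument for finiteness and surjectivity is essentially the paper's, routed through the same commutative square $\pi\circ AS_{\Ft}=AS_{\Fc}\circ\pi$. Your non-\'etaleness argument is genuinely different and in fact stronger: the paper proves non-\'etaleness only for $GL_{r\geq 2}$ by an explicit eigenvalue count (the fibre over $\{0,0,\ldots\}$ has fewer than $p^r$ points), whereas your ramification-locus comparison works for any $G$ with $\Phi\neq\varnothing$. You are also right that the Galois claim is delicate; the paper's own proof does not address it, and your observation that $W$ is not normal in $V\rtimes W$ shows the field extension $k(\Fc)/AS_{\Fc}^*k(\Fc)$ is not Galois in the usual sense, so the intended statement must be read as ``quotient by a finite group acting on the source'' (namely $V$) rather than as a Galois field extension.

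There is, however, a genuine gap in your disconnectedness argument. You deduce from generic \'etaleness that the \emph{generic} geometric fibre has $p^{r(G)}>1$ reduced points, and then assert ``the geometric fibres are therefore disconnected.'' This last step is unjustified: a finite generically \'etale morphism can have connected special fibres (any branched double cover does). The lemma claims disconnectedness of \emph{all} geometric fibres, and the paper proves exactly this: for every $t\in\Ft$ it shows that the $p^{r(G)}$ points of $AS_{\Ft}^{-1}(t)$ cannot lie in a single $W$-orbit, by embedding $(G,T)\hookrightarrow(GL_N,T_N)$ and observing that $AS_{\Ft_N}$ preserves each $\Fgl_1$-factor while $S_N$ merely permutes them. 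To repair your argument you would need a pointwise statement of this kind; one clean route is to note that $0\in V=\Ft(\bF_p)$ is $W$-fixed while $V\neq\{0\}$, so $V$ has at least two $W$-orbits, and then translate this to an arbitrary fibre using the $V$-torsor structure of $AS_{\Ft}$ together with the fact that $W$ normalises $V$.
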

\begin{proof}
    The morphism $AS_{\Ft}: \Ft
    \to \Ft$  and the 
    characteristic polynomial morphism
    $\chi|\Ft: \Ft \to \Fc$ are both finite. The composition $\chi|\Ft \circ AS_{\Ft} = AS_{\Fc} \circ \chi|{\Ft}$ is thus finite. 
    It follows from \cite[\href{https://stacks.math.columbia.edu/tag/0AH6}{Tag 0AH6}]{stacks-project} that $AS_{\Fc}$ is also finite.

To show that the geometric fibers of $AS_{\Fc}$ are disconnected, it suffices to show that given any $t\in \Ft$, the points in the disconnected fiber $AS_{\Ft}^{-1}(t)$ do not all lie in the $W$-orbits.
Choose an embedding $(G,T)\hookrightarrow (GL_N,T_N)$.
The induced embedding $\Ft\hookrightarrow \Ft_N$ is compatible with both the Weyl group actions and the $A\mapsto A^{[p]}-A$ morphism \cite[Prop. 4.4.9]{springer1998linear}.
Identifying $T_N$ with $\bG_m^N$, we see that $AS_{\Ft}$ preserves each $\Fgl_1$ factor in $\mathrm{Lie}(\bG_m^N)$, and that the Weyl group of $GL_N$ permutes the factors. 
Since $AS_{\Fgl_1}$ has disconnected fibers, we have that $AS_{\Ft}^{-1}(t)$ lie in different $W$-orbits as desired.
    
In the $GL_r$-case, the $k$-points of $\Fc$ are in bijection with the set of unordered $r$-tuple of points  $\{e_1,...,e_r\}$, with $e_i\in k$, given by the eigenvalues of matrices.
The  general fiber (i.e. the $e_i$'s are pairwise distinct) is reduced with cardinality $p^r$. The set-theoretic fiber of any point of the form  $\{0,0, \ldots \}$  (at least two entries equal to zero) has cardinality strictly smaller than  $p^r$. It follows that   the finite and surjective morphism $AS_{\Fc}$
    onto the  integral $\Fc$ is not \'etale. 
\end{proof}

Taking $AS_{\Fc}$ on sections in $\Gamma(D,\Fc_D)$, we obtain a $k$-morphism $AS_{\Fc_{D}}: \Gamma(D,\Fc_D)\to \Gamma(D,\Fc_D)$.
By \Cref{lemma: no conn fib}), the  finite $k$-morphism $AS_{\Fc_{D}}$ is not \'etale when $G=GL_{r\geq 2}$, and it does not have connected geometric fibers.
In summary, we have the commutative diagram 
\begin{equation}
\label{diag: res of p curv}
    \xymatrix{
&
\mathscr{M}_{dR}(C,D)
\ar[r]^-{\Psi}
\ar[d]^-{h_{dR}}
\ar[ddl]_-{\chi_D\circ \mathrm{res}_D}
&
\mathscr{M}_{Dol}(C, \omega_C^{\otimes p}(pD))
\ar[d]^-{h_{Dol}^p}
\ar@/^3pc/[dd]^-{ \chi_D \circ \mathrm{res}_D}
\\
&
A(\omega_{C'}(D'))
\ar@{^{(}->}[r]^-{Fr_C^*}
\ar[d]^-{ev_{D'}}
& A(\omega_C^{\otimes p}(pD))
\ar[d]^-{ev_{D}}
\\
\Gamma(D,\Fc_D)
\ar@{->>}@/_2pc/[rr]^-{AS_{\Fc_{D}}} \ar@{.>>}[r]^-{\Fas}
&
\Gamma(D',\Fc_{D'})
\ar@{^{(}->>}[r]^-{Fr_D^*}
&
\Gamma(D,\Fc_D),
}
\end{equation}
where the surjectivity of $AS_{\Fc_D}$ follows from \Cref{lemma: no conn fib}, the definition of the morphism $\Fas$ is given in \Cref{defn: fas}, and the bottom square is given by pulling back sections along the commutative square: 
\begin{equation}\label{eq:cdfrob}
    \xymatrix{
    C' & C  \ar[l]_-{Fr_C} \\
    D' \ar[u] & D \ar[u] \ar[l]_-{ Fr_D}.
    }
\end{equation}

Since $D$ is reduced and zero-dimensional, we have that $Fr_D:D\to D'$ is an isomorphism.
Therefore the injective $k$-morphism $Fr_D^*: \Gamma(D',\Fc_{D'})\to \Gamma(D,\Fc_D)$ is an isomorphism.

\begin{defn}\label{defn: fas}
    We define the morphism $\Fas$ to be the composition
    \begin{equation}
        \label{eq: defining fas}
        \Fas: \Gamma(D,\Fc_D)\xrightarrow{AS_{\Fc_D}} \Gamma(D,\Fc_D)\xrightarrow{(Fr_D^*)^{-1}} \Gamma(D',\Fc_{D'}).
    \end{equation}
\end{defn}

\subsubsection{\underline{$GL_r$ case}}
When $G=GL_r$, we can make the diagram more precise by 
considering the degrees for the moduli stacks and the Fuchs relation.
The image of $\sM_{dR}(C,D,d)$ under $\chi_D(\mathrm{res}_D)$ lies in $\Gamma(D,\Fc_D)^{-\ov{d}}$.
Since $AS(d)=d^p-d=0 \in k$, we have that 
\begin{equation}
\label{eq: chi d to o}
    AS_{\Fc_{D}}(\Gamma(D,\Fc_D)^{-\overline{d}})=\Gamma(D,\Fc_D)^{\ov 0}.
\end{equation}

We thus have the commutative diagram of  $k$-morphisms, where the dotted arrow is uniquely determined by the fact that $Fr_D^*$ is an isomorphism  and where we have marked the surjective morphisms
\begin{equation}
    \label{diag: hdr fiber not conn}
    \xymatrix{
&
\mathscr{M}_{dR}(C,D,d)
\ar[r]^-{\Psi}
\ar@{->>}[d]^-{h_{dR}\,\, \mbox{not conn. fib.}}
\ar@{->>}[ddl]_-{\chi_D\circ \mathrm{res}_D}
&
\mathscr{M}_{Dol}(C, \omega_C^{\otimes p}(pD),d)
\ar@{->>}[d]^-{h_{Dol}^p}
\ar@{->>}[ddr]^-{ \chi_D \circ \mathrm{res}_D}
&
\\
&
A(\omega_{C'}(D'))
\ar@{^{(}->}[r]^-{Fr_C^*}
\ar@{->>}[d]^-{ev_{D'}}
& A(\omega_C^{\otimes p}(pD))
\ar@{->>}[dr]^-{ev_{D}}
&
\\
\Gamma(D,\Fc_D)^{-\overline{d}}
\ar@{.>>}[r]^-{\Fas}
\ar@{->>}@/_2pc/[rr]^-{AS_{\Fc_{D}}}_-{ \mbox{finite,  not conn. fib.,   not \'etale} \; \forall  r\geq 2}
&
\Gamma(D',\Fc_{D'})^{\ov{0}}
\ar@{^{(}->>}[r]^-{Fr_D^*}_-{\simeq_k}
&
\Gamma(D,\Fc_D)^{\ov{0}}\ar@{^{(}->}[r]
&
\Gamma(D,\Fc_D);
}
\end{equation}
symbolically, if $A$ is a matrix representing $\mathrm{res}_D(E,\nabla)$ and $\chi (A)$ is its characteristic polynomial, then we have $\chi_D (\mathrm{res}_D(E,\nabla))=\chi(A)$
and
$\chi_D(\mathrm{res}_D(\Psi (E,\nabla)))= \chi(A^p-A)$ and, moreover, the eigenvalues of $A^p-A$ are the eigenvalues of $A$ after application of the Artin-Schreier morphism.
The morphism $h_{Dol}^p$ is surjective because of the BNR correspondence.
The morphism $h_{dR}$ is surjective because of \Cref{prop: surj of drhr}.
The morphism $Fr_C^*$ is injective because $Fr_C:C\to C'$ is surjective. 
The morphism $ev_D$ is surjective because $H^1(C,\omega_{C}^{\otimes p}(pD))=0$ for $i\geq 1.$
The morphism $ev_{D'}$ is surjective because $H^1(C,\omega_{C})=k$ and $H^1(C,\omega_C(Q))=0$ for any  strictly  effective divisor $Q$.
The morphism $Fr_D^*$ is bijective because $Fr_D$ is an isomorphism.
The morphism $\chi_D\circ \mathrm{res}_D$ is surjective because of the sufficiency of the Fuchs relation \cite[Thm. 1.2]{manikandan2022criterion}.
By \Cref{lemma: no conn fib}, the morphism $AS_{\Fc_{D}}$ is finite with disconnected geometric fibers  and is not \'etale $\forall r \geq 2$.

\begin{remark}[\bf The log-de Rham-Hitchin morphism has disconnected fibers]\label{rmk:hdr not conn fib}
In the case with no poles, when the degree $d$ is a multiple of $p$ (a necessary  and sufficient condition for the non-emptiness of the moduli stack $\sM_{dR}(C,d)$), the de Rham-Hitchin morphism $h_{dR}:  \sM_{dR}(C,d) \to A(\omega_{C'})$
 has connected fibers, just like in the case of the Hitchin morphism
 $h_{Dol}: \sM_{Dol}(C,d) \to A (\omega_C)$.
 In the log-case, the morphism $h_{Dol}: \sM_{Dol}(C,D,d)\to  A(\omega_{C}(D))$, has connected fibers, but the morphism $h_{dR}: \sM_{dR}(C,D,d) \to  A(\omega_{C'}(D'))$ does \underline{not} have connected fibers, 
 not even after passing to the semistable good moduli space, not even in the case when the degree is a multiple of $p$ and coprime to the rank, and not even in the rank one case.
 In fact, since $ev_{D'}$ has connected fibers, if $h_{dR}$ had connected fibers, 
 then the surjective $Fr^*_D \circ ev_{D'} \circ h_{dR}$ would have connected fibers, contradicting the fact that the surjective $AS_{\Fc_{D}}$ does not have connected fibers, as explained in the paragraphs above diagram \eqref{diag: res of p curv}. 
 The non-connectedness
 is rooted in the fact that 
 once we prescribe the restriction
to $D'$  in $\Gamma(D', \Fc_{D'})^{\ov{0}}$ of the characteristic polynomial of
 the $p$-curvature of a log-connection,
 there are many possible distinct sets of eigenvalues 
 in $\Gamma(D, \Fc_{D})^{\ov{0}}$
 of the residue of a log-connection yielding the same set of values in $\Gamma(D', \Fc_{D'})^{\ov{0}}$ via the Artin-Schreier-type morphism $AS_{\Fc_{D}}$.
\end{remark}

\subsection{Hitchin-type residue morphisms}

\subsubsection{The variety of residues $\Fc A$}\label{subs: var rez} $\;$

Recall the notation as in \eqref{diag: res of p curv} and \eqref{eq: defining fas}.
\begin{defn}\label{defn: base cA}
    We define the $k$-scheme $\Fc A$ to be the fiber product
    \begin{equation}\label{diag: base cA}
    \xymatrix{
        \Fc A\ar[r]^-{pr_1} \ar[d]_-{pr_2} &
        A(\omega_{C'}(D')) \ar[d]^-{ev_{D'}}\\
        \Gamma(D,\Fc_D)\ar[r]_-{\Fas} & \Gamma(D',\Fc_{D'}).
        }
    \end{equation}
\end{defn}
One may view the $k$-points of $\Fc A$ as special pairs $(a',s)$, where $a' \in A(\omega_{C'}(D'))$ and $s$ is a collection of adjoint orbits in $\Fg^{reg}$, one for each puncture in $D$.

\subsubsection{The de Rham-Hitchin-residue morphism $h_{dR}^{\Fc A}$}$\;$

The commutativity of \eqref{diag: res of p curv} entails that the morphism $h_{dR}:\sM_{dR}(C,D,d)\to A(\omega_{C'}(D'))$ factors as
\begin{equation}
    \sM_{dR}(C,D)\xrightarrow{h_{dR}^{\Fc A}} \Fc A\xrightarrow{pr_1} A(\omega_{C'}(D')).
\end{equation}
We call the morphism $h_{dR}^{\Fc A}:\sM_{dR}(C,D)\to \Fc A$ the de Rham-Hitchin-residue morphism.

\subsubsection{$\wt{(-)}$ over $\Fc A$}\label{sssec: wt not}$\;$

Let $X$ be one of the stacks or schemes over $A(\omega_{C'}(D'))$ that we have introduced above. For example, $X$ can be $\sM_{Dol}(C',D'),\sP, M_{Dol}^{ss}(C,D)$, etc.
\begin{notn}
We denote by $\wt{X}$ the $\Fc A$-stack or scheme obtained by taking the base change $X\times_{A(\omega_{C'}(D')} \Fc A$.
We denote by $h_{Dol}^{\Fc A}:\wt{\sM_{Dol}}(C',D')\to \Fc A$ the morphism given by the base change of $h_{Dol}:\sM_{Dol}(C',D')\to A(\omega_{C'}(D'))$.
We call the morphism  $h^{\Fc A}_{Dol}$ 
the Hitchin-residue morphism.
\end{notn}

By base changing the action \eqref{eq: pl act on mdoll}, we have an action
    \begin{equation}
    \label{eq: wt P act on wt mdol}
        \wt{\sP}\times_{\Fc A}\wt{\sM}_{Dol}(C',D')\to \wt{\sM}_{Dol}(C',D').
    \end{equation}

\subsubsection{Refined picture in the vector bundle case}\label{subs: var res}$\;$

When $G=GL_r$, by taking into account the Fuchs relations on residues, we have a more refined picture.
Recall the notation  in diagram \eqref{diag: hdr fiber not conn}. 
The pre-image  via the morphism $\Fas$ of the 
tracelss part of $\Gamma(D', \Fc_{D'})^{\ov{0}}$ is the disjoint union of the affine hyperplanes
$\Gamma (D, \Fc_D)^{\delta}$,
with $\delta \in \bF_p$ ranging in the residual classes modulo $p$
(cf. \eqref{diag: hdr fiber not conn}).

We then have the following commutative diagram  with a Cartesian square
\begin{equation}
     \label{diag: hcdr bis}
         \xymatrix{
    \sM_{dR}(C,D,d) \ar@/_/[ddr]_-{\chi_D\circ \mathrm{res}_D} \ar@/^/[drr]^-{h_{dR}}\ar[dr]_(.65){h_{dR}^{\Fc A}}
    &
    &
    \\
    &
    \mathfrak{c}A^{-\overline{d}} \ar[r]_-{pr_1} \ar[d]^-{pr_2}
    &
    A(\omega_{C'}(D'))
    \ar[d]^-{ev_{D'}}
    \\
    &
\Gamma(D,\mathfrak{c}_D)^{-\overline{d}} \ar[r]_-{\Fas}
    &
    \Gamma(D',\mathfrak{c}_{D'})^{\ov{0}},
    }
 \end{equation}
where we name the fiber product $\Fc A^{-\ov{d}}$ the  variety of  residues for rank $r$  degree $d$ connections (it depends only on the residual class of $-d$ modulo $p$),
and where the  morphism  $h_{dR}^{\Fc A}=h_{dR}^{\Fc A}(d)$, which we call the de Rham-Hitchin-residue
morphism, is induced from the morphisms $h_{dR}$ and $\chi_D\circ \mathrm{res}_D$, in view of the square
being Cartesian.
Note that by \eqref{diag: hdr fiber not conn}, the morphism $\chi_D\circ \mathrm{res}_D$,
restricted to the degree $d$ component
$\sM_{dR}(C,D,d)$, surjects onto $\Gamma(D, \Fc_D)^{-\overline{d}}$.

 We have the de Rham-Hitchin-residue morphism on semistable  moduli spaces, which we also denote by 
\begin{equation}\label{eq: drres st}
h_{dR}^{\Fc A}  = h_{dR}^{\Fc A}(d): M_{dR}^{ss}(C,D,d)\to \Fc A^{-\overline{d}}.
\end{equation}
The morphism \eqref{eq: drres st}, being the first link leading to the proper morphism $h_{dR}$ (cf. \cite[Thm. 5.9]{langer2021moduli}) is proper.
 The morphism $h_{dR}^{\Fc A}(d)$ is surjective by \Cref{prop: surj of drhr}. The morphism $h_{dR}^{\Fc A}(pd)$ has geometrically connected and equidimensional fibers by \Cref{prop: surj and conn fib} and \Cref{cor: fiber dim of hdrc}. When $(pd,r)=1$, the morphism $h_{dR}^{\Fc A}$ is flat by \Cref{cor: hdr flat}.

\begin{remark}
    We have  $\Fc A = \coprod_{\delta \in \bF_p} \Fc A^{\delta}$. 
\end{remark}

The stack
$\widetilde{\mathscr M}_{Dol}(C',D')$ fits into the following base change Cartesian diagram 
\begin{equation}
    \label{diag: ca and wtmdol}
         \xymatrix{
\widetilde{\mathscr M}_{Dol}(C',D')
=  \coprod_{\delta \in\bF_p, d \in\bZ}
\widetilde{\mathscr M}_{Dol}(C',D',d)^{\delta}
\ar[r]^-{\xi}
\ar[d]_-{h^{\Fc A}_{Dol} =
\coprod_{\delta \in\bF_p,  d \in \bZ} h^{{\Fc A}}_{Dol} (d)^{\delta}} 
    &
    \mathscr{M}_{Dol}(C',D')\ar[d]^-{h_{Dol}}
    \\
    \Fc A =
 \coprod_{\delta \in\bF_p}
 \Fc A^{\delta} \ar[r]^-{ pr_1=\coprod_{\delta} pr_1^\delta}\ar[d]^-{pr_2}
    &
    A(\omega_{C'}(D')) \ar[d]^-{ev_{D'}}
    \\ 
\coprod_{\delta \in \bF_p} \Gamma (D, \Fc_D)^{\delta}
    \ar[r]^-{\Fas} & \Gamma(D',\Fc_{D'})^{\ov{0}},
    }
    \end{equation}
where all arrows are surjective, the projection morphism $pr_2$ preserves
the disjoint union decomposition labelled by $\delta$,
and the variety  $\Fc A$ is nonsingular
and a disjoint union of the  equidimensional affine spaces
$\Fc A^{\delta}$.
It is well-known that the Hitchin morphism is surjective in each degree. It follows that for every fixed degree $d$, and every residue class $\delta$, $\widetilde{\mathscr M}_{Dol}(C',D',d)^{\delta}$ surjects onto $\Fc A^{\delta}$.
The morphisms $pr_1^\delta$
are isomorphic, so that 
for $d$ fixed and varying $\delta$, there is no significant difference between
the 
$\widetilde{\mathscr M}_{Dol}(C',D',d)^{\delta}$,
as $\delta$ varies in $\bF_p$.

The Picard stack $\widetilde{\mathscr{P}}$ over $\mathfrak{c}A$ admits the decomposition
$\widetilde{\mathscr{P}}=
\coprod_{\delta,d} \widetilde{\mathscr{P}} (d)^\delta$
labelled by  $d\in \bZ$ and by $\delta \in \bF_p$.
The action \eqref{eq: wt P act on wt mdol} is compatible with the decomposition labelled by $\delta \in \bF_p$.

Let $M_{Dol}^{ss}(C',D',d)$ be the adequate moduli space of the semistable substack $\sM_{Dol}^{ss}(C',D',d)$. 
Let $\wt{M}_{Dol}^{ss}(C',D',d)$ be the fiber product 
\begin{equation}\label{eq: try1}
      \xymatrix{
    \widetilde{M}_{Dol}^{ss}(C',D',d) \ar[r]^-{\xi}\ar[d]_-{h^{\Fc A}_{Dol}}
    &
    M_{Dol}^{ss}(C',D',d)\ar[d]^-{h_{Dol}}
    \\
    \mathfrak{c}A \ar[r]^-{pr_1} 
    &
    A(\omega_{C'}(D')),
    }
\end{equation}
where, by a slight abuse of notation,
we have denoted the resulting  Hitchin residue morphism $h^{\Fc A}_{Dol}$ and the morphism $\xi$ above by the same notation as their stacky counterpart. 
 This Hitchin residue morphism $h^{\Fc A}_{Dol}$ is proper, surjective and with connected fibers, because so is the Hitchin morphism $h_{Dol}$.
 The l.h.s. column in \eqref{eq: try1} is compatible with the decomposition labelled by $\delta \in \bF_p$
(cf. \eqref{diag: ca and wtmdol}).

Since adequate moduli spaces are compatible with flat base change, we have that  the fiber product
$\widetilde{M}_{Dol}^{ss}(C',D',d)$ is also the adequate moduli space of 
\[
\widetilde{\sM}_{Dol}^{ss}(C',D',d):=\sM_{Dol}^{ss}(C',D',d)\times_{A(\omega_{C'}(D'))} \Fc A,
\]
compatibly with the decomposition labelled by $\delta \in \bF_p$.

\section{Log-$p$-Non-Abelian Hodge Theorem}\label{sec: true lognah}

\subsection{The pseudo-torsor $\sH$}\;

In this subsection, we first recall the construction of a stack $\sH$ over $A(\omega_{C'}(D'))$.
In the no pole case, the analogous stack is constructed in \cite[\S3.2]{chen-zhu} and used in \cite[\S5.3]{chzh17} and \cite[\S2.2]{herrero-zhang-semistable}.
A similar construction in the no pole case also appeared in the $p$-adic context in \cite[\S6.1]{heuer-xu2024padic}, where connections are replaced by $\mathrm{v}$-bundles.
The no pole case analogue of $\sH$ is a $\sP(\omega_{C'})$-torsor over $A(\omega_{C'})$.
In the log-case, the stack $\sH$ is constructed in \cite[\S5.4]{li2024tame}.
In the log-case, $\sH$ is not even a $\sP$-pseudo-torsor.
Our main contribution in this subsection is to show that $\sH$ actually sits over $\Fc A$ and is a pseudo-torsor under the action of $\wt{\sP}=\sP\times_{A(\omega_{C'}(D')}\Fc A$, see \Cref{lemma: unique res}.

\subsubsection{Lie-theoretic setup}\label{sssec: lie the set}$\;$

Let $I\subset \Fg\times G$ be the universal centralizer group scheme over $\Fg$, i.e. for each $x\in \Fg$, the fiber $I_x$ is the stabilizer of $x$ for  the $G$-conjugation-action on $\Fg$.
Recall the Chevalley quotient morphism $\chi:\Fg\to \Fc$. 
Let $\Fg^{reg}\subset \Fg$ be the regular locus. It is an open subscheme.
We have that $\Fg^{reg}$ is stable under the $G$-action, and that $\Fg^{reg}\git G=\Fc$ \cite[\S7.13]{jantzen2004nilpotent}.

By \cite[Lem. 2.1.1]{ngo-lemme-fondamental}, there is a canonical morphism of $\Fg$-group schemes $c: \chi^*J\to I$ which restricts to an isomorphism on $\Fg^{reg}$; the group scheme $J$ is the group scheme of regular centralizers.
Let $\mathrm{Lie}(I)\to \Fg$ be the Lie algebra of $I\to \Fg$ and let $\mathrm{Lie}(J)\to \Fc$ be the Lie algebra of $J\to \Fc$.
The differential  $dc: \chi^*\mathrm{Lie}(J)\to \mathrm{Lie}(I)$ of the morphism $c$ is a morphism of $p$-Lie algebras.

\subsubsection{The Chen-Zhu section $\tau$}\label{sssec: cz sec}$\;$

In \cite[\S2.3]{chen-zhu}, a section $\tau:\Fc\to \mathrm{Lie}(J)$ is constructed. 
It is obtained by descending the tautological section $x\mapsto x: \Fg^{reg}\to \mathrm{Lie}(I)|_{\Fg^{reg}}$ to $\Fc$.
By \cite[Lem. 2.2]{chen-zhu}, for each $x\in \Fg$, the differential $dc_x: \mathrm{Lie}(J_{\chi(x)})\to \mathrm{Lie}(I_x)$ sends $\tau(\chi(x))$ to $x$.

\subsubsection{$\bG_m$-Equivariancy}$\;$

The diagonal $\bG_m$-action on $\mathrm{Lie}(I)\subset \Fg\times\Fg$ induces a $\bG_m$-action on $\mathrm{Lie}(J)$ over $\Fc$.
The section $\tau$ is $\bG_m$-equivariant and so we can twist it by the $\bG_m$-torsor given by an invertible sheaf $L$ on $C$ and obtain a section $\tau_L: \Fc_L\to \mathrm{Lie}(J)_L$ of $C$-schemes. 
Recall that for $a: C\to \Fc_L$, we have $J_a:=a^*J_L$.
By \cite[Lem. 2.4]{herrero-zhang-semistable}, we have $a^*(\mathrm{Lie}(J))_L=\mathrm{Lie}(J_a)_L$ over $C$.

\subsubsection{$G$-equivariancy and a Lie-theoretic lemma}$\;$

Since $\Fc=\Fg\git G$, the $\Fg$-group scheme $\chi^*J$ is $G$-equivariant.
The conjugation action of $G$ on $G$ induces a $G$-action on $I\subset \Fg\times G$ such that both $I\to \Fg$ and $\chi^*J\to I$ are $G$-equivariant.
Therefore we have the descended morphism $c: \chi^*J/G\to I/G$ on $\Fg/G$, which enables us to define the $\sP$-action on $\sM_{Dol}$ as in \Cref{sssec: action of p on mdol}.
We record another consequence of the $G$-equivariancy, which is used in the proof of \Cref{thm: main thm general g}.
\begin{lemma}\label{lemma: g equiv}
Let $a\in \Fc$.
    Let $x,y\in \chi^{-1}(a)\subset \Fg$.
    Let $z\in \mathrm{Lie}(J_a)$.
    Then $dc_x(z), dc_y(z)\in \Fg$ have the same image under $\chi:\Fg\to\Fc$.
\end{lemma}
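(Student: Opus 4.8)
The plan is to promote the pointwise assertion to an identity of morphisms and then to exploit that $\Fc=\Fg\git G$ together with the $G$-equivariance of $c$. First I would assemble everything into a single map. Composing the differential $dc\colon \chi^*\mathrm{Lie}(J)\to\mathrm{Lie}(I)$ with the inclusion $\mathrm{Lie}(I)\subset\Fg\times\Fg$, the projection onto the second (fiber) factor, and finally $\chi\colon\Fg\to\Fc$, I obtain a morphism
\[
\Psi\colon \chi^*\mathrm{Lie}(J)=\Fg\times_{\Fc}\mathrm{Lie}(J)\longrightarrow \Fc,\qquad (x,z)\longmapsto \chi\big(dc_x(z)\big).
\]
The two quantities in the lemma are exactly $\Psi(x,z)=\chi(dc_x(z))$ and $\Psi(y,z)=\chi(dc_y(z))$, and the hypotheses $\chi(x)=\chi(y)=a$ and $z\in\mathrm{Lie}(J_a)$ say precisely that $(x,z)$ and $(y,z)$ are two points of $\chi^*\mathrm{Lie}(J)$ with the same image $z$ under the second projection $\Fg\times_{\Fc}\mathrm{Lie}(J)\to\mathrm{Lie}(J)$. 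Hence it suffices to show that $\Psi$ factors through this second projection.

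Next I would verify that $\Psi$ is $G$-invariant for the action $g\cdot(x,z)=(\mathrm{Ad}(g)x,\,z)$, which is trivial on the $\mathrm{Lie}(J)$-factor since that factor lives over $\Fc$ and $\chi$ is $\mathrm{Ad}$-invariant. The $G$-equivariance of $c\colon\chi^*J\to I$ recorded above differentiates to $dc_{\mathrm{Ad}(g)x}=\mathrm{Ad}(g)\circ dc_x$ as maps $\mathrm{Lie}(J_a)\to\Fg$; applying the $\mathrm{Ad}$-invariant $\chi$ gives $\chi(dc_{\mathrm{Ad}(g)x}(z))=\chi(dc_x(z))$, so $\Psi$ is constant on $G$-orbits.

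Finally I would identify the categorical quotient. The scheme $\chi^*\mathrm{Lie}(J)=\Fg\times_{\Fc}\mathrm{Lie}(J)$ is affine (a fiber product of affine schemes, $\mathrm{Lie}(J)$ being the total space of a vector bundle over the affine $\Fc$), so the $G$-invariant morphism $\Psi$ to the affine scheme $\Fc$ factors through the GIT quotient $(\chi^*\mathrm{Lie}(J))\git G$. Since $J$ is smooth over $\Fc$, the structure map $\mathrm{Lie}(J)\to\Fc$ is a vector bundle, hence flat, and formation of $G$-invariants, being a kernel, commutes with flat base change; therefore
\[
(\Fg\times_{\Fc}\mathrm{Lie}(J))\git G \;=\; (\Fg\git G)\times_{\Fc}\mathrm{Lie}(J)\;=\;\Fc\times_{\Fc}\mathrm{Lie}(J)\;=\;\mathrm{Lie}(J),
\]
and the quotient map is exactly the second projection $(x,z)\mapsto z$. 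Thus $\Psi$ factors through the second projection, which is what remained; evaluating at $(x,z)$ and $(y,z)$ yields $\chi(dc_x(z))=\chi(dc_y(z))$.

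I expect the only delicate point to be the commutation of the GIT quotient with the base change $\mathrm{Lie}(J)\to\Fc$ in positive characteristic, where reductive groups need not be linearly reductive; this is precisely what is rescued by the flatness of $\mathrm{Lie}(J)\to\Fc$ and the left-exactness of invariants. A more hands-on alternative would bypass GIT: $\Psi$ is constant on the dense regular orbit inside the fiber $\chi^{-1}(a)$, which is a single $G$-orbit by the description of regular elements valid in the characteristic of \Cref{context: G}, and a morphism from the irreducible fiber to the separated scheme $\Fc$ that is constant on a dense open subset is constant. I prefer the GIT argument, since it avoids having to invoke irreducibility and the orbit structure of the fibers of $\chi$.
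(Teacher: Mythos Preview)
Your main GIT argument is correct and takes a genuinely different route from the paper. The paper's proof is precisely the ``hands-on alternative'' you sketch at the end: it fixes $a$ and $z$, observes that $x\mapsto dc_x(z)$ is $G$-equivariant by the $G$-equivariance of $c$, composes with the $G$-invariant $\chi$, and then invokes the connectedness (indeed irreducibility) of $\chi^{-1}(a)$ from \cite[Thm.~7.13]{jantzen2004nilpotent} to conclude that the resulting $G$-invariant morphism $\chi^{-1}(a)\to\Fc$ is constant, the implicit step being that the regular orbit is dense.

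Your GIT argument trades the citation of a nontrivial geometric input (irreducibility of the fibers of the adjoint quotient, which requires some care in small characteristic) for the purely algebraic fact that invariants for a trivial action on one factor commute with flat base change; this is cleaner and characteristic-free, and globalizes the statement into a factorization of $\Psi$ rather than a fiber-by-fiber verification. The paper's approach is shorter to state and stays closer to the concrete picture, but leans on the structure theory of the fibers. Both are valid; your remark that the flat base change step is the only delicate point is apt, and your justification of it (invariants as a kernel, hence preserved by flat tensor) is correct since the $k[\Fc]$-module structure on $k[\Fg]$ is compatible with the coaction.
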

\begin{proof}
    The $G$-equivariance of $c: \chi^*J\to I$ entails that the assignment $x\mapsto dc_x(z)$ defines a $G$-equivariant morphism $\lambda_z:\chi^{-1}(a)\to \Fg$. 
    The morphism $\chi:\Fg\to \Fc$ is $G$-equivariant for the trivial $G$-action on $\Fc$.
    Since $\chi^{-1}(a)$ is connected \cite[Thm. 7.13]{jantzen2004nilpotent}, we have that the image of the composition $\chi^{-1}(a)\xrightarrow{\lambda_z}\Fg\xrightarrow{\chi}\Fc$ is just one point.
\end{proof}

\subsubsection{}

We record another Lie-theoretic lemma, to be used in the proof of \Cref{lemma: unique res}.
Let $x\in \Fg^{reg}$ be a regular element.
Let $I_x\subset G$ be the centralizer of $x$.
We have that $I_x$ is a smooth commutative subgroup of $G$ \cite[Cor. 3.3.6]{riche2017kostant}.
The Lie algebras $\mathrm{Lie}(I_x)\subset\Fg$ are $p$-Lie algebras with compatible $p$-operations $(-)^{[p]}$.
Let $AS: \Fg\to \Fg$ be the morphism given by $x\mapsto x^{[p]}-x$.
Let $\chi:\Fg\to \Fc=\Fg\git G$ be the quotient morphism.

Already for $G=GL_2$ and regular nilpotent $x$, the morphism $\chi|_{\mathrm{Lie}(I_x)}: \mathrm{Lie}(I_x)\to \Fc$ is not injective.
In contrast, let $\tau\in \mathrm{Lie}(I_x)$ be an element, and
let $(AS|_{\mathrm{Lie}(I_x)})^{-1}(\tau)\subset \mathrm{Lie}(I_x)$ be the preimage of $\tau$ under the morphism $AS|_{\mathrm{Lie}(I_x)}: \mathrm{Lie}(I_x)\to \mathrm{Lie}(I_x)$, we have

\begin{lemma}
\label{lemma: lie lemma}
    The morphism $\chi|_{AS^{-1}(\tau)}: (AS|_{\mathrm{Lie}(I_x)})^{-1}(\tau)\to \Fc$ is injective.
\end{lemma}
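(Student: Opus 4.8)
The plan is to exploit that $I_x$ is commutative, so that $\mathrm{Lie}(I_x)$ is an abelian restricted Lie algebra and the $p$-operation is additive on it (the Jacobson correction terms are iterated brackets, which vanish). Consequently $AS|_{\mathrm{Lie}(I_x)} = (-)^{[p]} - \mathrm{id}$ is an additive endomorphism whose differential is $-\mathrm{id}$; hence it is finite \'etale, and the fiber $(AS|_{\mathrm{Lie}(I_x)})^{-1}(\tau)$ is a torsor under the finite group $K := \ker(AS|_{\mathrm{Lie}(I_x)}) = \{z \in \mathrm{Lie}(I_x) : z^{[p]} = z\}$. Every $z \in K$ is toral, hence semisimple. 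Thus it suffices to show that for $y$ in the fiber and $z \in K$ the equality $\chi(y) = \chi(y - z)$ forces $z = 0$.

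First I would pass to semisimple parts. Writing the restricted Jordan decomposition $y = y_s + y_n$ with $[y_s, y_n] = 0$, and using that $y$ and $z$ commute inside the abelian $\mathrm{Lie}(I_x)$, the decomposition is additive, so $(y-z)_s = y_s - z$ and $(y-z)_n = y_n$. Since $\chi$ depends only on the semisimple part, $\chi(y) = \chi(y-z)$ becomes $\chi(y_s) = \chi(y_s - z)$, reducing the claim to two commuting toral elements $s := y_s$ and $s - z$ with the same image in $\Fc$. Next I would bring in the regularity of $x$: with $\Fm := \mathrm{Lie}(Z_G(x_s))$, the algebra $\mathrm{Lie}(I_x)$ is the centralizer of the regular nilpotent $x_n$ in $\Fm$, whose only semisimple elements lie in the center $\Fz(\Fm)$; hence $s, z \in \Fz(\Fm)$, and in particular $s$ and $s-z$ lie in a common maximal torus, so we may take $s, z \in \Ft$.

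The crux, and the step I expect to be the main obstacle, is to conclude $z = 0$ from $\chi(s) = \chi(s-z)$ on $\Ft$. Here $\chi|_{\Ft}$ is the Chevalley quotient $\Ft \to \Ft \git W = \Fc$, so $\chi(s) = \chi(s-z)$ means $s - z = w(s)$ for some $w \in W$. I would then combine this with the $W$-equivariance of $AS_{\Ft}$ recorded in the proof of \Cref{lemma: no conn fib}: applying $AS_{\Ft}$ to $s - z = w(s)$ and using $z^{[p]} = z$ together with the additivity of $AS_{\Ft}$ on $\Ft$ shows first that $w$ fixes $AS(s)$, and then the finite-\'etale, $\bF_p$-coset structure of the fibers of $AS_{\Ft}$ (via the embedding $(G,T) \hookrightarrow (\GL_N, T_N)$ and the reduction to the $\Fgl_1$-factors, exactly as in loc. cit.) pins $s$ and $s-z$ to the same point, forcing $z = 0$. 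The delicate point is controlling the interplay between the $W$-action and the Artin--Schreier fibers; this is precisely where the regularity of $x$ (which confines $s$ and $z$ to $\Fz(\Fm)$, so that the relevant symmetry is the one governing the regular locus) is indispensable, and it is the single-centralizer analogue of the disconnectedness of the fibers of $AS_{\Fc}$ established in \Cref{lemma: no conn fib}.
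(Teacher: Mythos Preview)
Your reduction in steps 1--6 parallels the paper's approach: you correctly observe that $AS$ is an additive \'etale endomorphism of the abelian restricted Lie algebra $\mathrm{Lie}(I_x)$, that its kernel $K$ consists of toral (hence semisimple) elements, and that the problem reduces to two semisimple elements $s = y_s$ and $s - z$ lying in the same $W$-orbit with $z \in K$. The confinement of $s, z$ to $\Fz(\Fm)$ via the regularity of $x$ is also correct.

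The gap is in your final step. Your appeal to \Cref{lemma: no conn fib} does not deliver what you need: that lemma shows the fibers of $AS_{\Fc}$ are disconnected, i.e., that \emph{some} elements of an $AS_{\Ft}$-fiber lie in distinct $W$-orbits. You need the opposite implication---that two elements of a single $AS_{\Ft}$-fiber lying in the \emph{same} $W$-orbit must coincide---and this is false in general, even after confining to $\Fz(\Fm)$. For $G = \GL_4$, take $x$ regular with $x_s = \mathrm{diag}(\lambda,\lambda,\mu,\mu)$, $\lambda \neq \mu$; then $\Fz(\Fm) = \{(a,a,b,b)\}$, and $w = (13)(24) \in W$ sends $s = (a,a,b,b)$ to $(b,b,a,a) \in \Fz(\Fm)$. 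If $a - b \in \bF_p^\times$ then $z = s - w(s) = (a-b,a-b,b-a,b-a)$ lies in $K \cap \Fz(\Fm)$ and is nonzero, yet $\chi(s) = \chi(s-z)$ and $AS(s) = AS(s-z)$. So the regularity of $x$ alone cannot close the argument.

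What is missing is the regularity of $\tau$ itself, which the paper's proof invokes explicitly (and which holds in the only application, \Cref{lemma: unique res}, where $\tau$ comes from the Chen--Zhu section and is identified with the regular element $\kappa(\alpha_D(x))$). The paper does \emph{not} decouple the nilpotent and semisimple parts at the final stage: after obtaining $\sigma \in W_{\tau_s}$ with $\sigma \cdot A_s = B_s$, it writes $\sigma$ as a product of reflections $s_{\alpha_i}$ fixing $\tau_s$, so that each $\Fg_{\alpha_i} \subset C_{\Fg}(\tau_s)$. Regularity of $\tau$ forces $\tau_n$ to be regular nilpotent in $C_{\Fg}(\tau_s)$, hence to have a nonzero component in each $\Fg_{\pm\alpha_i}$; tracing back through $AS$ on the nilpotent part, so must $A_n$. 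But $A_n \in C_{\Fg}(A_s)$, and if $A_s \neq B_s$ some $s_{\alpha_j}$ moves $A_s$, whence $\Fg_{\alpha_j} \not\subset C_{\Fg}(A_s)$---a contradiction. In the $\GL_4$ example above one computes $\tau_s$ to be scalar while $\tau_n$ is block-diagonal, hence $\tau$ is not regular in $\Fgl_4$; the counterexample is excluded precisely by this hypothesis.
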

\begin{proof}
    Let $A,B\in (AS|_{\mathrm{Lie}(I_x)})^{-1}(\tau)\subset \mathrm{Lie}(I_x)$ with $\chi(A)=\chi(B)$.  We are done if we can show that $A=B$.

    Since $I_x$ is commutative and $k$ is perfect, there is a unique product decomposition of group schemes $I_x=(I_x)_u\times (I_x)_s$, where $(I_x)_u$ is unipotent and $(I_x)_s$ is of multiplicative type \cite[Thm. 16.13]{milne2017algebraic}.
    Therefore, we have a corresponding product decomposition of Lie algebras 
    \begin{equation}
    \label{eq: lie algebra decompose}
        \mathrm{Lie}(I_x)=\mathrm{Lie}(I_x)_n\oplus \mathrm{Lie}(I_x)_s,
    \end{equation} 
    corresponding to the nilpotent and semisimple parts, which coincides with the Jordan decomposition of each element in $\mathrm{Lie}(I_x)$, \cite[top of p.328]{milne2017algebraic}.
    Consider the Jordan decompositions $A=A_s+A_n$, $B=B_s+B_n$, $\tau=\tau_s+\tau_n$, where $(-)_s$ (resp. $(-)_n$) denotes the semisimple (resp. nilpotent) part.
    Since $A,B\in (AS|_{\mathrm{Lie}(I_x)})^{-1}(\tau)\subset \mathrm{Lie}(I_x)$, we have that $A^{[p]}-A=B^{[p]}-B=\tau$.
    By \eqref{eq: lie algebra decompose}, the Jordan decomposition in $\mathrm{Lie}(I_x)$ respects addition.
    By \cite[Ex. 4.4.21]{springer1998linear}, the Jordan decomposition also respects the $p$-operation.
    Therefore, we have the identities:
    \begin{equation}
    \label{eq: four eqs}
        A_s^{[p]}-A_s\stackrel{\mathrm{(i)}}{=}B_s^{[p]}-B_s\stackrel{\mathrm{(ii)}}{=}\tau_s, \quad
        A_n^{[p]}-A_n\stackrel{\mathrm{(iii)}}{=}B_n^{[p]}-B_n\stackrel{\mathrm{(iv)}}{=}\tau_n.
    \end{equation}

Since $(I_x)_u$ is unipotent, it is isomorphic to a subgroup of the group of upper triangular matrices with size $r$ with 1 on the diagonal.
Therefore, the $p$-Lie algebra $\mathrm{Lie}(I_x)_n$ is isomorphic to a sub-$p$-Lie algebra of $\mathfrak{n}_r$, the nilpotent upper triangular matrices \cite[Thm. 14.5]{milne2017algebraic}.
The $p$-operation on $\mathfrak{n}_r$ raises a matrix to its $p$-th power \cite[Ex. 4.4.10]{springer1998linear}.
A matrix calculation shows that the morphism of $k$-varieties $AS_{\mathfrak{n}_r}: \mathfrak{n}_r\to \mathfrak{n}_r$, given by $x\mapsto x^{[p]}-x$,
is injective.
Therefore the Artin-Schreier map on $\mathrm{Lie}(I_x)_n$ is also injective.
 \eqref{eq: four eqs}.(iii) then entails that $A_n=B_n$.

It remains to show that $A_s=B_s$.
Since $I_x$ is smooth, the group $(I_x)_s$ is a torus \cite[Cor. 16.15]{milne2017algebraic}.
Since $k$ is algebraically closed, the group $(I_x)_s$ is conjugated to a subgroup of $T$, the maximal torus fixed at the beginning \cite[Thm. 17.10]{milne2017algebraic}.
Without loss of generality, we can assume that $(I_x)_s$ is a subtorus of $T$, hence $A_s, B_s\in \Ft$.
By \cite[Prop. 2.11]{jantzen2004nilpotent}, we have that $\chi(A)=\chi(A_s)$ and $\chi(B)=\chi(B_s)$.
Since $\chi(A)=\chi(B)$ by assumption, we have that $\chi(A_s)=\chi(B_s)$.
Since $\Fc=\Ft\git W$ by assumption on $p$ and $G$, there is an element $\sigma\in W$ such that 
\begin{equation}
\label{eq: sigma a b}
    \sigma\cdot A_s=B_s.
\end{equation}

To derive a contradiction, assume that $A_s\neq B_s$. Thus $\sigma\neq 1$.

By \cite[Prop. 4.4.9]{springer1998linear}, the $W$-action on $\Ft$ is compatible with the $p$-operation.
Therefore, \eqref{eq: four eqs}.(i,ii) and \eqref{eq: sigma a b} entail that $\sigma\cdot \tau_s=\sigma\cdot (A_s^{[p]}-A_s)=B_s^{[p]}-B_s=\tau_s$.
By \cite[Ch. V, \S5, Ex. 8]{bourbaki-lie}, the stabilizer $W_{\tau_s}$ is generated by the reflections contained in it.
Therefore, there is a reduced decomposition of $\sigma$ into simple reflections $\sigma=s_{\alpha_1}...s_{\alpha_n}$ such that each $\alpha_i$ is a simple root and that $s_{\alpha_i}$ fixes $\tau_s$.

Let $C_{\Fg}(\tau_s)\subset \Fg$ be the centralizer of $\tau_s$ in $\Fg$. 
Let $\Fg=\Ft\oplus \bigoplus_{\alpha\in\Phi}\Fg_{\alpha}$ be the root space decomposition.
For any $x_{\alpha}\in \Fg_{\alpha}$, we have that $[\tau_s,x_{\alpha}]=d\alpha(\tau_s)x_{\alpha}$, where $d\alpha:\Ft\to \Fgl_1$ is the differential of the root morphism $\alpha: T\to \bG_m$.
Therefore, we have that $C_{\Fg}(\tau_s)=\Ft\oplus \bigoplus_{\alpha\in \Phi, d\alpha(\tau_s)=0}\Fg_{\alpha}$.
Since $s_{\alpha_i}(\tau_s)=\tau_s$, we have that $d\alpha_i(\tau_s)=0$ by \cite[Ch. VIII, \S2.2, Lem. 1.(1)]{bourbaki2004lie}, which gives the equality of self-maps  $s_{\alpha_i}=\mathrm{Id}-d\alpha_i(-)H_{\alpha_i}$ on $\Ft$, where $H_{\alpha_i}$ is a uniquely determined nonzero semisimple element in the $\Fsl_2$ associated with $\alpha_i$. 
Therefore, each $\Fg_{\alpha_i}$ is contained in $C_{\Fg}(\tau_s)$.
Since $\tau\in \Fg^{reg}$, we have that $\tau_n=A_n^{[p]}-A_n$ is nilpotent regular in $C_{\Fg}(\tau_s)$ \cite[Prop. 0.4]{kostant1963lie}.
It then follows from \cite[\S6.7, (1)]{jantzen2004nilpotent},  that for each $i=1,...,n$, $\tau_n$ has nonzero components in $\Fg_{\alpha_i}$ or $\Fg_{-\alpha_i}$ (it can't be both, since $\tau_n$ is nilpotent).
Without loss of generality, we can assume that $\tau_n$ has nonzero components in each $\Fg_{\alpha_i}$.
Using the formulas in \cite[p.279, Rmk. (3)]{mcninch2002abelian} and \eqref{eq: four eqs}.(iii,iv), we conclude that $A_n$ must also have nonzero components in each $\Fg_{\alpha_i}, i=1,...,n$.

On the other hand, since $\sigma\cdot A_s=B_s\neq A_s$, there is $j\in\{1,...,n\}$ such that $s_{\alpha_j}\cdot A_n\neq A_n$.
Applying \cite[Ch. VIII, \S2.2, Lem. 1.(1)]{bourbaki2004lie} again, we have that $d\alpha_j(A_s)\neq 0$, thus $\Fg_{\alpha_j}$ is not in $C_{\Fg}(A_s)$.
Therefore, $A_n$, being in $C_{\Fg}(A_s)$, cannot have a nonzero component in $\Fg_{\alpha_j}$, and so we obtain a contradiction with the conclusion of the last paragraph above.
\end{proof}

\subsubsection{The $J$-Hitchin system} $\;$

As defined in \cite[Def. 2.5]{herrero-zhang-semistable}, the $J$-Hitchin system is the following diagram of $A(L)$-stacks:
\begin{equation}
\label{eq: j-hitchin}
    \xymatrix{
    \sM_{Dol}(C,J,L) \ar[rr]^-{h^J}&& A(J,L) \ar@/^/[rr]^-{p} && A(L) \ar@/^/[ll]^-{\tau}.
} 
\end{equation}
Namely, for any $k$-scheme $S$,  over each $a\in A(L)(S)$, the fiber of \eqref{eq: j-hitchin} is the diagram of groupoids
\begin{equation}
    \xymatrix{
    \Gamma(C_S, (\mathrm{Lie}(J_a)/J_a)_L)\ar[rr]^-{\chi_{L,a}}&& \Gamma(C_S, \mathrm{Lie}(J_a)_L) \ar@/^/[rr]^-{p_a} && S \ar@/^/[ll]^-{\tau_L(a)},
} 
\end{equation}
where $\chi_{L,a}$ is induced by the quotient morphism $\mathrm{Lie}(J_a)/J_a\to \mathrm{Lie}(J_a)\git J_a=\mathrm{Lie}(J_a)$ (the equality holds because $J_a$ is commutative); $p_a$ is the structural morphism; $\tau_L(a)$ is induced by the section $\tau_L:\Fc_L\to \mathrm{Lie}(J)_L$.

\subsubsection{The $J$-de Rham-Hitchin morphism}\label{sssec: constr of h}$\;$

We freely use the formalism of log-connections on group schemes and torsors developed in \Cref{section: two views on residues}.
Let $a': C'\to \Fc_{\omega_{C'}(D')}$ be a section.
Let $a^p: C\to \Fc_{\omega_{C}^p(pD)}$ be the Frobenius pullback of $a'$.
The group scheme $J_{a^p}$ is isomorphic to $Fr^*J_{a'}$, and thus has a canonical connection $\nabla^{can}$ \cite[Ex. A.5]{chen-zhu}.

Let $\sM_{dR}(C,D,J_{a^p})$ be the groupoid of log-$J_{a^p}$-connections on $C$.
Taking the $p$-curvatures, we obtain a morphism of groupoids $\Psi_{a^p}:\sM_{dR}(C, D, J_{a^p})\to \Gamma(C, (\mathrm{Lie}(J_{a^p})/J_{a^p})_{\omega_{C}^p(pD)})$.
The composition $\chi_{\omega_{C}^p(pD),a^p}\circ \Psi_{a^p}: \sM_{dR}(C,D,J_{a^p})\to \Gamma(C, \mathrm{Lie}(J_{a^p})_{\omega_{C}^p(pD)})$ factors as the composition of the injective Frobenius pullback morphism $Fr^*: \Gamma(C', \mathrm{Lie}(J_{a'})_{\omega_{C'}(D')})\hookrightarrow \Gamma(C, \mathrm{Lie}(J_{a^p})_{\omega_{C}^p(pD)})$
and a morphism $h_{dR}^{J_{a^p}}: \sM_{dR}(C,D,J_{a^p})\to \Gamma(C', \mathrm{Lie}(J_{a'})_{\omega_{C'}(D')})$.
Indeed, the proof of the factorization in the no pole case in \cite[Lem. 2.9]{herrero-zhang-semistable} carries over  verbatim to the log-case.

The groupoid $\sM_{dR}(C,D,J_{a^p})$ gives rise to an $A(\omega_{C'}(D'))$-stack $\sM_{dR}(C,D,J^p)$.
The set $\Gamma(C',\mathrm{Lie}(J_{a'})_{\omega_{C'}(D')})$ gives rise to the $A(\omega_{C'}(D'))$-functor $A(J,\omega_{C'}(D'))$ as in \eqref{eq: j-hitchin}.
Finally, the morphism $h_{dR}^{J_{a^p}}$ gives rise to a morphism of $A(\omega_{C'}(D'))$-stacks
\begin{equation}\label{eq: hjdr}
h_{dR}^J: \sM_{dR}(C,D,J^p)\to A(J,\omega_{C'}(D')).
\end{equation}

\begin{defn}\label{defn: def H}
    We define the $A(\omega_{C'}(D'))$-stack $\sH$ so that the following diagram of $A(\omega_{C'}(D'))$-stacks is Cartesian:
    \begin{equation}\label{diag: def H}
        \xymatrix{
        \sH\ar[rr]^-{\pi_1}\ar[d]_-{\pi_2} && \sM_{dR}(C,D,J^p)\ar[d]^-{h_{dR}^J}\\
        A(\omega_{C'}(D'))\ar[rr]_-{\tau_{\omega_{C'}(D')}} && A(J,\omega_{C'}(D')).
        }
    \end{equation}
\end{defn}

\subsubsection{Description of objects in $\sH$}
\label{sssec: description of H}$\;$

Let $R$ be a $k$-scheme.
By construction, given $a'\in A(\omega_{C'}(D')(R))$, an object in the groupoid $\sH_{a'}$ is a pair $(E,\nabla)$ where $E$ is a $J_{a^p}$-torsor on $C_R$, $\nabla$ is a log-connection on $E$ such that the $p$-curvature $\Psi(\nabla)\in \Gamma(C_R, \mathrm{Lie}(J_{a^p})_{\omega_C^p(pD)})$ is the Frobenius pullback $Fr^*$ of 
the section $\tau_{\omega_{C'}(D')}(a')\in \Gamma(C'_R,\mathrm{Lie}(J_{a'})_{\omega_{C'}(D')})$.

Roughly speaking, the next lemma says that the action in question is free.
\begin{lemma}\label{lemma: faithful action}
    There is a $\sP$-action on $\sH$, $act:\sP\times_{A(\omega_{C'}(D'))}\sH\to \sH$.
    Furthermore,  for each object $X$ in $\sH$, the induced morphism $act(-,X):\sP\to \sH$ is fully faithful.
\end{lemma}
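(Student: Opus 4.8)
The plan is to construct the action by the same contracted-product recipe already used for the $\sP$-action on $\sM_{dR}(C,D)$, and then to read off full faithfulness from the freeness of that construction, which ultimately rests on Cartier descent for the commutative group scheme $J$.

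\textbf{Construction of the action.} Fix $a'\in A(\omega_{C'}(D'))(R)$ and write $a^p:=Fr^*a'$. Given a $J_{a'}$-torsor $F$ on $C'_R$ (an object of $\sP$ over $a'$) and an object $(E,\nabla)$ of $\sH$ over $a'$ as described in \Cref{sssec: description of H}, I would set
\[
act\bigl(F,(E,\nabla)\bigr):=\bigl(E\times^{J_{a^p}}Fr^*F,\ \nabla\otimes\nabla_F^{can}\bigr),
\]
where $Fr^*F$ is the $J_{a^p}$-torsor obtained by Frobenius pullback, carrying its canonical connection $\nabla_F^{can}$, and $E\times^{J_{a^p}}Fr^*F$ is the Baer sum formed via the commutative group law of $J_{a^p}$. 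The only point to verify is that the output again lies in $\sH$. Since $\Psi(\nabla_F^{can})=0$ by \cite[Ex. A.5]{chen-zhu}, and the $p$-curvature is additive under Baer sum in the formalism of \Cref{section: two views on residues}, one gets $\Psi(\nabla\otimes\nabla_F^{can})=\Psi(\nabla)+\Psi(\nabla_F^{can})=Fr^*\tau_{\omega_{C'}(D')}(a')$, so the $p$-curvature condition defining $\sH$ in \Cref{defn: def H} is preserved. The unit and associativity constraints are inherited from those of the Baer sum, giving the action $act:\sP\times_{A(\omega_{C'}(D'))}\sH\to\sH$.

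\textbf{Reduction of full faithfulness to a freeness statement.} Full faithfulness of $act(-,X)$ amounts to $act(-,X)$ inducing isomorphisms of $\uIsom$-sheaves, which I would check over each test scheme $T\to R$, with induced point $a'_T$ and Frobenius pullback $a^p_T$. Letting $\cK$ denote the Picard groupoid of $J_{a^p}$-torsors on $C_T$ equipped with a connection of vanishing $p$-curvature, I would factor $act(-,X)$ as
\[
\sP_{a'_T}\ \xrightarrow{\ Fr^*\ }\ \cK\ \xrightarrow{\ X\times^{J_{a^p}}(-)\ }\ \sH_{a'_T}.
\]
The first functor is an equivalence by Cartier descent for $J$: this is exactly the identification used in the no-pole construction of $\sH$ as a $\sP$-torsor in \cite[\S3.2]{chen-zhu}, which carries over verbatim to the log case through \Cref{section: two views on residues}. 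In particular, on automorphisms $Fr^*$ identifies $H^0(C'_T,J_{a'})$ with the horizontal sections of $J_{a^p}$, and it is essentially surjective onto $\cK$.

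\textbf{Freeness and conclusion.} It then remains to see that $X\times^{J_{a^p}}(-):\cK\to\sH_{a'_T}$ is fully faithful. By additivity of the $p$-curvature, $\cK$ acts on $\sH_{a'_T}$ by Baer sum; and given two objects $(E_1,\nabla_1),(E_2,\nabla_2)$ of $\sH_{a'_T}$, their Baer difference is a $J_{a^p}$-torsor with connection of $p$-curvature $\Psi(\nabla_2)-\Psi(\nabla_1)=0$, hence an object of $\cK$, unique up to unique isomorphism. Thus $\sH_{a'_T}$ is a pseudo-torsor under $\cK$, and it is nonempty because it contains $X$, so it is a $\cK$-torsor; translation $X\times^{J_{a^p}}(-)$ is therefore an equivalence, in particular fully faithful. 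Composing the two fully faithful functors yields the lemma. The one genuinely load-bearing input, and where I expect the real work to sit, is the Cartier-descent equivalence $Fr^*:\sP_{a'_T}\xrightarrow{\sim}\cK$ together with the $p$-curvature additivity underlying the freeness, both in the logarithmic setting; granting the formalism of \Cref{section: two views on residues}, these reduce to the corresponding no-pole statements of \cite{chen-zhu}.
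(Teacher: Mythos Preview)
Your construction of the action matches the paper's and is correct.

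For full faithfulness, however, your argument overclaims in a way that is precisely the log-specific obstruction this paper is organized around. You assert both that $Fr^*:\sP_{a'_T}\to\cK$ is an equivalence and that $\sH_{a'_T}$ is a $\cK$-torsor. These cannot both hold. If $\cK$ consists of genuine (pole-free) connections with vanishing $p$-curvature, then $Fr^*$ is an equivalence by Cartier descent, but the Baer difference of two objects of $\sH_{a'_T}$ is a \emph{log}-connection whose residue is the difference of the two residues, and this need not vanish (cf.\ \Cref{rmk: preserves residue} and the later \Cref{lemma: unique res}); so the difference does not lie in this $\cK$, and $\sH_{a'_T}$ is not a $\cK$-torsor. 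If instead $\cK$ allows log-connections with vanishing $p$-curvature, then the torsor claim holds, but $Fr^*$ is only fully faithful, not essentially surjective (its image is the subgroupoid with vanishing residue). Either way, your conclusion that $act(-,X)$ is an \emph{equivalence} is false in the log case: were it true, $\sH$ would already be a $\sP$-torsor over $A(\omega_{C'}(D'))$, obviating the base $\Fc A$ and contradicting \Cref{cor: not surj}.

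The paper sidesteps this by not invoking any torsor structure at this stage. It works directly on $\Hom$-sets: tensoring by the dual log-connection $X^\vee$ of \Cref{lemma: dual is dual} inside the ambient Picard groupoid $\sM_{dR}(C,D,J_{a^p})$ gives
\[
\Hom_{\sH_{a'}}(X\cdot F,\,X\cdot F')=\Hom_{\sM_{dR}(J_{a^p})}\bigl((Fr^*F,\nabla^{can}),\,(Fr^*F',\nabla^{can})\bigr)=\Hom_{\sP_{a'}}(F,F'),
\]
the last step by Cartier descent applied to the pole-free canonical connections. Your factorization is salvageable under the second reading of $\cK$ (accept that $Fr^*$ is only fully faithful, and note that translation by $X$ in the Picard groupoid $\sM_{dR}(C,D,J_{a^p})$ is an autoequivalence, hence its restriction $\cK\to\sH_{a'_T}$ is fully faithful), but the torsor claim as written must be dropped.
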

\begin{proof}
        It suffices to work over a point $a'\in A(\omega_{C'}(D'))(R)$.
    Let $\underline{E}:=(E,\nabla_E)$ be an object in $\sH_{a'}$.
    An object in $\sP_{a'}$ is a $J_{a'}$-torsor $F$ on $C'_R$.
    The pullback $Fr^*F$ is a $J_{a^p}$-torsor with a canonical connection $\nabla^{can}_{Fr^*F}$.
    The action of $\sP$ on $\sH$ sends the pair $((E,\nabla_E),F)$ to the log-connection $\underline{E}\cdot F:=(E\times^{J_{a^p}} Fr^*F,\nabla_E\otimes \nabla^{can}_{Fr^*F})$.
    Since $\Psi(\nabla^{can}_{Fr^*F})=0$, \eqref{eq: pcuvr twi prod} entails that the log-connection $\underline{E}\cdot F$ is still over $a'$, therefore the action is well-defined.

    Let $F,F'$ be  $J_{a'}$-torsors.
    Let $\underline{E}^{\vee}$ be the dual log-connection $(E^{\vee},\nabla_{E^{\vee}})$, see \Cref{sssec: dual log conn}.
    Then we have the following isomorphisms of sets:
    \begin{align*}
        & Hom_{\sH_{a'}}(\underline{E}\cdot F,\;\; \underline{E}\cdot F') \\
        =& Hom_{\sM_{dR}(J_{a^p})}(\underline{E}^{\vee}\cdot\underline{E}\cdot F, \;\;\underline{E}^{\vee}\cdot \underline{E}\cdot F')\\
        =& Hom_{\sM_{dR}(J_{a^p})}\Big((Fr^*F,\nabla^{can}_{Fr^*F}),\;\; (Fr^*F',\nabla^{can}_{Fr^*F'})\Big)\\
        =& Hom_{\sP_{a'}}(F,\;\; F'),
    \end{align*}
        where the second equality follows from \Cref{lemma: dual is dual} and the last equality follows from Cartier Descent for torsors \cite[Ex. A.5]{chen-zhu}.
We have shown the fully faithful claim.

\end{proof}

\begin{remark}\label{rmk: preserves residue}
Note that the $\sP$-action on $\sH$ preserves the residues.
    Indeed, $\nabla_{Fr^*F}^{can}$ has no poles, so the product formula \eqref{eq: res twi prod} entails that $\mathrm{res}_D(\nabla_{E}\otimes \nabla_{Fr^*F}^{can})=\mathrm{res}_D(\nabla_E)\in \Gamma(D, \mathrm{Lie}(J_{a^p})|_D)$.
\end{remark}

\subsubsection{The morphism $\pi^{\Fc A}$}$\;$

Recall the morphisms $\pi_2$ in \eqref{diag: def H} and $pr_1$ in \eqref{diag: base cA}.
\begin{prop}\label{prop: def pi ca}
    The structural morphism $\pi_2:\sH\to A(\omega_{C'}(D'))$ factors through
    \begin{equation}\label{eq: def pi ca}
        \sH\xrightarrow{\pi^{\Fc A}}\Fc A\xrightarrow{pr_1} A(\omega_{C'}(D')).
    \end{equation}
\end{prop}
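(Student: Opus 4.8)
The plan is to use the universal property of the Cartesian square \eqref{diag: base cA} defining $\Fc A$. As $\pi_2\colon \sH\to A(\omega_{C'}(D'))$ is already given, it suffices to construct a ``residual characteristic polynomial'' morphism $\mathrm{res}^{\Fc}\colon \sH\to \Gamma(D,\Fc_D)$ and to check the single compatibility $\Fas\circ \mathrm{res}^{\Fc} = ev_{D'}\circ \pi_2$; the pair $(\pi_2,\mathrm{res}^{\Fc})$ then defines the sought morphism $\pi^{\Fc A}$ with $pr_1\circ \pi^{\Fc A}=\pi_2$.

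To construct $\mathrm{res}^{\Fc}$: an $R$-point of $\sH$ over $a'\in A(\omega_{C'}(D'))(R)$ is a log-$J_{a^p}$-connection $(E,\nabla)$ with $p$-curvature prescribed as in \Cref{defn: def H}, where $a^p:=Fr_C^*a'$. Its residue $\rho:=\mathrm{res}_D(\nabla)$ is a section of $\mathrm{Lie}(J_{a^p})|_D$. Over each puncture $q\in D$, set $b:=a^p(q)\in\Fc$ and send $\rho(q)\in\mathrm{Lie}(J_b)$ to $\chi\bigl(dc_x(\rho(q))\bigr)\in\Fc$, where $x\in\chi^{-1}(b)$ is arbitrary and $dc_x\colon\mathrm{Lie}(J_b)\to\mathrm{Lie}(I_x)\subset\Fg$ is the differential of the canonical morphism $c\colon\chi^*J\to I$ of \cite[Lem.\ 2.1.1]{ngo-lemme-fondamental}, a morphism of $p$-Lie algebras. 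By \Cref{lemma: g equiv} this value is independent of $x$, so the assignment is well defined and, being functorial in $R$, glues to a morphism $\mathrm{res}^{\Fc}\colon\sH\to\Gamma(D,\Fc_D)$.

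It remains to verify the compatibility, which by \Cref{defn: fas} is equivalent to $AS_{\Fc_D}\circ \mathrm{res}^{\Fc} = Fr_D^*\circ ev_{D'}\circ\pi_2$, i.e.\ to the pointwise identity $AS_{\Fc}(\mathrm{res}^{\Fc}(E,\nabla)(q))=a^p(q)$ for each puncture $q$ (using $ev_D(a^p)=Fr_D^*ev_{D'}(a')$ from \eqref{eq:cdfrob}). The key point is that the prescription of \Cref{defn: def H} forces $\mathrm{res}_D(\Psi(\nabla)) = \tau\bigl(ev_D(a^p)\bigr)$ in $\mathrm{Lie}(J_{a^p})|_D$ after the $p$-th iterate residue trivialization. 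Combining this with the $J$-analogue of the residue formula \eqref{eq:respcurv}, valid by the formalism of \Cref{section: two views on residues}, gives $\rho^{[p]}-\rho = \tau(ev_D(a^p))$. Since $dc_x$ is a morphism of $p$-Lie algebras, applying it at $q$ yields
\[
\bigl(dc_x(\rho(q))\bigr)^{[p]}-dc_x(\rho(q)) = dc_x\bigl(\rho(q)^{[p]}-\rho(q)\bigr)=dc_x(\tau(b))=x,
\]
the final equality being the Chen--Zhu identity $dc_x(\tau(\chi(x)))=x$ of \cite[Lem.\ 2.2]{chen-zhu}. As $\mathrm{Ad}(g)$ is a restricted Lie algebra automorphism, the map $y\mapsto y^{[p]}-y$ on $\Fg$ is $G$-equivariant and descends to the morphism $AS_{\Fc}\colon\Fc\to\Fc$ (\Cref{lemma: no conn fib}), whence $\chi\circ(y\mapsto y^{[p]}-y)=AS_{\Fc}\circ\chi$; applying $\chi$ to the display gives $AS_{\Fc}\bigl(\mathrm{res}^{\Fc}(E,\nabla)(q)\bigr)=\chi(x)=a^p(q)$, as desired.

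The part I expect to be delicate is bookkeeping rather than conceptual: pinning down the identification $\mathrm{res}_D(\Psi(\nabla))=\tau(ev_D(a^p))$ requires matching the canonical trivializations of $\omega_C(D)|_D$ and $\omega_C^p(pD)|_D$ through the $p$-th iterate residue isomorphism together with the square \eqref{eq:cdfrob}, and one must confirm that the pointwise recipe for $\mathrm{res}^{\Fc}$ is genuinely functorial in families. Once these twists are aligned, the $p$-Lie compatibility of $dc$ and the Chen--Zhu identity carry out the rest automatically.
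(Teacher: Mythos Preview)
Your proof is correct and follows essentially the same approach as the paper: construct a residue-to-$\Fc$ morphism by pushing $\mathrm{res}_D(\nabla)\in\mathrm{Lie}(J_{a^p})|_D$ into $\Fg$ via $dc$ and then applying $\chi$, and verify the Artin--Schreier compatibility using that $dc$ is a $p$-Lie map together with the Chen--Zhu identity $dc_x(\tau(\chi(x)))=x$. The only difference is that where you take an arbitrary $x\in\chi^{-1}(b)$ and invoke \Cref{lemma: g equiv} for well-definedness, the paper simply picks $x=\kappa(b)$ via the Kostant section $\kappa\colon\Fc\to\Fg$; this choice immediately resolves the functoriality-in-families concern you flagged, since $\kappa$ is a morphism of schemes and so the composite $\mathrm{Lie}(J)\to\Fg\to\Fc$ over $\Fc$ is visibly algebraic.
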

\begin{proof} 
    Let $(E,\nabla)$ be an object in $\sH$ over $a'$ in $A(\omega_{C'}(D'))$.
    Its residue $\mathrm{res}_D(\nabla)$ is a section in $\Gamma(D, \mathrm{Lie}(J_{a^p})|_D)$.
    Under the canonical trivialization $\omega_C^p(pD)|_D=\cO_D$, the section $a^p|_D\in \Gamma(D, \Fc_{\omega_C^p(pD)})$ is identified with a section $\alpha_D\in \Gamma(D, \Fc_D)$.
    For each point $x\in D$, we have $\alpha_D(x)\in \Fc$.
    Consequently, $\mathrm{Lie}(J_{a^p})|_D$ is identified with the $D$-Lie algebra $\mathrm{Lie}(J_{\alpha_D})$ whose fiber over $x\in D$ is $\mathrm{Lie}(J_{\alpha_D(x)})$.
    Let $\kappa: \Fc\to \Fg$ be the Kostant section.
    The morphism $\mathrm{Lie}(J_{\alpha_D(x)})\xrightarrow{dc_{\kappa(\alpha_D(x))}} \mathrm{Lie}(I_{\kappa(\alpha_D(x))})\hookrightarrow\Fg$ as in \Cref{sssec: lie the set} assembles to a morphism of $D$-Lie algebras $dc_{\kappa(\alpha_D)}: \mathrm{Lie}(J_{\alpha_D})\to \Fg_D$.
    Let the morphism $\Xi_{a'}$ be the composition of following morphisms:
    \begin{equation}\label{eq: Xi a'}
       \Xi_{a'}: \sH_{a'}\xrightarrow{\mathrm{res}_D}\Gamma(D,\mathrm{Lie}(J_{a^p})|_D)=\Gamma(D, \mathrm{Lie}(J_{\alpha_D}))\xrightarrow{dc_{\kappa(\alpha_D)}} \Gamma(D,\Fg_D) \xrightarrow{\chi_D} \Gamma(D, \Fc_D).
    \end{equation}
    Letting $a'$ vary, we obtain a morphism $\Xi: \sH\to \Gamma(D,\Fc_D)$.
    To obtain the desired factorization \eqref{eq: def pi ca}, we need to show the commutativity of the following square:
    \begin{equation}
    \xymatrix{
        \sH\ar[d]_-{\Xi} \ar[r]^-{\pi_2} & A(\omega_{C'}(D')) \ar[d]^-{ev_{D'}} \\
        \Gamma(D,\Fc_D) \ar[r]_-{\Fas} & \Gamma(D',\Fc_{D'}).
        }
    \end{equation}
    
    For each $x\in D$, the morphism $dc_{\kappa(\alpha_D(x))}: \mathrm{Lie}(J_{\alpha_D(x)})\to \Fg$ is respects the $p$-Lie algebra structures.
    Furthermore, the morphism $AS_{\Fg}:\Fg\to \Fg,$ given by setting $ X\mapsto X^{[p]}-X$ descends to the morphism $AS_{\Fc}:\Fc\to \Fc$.
     By combining the discussion above with \Cref{sssec: description of H}, we have that the image of $\Fas\circ \Xi_{a'}$ is the image of $\tau_{\omega_{C'}(D')}(a')\in \Gamma(C', \mathrm{Lie}(J_{a'})_{\omega_{C'}(D')})$ under the composition of the following morphisms:
    \begin{equation}
        \Gamma(C', \mathrm{Lie}(J_{a'})_{\omega_{C'}(D')})\xrightarrow{f_1} \Gamma(D', \mathrm{Lie}(J_{a'})|_{D'})\xrightarrow{f_2} \Gamma(D,\Fg_D)\xrightarrow{\chi_D} \Gamma(D,\Fc_D),
    \end{equation}
    where $f_1$ is defined as the restriction to $D'$, followed by the canonical residue isomorphism $\omega_{C'}(D')|_{D'}=\cO_{D'}$, and $f_2$ is defined in a similar way as $dc_{\kappa(\alpha_D)}$ above.
    The desired commutativity then follows from \cite[Lem. 2.2]{chen-zhu}, which entails that the image of $\tau_{\omega_{C'}(D')}(a')$ under $f_2\circ f_1$ is given by the Kostant image of $a'|_{D'}\in \Gamma(D',\Fc_{D'})$.
\end{proof}

\subsubsection{$\pi^{\Fc A}$ and residues} $\;$

From now on we usually view $\sH$ as a $\Fc A$-stack via the morphism $\pi^{\Fc A}$ \eqref{eq: def pi ca}.
Let $(E,\nabla)$ be an object in $\sH_{a'}$, notation as in \Cref{sssec: description of H}.
The residue $\mathrm{res}_D(\nabla)$ is a section in $\Gamma(D,\mathrm{Lie}(J_{a^p})_D)$, see \Cref{sssec: residues}.

The following lemma is crucial in the upcoming proof of \Cref{prop: H pstorsor}. Recall \Cref{defn: base cA}.

\begin{lemma}
\label{lemma: unique res}
    For each point $(a',s)\in \Fc A=A(\omega_{C'}(D'))\times_{\Gamma(D',\Fc_{D'})}\Gamma(D,\Fc_D)$,
the log-connections in the fiber $\sH_{a',s}$ have the same residue in $\Gamma(D, \mathrm{Lie}(J_{a^p})_D)$.
\end{lemma}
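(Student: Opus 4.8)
The plan is to reduce the claim, point by point over the reduced divisor $D$, to the Lie-theoretic injectivity recorded in \Cref{lemma: lie lemma}. Since $D=\coprod_x\{x\}$ is a disjoint union of reduced points, a section of $\mathrm{Lie}(J_{a^p})|_D$ is nothing but a tuple of elements, one in each fiber $\mathrm{Lie}(J_{\alpha_D(x)})$, so it suffices to fix a point $x\in D$ and show that the component at $x$ of the residue $R:=\mathrm{res}_D(\nabla)$ of any object $(E,\nabla)$ in $\sH_{a',s}$ is already determined by $(a',s)$. Writing $a:=\alpha_D(x)\in\Fc$ for the value at $x$ of the section $\alpha_D\in\Gamma(D,\Fc_D)$ attached to $a'$ as in the proof of \Cref{prop: def pi ca}, and $\kappa(a)\in\Fg^{reg}$ for its Kostant representative, I would work through the differential $dc_{\kappa(a)}\colon \mathrm{Lie}(J_a)\xrightarrow{\sim}\mathrm{Lie}(I_{\kappa(a)})$, which is an isomorphism of $p$-Lie algebras because $\kappa(a)$ is regular and $c\colon\chi^*J\to I$ restricts to an isomorphism over $\Fg^{reg}$ (\Cref{sssec: lie the set}).

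Next I would extract two constraints on $dc_{\kappa(a)}(R(x))\in\mathrm{Lie}(I_{\kappa(a)})$. The first comes from the $p$-curvature: by the description of objects of $\sH$ in \Cref{sssec: description of H}, the residue of $\Psi(\nabla)$ is the restriction to $D$ of $Fr^*\tau_{\omega_{C'}(D')}(a')$, which under the above identifications is the Chen--Zhu section value $\tau(a)\in\mathrm{Lie}(J_a)$; applying the $J_{a^p}$-torsor analogue of the residue formula \eqref{eq:respcurv}, valid in the commutative $p$-Lie algebra $\mathrm{Lie}(J_{a^p})|_D$ by the formalism of \Cref{section: two views on residues}, gives $R^{[p]}-R=\tau(a)$ at $x$. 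Pushing this through the $p$-Lie homomorphism $dc_{\kappa(a)}$ and using that it sends $\tau(a)$ to $\kappa(a)$ (\Cref{sssec: cz sec}), I obtain $\bigl(dc_{\kappa(a)}(R(x))\bigr)^{[p]}-dc_{\kappa(a)}(R(x))=\kappa(a)$, i.e. $dc_{\kappa(a)}(R(x))$ lies in the Artin--Schreier fiber $(AS|_{\mathrm{Lie}(I_{\kappa(a)})})^{-1}(\kappa(a))$. The second constraint is that $(E,\nabla)$ lies over the fixed point $(a',s)$: since $pr_2\circ\pi^{\Fc A}=\Xi$ equals $\chi_D\circ dc_{\kappa(\alpha_D)}\circ\mathrm{res}_D$ by \eqref{eq: Xi a'}, the second coordinate $s$ prescribes $\chi\bigl(dc_{\kappa(a)}(R(x))\bigr)=s(x)$.

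Finally I would invoke \Cref{lemma: lie lemma}, applied to the regular element $\kappa(a)$ and to the element (playing the role of $\tau$ there) equal to $\kappa(a)\in\mathrm{Lie}(I_{\kappa(a)})$: it asserts that $\chi$ restricts to an injection on $(AS|_{\mathrm{Lie}(I_{\kappa(a)})})^{-1}(\kappa(a))$. The two constraints place $dc_{\kappa(a)}(R(x))$ in this fiber with prescribed image $s(x)$ under $\chi$, so $dc_{\kappa(a)}(R(x))$, hence $R(x)$, is uniquely determined by $(a',s)$; as $x\in D$ was arbitrary, the whole residue is determined, which is the claim. I expect the only delicate point to be the bookkeeping that matches the data at hand to the hypotheses of \Cref{lemma: lie lemma}---namely that $\kappa(a)$ is regular, so that $dc_{\kappa(a)}$ is a $p$-Lie isomorphism and $\kappa(a)\in\mathrm{Lie}(I_{\kappa(a)})$, and that the fixed element of the Artin--Schreier fiber is precisely $\kappa(a)=dc_{\kappa(a)}(\tau(a))$; once these identifications are in place the conclusion is immediate from the already-proved injectivity.
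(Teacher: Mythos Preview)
Your proposal is correct and follows essentially the same argument as the paper: reduce to a single point of $D$, use the isomorphism $dc_{\kappa(a)}\colon \mathrm{Lie}(J_a)\xrightarrow{\sim}\mathrm{Lie}(I_{\kappa(a)})$ to translate the two constraints (the Artin--Schreier condition coming from the prescribed $p$-curvature $\tau$, and the $\chi$-value $s$ coming from the definition of $\pi^{\Fc A}$ via \eqref{eq: Xi a'}), and conclude by \Cref{lemma: lie lemma}. Your write-up is in fact somewhat more explicit than the paper's about the identification $dc_{\kappa(a)}(\tau(a))=\kappa(a)$, but the substance is identical.
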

\begin{proof}
We can assume that $D=x$ is a point.
By the construction of $\pi^{\Fc A}$ in the proof of \Cref{prop: def pi ca}, the objects in $\sH_{s,a'}$ are $J_{a^p}$-log-connections $(E,\nabla)$ whose residue $\mathrm{res}_D(\nabla)$ satisfies $\chi_D\circ dc_{\kappa(\alpha_D(x))}(\mathrm{res}_D(\nabla))=s$.
Since $\kappa(\alpha_D(x))\in \Fg^{reg}$, the morphism $dc_{\kappa(\alpha_D(x))}: \mathrm{Lie}(J_{\alpha_D(x)})\to \mathrm{Lie}(I_{\kappa(\alpha_D(x))})$ is an isomorphism.
We are reduced to showing that there is a unique $A\in \mathrm{Lie}(J_{\alpha_D(x)})=\mathrm{Lie}(I_{\kappa(\alpha_D(x))})$ such that $A^{[p]}-A=(Fr^*\tau_{\omega_{C'}(D')}(a'))|_{D}\in \Gamma(D, \mathrm{Lie}(J_{\alpha_D(x)}))$ and that $\chi_D(A)=s\in \Gamma(D,\Fc_D)$.
We can now conclude by the Lie-theoretic \Cref{lemma: lie lemma}.
\end{proof}

\subsubsection{Pseudo-torsorness}$\;$

The $\sP$-action on $\sH$ over $A(\omega_{C'}(D'))$ is not transitive since objects in $\sH_{a'}$ may have different residues, e.g. in the $GL_{r>p}$-case (see \Cref{lemma: compare cashen}), while the action of $\sP$ preserves residues (see \Cref{rmk: preserves residue}).
On the other hand, by \eqref{eq: Xi a'}, if two objects  in $\sH_{a'}$ have the same residue, then they are in the same fiber over $\Fc A$.
Therefore, the $\sP$-action on $\sH$ over $A(\omega_{C'}(D'))$ gives rise to a $\wt{\sP}(=\sP\times_{A(\omega_{C'}(D'))}\Fc A)$-action on $\sH$ over $\Fc A$.
\begin{prop}
    \label{prop: H pstorsor}
    The stack $\sH$ is a $\wt{\sP}$-pseudo-torsor over $\Fc A$. 
    In particular, $\sH$ is smooth over $\Fc A$.
\end{prop}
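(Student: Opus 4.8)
The plan is to verify the two pseudo-torsor axioms (i) and (iii) of \cite[\S A.5]{chzh17} for the $\wt{\sP}$-action on $\sH$ over $\Fc A$. Axiom (i), that the action is indeed an action, has essentially been checked already: the $\sP$-action on $\sH$ over $A(\omega_{C'}(D'))$ is constructed in \Cref{lemma: faithful action}, and we observed just above that it descends to a $\wt{\sP}$-action over $\Fc A$ because (by \Cref{rmk: preserves residue}) the action preserves residues and hence preserves the fibers of $\pi^{\Fc A}$. So the real content is axiom (iii): that the natural morphism
\[
\wt{\sP}\times_{\Fc A}\sH\longrightarrow \sH\times_{\Fc A}\sH,\qquad (F,\underline{E})\mapsto (\underline{E}\cdot F,\underline{E}),
\]
is an isomorphism. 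First I would reduce this to a statement fiberwise over a geometric point $(a',s)\in \Fc A$, since all constructions are compatible with base change.

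The injectivity (monomorphism) half of axiom (iii) is exactly the fully faithful conclusion of \Cref{lemma: faithful action}: given a fixed object $\underline{E}$, the map $F\mapsto \underline{E}\cdot F$ induces a bijection on Hom-sets, so two torsors $F,F'$ producing isomorphic twists must be isomorphic, and the isomorphism is unique. Thus the map above is a monomorphism of stacks. The essential point, and the main obstacle, is surjectivity: given two objects $\underline{E}=(E,\nabla_E)$ and $\underline{E}'=(E',\nabla_{E'})$ in the same fiber $\sH_{a',s}$, I must produce a $J_{a'}$-torsor $F$ on $C'_R$ with $\underline{E}\cdot F\cong \underline{E}'$. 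Consider the ``difference'' $\underline{E}'\cdot\underline{E}^{\vee}$, which by the dual-connection formalism (\Cref{sssec: dual log conn}, \Cref{lemma: dual is dual}) is a log-$J_{a^p}$-connection $(\cF,\nabla_\cF)$ on $C_R$. Its $p$-curvature vanishes because both factors have $p$-curvature equal to $Fr^*\tau_{\omega_{C'}(D')}(a')$ and the $p$-curvature is additive under the tensor product of $J$-connections (the relevant additivity is \eqref{eq: pcuvr twi prod}). By Cartier descent for $J_{a'}$-torsors \cite[Ex. A.5]{chen-zhu}, a log-$J_{a^p}$-connection with vanishing $p$-curvature descends to a $J_{a'}$-torsor $F$ on $C'_R$ with $Fr^*F\cong(\cF,\nabla_\cF)$, giving $\underline{E}\cdot F\cong\underline{E}'$ as required.

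The subtle step here is that Cartier descent applies only after one knows that $(\cF,\nabla_\cF)$ is genuinely a log-connection whose $p$-curvature vanishes \emph{as a global section}, and this is precisely where the passage from $A(\omega_{C'}(D'))$ to $\Fc A$ is essential: over $A(\omega_{C'}(D'))$ the two residues of $\underline{E}$ and $\underline{E}'$ need not agree, so the difference would carry a nonzero residue and could not descend without poles. The whole point of \Cref{lemma: unique res} is that once we fix the refined datum $s\in\Gamma(D,\Fc_D)$, i.e.\ once we work in the fiber over $\Fc A$ rather than over $A(\omega_{C'}(D'))$, the residues $\mathrm{res}_D(\nabla_E)$ and $\mathrm{res}_D(\nabla_{E'})$ necessarily coincide in $\Gamma(D,\mathrm{Lie}(J_{a^p})_D)$. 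Hence the residue of the difference $(\cF,\nabla_\cF)$ is zero by the residue product formula \eqref{eq: res twi prod}, so $(\cF,\nabla_\cF)$ is an honest (pole-free) $J_{a'}$-connection on $C$ and Cartier descent is legitimately available. I expect this residue-matching input from \Cref{lemma: unique res} to be the crux of the argument; everything else is a formal consequence of the torsor formalism together with Cartier descent.

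Finally, for the ``In particular'' clause, smoothness of $\sH$ over $\Fc A$ follows from pseudo-torsorness together with smoothness of $\wt{\sP}$ over $\Fc A$: locally where $\sH$ is nonempty, the action map furnishes an isomorphism with (a base change of) $\wt{\sP}$, and smoothness is local and preserved under the base change $\sP\mapsto\wt{\sP}=\sP\times_{A(\omega_{C'}(D'))}\Fc A$; since $\sP=\sP(\omega_{C'}(D'))$ is smooth over its Hitchin base, the claim follows.
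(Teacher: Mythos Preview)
Your proposal is correct and follows essentially the same approach as the paper: form the ``difference'' log-$J_{a^p}$-connection $\underline{E}^{\vee}\cdot\underline{E}'$, use \Cref{lemma: unique res} together with the product and dual formulas \eqref{eq: pcuvr twi prod}, \eqref{eq: res twi prod}, \eqref{eq: pcurv dual}, \eqref{eq: res dual} to see that both its $p$-curvature and residue vanish, then apply Cartier descent to obtain the required $J_{a'}$-torsor. One small phrasing point: when you say ``both factors have $p$-curvature equal to $Fr^*\tau_{\omega_{C'}(D')}(a')$'', the factors in the twisted product are $\underline{E}'$ and $\underline{E}^{\vee}$, so you should invoke \eqref{eq: pcurv dual} (and the vanishing of $\Psi(\nabla^{can}_{J_{a^p}})$) to get that the dual carries $p$-curvature $-\Psi(\nabla_E)$ before adding.
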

\begin{proof}
The smoothness of $\sH$ follows from the smoothness of $\sP$ over $A(\omega_{C'}(D'))$.

    Let $(a',s)\in \Fc A=A(\omega_{C'}(D'))\times_{\Gamma(D',\Fc_{D'})}\Gamma(D,\Fc_D)$ be a point such that the fiber $\sH_{a',s}$ contains an object $(E,\nabla)$.
    It suffices to show that the morphism $act(-,(E,\nabla)):\sP_{a'}\times \sH_{a',s}\to \sH_{a',s}$ is an equivalence.
    \Cref{lemma: faithful action} shows that the morphism $act(-,(E,\nabla))$ is fully faithful.
    It remains to show that it is also essentially surjective.
    
    Let $(F,\nabla_F)$ be another object in $\sH_{a',s}$.
    By \Cref{lemma: unique res}, we have that $\mathrm{res}_D(\nabla_E)=\mathrm{res}_D(\nabla_F)\in \Gamma(D,\mathrm{Lie}(J_{a^p})|_D)$.
    We also have that $\Psi(\nabla_E)=\Psi(\nabla_F)$ by \Cref{sssec: description of H}. 
    Let us form the dual $J_{a^p}$-torsor $E^{\vee}$ of $E$, which comes with the dual log-connection $\nabla_{E^{\vee}}$ by \Cref{lemma: dual is dual}.
Consider the log-$J_{a^p}$ connection $(X,\nabla_X):=(E^{\vee}\times^{J_{a^p}} F, \nabla_{E^{\vee}}\otimes\nabla_F)$.
Then \eqref{eq: res twi prod}, \eqref{eq: pcuvr twi prod}, \eqref{eq: res dual}, and \eqref{eq: pcurv dual} together entail that $\Psi(\nabla_X)=0$ and that $\mathrm{res}_D(\nabla_X)=0$.
Cartier descent \cite[Thm. 5.1]{katz1970} entails that $(X,\nabla_X)$  is isomorphic to 
$(Fr^*M,\nabla^{can})$
for a unique up to an isomorphism $J_{a'}$-torsor $M$ on $C'$.
We then have that $act(M, (E,\nabla))\cong (F,\nabla_F)$.
\end{proof}

Since $\sH$ is smooth over $\Fc A$, its image is open in $\Fc A$.
\begin{defn}\label{def: def him}
    We denote by $\Fc A_{im}$ the open subvariety of $\Fc A$ given by the image of $\pi^{\Fc A}:\sH\to \Fc A$.
\end{defn}
In general, $\sH$ does not surject onto $\Fc A$, see \Cref{cor: not surj}.

\subsubsection{Regular log-connections}$\;$

An $L$-twisted Higgs bundle $(P,\phi)$ on $C$ is regular if it defines a section $C\to (\Fg^{reg}/G)_L$.
The open substack of regular $L$-twisted Higgs bundles is denoted by $\sM_{Dol}^{reg}(C,L)$.
\begin{defn}
    A log-connection $(E,\nabla)$ is said to be regular if the $p$-curvature $(E,\Psi(\nabla))$ is a regular $\omega_C^p(pD)$-twisted Higgs bundle.
    The open substack of regular log-connections on $C$ is denoted by $\sM_{dR}^{reg}(C,D)$.
\end{defn}

If $\deg(D)$ is even, then a choice of a square root $\omega_C(D)^{1/2}$ of $\omega_C(D)$ gives rise to a Kostant section $\kappa: A(\omega_{C'}(D'))\to \sM_{Dol}^{reg}(C',\omega_{C'}(D'))$, see e.g. \cite[\S2.3]{chzh17}.

The no pole analog of the following \Cref{lemma: constr m} is observed in \cite[ top of p.1720,]{chen-zhu}.
The argument in \textit{loc.cit.} does not apply in the log-case. 
Below we provide a direct proof in the log-case, valid also in the no pole case.

\begin{prop}
\label{lemma: constr m}
Assume that one of the following two conditions are satisfied:
\begin{enumerate}
    \item[(i)] $G$ is a product of general linear groups.
    \item[(ii)] $\deg(D)$ is even.
\end{enumerate}
     Then there is a $\sP$-equivariant isomorphism of $\Fc A$-stacks:
    \begin{equation}
        \mathbf{m}: \sH\xrightarrow{\sim} \sM_{dR}^{reg}(C,D).
    \end{equation}
If (i) is satisfied, then the construction of $\mathbf{m}$ is canonical.    
If (ii) is satisfied, then the construction of $\mathbf{m}$ depends on  the choice of a square root $\omega_C(D)^{1/2}$ of $\omega_C(D)$.
\end{prop}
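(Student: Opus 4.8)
The plan is to build $\mathbf{m}$ as a twisting functor against a Kostant reference object, and then to verify fiberwise over $\Fc A$ that this functor lands in the regular locus, is $\sP$-equivariant, preserves the residue data, and is an equivalence. Under hypothesis (i) I would use the canonical Kostant section for products of general linear groups, and under (ii) I would fix a square root $\omega_C(D)^{1/2}$ and use the resulting Kostant section $\kappa\colon A(\omega_{C'}(D'))\to \sM_{Dol}^{reg}(C',\omega_{C'}(D'))$; the two cases then run in parallel, the only difference being the canonicity of the reference. For a point $a'\in A(\omega_{C'}(D'))(R)$ write $(P^\kappa_{a'},\phi^\kappa_{a'})$ for the Kostant Higgs bundle on $C'_R$, so that its Frobenius pullback $(Fr^*P^\kappa_{a'},Fr^*\phi^\kappa_{a'})$ is a regular $\omega_C^p(pD)$-twisted Higgs bundle on $C_R$ carrying the canonical (Cartier) connection $\nabla^{can}$ of vanishing $p$-curvature and automorphism group scheme $Fr^*J_{a'}=J_{a^p}$. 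Given an object $(E,\nabla)\in\sH_{a'}$, i.e. a $J_{a^p}$-torsor $E$ with log-connection $\nabla$ satisfying $\Psi(\nabla)=Fr^*\tau_{\omega_{C'}(D')}(a')$, I would set
\[
\mathbf{m}(E,\nabla):=\Big(Fr^*P^\kappa_{a'}\times^{J_{a^p}}E,\ \nabla^{can}\otimes\nabla\Big),
\]
using the twisting formalism for log-connections on torsors from \Cref{section: two views on residues}.

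The first step is to check that $\mathbf{m}(E,\nabla)$ is a regular log-connection over the correct point of $\Fc A$. By the $p$-curvature product formula \eqref{eq: pcuvr twi prod} and $\Psi(\nabla^{can})=0$, one gets $\Psi(\nabla^{can}\otimes\nabla)=dc\big(Fr^*\tau_{\omega_{C'}(D')}(a')\big)=Fr^*\phi^\kappa_{a'}$, where the last equality is the defining property $dc_x(\tau(\chi(x)))=x$ of the Chen--Zhu section (\Cref{sssec: cz sec}); since $\phi^\kappa_{a'}$ is regular, this shows $\mathbf{m}(E,\nabla)\in\sM_{dR}^{reg}(C,D)$ with de Rham--Hitchin image $a'$. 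Because $\nabla^{can}$ has no poles, the residue product formula \eqref{eq: res twi prod} gives $\mathrm{res}_D(\nabla^{can}\otimes\nabla)=dc_{\kappa(\alpha_D)}(\mathrm{res}_D(\nabla))$, whose image under $\chi_D$ is exactly the map $\Xi_{a'}$ of \eqref{eq: Xi a'} defining $\pi^{\Fc A}$; hence $h_{dR}^{\Fc A}\circ\mathbf{m}=\pi^{\Fc A}$ and $\mathbf{m}$ is a morphism of $\Fc A$-stacks. The $\sP$-equivariance is then the associativity of the twisting operation: twisting $(E,\nabla)$ by a $J_{a'}$-torsor $F$ and applying $\mathbf{m}$ yields the same object as twisting $\mathbf{m}(E,\nabla)$ by $F$, compatibly with the action of \Cref{lemma: faithful action} on $\sH$ and the action \eqref{eq: p action mdr a} on $\sM_{dR}(C,D)$.

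It remains to prove that $\mathbf{m}$ is an isomorphism, which I would do fiberwise over $\Fc A$ by showing it is fully faithful and essentially surjective. Full faithfulness parallels \Cref{lemma: faithful action}: since the reference Higgs bundle has automorphism group scheme $J_{a^p}$, a morphism between the two twisted objects amounts to a morphism of the twisting $J_{a^p}$-torsors intertwining the connections, i.e. a morphism in $\sH$. For essential surjectivity, start from $(V,\nabla_V)\in\sM_{dR}^{reg}(C,D)$ over $(a',s)\in\Fc A$. Its $p$-curvature makes $(V,\Psi(\nabla_V))$ a regular $\omega_C^p(pD)$-twisted Higgs bundle with the same Hitchin image $Fr^*a'$ as the reference; since regular twisted Higgs bundles with fixed Hitchin image form a torsor under $J_{a^p}$ (the content of the Kostant embedding \eqref{eq: kostant embed} identifying the regular locus with a $\sP$-torsor, via the regular-centralizer theory), there is a $J_{a^p}$-torsor $E$ and an isomorphism $V\cong Fr^*P^\kappa_{a'}\times^{J_{a^p}}E$ matching the Higgs fields. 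Transporting $\nabla_V$ and comparing it with $\nabla^{can}$ in the torsor-connection formalism of \Cref{section: two views on residues} produces a log-connection $\nabla$ on $E$; the product formula together with the injectivity of $dc$ on the regular locus forces $\Psi(\nabla)=Fr^*\tau_{\omega_{C'}(D')}(a')$, so $(E,\nabla)\in\sH_{a'}$, and the residue computation above places it in the prescribed fiber $\sH_{a',s}$ with $\mathbf{m}(E,\nabla)\cong(V,\nabla_V)$.

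The main obstacle I expect is the essential-surjectivity step, and specifically the passage from the connection $\nabla_V$ on the associated $G$-bundle $V$ to a log-connection $\nabla$ on the torsor $E$: this is not a mere difference of connections on a vector bundle, but requires the log-connection-on-$J$-torsors formalism of \Cref{section: two views on residues} to make sense of comparing $\nabla_V$ with $\nabla^{can}$, to guarantee that the comparison is a genuine log-connection on $E$ in families over an arbitrary base $R$, and to control its residue and its $p$-curvature simultaneously through the product formulas \eqref{eq: res twi prod} and \eqref{eq: pcuvr twi prod}. The regular-centralizer input—that the regular locus is a $\sP$-torsor—is standard, so the real work is verifying that the connection-level twisting is an equivalence compatibly with all the residue and $p$-curvature bookkeeping, uniformly in the two cases (i) and (ii).
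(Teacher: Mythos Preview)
Your proposal is correct and follows the same approach as the paper: twist the Frobenius pullback of the Kostant (case (ii)) or structure-sheaf (case (i)) Higgs bundle against the given $J_{a^p}$-log-connection, verify regularity and $\Fc A$-compatibility via \cite[Lem.~2.2]{chen-zhu} and the product formulas, and invert using that $\sM_{Dol}^{reg}(\omega_C^p(pD))$ is a trivial $\sP(\omega_C^p(pD))$-torsor (the paper phrases the inverse as an explicit construction rather than as full faithfulness plus essential surjectivity, but the content is the same). One small correction: since $Fr^*P^\kappa_{a'}$ is a $G$-bundle acted on by $J_{a^p}$ through $c$ rather than itself a $J_{a^p}$-torsor, the relevant product formulas are \eqref{eq: p cj} and \eqref{eq: res cj} rather than \eqref{eq: pcuvr twi prod} and \eqref{eq: res twi prod}, and the paper applies them after passing to an \'etale cover of $C_R$ trivializing $E$, $X$, and $\omega_C(D)$.
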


\begin{proof} 
It suffices to work over a point $(a',s)\in \Fc A(R)= (A(\omega_{C'}(D'))\times_{\Gamma(D',\Fc_{D'})}\Gamma(D,\Fc_D))(R)$, with $R$ an affine scheme.
Let $(E,\nabla_E)$ be an object in the fiber $\sH_{a'}$,  see \Cref{sssec: description of H}.

Suppose that $\deg(D)$ is even.
    Let $(X, \phi)$ be the Higgs bundle in the fiber $\sM_{Dol}(C',D')_{a'}$ defined by the Kostant section $\kappa(a')$.
    Recall that we have the morphisms of $C'_R$-group schemes $c_{X,\phi}: J_{a'}\to Aut(X,\phi)$.
    Applying Frobenius pull back and forgetting $\phi$, we obtain the morphism of $C_R$-group schemes $Fr^*c_{X,\phi}: J_{a^p}\to Aut(Fr^*X)$.
    Let $\nabla^{can}_{Fr^*X}$ be the canonical connection on $Fr^*X$.
    The morphism $\mathbf{m}$ sends the pair $(E,\nabla_E)$ to the log-connection 
    \begin{equation}
    \label{eq: ep and nabla}
        (E_X,\nabla_{E_X}):=(E\times^{J_{a^p}}_{Fr^*c_{X,\phi}} Fr^*X,\nabla_E\otimes \nabla^{can}_{Fr^*X}).
    \end{equation}
    
We now check that $(E_X,\nabla_{E_X})$ is regular and sits over $(a',s)$, which 
can be done \'etale locally over $C_R$.
Fix an \'etale cover $U$ of $C_R$ and trivializations of $E$, $X$ and $\omega_C(D)$ over $U$.
They induce an isomorphism $E_X\cong G_U$ and an identification of $Fr^*\phi$ with a section $U\to \Fg_U$.
The $p$-curvature $\Psi(\nabla_E)$ is identified with a section $U\to \mathrm{Lie}(J_{a^p})_U$.
The morphism $Fr^*c_{X,\phi}$ is identified with a morphism $c_{Fr^*\phi}: (J_{a^p})_U\to G_U$.
Then the product formula \eqref{eq: p cj} entails that $\Psi(\nabla_{E_X})=dc_{Fr^*\phi}(\Psi(\nabla_E)): U\to \Fg_U$.
Since $\Psi(\nabla_E)=\tau(a^p)$, \cite[Lem. 2.2]{chen-zhu} entails that $\Psi(\nabla_{E_X})=Fr^*\phi$, which is regular with Hitchin image $a^p$.
Similarly, the product formula \eqref{eq: res cj} entails that $\mathrm{res}_D(\nabla_{E_X})=dc_{Fr^*\phi}(\mathrm{res}_D(\nabla_E))\in \Gamma(D_R, \Fg_{D_R})$.
Since $\phi$ is defined by the Kostant section, the construction of the morphism $\pi^{\Fc A}$, especially \eqref{eq: Xi a'}, entails that $\chi_D(\mathrm{res}_D(\nabla_{E_X}))=s\in \Gamma(D_R,\Fc_{D_R})$.

    The $\sP$-equivariance follows from the construction of $\mathbf{m}$.

    To show that $\mathbf{m}$ is an isomorphism, we construct an inverse.
    Let $(F,\nabla_F)$ be an object in $\sM_{dR}^{reg}(C,D)_{a'}$.
    Then $(F,\Psi(\nabla_F))$ is a regular $\omega_C^p(pD)$-twisted Higgs bundle in $\sM_{Dol}^{reg}(\omega_C^p(pD))_{a^p}$.
    The Higgs bundle $(Fr^*X,Fr^*\phi)$ is the image of $a^p\in A(\omega_C^p(pD))$ under the Kostant section $A(\omega_C^p(pD))\to \sM^{reg}_{Dol}(\omega_C^p(pD))$. 
    Since $\sM_{Dol}^{reg}(\omega_C^p(pD))$ is a trivial $\sP(\omega_C^p(pD))$-torsor, up to isomorphisms, there is a unique $J_{a^p}$-torsor $E$ such that the $\sP(\omega_C^p(pD))$-action on $\sM_{Dol}(\omega_C^p(pD))$ sends $(E, (Fr^*X,Fr^*\phi))$ to $(F,\Psi(\nabla_F))$.
    We are done if we can show that there is a unique log-connection $\nabla_E$ on $E$ such that 
    \begin{enumerate}
        \item[(i)] $\Psi(\nabla_E)=\tau_{\omega_{C}^p(pD)}(a^p)\in \Gamma(C_R,\mathrm{Lie}(J_{a^p})_{\omega_C^p(pD)})$.
        \item[(ii)] $\nabla_E\otimes \nabla^{can}_{Fr^*X}=\nabla_F$.
    \end{enumerate}
    Note that (ii) implies (i).
    Indeed, we can check the implication locally over $C_R$ and 
    we fix the trivializations of $E, X$ and   $\omega_{C}(D)$ over $U$ as above.
    The trivialization induces an identity $Fr^*\phi=\Psi(\nabla_F): U\to \Fg_U^{reg}$.
    Assume that (ii) holds. Then \eqref{eq: p cj} entails that $\Psi(\nabla_F)=dc_{\Psi(\nabla_F)}(\Psi(\nabla_E))$.
    \cite[Lem. 2.2]{chen-zhu} and the regularity of $\Psi(\nabla_F)$ entails that $\Psi(\nabla_E)=\tau(a^p)$ is the only solution.

    We are reduced to show that there is a unique $\nabla_E$ such that (ii) holds. We can work locally over $C_R$ and fix the trivializations of $E, X, \omega_C(D)$ over $U$ above.
    We have $F=Fr^*X=G_U$ and $\Psi(\nabla_F)=Fr^*\phi=\kappa(a^p): U\to \Fg^{reg}_U$.
    The log-connection $\nabla_F$ is determined by the connection 1-form, which is a section $g_F: U\to \Fg_U$.
    Since log-connections commute with their $p$-curvatures \cite[(5.2.1)]{katz1970}, we have that $g_F$ factors through $U\to \mathrm{Lie}(I_{\kappa(a^p)})\subset \Fg_U$.
    Any log-connection $\nabla_E$ is determined by a section $g_E: U\to \mathrm{Lie}(J_{a^p})_U=\mathrm{Lie}(I_{\kappa(a^p)})_U$, which gives rise to the connection 1-form.
    Therefore, we can choose $\nabla_E$ so that $g_E=g_F$. Applying \eqref{eq: p cj} again, we see that this is the unique choice so that (ii) is satisfied.

    When $G$ is a product of general linear groups, we can replace the Kostant section with the canonical section $s: A(\omega_{C'}(D'))\to \sM_{Dol}(C',D')$ defined by the structural sheaf of spectral curves (see \Cref{sssec: two embeddings}), and the  proof given above  works in this context.
\end{proof}

\subsection{The Log-$p$-NAHT  for general $G$}\;

\subsubsection{Twisted product of stacks}$\;$

Given a Picard stack $\sG$ over a site $\sS$, and two stacks $\sT_1, \sT_2$  which $\sG$ acts on,  we can form the twisted product $\sT_1\times^{\sG}\sT_2:=(\sT_1\times \sT_2)/\sG$, which is a $2$-stack, as explained in \cite[p.419, Compl\'ement]{ngo2006fibration}. 
By \cite[Lem. 4.7]{ngo2006fibration}, if $\sT_1$ is a $\sG$-pseudo-torsor, then $\sT_1\times^{\sG}\sT_2$ is $2$-equivalent to a usual $1$-stack. 
Therefore, by using the anti-diagonal $\wt{\sP}$-action on $\sH\times_{\Fc A}\wt{\sM}_{Dol}(C,D)$ induced by \Cref{prop: H pstorsor} and \eqref{eq: wt P act on wt mdol}, we obtain the $\Fc A$-stack $\sH\times^{\wt{\sP}}\wt{\sM}_{Dol}(C',D')$.
Since $\wt{\sP}$ is commutative, there is still a natural $\wt{\sP}$-action on $\sH\times^{\wt{\sP}}\wt{\sM}_{Dol}(C',D')$.

\subsubsection{}
The next theorem is a log-analog of \cite[Thm. 1.2]{chen-zhu} and it complements 
some of the results in  \cite[\S4]{shen2018tamely} when $G=GL_r$. 

\begin{thm}[{\bf Log-$p$-NAHT}]\label{thm: main thm general g}
    There is a canonical $\wt{\sP}$-equivariant morphism of $\Fc A$-stacks
    \begin{equation}
        \label{eq: nah mor}
            \mathbf{n}: \mathscr{H}\times^{\widetilde{\mathscr{P}}} \widetilde{\mathscr M}_{Dol}(C',D')\to \mathscr{M}_{dR}(C,D), 
        \end{equation}
which is an isomorphism over the open subvariety $\Fc A_{im}\subset \Fc A$ as in \Cref{def: def him}.
\end{thm}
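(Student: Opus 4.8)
The plan is to build $\mathbf n$ by the same torsor-twisting recipe that defines the $\wt{\sP}$-actions in \S\ref{subs: act} and the isomorphism $\mathbf m$ of \Cref{lemma: constr m}, and then to invert it over $\Fc A_{im}$ using the two Lie-theoretic lemmas \Cref{lemma: g equiv} and \Cref{lemma: lie lemma}. Working over a point $(a',s)\in \Fc A(R)$ with Frobenius pullback $a^p$, I would send a pair consisting of $(E,\nabla_E)\in \sH_{(a',s)}$ (a $J_{a^p}$-torsor whose $p$-curvature is $\tau_{\omega_C^p(pD)}(a^p)$) and a Higgs bundle $(V,\psi)\in \wt{\sM}_{Dol}(C',D')$ over $a'$ to the log-connection
\[
\mathbf n\big((E,\nabla_E),(V,\psi)\big):=\big(E\times^{J_{a^p}}_{Fr^*c_{V,\psi}}Fr^*V,\ \nabla_E\otimes \nabla^{can}_{Fr^*V}\big),
\]
using the canonical connection on $Fr^*V$ and the centralizer morphism $c_{V,\psi}:J_{a'}\to Aut(V,\psi)$. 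Exactly as in the invariance computation for $\mathbf m$, the anti-diagonal $\wt{\sP}$-action cancels, since $Fr^*F\times^{J_{a^p}}Fr^*F^{-1}$ is trivial; hence this bilinear assignment is $\wt{\sP}$-invariant and descends to the twisted product, which is a genuine $1$-stack by \cite[Lem.~4.7]{ngo2006fibration} because $\sH$ is a pseudo-torsor (\Cref{prop: H pstorsor}). The residual $\wt{\sP}$-action then makes $\mathbf n$ equivariant.

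Next I would verify that $\mathbf n$ is a morphism of $\Fc A$-stacks, i.e.\ that the de Rham--Hitchin--residue image of the output is again $(a',s)$. For the Hitchin part, the product formula \eqref{eq: p cj} gives $\Psi(\nabla_{E_V})=dc_{Fr^*\psi}\big(\Psi(\nabla_E)\big)=dc_{Fr^*\psi}(\tau(a^p))=Fr^*\psi$, where the last equality is \cite[Lem.~2.2]{chen-zhu}, valid for \emph{all} elements and not only regular ones; thus the $p$-curvature has Hitchin image $a^p$ and the de Rham--Hitchin image is $a'$. For the residue part, \eqref{eq: res cj} gives $\mathrm{res}_D(\nabla_{E_V})=dc_{Fr^*\psi}(\mathrm{res}_D(\nabla_E))$, and \Cref{lemma: g equiv} shows that $\chi_D$ of this element depends only on the common characteristic polynomial $a^p|_D$ of $Fr^*\psi|_D$ and of the Kostant representative used to define $\pi^{\Fc A}$; therefore $\chi_D(\mathrm{res}_D(\nabla_{E_V}))=s$, independently of the Higgs datum $(V,\psi)$. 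This invariance is precisely the new logarithmic phenomenon that forces the base $\Fc A$ rather than $A(\omega_{C'}(D'))$, and is the reason \Cref{lemma: g equiv} was isolated.

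To prove that $\mathbf n$ is an isomorphism over $\Fc A_{im}$, I would construct an inverse. Over $\Fc A_{im}$ the pseudo-torsor $\sH$ is a $\wt{\sP}$-torsor, so after an \'etale base change it admits a section $(E_0,\nabla_0)$; given any $(P,\nabla)\in \sM_{dR}(C,D)$ over $(a',s)$, I form the twisted $G$-connection $(Q,\nabla_Q):=\big(P\times^{J_{a^p}}_{c_{P,\Psi(\nabla)}}E_0^\vee,\ \nabla\otimes \nabla_{E_0^\vee}\big)$, with the dual log-connection of \Cref{lemma: dual is dual}. The point is that $(Q,\nabla_Q)$ descends along Frobenius. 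By \eqref{eq: pcurv dual} and \cite[Lem.~2.2]{chen-zhu} its $p$-curvature is $\Psi(\nabla)-dc_{\Psi(\nabla)}(\tau(a^p))=0$, while its residue is $\mathrm{res}_D(\nabla)-dc_{x}(\mathrm{res}_D(\nabla_0))$, where $x:=\Psi(\nabla)|_D$. Both $\mathrm{res}_D(\nabla)$ and $dc_x(\mathrm{res}_D(\nabla_0))$ lie in $\mathrm{Lie}(I_x)$ (residues commute with the $p$-curvature), both map to $s$ under $\chi_D$ by \Cref{lemma: g equiv}, and both are sent to $x=\mathrm{res}_D(\Psi(\nabla))$ by the operation $y\mapsto y^{[p]}-y$ in view of \eqref{eq:respcurv} and the fact that $dc_x$ respects the $p$-operation; hence \Cref{lemma: lie lemma} forces them to coincide and $\mathrm{res}_D(\nabla_Q)=0$. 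With both $p$-curvature and residue vanishing, Cartier descent \cite[Thm.~5.1]{katz1970} produces a unique $G$-bundle $V$ on $C'$ with $(Q,\nabla_Q)\cong(Fr^*V,\nabla^{can})$, and the Frobenius descent of $\Psi(\nabla)$ equips $V$ with a Higgs field $\psi$ of Hitchin image $a'$; the pair $(V,\psi)$, together with the section $(E_0,\nabla_0)$, is the desired preimage, and the two constructions are mutually inverse by the same torsor bookkeeping used for \Cref{lemma: faithful action}.

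The hard part is the residue bookkeeping in the last two paragraphs: making $\mathbf n$ land over the correct point of $\Fc A$, and, dually, forcing $\mathrm{res}_D(\nabla_Q)=0$ so that Cartier descent applies. Both steps go beyond the no-pole theorem of \cite{chen-zhu} and rely essentially on the injectivity of \Cref{lemma: lie lemma} --- that $\chi$ is injective on the Artin--Schreier fibre inside $\mathrm{Lie}(I_x)$ --- together with the $G$-equivariance of \Cref{lemma: g equiv}. Everything else is a routine adaptation of the twisting and Cartier-descent arguments already carried out for $\mathbf m$ in \Cref{lemma: constr m} and for the pseudo-torsor structure in \Cref{prop: H pstorsor}.
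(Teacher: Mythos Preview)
Your construction of $\mathbf n$, the verification via \Cref{lemma: g equiv} that it lands over $(a',s)$, and the inverse via dual-twisting followed by Cartier descent all match the paper's proof essentially line for line.

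There is, however, a gap in your justification that $\mathrm{res}_D(\nabla_Q)=0$. You invoke \Cref{lemma: lie lemma}, but that lemma is stated and proved only for \emph{regular} $x\in\Fg^{reg}$, whereas your $x=\Psi(\nabla)|_D$ is the value at $D$ of the $p$-curvature of an \emph{arbitrary} log-connection $(P,\nabla)\in\sM_{dR}(C,D)_{a',s}$ and need not be regular. For non-regular $x$ the centralizer $\mathrm{Lie}(I_x)$ is strictly larger than the image of $dc_x$, and the injectivity of $\chi$ on the Artin--Schreier fibre inside $\mathrm{Lie}(I_x)$ can genuinely fail: already for $G=GL_2$ and $x=\lambda I$, two diagonal matrices with entries $\alpha\neq\alpha'$ both solving $t^p-t=\lambda$ lie in $(AS|_{\Fgl_2})^{-1}(x)$ and have the same image in $\Fc$. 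The paper does \emph{not} use \Cref{lemma: lie lemma} at this step---that lemma is invoked only in the proof of \Cref{lemma: unique res}, where one works inside $\mathrm{Lie}(I_{\kappa(a^p|_D)})$ with $\kappa(a^p|_D)$ Kostant and hence regular by construction---and instead asserts the vanishing of $\mathrm{res}_D(\nabla_{\wt F})$ directly from the product formulae \eqref{eq: p cj}, \eqref{eq: res cj}, \eqref{eq: pcurv dual}, \eqref{eq: res dual}. So while you have correctly isolated the residue bookkeeping as the genuinely new logarithmic issue, the specific route you propose through \Cref{lemma: lie lemma} does not go through as written.
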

\begin{proof}
\underline{Construction of the morphism $\mathbf{n}$.}
    Let $(a', s)$ be a point in $\Fc A$ such that the fiber $\sH_{a',s}$ contains an object $(E,\nabla_E)$, see the description of $(E,\nabla_E)$ in \Cref{sssec: description of H}.
An object in $\wt{\sM}_{Dol}(C',D')_{a',s}$ is a Higgs field $(F,\phi)$ in $\sM_{Dol}(C',D')_{a'}$.
There is a morphism of group schemes 
$c_{F,\phi}:J_{a'}\to Aut(F)$.
The morphism $\mathbf{n}_{a',s}$ then sends $(E,\nabla)$ and $(F,\phi)$
to  the twist $(E_F,\nabla_{E_F}):=(E\times^{J_{a^p}}_{Fr^*c_{F,\phi}} Fr^*F, \nabla_E\otimes \nabla^{can}_{Fr^*F})$.

We now verify that $(E_F,\nabla_{E_F})$ is still over $(a',s)$ in $\Fc A$.

By \eqref{eq: p cj}, we have that $\Psi(\nabla_{E_F})$ is given by $dc_{F,\phi}(\Psi(\nabla_E))\in \Gamma(C, \Fg_{\omega_{C}^p(pD)})$ locally over $C$.
By \cite[Lem. 2.2]{chen-zhu}, we have that locally over $C$, $\Psi(\nabla_{E_F})$ is identified with $\phi\in \Gamma(C,\Fg_{\omega_{C}^p(pD)})$.
Therefore, we have that $(E_F,\nabla_{E_F})$ sits over $a'$ in $ A(\omega_{C'}(D'))$.

By \eqref{eq: res cj}, we have that $\mathrm{res}_D(\nabla_{E_F})$ is given by $dc_{\phi_D}(\mathrm{res}_D(\nabla_E))\in \Gamma(D, \Fg_D)$, where $\phi_D\in \Gamma(D,\Fg_D)$ and $c_{\phi_D}: \Gamma(D, (J_{a^p})_D)\to \Gamma(D,G_D)$ is the morphism induced by the morphism $c: \chi^*J\to I\to G\times\Fg$ as in \cite[Lem. 2.1.1]{ngo-lemme-fondamental}.
Let $(P,\psi)$ be the Kostant section $\kappa(a')$ in $\sM_{Dol}(C',D')_{a'}$.
By construction, $s\in \Gamma(D,\Fc_D)$ is given by the image of $\mathrm{res}_D(\nabla_E)\in \Gamma(D,\mathrm{Lie}(J_{a^p}))$ under the composition $\Gamma(D, \mathrm{Lie}(J_{a^p})_D)\xrightarrow{dc_{\psi_D}} \Gamma(D,\Fg_D)\to \Gamma(D,\Fc_D)$.
Since both $\phi_D$ and $\psi_D$ in $\Gamma(D,\Fg_D)$ have the same image in $\Gamma(D,\Fc_D)$, \Cref{lemma: g equiv} entails that $\mathrm{res}_D(\nabla_{E_F})$ still has invariants of residue $s$.

\underline{Isomorphy.} 
Let $(M,\nabla_M)$ be an object in $\sM_{dR}(C,D)_{a',s}$.
There is a morphism of group schemes $c_{M,\Psi(\nabla_M)}:J_{a^p}\to Aut(M, \Psi(\nabla_M))$.
Let $(E^{\vee},\nabla_{E^{\vee}})$ be the dual log-$J_{a^p}$-connection.
Consider the $G$-bundle $\wt{F}:=E^{\vee}\times^{J_{a^p}}_{c_{M,\Psi(\nabla_M)}} M$.
It comes with a log-connection $\nabla_{\wt{F}}$ and an $\omega_{C}^{\otimes p}(pD)$-Higgs field $\phi_{\wt{F}}$.
Indeed, the log-connection $\nabla_{\wt{F}}$ is given by the tensor product connection $\nabla_{E^{\vee}}\otimes \nabla_{M}$, and 
the Higgs field $\phi_{\wt{F}}$ is induced by $\Psi(\nabla_M)$.
Furthermore, we have that $\phi_{\wt{F}}$ is horizontal with respect to $\nabla_{\wt{F}}$ by the local argument in the last paragraph of the proof of \cite[Thm. 3.12]{chen-zhu}. 
Then \eqref{eq: p cj}, \eqref{eq: res cj}, \eqref{eq: pcurv dual}, and \eqref{eq: res dual} together entail that both the $p$-curvature and the residue of $\nabla_{\wt{F}}$ vanish.
Therefore, it follows from \cite[\S A.7]{chen-zhu} that the pair $(\wt{F},\phi_{\wt{F}})$
Frobenius descends to an $\omega_{C'}(D')$-Higgs bundle $(F,\phi_F)$ on $C'$.
The assignment that sends $(M,\nabla_M)$ to  the pair $((E,\nabla_E), (F,\phi_F))$ defines an inverse of $\mathbf{n}$.
\end{proof}

\subsection{Relation to earlier works}\label{subs: comparison}

\subsubsection{Comparison with Log-NAHT over $\bC$}\label{rmk: parabolic}\;

An important difference between the Log-$p$-NAHT \Cref{thm: main thm general g} above and the Log-$\bC$-NAHT as in \cite{simpson1990harmonic} is that no parabolic structure is needed in the Log-$p$-NAHT.
We recall the  Log-$\bC$-NAHT in the $G=GL_r$ case below, and we thank Pengfei Huang and Hao Sun for
helpful discussions.

 The Main Theorem in \cite[p.755]{simpson1990harmonic} states that, given a smooth non-compact complex algebraic curve $C^o$ with smooth completion $C$, the category of tame harmonic bundles on $C^o$ is canonically equivalent to the category of polystable \textit{filtered} logarithmic Higgs bundles of degree zero on $C$, as well as the category of polystable \textit{filtered} logarithmic connections of degree zero on $C$.

By inspection of the table relating residues and jumps in \cite[p.720]{simpson1990harmonic}, we see that the Main Theorem in \cite[p.755]{simpson1990harmonic}
cannot yield an equivalence of categories between polystable log-Higgs bundles and polystable log-connections,
at least if the first step of this putative equivalence is the embedding of polystable log-Higgs bundles
as polystable filtered logarithmic Higgs bundles with trivial filtration and a single jump at the real number zero;
indeed, in this case, the corresponding log-connections would have residues with purely imaginary eigenvalues.

In comparison, in the Log-$p$-NAHT, any section of the set-theoretic function $\sH(k)\to \Fc A_{im}(k)$ provides us with a morphism of groupoids $ (\sM_{dR}(C,D)|_{\Fc A_{im}})(k)\to\sM_{Dol}(C',D')(k)$, by considering the inverse of $\mathbf{n}$ as in \eqref{eq: nah mor} and the projection morphism $\wt{\sM_{Dol}}\to\sM_{Dol}$.

\subsubsection{Relation to \cite{schepler2005logarithmic}}$\;$

Let $W_2(k)$ be the ring of second truncated Witt vectors for $k$.
In the case with no poles, the Ogus-Vologodsky theory entails that each $W_2(k)$-lift $\wt{C'}$ of $C'$ gives rise to an equivalence of categories between rank $r\leq p$ nilpotent Higgs bundles on $C'$ and rank $r\leq p$ connections on $C$ with nilpotent $p$-curvature \cite[Thm. 2.8, Cor. 2.9]{ov07}.

Recall the number $h(G)$ as in \Cref{defn: cox num}.
In \cite[\S3.5]{chen-zhu}, it is explained that each $\wt{C'}$ defines an object in the nilpotent fiber, i.e. the fiber over the origin of the Hitchin base $A(\omega_{C'}(D'))$, of the no pole analogue of $\sH$ for each group $G$, when $p\geq h(G)$, so that the $p$-NAHTs considered in \cite{ov07} and \cite{chen-zhu} are compatible. Furthermore, when $G=SL_2$ or $PGL_2$, all objects in the nilpotent fiber of the no pole analogue of $\sH$ arise this way \cite[Lem. 3.23 and the paragraph after]{chen-zhu}.

\begin{remark}
    Although not stated, the assumption that $p\geq h(G)$ is needed in \cite[\S3.5]{chen-zhu}, since 
    a principal $SL_2$ for $G$ is used in the third paragraph of \cite[p. 1723]{chen-zhu}. 
    A principal $SL_2$ exists if and only if $p\geq h(G)$.
    Indeed, the if implication is entailed by \cite[Prop. 2]{serre1996exemples}, and the only if implication is due to the following: any nilpotent element $x$ in the $p$-Lie algebra $\Fsl_2$ satisfies the identity $x^{[p]}=0$, while given a regular nilpotent element $y$ in $\Fg$, we have that $y^{[p^m]}=0$ if and only if $p^m\geq h(G)$ by combining \cite[Order Formula 0.4]{testerman1995a_1}, which deals with regular unipotent elements in $G$, and \cite[Thm. 35]{mcninch2003sub}, which shows that the Springer isomorphism (which exists since $p\nmid |W|$) between the unipotent variety of $G$ and the nilpotent variety of $\Fg$ is compatible with the $p$-power operations, see also \cite[Rmk. 1.2]{sobaje2015springer}.
\end{remark}

The Ogus-Vologodsky theory has been extended  to the logarithmic case by Schepler in \cite{schepler2005logarithmic}.
Schepler considers the log-scheme $C'_{log}$ defined by the divisor $D'$, and shows that each $W_2(k)$-lifting $\wt{C_{log}'}$ of $C'_{log}$ gives rise to an equivalence of category between rank $r\leq p$ nilpotent log-Higgs bundles on $C'$ with nilpotent residues and rank $r\leq p$ log-connections on $C$ with nilpotent $p$-curvatures and residues \cite[Cor. 4.11]{schepler2005logarithmic}, see also \cite[Thm. 5.1]{langer2024bogomolov}.
(Note that our paper deals with curves, while both \cite{ov07} and \cite{schepler2005logarithmic} work with general (log-)varieties.)
By arguing as \cite[\S3.5]{chen-zhu}, we have the following result which is used in the proof of \Cref{prop: pstorsor ho}.
\begin{lemma}\label{lem: compare schepler}
Assume that $p\geq h(G)$.
    Let $\Fo:= (o_{A(\omega_{C'}(D'))}, o_{\Gamma(D,\Fc_D)})\in \Fc A=A(\omega_{C'}(D'))\times_{\Gamma(D',\Fc_{D'})}\Gamma(D,\Fc_D)$ be the origin.
    Let $Lift(C',D')$ be the groupoid of $W_2(k)$-liftings of the log-scheme $C'_{log}$.
    Then there is a canonical morphism $Lift(C',D')\to \sH_{\Fo}$, which is an isomorphism when $G=PGL_2$ or $SL_2$.
\end{lemma}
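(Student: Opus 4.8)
The plan is to transpose the construction of \cite[\S3.5]{chen-zhu} to the logarithmic setting, replacing the Ogus--Vologodsky inverse Cartier transform by Schepler's logarithmic one \cite[Cor. 4.11]{schepler2005logarithmic}. Recall first what the target is: by \Cref{sssec: description of H}, an object of the fiber $\sH_{\Fo}$ is a log-$J_0$-connection $(E,\nabla)$ on $C$ whose $p$-curvature equals $Fr^*\tau_{\omega_{C'}(D')}(o_{A(\omega_{C'}(D'))})$. Since the Chen--Zhu section satisfies $dc_{\kappa(o)}(\tau(o))=\kappa(o)=e$, the regular nilpotent element of $\Fg$, the prescribed $p$-curvature is regular nilpotent, and by \Cref{lemma: unique res} the residue is forced to be the unique nilpotent lift dictated by the origin $o_{\Gamma(D,\Fc_D)}$. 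These are precisely the nilpotency conditions around which Schepler's equivalence (between rank $r\le p$ nilpotent log-Higgs bundles on $C'$ with nilpotent residue and log-connections on $C$ with nilpotent $p$-curvature and residue) is organized.

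To construct the morphism, I would proceed as in \textit{loc.\ cit.} The hypothesis $p\ge h(G)$ guarantees a principal $SL_2\hookrightarrow G$, whose nilpotent generator maps to the regular nilpotent $e$ (this is exactly the role of $p\ge h(G)$ flagged in the remark preceding the lemma, via $e^{[p]}=0$). Given a $W_2(k)$-lifting $\wt{C'_{log}}$, I would feed the standard nilpotent log-Higgs datum at the origin (the principal $SL_2$-reduction of $\kappa(o)$, with regular nilpotent field $e$ and nilpotent residue) into Schepler's inverse Cartier transform $C^{-1}_{\wt{C'_{log}}}$, and push the resulting $SL_2$-log-connection forward along $SL_2\hookrightarrow G$. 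The key verification, carried out locally and closed by \cite[Lem. 2.2]{chen-zhu} exactly as in the isomorphy part of \Cref{thm: main thm general g} and in \Cref{lemma: constr m}, is that the output has $p$-curvature equal to $Fr^*\tau(o)$ and nilpotent residue, hence defines an object of $\sH_{\Fo}$. An isomorphism of liftings transports to an isomorphism of the associated log-connections, so the assignment is a functor of groupoids $Lift(C',D')\to\sH_{\Fo}$.

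For the isomorphism statement when $G=SL_2$ or $PGL_2$, the transform $C^{-1}_{\wt{C'_{log}}}$ is itself an equivalence in rank $2\le p$, so I would only need to match objects and morphisms. Following \cite[Lem. 3.23]{chen-zhu} and the discussion after it, in rank two the nilpotent fiber is small enough that every regular log-connection with nilpotent residue over $\Fo$ is the inverse Cartier transform of the standard nilpotent object for a lifting unique up to isomorphism, and isomorphisms correspond on both sides. The one genuinely new bookkeeping item relative to \textit{loc.\ cit.} is the residue: the condition that the residue invariant sit at the origin of $\Gamma(D,\Fc_D)$ is exactly nilpotency of the residue, which is the boundary hypothesis built into Schepler's theorem, so no additional data beyond the lift is needed.

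The main obstacle I anticipate is the interface between the two formalisms. The stack $\sH_{\Fo}$ is defined through $J_0$-torsors and the section $\tau$, whereas $C^{-1}_{\wt{C'_{log}}}$ naturally outputs a connection on a vector bundle, or a $G$-bundle after pushout. Pinning down that the output lands in the prescribed fiber of the $J$-de Rham--Hitchin morphism, namely that its $p$-curvature is \emph{exactly} $Fr^*\tau(o)$ and not merely a regular nilpotent field in the same adjoint orbit, and, for the isomorphism, proving essential surjectivity in rank two, i.e.\ that the low-dimensional nilpotent fiber of $\sH$ is genuinely exhausted by liftings together with their nilpotent residues, is where the substance lies; everything else is a faithful transcription of \cite[\S3.5]{chen-zhu}.
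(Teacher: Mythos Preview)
Your plan is in the right spirit—use Schepler's logarithmic theory and the principal $SL_2$—but the interface obstacle you flag in your last paragraph is precisely where the paper's proof diverges from yours, and you do not propose a way past it. Your construction outputs a $G$-log-connection (the inverse Cartier transform of the standard nilpotent Higgs datum, pushed along $SL_2\hookrightarrow G$), whereas an object of $\sH_{\Fo}$ is by definition a $J_{o^p}$-torsor with log-connection. Bridging the two via \Cref{lemma: constr m} would require $\deg(D)$ even (or $G$ a product of $GL$'s), which is not assumed. The paper sidesteps this entirely by never forming a $G$-connection: it invokes Schepler's explicit extension
\[
0\to (Fr^*T_{C'}(-D'),\nabla^{can})\xrightarrow{\alpha}(E,\nabla_E)\xrightarrow{\beta}(\cO_C,\nabla^{can})\to 0
\]
from \cite[Prop.~3.6]{schepler2005logarithmic}, together with the concrete identifications $J_{o_{PGL_2}}\cong\bG_a$, $J_{o'_{A(PGL_2)}}\cong T_{C'}(-D')$ (as an additive group scheme), and $\tau(o_{PGL_2})=1\in\mathrm{Lie}(\bG_a)$. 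The required $J_{o^p}$-torsor is then simply the fiber $\beta^{-1}(1)$ with its restricted connection; the $p$-curvature condition $\Psi=1$ and the nilpotent residue follow directly from the shape of the extension. For $SL_2$ one has $J_{o_{SL_2}}\cong\bG_a\times\mu_2$ and the same construction with an extra $\mu_2$ factor. For general $G$, the principal $SL_2$ induces a homomorphism $J_{o_{SL_2}}\to J_{o_G}$ sending $\tau(o_{SL_2})$ to $\tau(o_G)$, hence a morphism $\sH_{\Fo,SL_2}\to\sH_{\Fo,G}$ of groupoids of $J$-torsors, again avoiding $G$-bundles.

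For the isomorphism when $G=PGL_2$ or $SL_2$, your sketch (``the transform is an equivalence in rank two, so match objects and morphisms'') is vague about essential surjectivity. The paper instead stackifies both $Lift(C',D')$ and $\sH_{\Fo}$ over the fppf site of $C'$, observes that each becomes a gerbe banded by $T_{C'}(-D')$ (the first by log-deformation theory \cite[Prop.~3.14]{kato1989logarithmic}, the second by the pseudo-torsor argument behind \Cref{prop: H pstorsor}), and checks that the map on automorphism bands is injective, hence an isomorphism since an injective endomorphism of $\bG_a$ is an isomorphism; one then concludes via \cite[Lem.~4.6]{ov07}. This gerbe comparison is both cleaner and does not appeal to the full categorical equivalence in rank two.
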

\begin{proof}
    By \cite[Prop. 3.6 and (5) on p.14]{schepler2005logarithmic}, each $W_2(k)$-lift $\wt{C'_{log}}$ of $C'_{log}$ gives rise to a short exact sequence of log-connections on $C$:
    \begin{equation}\label{eq: ext of conns}
        0\to (Fr^*T_{C'}(-D'),\nabla^{can})\xrightarrow{\alpha} (E,\nabla_E)\xrightarrow{\beta} (\cO_C,\nabla^{can})\to 0,
    \end{equation}
    such that the $p$-curvature $\Psi(\nabla_E): E\to E\otimes\omega_C^p(pD)$ is given by the composition $E\xrightarrow{\beta}\cO_C\cong Fr^*T_{C'}(-D')\otimes \omega_C^p(pD)\xrightarrow{\alpha\otimes 1} E\otimes \omega_C^p(pD)$.
    Let $o_{PGL_2}\in \Fp\Fgl_2\git PGL_2$ be the origin.
    Then $J_{o_{PGL_2}}\cong \bG_a$ and $\tau(o_{PGL_2})=1\in \mathrm{Lie}(J_{o_{PGL_2}})$, see \cite[Lem. 3.21, 3.22]{chen-zhu} and \cite[Proof of Lem. 3.24]{herrero-zhang-semistable}.
    Let $o_{A(PGL_2)}'$ be the origin of the Hitchin base $A(\omega_{C'}(D'),PGL_2)$, and let $o_{A(PGL_2)}^p$ the origin of the Hitchin base $A(\omega_{C}^p(pD))$.
    It follows that $J_{o_{A(PGL_2)}'}\cong T_{C'}(-D')$, the latter viewed as an additive group scheme over $C'$, and that $X:=(\beta^{-1}(1),\nabla_E|_{\beta^{-1}(1)})$ is a $J_{o_{A(PGL_2)}^p}$-log-connection in $\sH_{o'_{A(PGL_2)}}$.
    Since the two canonical connections in \eqref{eq: ext of conns} have zero residues, the residue of $X$ is nilpotent, thus $X$ lies in $\sH_{\Fo}$ for $PGL_2$.
    
    The assignment $\wt{C'_{log}}\mapsto X$ then defines a functor $f: Lift(C',D')\to \sH_{\Fo}$ when $G=PGL_2$.
    We now show that $f$ is an isomorphism.
    We can stackify both groupoids $Lift(C',D')$ and $\sH_{\Fo'}$ over the fppf site of $C'$ and obtain two $C'$-stacks $\mathfrak{L} ift$ and $\mathfrak{H}_{\Fo'}$, i.e.,  for each $C'$-scheme $U$, $\mathfrak{L} ift(U)$ is the groupoid of $W_2(k)$-lifts of $(U,D'\times_{C'} U)$, and $\mathfrak{H}_{\Fo'}(U)$ is given by $Fr^*T_C'(-D')$-connections on $Fr^*U:=U\times_{C'}C$ whose $p$-curvature is $1\in H^0(Fr^*U,\cO_{Fr^*U})\cong H^0(Fr^*U,\mathrm{Lie}(J_{o^p_{A(PGL_2)}})\otimes \omega_C^p(pD))$ and whose residue is nilpotent.
    By deformation theory of log-schemes as in \cite[Prop. 3.14]{kato1989logarithmic}, the stack $\mathfrak{L}ift$ is a gerbe banded by $T_{C'}(-D')$.
    The proof of \Cref{prop: H pstorsor} shows that $\mathfrak{H}_{\Fo'}$ is a gerbe banded by $J_{o'_{A(PGL_2)}}\cong T_{C'}(-D')$.
    Furthermore, the morphism $f$ can be promoted to a morphism $\Ff: \mathfrak{L}ift\to \mathfrak{H}_{\Fo'}$ of $C'$.
    By the construction of $E$ in \cite{schepler2005logarithmic}, different lifts give rise to non-identical $E$'s, so that $\Ff$ induces injective morphisms on automorphism groups.
    Since an injective endomorphism of $\bG_a$ is an isomorphism, we have that $\Ff$ is bijective on automorphism groups, thus an isomorphism of gerbe by \cite[Lem. 4.6]{ov07}.
Since we can recover $f$ from $\Ff$ by taking $U=C'$, we see that $f$ is an isomorphism of groupoids.
    
    The argument for $G=SL_2$ is similar because  $J_{o_{SL_2}}\cong \bG_a\times\mu_2$, so that we just need to replace \eqref{eq: ext of conns} by 
    \begin{equation}\label{eq: ext of conns sl2}
        0\to (Fr^*T_{C'}(-D')\times\mu_2,\nabla^{can})\xrightarrow{\alpha} (E\times \mu_2,\nabla_{E\times\mu_2})\xrightarrow{\beta} (\cO_C,\nabla^{can})\to 0.
    \end{equation}

    For a general $G$, 
    since $p\geq h(G)$, \cite[Prop. 2]{serre1996exemples} gives us a principal $SL_2\to G$, which induces a morphism $\varphi: J_{o_{SL_2}}\to J_{o_G}$. Therefore, a log-$J_{o^p_{A(SL_2)}}$-connection induces a log-$J_{o^p_{A(G)}}$-connection.
    By the construction of $\tau: \Fc\to \mathrm{Lie}(J)$, we see that $d\varphi: \mathrm{Lie}(J_{o_{SL_2}})\to \mathrm{Lie}(J_{o_G})$ sends $\tau(o_{SL_2})$ to $\tau(o_{G})$.
    Therefore, $\varphi$ induces a morphism $\sH_{\Fo,SL_2}\to \sH_{\Fo, G}$.
\end{proof}

\subsubsection{Relation to \cite{shen2018tamely}} $\;$

When $G=GL_{r< p}$, the base $\Fc A$  does not appear in \cite{shen2018tamely}, and the morphism $\mathbf{n}$ of \Cref{thm: main thm general g} is essentially described fiberwise over $\Gamma(D,\Fc_{D})$ in \cite[Thm. 4.3]{shen2018tamely}.
More precisely, in \cite{shen2018tamely}, Shiyu Shen works over some particular points in $\Fc A$.
Such points form an open $\Fc A_{Shen}\subseteq \Fc A$.
\cite[Thm. 4.3.(1)]{shen2018tamely} shows that $\Fc A_{Shen}\subset \Fc A_{im}$ as in \Cref{def: def him}.
Moreover, with notation as in \Cref{defn: base cA}, for every  $s\in \Gamma(D,\Fc_{D})$, the restriction of $\mathbf{n}$ to $pr_2^{-1}(s)\cap \Fc A_{Shen}$ is constructed in \cite[Thm. 4.3.(2)]{shen2018tamely}, and is shown to be an isomorphism.

Next, we show that in the case of $GL_{r<p}$ we have that   $\Fc A_{Shen}= \Fc A_{im}$. Assume that $D$ is a point.
In the $GL_r$ case, a point in $\Fc$ is given by an unordered tuple $\{b_1,...,b_r\}$ of $r$ elements in $k$, thought of as the eigenvalues of matrices.
Let $x:=(a, \{b_1,...,b_r\})\in \Fc A$.
If $b_1^p-b_1=b_2^p-b_2$ but $b_1\neq b_2$, then $x$ is not in $\Fc A_{Shen}$.

\begin{lemma}\label{lemma: compare cashen}
    When $G=GL_{r<p}$, we have $\Fc A_{Shen}=\Fc A_{im}$.
\end{lemma}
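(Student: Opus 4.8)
The plan is to reduce the equality to the single inclusion $\Fc A_{im}\subseteq \Fc A_{Shen}$. Taking $D=x$ to be a single point for notational simplicity, the discussion preceding the statement identifies $\Fc A_{Shen}$ with the open \emph{non-resonance} locus in $\Fc A$, i.e. the set of points $(a',\{b_1,\dots,b_r\})$ for which $b_i\neq b_j$ implies $b_i^p-b_i\neq b_j^p-b_j$; indeed the example just above the statement shows that every resonant point is excluded from $\Fc A_{Shen}$. Since the reverse inclusion $\Fc A_{Shen}\subseteq \Fc A_{im}$ is already recorded via \cite[Thm.~4.3.(1)]{shen2018tamely}, and open subvarieties of the variety $\Fc A$ are determined by their closed points, it suffices to prove that every resonant $k$-point $(a',s)$ lies outside $\Fc A_{im}$, equivalently that its fiber $\sH_{a',s}$ is empty (recall \Cref{def: def him}).

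First I would translate membership in $\sH_{a',s}$ into a condition at the puncture. By \Cref{sssec: description of H} and the construction of $\pi^{\Fc A}$ in the proof of \Cref{prop: def pi ca}, any object of $\sH_{a',s}$ has residue $R:=\mathrm{res}_D(\nabla)\in \mathrm{Lie}(J_{\alpha_D})$ with $\chi_D\bigl(dc_{\kappa(\alpha_D)}(R)\bigr)=s$, where $\alpha_D=a^p|_D\in \Fc$ and $\kappa$ is the Kostant section, cf. \eqref{eq: Xi a'}. Because $\kappa(\alpha_D)\in \Fg^{reg}$, the differential $dc_{\kappa(\alpha_D)}\colon \mathrm{Lie}(J_{\alpha_D})\xrightarrow{\sim}\mathrm{Lie}(I_{\kappa(\alpha_D)})$ is an isomorphism; set $y:=\kappa(\alpha_D)$ and $A:=dc_{y}(R)\in \mathrm{Lie}(I_y)=k[y]$. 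The defining relation $\Fas(s)=ev_{D'}(a')$ of $\Fc A$, together with the identification of $\Fas$ with the Artin--Schreier map on eigenvalues displayed in the bottom of \eqref{diag: hdr fiber not conn}, forces the characteristic polynomial of the regular element $y\in \Fgl_r$ to have root multiset $\{b_1^p-b_1,\dots,b_r^p-b_r\}$, while the condition $\chi_D(A)=s$ says that $A$ has eigenvalue multiset $\{b_1,\dots,b_r\}$.

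The main point is then the following elementary obstruction, which is where the resonance hypothesis does its work. Since $y$ is regular in $\Fgl_r$, it is cyclic, so each distinct eigenvalue $\lambda$ of $y$ occupies a single Jordan block and $\mathrm{Lie}(I_y)=k[y]$; consequently every $A=f(y)\in k[y]$ acts on the generalized $\lambda$-eigenspace of $y$ with the single eigenvalue $f(\lambda)$, repeated with multiplicity equal to the size of that block. If $(a',s)$ is resonant, say $b_i\neq b_j$ but $b_i^p-b_i=b_j^p-b_j=:\lambda$, then $\lambda$ is an eigenvalue of $y$ of multiplicity $\geq 2$, so any $A\in k[y]$ has $f(\lambda)$ as its only eigenvalue there, with multiplicity $\geq 2$. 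Hence the eigenvalue multiset of $A$ cannot equal $\{b_1,\dots,b_r\}$, which contains the two distinct values $b_i,b_j$ both with Artin--Schreier image $\lambda$; thus $\chi_D(A)\neq s$, so $\sH_{a',s}=\emptyset$ and $(a',s)\notin \Fc A_{im}$. This yields $\Fc A_{im}\subseteq \Fc A_{Shen}$ and hence the desired equality. I expect the only delicate step to be the bookkeeping of the previous paragraph that produces $y$ as a regular element acquiring a repeated eigenvalue exactly at a resonance; once that is in place, the contradiction through the centralizer $k[y]$ is immediate and does not even require solving $A^{[p]}-A=y$.
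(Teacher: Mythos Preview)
Your proposal is correct in outline and reaches the right conclusion, but the final ``hence'' is doing more work than your sentence justifies. You observe that on the $\lambda$-block of $y$ the element $A=f(y)$ has the single eigenvalue $f(\lambda)$, and then conclude that the eigenvalue multiset of $A$ cannot be $\{b_1,\dots,b_r\}$ because $b_i\neq b_j$ both have Artin--Schreier image $\lambda$. As written this does not exclude the possibility that $b_i$ and $b_j$ arise as $f(\mu)$ for other eigenvalues $\mu$ of $y$. The clean fix is a one-line count: the number of \emph{distinct} eigenvalues of $A=f(y)$ is at most the number of distinct eigenvalues of $y$, which equals $\bigl|\{AS(b_1),\dots,AS(b_r)\}\bigr|$; resonance says this is strictly smaller than $\bigl|\{b_1,\dots,b_r\}\bigr|$, so $\chi_D(A)\neq s$. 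With this tweak your argument is complete.

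Your route differs from the paper's. The paper invokes the identification $\sH\cong\sM_{dR}^{reg}(C,D)$ from \Cref{lemma: constr m} and argues on the $G$-side: for any log-connection over a resonant point, the residue $R$ has $R^p-R=\mathrm{res}_D(\Psi(\nabla))$ non-regular, because when $r<p$ each Jordan block $\mathbb J_{b,n}$ is sent to a single Jordan block with eigenvalue $b^p-b$, and a resonance produces two blocks with the same eigenvalue. You instead stay on the $J$-side, working directly from the definition of $\sH$ and the construction of $\pi^{\Fc A}$: you fix the regular element $y=\kappa(\alpha_D)$ (playing the role of the $p$-curvature residue) and show no $A\in k[y]=\mathrm{Lie}(I_y)$ has the required characteristic polynomial. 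These are essentially dual computations; the paper's version is quicker once one grants \Cref{lemma: constr m}, while yours avoids that identification at the cost of unpacking \eqref{eq: Xi a'} and the centralizer description.
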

\begin{proof}
    That $\Fc A_{Shen}\subset \Fc A_{im}$ is \cite[Thm. 4.3.(1)]{shen2018tamely}.
    For $b_i\in k$ and  $n_i,i\in\bN_{>0}$, let $\mathbb{J}_{b_i,n_i}$ be the Jordan block with size $n_i$ and eigenvalue $b_i$.
    If the $b_i$ are pairwise distinct, then the direct sum matrix $R:=\bigoplus_i \mathbb{J}_{b_i,n_i}$ is regular. But if $b_1^p-b_1=b_2^p-b_2$, then the matrix $R^p-R$ is not regular.
    It follows that a log-connection of $\sM_{dR}(C,D)$ over $\Fc A\setminus\Fc A_{Shen}$ cannot have regular $p$-curvature, i.e. cannot lie in $\sM_{dR}^{reg}(C,D)$.
    By \Cref{lemma: constr m}, the $\Fc A$-stack $\sM_{dR}^{reg}(C,D)$ is isomorphic to $\sH$, so the lemma follows.
\end{proof}

\subsubsection{Relation to \cite{li2024tame}} $\;$

In \cite[Thm. 5.19]{li2024tame}, Mao Li and Hao Sun show that there is a canonical isomorphism of $A(\omega_{C'}(D'))$-stacks:
\begin{equation}
    LS: \sH\times^{\sA}\sX\xrightarrow{\sim} \sM_{dR}(C,D),
\end{equation}
where $\sA$ is the Picard stack over $A(\omega_{C'}(D'))$ whose fiber over $a'$ is given by the groupoid of $J_{a^p}$-log-connections with vanishing $p$-curvature, and $\sX$ is the moduli stack of triples $(E,\nabla,\Psi)$, where $E$ is a $G$-bundle, $\nabla$ is a log-connection with vanishing $p$-curvature, and $\Psi$ is a horizontal section of $ad(E)\otimes \omega_C^p(pD)$.
In \cite[Prop. 5.24]{li2024tame}, it is shown that $\sX=\coprod_{\tau}\sM_{Dol}(C', G_{\tau}')$, where $\tau$ runs through rational semisimple conjugacy classes in $\Gamma(D,\Fg_D)$,   $G_{\tau}'$ is the parahoric group scheme over $C'$ associated to $\tau$ as in \cite[\S4.3]{li2024tame}, and $\sM_{Dol}(C',G_{\tau}')$ is the moduli stack of $\omega_{C'}(D')$-twisted $G_{\tau}'$-Higgs bundles.

\subsubsection{Implications of \cite{li2024tame}  for $\Fc A_{im}$} $\;$

The base $\Fc A$ as in \Cref{defn: base cA} does not appear in \cite{li2024tame}.
Instead, a base denoted by $B_{\sL'}^{ext}$ is used in \textit{loc. cit.}
We recall its definition and show that it is identified with $\Fc A_{im}$ as in \Cref{def: def him}.

Recall the morphism $\tau_{\omega_{C'}(D')}: A(\omega_{C'}(D'))\to A(J,\omega_{C'}(D'))$ as in \Cref{defn: def H}, where $A(J,\omega_{C'}(D'))$ is a scheme over $A(\omega_{C'}(D'))$ whose fiber over $a'$ is $\Gamma(C, \mathrm{Lie}(J_{a'})\otimes\omega_{C'}(D'))$.
Restricting to $D'$ and using the canonical isomorphism $\omega_{C'}(D')|_{D'}=\cO_{D'}$, we get the $\Gamma(D',\Fc_{D'})$-scheme $\Gamma(D', \mathrm{Lie}(J)|_{D'})$ and the restriction morphism $res_{D'}: A(J,\omega_{C'}(D'))\to \Gamma(D', \mathrm{Lie}(J)|_{D'})$.
The morphism $AS_{J_{a'}}: \mathrm{Lie}(J_{a'})\to \mathrm{Lie}(J_{a'})$ given by $x\mapsto x^{[p]}-x$ give rises to a morphism $AS_J: \Gamma(D', \mathrm{Lie}(J)|_{D'})\to \Gamma(D', \mathrm{Lie}(J)|_{D'})$.
Composing with the inverse of the isomorphism $Fr^*_D: \Gamma(D', \mathrm{Lie}(J_{a'})|_{D'})\xrightarrow{\sim} \Gamma(D, \mathrm{Lie}(J_{a^p})|_D)$, we obtain the morphism $\Fas_J: \Gamma(D, \mathrm{Lie}(J^p)|_D)\to \Gamma(D', \mathrm{Lie}(J)|_{D'})$.
The base $B_{\sL'}^{ext}$ is defined so that the following diagram is Cartesian:
\begin{equation}
    \xymatrix{
    B_{\sL'}^{ext} \ar[rr]^-{res}\ar[d]_-{p_B} && \Gamma(D, \mathrm{Lie}(J^p)|_{D})\ar[d]^-{\Fas_J}\\
    A(\omega_{C'}(D'))\ar[rr]_-{res_{D'}\circ \tau_{\omega_{C'}(D')}} && \Gamma(D', \mathrm{Lie}(J)|_{D'}).
    }
\end{equation}
Given an object $(E,\nabla)$ in $\sH_{a'}$, its residue lies in $\Gamma(D, \mathrm{Lie}(J_{a^p})|_D)$, so that the structural morphism $\pi_2: \sH\to A(\omega_{C'}(D'))$ factors through $\sH\xrightarrow{h_B} B_{\sL'}^{ext}\xrightarrow{p_B} A(\omega_{C'}(D'))$.

\begin{lemma}\label{lemma: b injects to ca}
    There is a canonical injective $A(\omega_{C'}(D'))$-morphism $i_B: B_{\sL'}^{ext}\hookrightarrow \Fc A$.
    Furthermore, the morphism $\pi^{\Fc A}:\sH\to \Fc A$ factors through $i_B$.
\end{lemma}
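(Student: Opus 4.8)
The plan is to build $i_B$ from the universal property of $\Fc A$ as a fiber product, reusing the very residue map $\chi_D\circ dc_{\kappa(\alpha_D)}$ that defined $\pi^{\Fc A}$ in the proof of \Cref{prop: def pi ca}. First I would introduce the morphism of $k$-schemes
\[
 g_D\colon \Gamma(D,\mathrm{Lie}(J^p)|_D)\to \Gamma(D,\Fc_D),\qquad g_D := \chi_D\circ dc_{\kappa(\alpha_D)},
\]
which fiberwise sends a residue datum $v\in\Gamma(D,\mathrm{Lie}(J_{a^p})|_D)$ to the characteristic polynomial of its image $dc_{\kappa(\alpha_D)}(v)\in\Gamma(D,\Fg_D)$; this is precisely the composition appearing in \eqref{eq: Xi a'}. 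The analogous map $g_{D'}:=\chi_{D'}\circ dc_{\kappa}$ at the $D'$-level fits, together with $g_D$, into a square with vertical arrows $\Fas_J$ and $\Fas$. Its commutativity, $\Fas\circ g_D=g_{D'}\circ\Fas_J$, follows because $dc_\kappa$ is a morphism of $p$-Lie algebras and $AS_{\Fg}$ descends to $AS_{\Fc}$, so $g$ intertwines the two Artin--Schreier operations, and $Fr_D^*$ is compatible with descent by $Fr_D$ --- exactly the intertwining established in the proof of \Cref{prop: def pi ca}.

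Next I would produce $i_B$ itself. A point of $B_{\sL'}^{ext}$ over $a'\in A(\omega_{C'}(D'))$ is a residue $v$ with $\Fas_J(v)=res_{D'}\circ\tau_{\omega_{C'}(D')}(a')$. Applying $g_{D'}$ and invoking \cite[Lem. 2.2]{chen-zhu}, which gives $dc_x(\tau(\chi(x)))=x$ and hence $g_{D'}(res_{D'}\circ\tau_{\omega_{C'}(D')}(a'))=ev_{D'}(a')$, I obtain $\Fas(g_D(v))=g_{D'}(\Fas_J(v))=ev_{D'}(a')$. Thus the pair $(a',\,g_D(v))$ satisfies the defining condition of $\Fc A=A(\omega_{C'}(D'))\times_{\Gamma(D',\Fc_{D'})}\Gamma(D,\Fc_D)$, and the universal property, applied to $p_B$ and $g_D\circ res$, yields the $A(\omega_{C'}(D'))$-morphism $i_B$.

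For injectivity I would work fiberwise over $a'$ and over a single puncture $x\in D$. Since $\kappa(\alpha_D(x))\in\Fg^{reg}$, the map $dc_{\kappa(\alpha_D(x))}\colon\mathrm{Lie}(J_{\alpha_D(x)})\to\mathrm{Lie}(I_{\kappa(\alpha_D(x))})$ is an isomorphism, so I may transport residues $v_1,v_2$ to elements $A_1,A_2\in\mathrm{Lie}(I_{\kappa(\alpha_D(x))})$. Lying in the same fiber of $B_{\sL'}^{ext}$ forces $AS(A_1)=AS(A_2)=\tau$ (the common value read off from $a'$), while the hypothesis $i_B(v_1)=i_B(v_2)$ gives $\chi(A_1)=\chi(A_2)$. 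The Lie-theoretic \Cref{lemma: lie lemma} then yields $A_1=A_2$, whence $v_1=v_2$; this invocation of \Cref{lemma: lie lemma} is the crux of the argument.

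Finally, the factorization $\pi^{\Fc A}=i_B\circ h_B$ is immediate upon unwinding the definitions: $h_B$ records the residue $\mathrm{res}_D(\nabla)\in\Gamma(D,\mathrm{Lie}(J^p)|_D)$, the morphism $i_B$ applies $g_D=\chi_D\circ dc_{\kappa(\alpha_D)}$, and the composite is exactly the map $\Xi$ of \eqref{eq: Xi a'} defining $\pi^{\Fc A}$. The main obstacle is step three: one must confirm that the transported residues genuinely satisfy both hypotheses of \Cref{lemma: lie lemma} --- the \emph{same} regular centralizer and the \emph{same} Artin--Schreier image --- which requires carefully tracking the Frobenius identification $\mathrm{Lie}(J_{a^p})|_D\cong Fr_D^*\mathrm{Lie}(J_{a'})|_{D'}$ so that the equality $\Fas_J(v_1)=\Fas_J(v_2)$ translates into $AS(A_1)=AS(A_2)$.
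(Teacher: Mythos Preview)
Your proposal is correct and follows essentially the same route as the paper: both construct $i_B$ fiberwise over $a'$ by applying $\chi\circ dc_{\kappa(a^p|_D)}$ to the residue datum, and both derive the factorization $\pi^{\Fc A}=i_B\circ h_B$ directly from the definition of $\Xi$ in \eqref{eq: Xi a'}. The paper's proof is terser---it does not spell out the injectivity argument at all---whereas you supply it explicitly via \Cref{lemma: lie lemma}, which is exactly the mechanism already used for \Cref{lemma: unique res}; your verification of the compatibility square and of the hypotheses of \Cref{lemma: lie lemma} is a welcome expansion of what the paper leaves to the reader.
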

\begin{proof}
    We can assume that $D$ is a single point.
    Let $a'\in A(\omega_{C'}(D'))$ with $a^p=Fr^*a'\in A(\omega_C^p(pD))$. We have $a^p|_{D}\in \Fc_{D}$ and $\tau(a^p|_{D})\in \mathrm{Lie}(J_{a^p})|_{D}$.
        Let $\kappa(a^p|_{D})\in \Fg$ be the Kostant section.
    A point in $p_B^{-1}(a')$ is given by a section of $ \mathrm{Lie}(J_{a^p})|_{D}$ which is in the Artin-Shreier preimage of $\tau(a^p|_{D})$.
    We have the morphisms $ \mathrm{Lie}(J_{a^p}|_D)\xrightarrow{dc_{\kappa(a^p|_{D})}} \Fg\xrightarrow{\chi}\Fc$.
    We define $i_B$ by applying $\chi\circ dc_{\kappa(a^p|_{D})}$ to $p_B^{-1}(a')$.
    The factorization claim follows from the construction of $\pi^{\Fc A}$, especially \eqref{eq: Xi a'}.
\end{proof}

\begin{cor}\label{cor: not surj}
    The injection $i_B$ in \Cref{lemma: b injects to ca} gives rise to an isomorphism $i_B: B_{\sL'}^{ext}\xrightarrow{\sim}\Fc A_{im}$.
    Furthermore, the morphisms $\sH\to \Fc A_{im}\to A(\omega_{C'}(D'))$ are both smooth and surjective.
    In particular, $\Fc A_{im}\neq \Fc A$ whenever $AS_{\Fc}:\Fc \to \Fc$ is not \'etale.
\end{cor}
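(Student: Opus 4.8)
The plan is to reduce the whole statement to a single computation: that the Artin--Schreier-type map $\Fas_J$ entering the definition of $B_{\sL'}^{ext}$ is finite, \'etale and surjective, in sharp contrast to the map $\Fas$ on $\Gamma(D,\Fc_D)$, which by \Cref{lemma: no conn fib} is finite but \emph{not} \'etale. Granting this, I would show that the injection $i_B$ of \Cref{lemma: b injects to ca} is an open immersion, identify its image with $\Fc A_{im}$, transport smoothness and surjectivity across $i_B$, and deduce the last clause by faithfully flat descent. The one genuinely analytic input, isolated at the end, is the surjectivity of the residual factor $h_B\colon\sH\to B_{\sL'}^{ext}$.

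First I would prove that $p_B\colon B_{\sL'}^{ext}\to A(\omega_{C'}(D'))$ is finite, \'etale and surjective. Since $D'$ is finite and reduced, $\Gamma(D',\mathrm{Lie}(J)|_{D'})$ is a finite-dimensional $k$-vector space, and, up to the isomorphism $Fr_D^*$, the map $\Fas_J$ is the endomorphism $x\mapsto x^{[p]}-x$ of the additive group underlying it. Because each $J_{a'}$ is smooth and commutative, the $p$-operation on the abelian restricted Lie algebra $\mathrm{Lie}(J_{a'})$ is additive and Frobenius-semilinear; hence $x\mapsto x^{[p]}$ has vanishing differential, $\Fas_J$ has differential $-\mathrm{id}$ and is therefore \'etale, and being an \'etale homomorphism of a connected commutative group it has open, hence full, image and finite kernel, so it is also finite and surjective. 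As $p_B$ is the base change of $\Fas_J$ along $res_{D'}\circ\tau_{\omega_{C'}(D')}$, it is finite, \'etale and surjective as well. This is exactly the structural reason why, although $\Fas$ on $\Gamma(D,\Fc_D)$ is badly ramified, its lift to the Lie algebra of regular centralizers is not.

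Next I would upgrade $i_B$ to an open immersion. The morphism $pr_1\colon\Fc A\to A(\omega_{C'}(D'))$ is finite by \Cref{lemma: no conn fib}; let $U\subseteq\Fc A$ be its (open) \'etale locus. Since $pr_1\circ i_B=p_B$ is \'etale, $i_B$ factors through $U$, and over $A(\omega_{C'}(D'))$ both $U$ and $B_{\sL'}^{ext}$ are \'etale, so $i_B\colon B_{\sL'}^{ext}\to U$ is a morphism of \'etale $A(\omega_{C'}(D'))$-schemes, hence \'etale; being injective (\Cref{lemma: b injects to ca}) it is an open immersion. Thus $i_B(B_{\sL'}^{ext})$ is open in $\Fc A$ and finite \'etale over the base, and $i_B$ is an isomorphism onto its image. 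Because $\pi^{\Fc A}$ factors through $i_B$, we have $\Fc A_{im}\subseteq i_B(B_{\sL'}^{ext})$ (\Cref{def: def him}); granting the surjectivity of $h_B$ discussed below, this inclusion is an equality, so $i_B\colon B_{\sL'}^{ext}\xrightarrow{\sim}\Fc A_{im}$. The smoothness and surjectivity of $\sH\to\Fc A_{im}$ then follow from \Cref{prop: H pstorsor} together with the definition of the image, while the smoothness and surjectivity of $\Fc A_{im}\to A(\omega_{C'}(D'))$ follow from the previous paragraph via $i_B$. For the final clause, if $\Fc A_{im}=\Fc A$ then $pr_1$ would be \'etale; since $ev_{D'}$ is a surjective linear map of affine spaces, hence faithfully flat, and $pr_1$ is the base change of $\Fas$ along it, \'etaleness would descend to $\Fas$, that is to $AS_{\Fc}$, contradicting \Cref{lemma: no conn fib}. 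Hence $\Fc A_{im}\subsetneq\Fc A$ precisely when $AS_{\Fc}$ is not \'etale.

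The main obstacle is the surjectivity of $h_B\colon\sH\to B_{\sL'}^{ext}$, equivalently the assertion that every admissible residue datum is realized: given $(a',r)\in B_{\sL'}^{ext}$, i.e.\ $r\in\Gamma(D,\mathrm{Lie}(J_{a^p})|_D)$ satisfying the compatibility $AS_J(r)=Fr_D^*\bigl(res_{D'}\tau_{\omega_{C'}(D')}(a')\bigr)$ forced by \eqref{eq:respcurv}, one must produce a log-$J_{a^p}$-connection with $p$-curvature $Fr^*\tau_{\omega_{C'}(D')}(a')$ and residue $r$. This sufficiency of the necessary residue condition is exactly what is encoded by the base $B_{\sL'}^{ext}$ of \cite{li2024tame}, and I would obtain it either by invoking the construction of $\sH$ over $B_{\sL'}^{ext}$ there, or directly from the theory of connections on the smooth commutative group scheme $J_{a^p}=Fr^*J_{a'}$ developed in \Cref{section: two views on residues}, where the solvability of the relevant Artin--Schreier equation on $\mathrm{Lie}(J_{a'})$ is again guaranteed by the \'etaleness established in the first step. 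I stress that this existence must be proved on the nose: by \Cref{rmk: preserves residue} the $\wt{\sP}$-action cannot change residues, so it does not reduce to the pseudo-torsor structure of \Cref{prop: H pstorsor}.
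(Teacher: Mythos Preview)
Your argument tracks the paper's closely: both import the surjectivity of $h_B:\sH\to B_{\sL'}^{ext}$ from \cite{li2024tame}, and you go further by supplying an independent proof that $p_B$ is \'etale (the paper cites \cite[Lem.~5.18]{li2024tame} for the smoothness of $B_{\sL'}^{ext}\to A(\omega_{C'}(D'))$ and then says the corollary follows). Your deduction of the last clause via faithfully flat descent along $ev_{D'}$ is the right way to extract the contrapositive.

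One inaccuracy: $\Gamma(D',\mathrm{Lie}(J)|_{D'})$ is not a single finite-dimensional vector space but a vector bundle over $\Gamma(D',\Fc_{D'})$, and $\Fas_J$ is a \emph{family} of Artin--Schreier maps whose degree depends on the multiplicative rank of $J_c$. For $GL_2$, over the Kostant section the centralizer Lie algebra has basis $\{I,M\}$ with $M$ the companion matrix, and $(aI+cM)^{[p]}=a^pI+c^pM^p$; the degree of $AS$ on the fiber is $p^2$ when $M^p\notin kI$ (distinct eigenvalues) and drops to $p$ on the discriminant. So $\Fas_J$, and hence $p_B$, is \'etale and surjective but \emph{not} finite. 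This does not harm your proof, since you never actually use finiteness downstream, but the phrase ``being an \'etale homomorphism of a connected commutative group it has\ldots finite kernel'' should be recast as a fiberwise statement over $\Gamma(D',\Fc_{D'})$.

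Your step ``$i_B$ factors through the \'etale locus $U$ of $pr_1$'' is correct but implicitly uses that $\Fc A$ is smooth of the same dimension as $A(\omega_{C'}(D'))$: \'etaleness of $pr_1\circ i_B$ at $x$ forces the tangent map of $pr_1$ at $i_B(x)$ to be surjective between equidimensional tangent spaces, hence bijective. With that remark your conclusion that $i_B$ is an \'etale monomorphism, hence an open immersion onto $\Fc A_{im}$, is complete.
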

\begin{proof}
    It is proved in \cite[Lem. 5.18 and the sentence above]{li2024tame} that $\sH$ surjects onto $B_{\sL'}^{ext}$ and that $B_{\sL'}^{ext}\to A(\omega_{C'}(D'))$ is smooth. Combining with \Cref{lemma: b injects to ca} and the construction of $\Fc A\to A(\omega_{C'}(D'))$ as in \Cref{defn: base cA}, the corollary follows.
\end{proof}

Consider the restrictions $\wt{\sP}|_{\Fc A_{im}}$ and $\wt{\sM_{Dol}}(C',D')|_{\Fc A_{im}}$, and view them as stacks over $A(\omega_{C'}(D'))$.

\begin{question}
    Are there $A(\omega_{C'}(D'))$-isomorphisms of the form $\sA\xrightarrow{\sim} \wt{\sP}|_{\Fc A_{im}}$ and $\sX\xrightarrow{\sim} \wt{\sM_{Dol}}(C',D')|_{\Fc A_{im}}$?
    A weaker question: is $\wt{\sP}|_{\Fc A_{im}}$ a Picard stack over $A(\omega_{C'}(D'))$?
\end{question}

\subsection{Semistability}\label{sec: lognah}

\subsubsection{Neutral components of Picard stacks} $\;$

Let $L$ be a line bundle over $C$.
By \cite[Lem. 3.1]{herrero-zhang-semistable}, the Picard stack $\sP(L)$ over $A(L)$ has an open substack $\sP^o(L)$ such that for every geometric point $a$ of $A(L)$, the fiber $\sP^o(L)_a$ is the connected component of $\sP(L)_a$ that contains the netural $J_{a}$-torsor.

Consider the Picard stacks $\sP^o(\omega_C^p(pD))\hookrightarrow \sP(\omega_C^p(pD))$ over $A(\omega_C^p(pD))$.
Let $Fr^*: A(\omega_{C'}(D'))\hookrightarrow A(\omega_C^p(pD))$ be the morphism (which is a closed immersion by \cite[Lem. 5.6]{herrero-zhang-mero}) of Hitchin bases induced by $Fr_C:C\to C'$.
\begin{defn}
    We define the $A(\omega_{C'}(D'))$-stacks 
    \begin{equation}
        \sP_p:=\sP(\omega_C^p(pD))\times_{A(\omega_C^p(pD)), Fr^*} A(\omega_{C'}(D')),
    \end{equation}
    \begin{equation}
        \sP_p^o:=\sP^o(\omega_C^p(pD))\times_{A(\omega_C^p(pD)), Fr^*} A(\omega_{C'}(D')).
    \end{equation}
\end{defn}

\subsubsection{Very good regular connections} $\;$
By the construction of $\sH$ as in \Cref{sssec: constr of h}, there is a morphism of $A(\omega_{C'}(D'))$-stacks $\sH\xrightarrow{forget} \sP_p$ given by forgetting the connections. 
\begin{defn}
    We define the $A(\omega_{C'}(D'))$-stack $\sH^o$ to be the pullback
    \begin{equation}
        \xymatrix{
        \sH^o \ar@{^{(}->}[r] \ar[d] & \sH \ar[d]^-{forget}\\
        \sP^o_p \ar@{^{(}->}[r] & \sP_p.
        }
    \end{equation}
\end{defn}

\begin{notn}
    In the no pole case, the stack analogous to $\sH^o$ is the stack of very good splittings as in \cite[\S3.3]{dCGZ} when $G=GL_r$, and the stack of very good $G$-splittings as in \cite[\S3.2]{herrero-zhang-semistable} for general $G$.
\end{notn}

\subsubsection{Non-emptyness of  $\sH^o$} $\;$

Let $\sP^o\hookrightarrow\sP$ be the open sub-Picard stack over $A(\omega_{C'}(D'))$ given by  neutral components.
Recall the notation $\wt{(-)}$ as in \Cref{sssec: wt not}.
We have the open sub-Picard stack $\wt{\sP^o}\hookrightarrow \wt{\sP}$.
We have shown in \Cref{prop: H pstorsor} that $\sH$ is a $\wt{\sP}$-pseudo-torsor over $\Fc A$.
\begin{prop}\label{prop: pstorsor ho}
The $\Fc A$-stack $\sH^o$ is a smooth $\wt{\sP^o}$-pseudo-torsor.
It is non-empty if $(p,G)$ is good enough as in \Cref{defn: good enough}.
\end{prop}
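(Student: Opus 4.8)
The plan is to deduce everything from the pseudo-torsor structure of $\sH$ established in \Cref{prop: H pstorsor}, together with the behaviour of the Frobenius pullback on Picard stacks and their neutral components. First I would record that $\sH^o\hookrightarrow\sH$ is an open immersion: by definition $\sH^o=forget^{-1}(\sP^o_p)$, and $\sP^o_p\hookrightarrow\sP_p$ is open since neutral components of a smooth commutative group stack are open substacks. Smoothness of $\sH^o$ over $\Fc A$ then follows from the smoothness of $\sH$ over $\Fc A$. Next, the forget morphism $\sH\to\sP_p$ is equivariant for the $\wt{\sP}$-action on $\sH$ and the translation action of $\wt{\sP}$ on $\sP_p$ through the Picard homomorphism $Fr^*\colon\wt{\sP}\to\sP_p$; this is precisely the content of the twist by $Fr^*F$ defining the action in \Cref{lemma: faithful action}. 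Since a homomorphism of Picard stacks carries the identity to the identity and is continuous, $Fr^*(\wt{\sP^o})\subseteq\sP^o_p$, so the action restricts to a $\wt{\sP^o}$-action on $\sH^o=forget^{-1}(\sP^o_p)$.

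For the pseudo-torsor property I would argue fibrewise over a point $(a',s)\in\Fc A$ with $\sH^o_{a',s}\neq\emptyset$, fixing an object $X$. By \Cref{prop: H pstorsor} the map $act(-,X)\colon\sP_{a'}\xrightarrow{\sim}\sH_{a',s}$ is an equivalence, and $gX\in\sH^o$ iff $forget(gX)=Fr^*(g)\cdot forget(X)$ lies in $\sP^o_p$, i.e. iff $Fr^*(g)\in\sP^o_p$. Thus $act(-,X)$ restricts to an equivalence $(Fr^*)^{-1}(\sP^o_p)\xrightarrow{\sim}\sH^o_{a',s}$, and it remains to identify $(Fr^*)^{-1}(\sP^o_p)$ with $\sP^o_{a'}$. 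The inclusion $\supseteq$ is the neutral-component preservation noted above; the reverse inclusion is the key point and amounts to the injectivity of the induced map $\pi_0(Fr^*)\colon\pi_0(\sP_{a'})\to\pi_0(\sP_{p,a'})$ on component groups. I would establish this exactly as in the no-pole case treated in \cite{herrero-zhang-semistable}: the component group is a global invariant of the spectral/cameral cover, insensitive to the residue data at $D$, while $Fr^*$ acts on it by a multiplication-by-$p$-type map that is injective under the standing hypothesis $p\nmid|W|$ of \Cref{context: G}; for $GL_r$ it is literally multiplication by $p$ on $\pi_0=\bZ$, via $\deg(Fr_S^*M)=p\deg M$ on spectral curves. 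Note that $Fr^*$ is \emph{not} surjective on $\pi_0$, which is exactly why non-emptiness of $\sH^o$ is a genuine issue rather than a formal consequence of $\sH\neq\emptyset$.

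The main obstacle is the non-emptiness when $(p,G)$ is good enough, which I would treat in the two cases of \Cref{defn: good enough}. When $p\geq h(G)$, a $W_2(k)$-lift of the log-scheme $C'_{log}$ exists because the obstruction lies in an $H^2$ of a curve and hence vanishes, so $Lift(C',D')\neq\emptyset$ and \Cref{lem: compare schepler} produces an object of $\sH_{\Fo}$ over the origin. Its underlying torsor arises, through the principal $SL_2\to G$, from the extension \eqref{eq: ext of conns} (resp. \eqref{eq: ext of conns sl2}), hence is a torsor under the vector-group part $Fr^*T_{C'}(-D')$ whose $H^1$ is connected, while the finite part is trivial by construction; therefore it lies in the neutral component $\sP^o_p$, and since $\varphi_*$ is a homomorphism of Picard stacks it stays in the neutral component for $G$. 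This exhibits an object of $\sH^o$.

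When $G=\prod_i GL_{r_i}$ I would reduce to $G=GL_r$, an object being a tuple, and construct an element of $\sH^o$ directly from the spectral/BNR description of \Cref{subs: act}: produce a line bundle of neutral degree on the $p$-curvature spectral curve carrying the required $\sD(-D)$-module structure, following the construction of very good splittings in the no-pole case \cite{dCGZ}. The delicate point, and the part I expect to require the most care, is the degree bookkeeping guaranteeing that the underlying line bundle can be normalized into $\sP^o_p$, i.e. that its degree is divisible by $p$; this is precisely what makes the $\prod_i GL_{r_i}$ case work without assuming $p\geq h(G)$, and it is here, rather than in the pseudo-torsor formalism, that the bulk of the argument lies.
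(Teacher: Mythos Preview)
Your plan for the pseudo-torsor assertion and for the non-emptiness when $p\geq h(G)$ is essentially the paper's: the paper also reduces to the injectivity of $\pi_0(Fr^*)$ under the hypothesis $p\nmid|W|$ (citing \cite[Prop.~3.22]{herrero-zhang-semistable}), and for $p\geq h(G)$ it uses \Cref{lem: compare schepler} and the principal $SL_2$ exactly as you outline; your ``$H^1$ of the vector group is connected'' is the paper's ``$\beta$ deforms $\beta^{-1}(1)$ to the trivial torsor''.

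For $G=GL_r$ with possibly $p\leq r$, your sketch is where the paper's proof diverges from ``follow \cite{dCGZ}'', and you should be aware of the two concrete ingredients the paper supplies. First, the paper does not adapt the no-pole construction; it reduces via steps~1 and~3 of \cite[Prop.~4.6]{shen2018tamely} (valid for $p\leq r$) to the formal disk and writes down an explicit nilpotent matrix $A$ with $-1$'s on the first and $(p{-}1)$-th subdiagonals so that $A^p-A$ is the regular nilpotent Jordan block, yielding an object of $\sH_{\Fo}$ with nilpotent residue and $p$-curvature. Second, the degree normalization is not a bare ``divisible by $p$'' check: on the nilpotent spectral curve $S_{o^p}$ (whose reduction is $C$ with multiplicity $r$), the rank-one quotient $L_E|_C$ has vanishing $p$-curvature and residue (because $A$ is strictly lower triangular), hence Cartier-descends to $Fr^*M$; extending $M^\vee$ to $S_{\Fo'}$ and acting by it forces $L_F|_C\cong\cO_C$, whence $\deg_{S_{o^p}}L_F=r\cdot\deg_C(L_F|_C)=0$ by \cite[\S9, Prop.~5]{blr-neron}. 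These two steps are the content of ``the bulk of the argument'' you flag, and neither is visible from the no-pole treatment.
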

\begin{proof}
    The proof for the smooth-pseudo-torsorness is the same as in the no pole case in \cite[Prop. 3.22]{herrero-zhang-semistable}, see also \cite[Lem. 3.9]{dCGZ} for the $GL_r$-case.
    The idea is the following: 
    since $\sH$ is a $\wt{\sP}$-pseudo-torsor, any two objects $X,Y$ in $\sH^o$ in the same fiber over $(a',s)\in \Fc A$ are related by the action of an essentially unique object $E$ in $\sP_{a'}$.
    To prove the first assertion, we 
    are left with showing  that $E$ lies in $\sP_{a'}^o$.
    In the $GL_n$-case, $E$ is identified with a line bundle on a spectral curve $S_{a'}$ over $C'$, and $E\cdot X=Y$ entails that the pullback of $E$ to the spectral curve $S_{a^p}=S_{a'}\times_{C', Fr_{C}}C$ has degree 0 on each irreducible component. But the pullback changes the multidegrees of line bundles by multiplication by $p$, so that $E$ has to also lie in the neutral component of $Pic(S_{a'})$, which is $\sP_{a'}^o$.
    In the general $G$-case it is more subtle because $\sP$ may have torsion components and we need to prove the \textbf{claim} that the Frobenius pullback does not send torsion components in $\sP(\omega_{C'}(D'))$ to the neutral component in $\sP(\omega_C^p(D^p))$. 
    The proof of \cite[Prop. 3.22]{herrero-zhang-semistable} shows that, under our assumption that $p\nmid |W|$, the only component sent by $Fr_C^*$ to the component $\sP^o(\omega_C^p)$ is $\sP^o(\omega_{C'})$.
    The argument in the proof of \cite[Prop. 3.22]{herrero-zhang-semistable} carries verbatim to the log-case, establishing the \textbf{claim} above.
    
    To prove the second assertion, it remains to show that $\sH^o$ is nonempty.

    First assume that $p\geq h(G)$.
In the proof of \Cref{lem: compare schepler}, we have shown that $\sH$ is nonempty for $SL_2$ and that there is a morphism $\varphi: J_{o_{\Fc_{SL_2}}}\to J_{o_{\Fc_{G}}}$, inducing a morphism $\varphi':\sH_{\Fo, SL_2}\to \sH_{\Fo, G}$, where $\Fo\in \Fc A$ denotes the point giving nilpotent residues for connections and $p$-curvatures.
Indeed, more is true. 
Note that the morphism $\beta$ in \eqref{eq: ext of conns sl2} induces a deformation from the trivial $J_{o_{A(SL_2)}^p}$-torsor to $\beta^{-1}(1)$.
Therefore, the proof of \Cref{lem: compare schepler} shows that each $W_2(k)$-lift of $C'$ induces an object in $\sH_{o_{A(SL_2)}^p}^o$.
Since the morphism $Bun_C(J_{o_{A(SL_2)}^p})\to Bun_C(J_{o_{A(G)}^p})$ induced by $\varphi$ sends the trivial torsor to the trivial torsor, we have that $\varphi'$ restricts to a morphism $\sH_{\Fo,SL_2}^o\to \sH_{\Fo,G}^o$, thus the target is nonempty.
    
   We now assume that $G$ is a product of general linear groups. Since the formation of $\sH$ is compatible with Cartesian products of reductive groups, we may assume that $G=GL_r$.
    We first construct an object in $\sH$, i.e. a pair $(E,\nabla)$, where $E$ is a rank $r$ vector bundle and $\nabla$ is a log-connection whose $p$-curvature is regular, see the description of $\sH$ in the $GL_r$-case as in \Cref{lem: h and drreg glr}.
    Indeed, below we construct such a pair $(E,\nabla)$ which furthermore has nilpotent residue and $p$-curvature.
The proof of \cite[Prop. 4.6]{shen2018tamely} provides a way to construct regular log-connections with fixed $p$-curvatures and residues.
    By steps 1 and 3 in the proof of \cite[Prop. 4.6]{shen2018tamely}
     (these steps work also when $p\leq r$), we are reduced to the case where $C=\mathrm{Spec}(k[[t]])$ is a disk and $D$ is the closed point of $C$.
    Upon choosing a trivialization of $\omega_C^p(pD)$, the nilpotent spectral curve $S_{{\Fo}^p}$ is identified with $\mathrm{Spec}(k[[t]][y]/(y^r))$, where the coordinate $y$, when restricted to $D$, is given by the canonical section $(t\partial_t)^{\otimes p}$ of $\Big(T_C(-D)|_D\Big)^{\otimes p}$.
    We would like to construct a log-connection $\nabla$ on the vector bundle $\pi^p_*\cO_{S_{{\Fo}^p}}$ whose residue $\mathrm{res}_D(\nabla)$ is nilpotent, and whose $p$-curvature $\Psi(\nabla)$ saitsifies the identity  $\Psi(\nabla)(t\partial_t)=y$.

    Let $e, ye, ...,y^{r-1}e$ be a basis of $\pi^p_*\cO_{S_{ \Fo^p}}$.
    Let $A$ be the matrix of $\nabla(x\partial_x)$ wrt this basis.
    The requirement that $\mathrm{res}_D(\nabla)$   is nilpotent is satisfied if $A$ is a lower triangular matrix with 0 on the diagonal.
    The requirement that $\Psi(\nabla)(t\partial_t)=y$ is the same as requiring that the matrix $A^p-A$ has $1$ on the lower subdiagonal and $0$ everywhere else. 

We can take $A$ to be the matrix $(a_{i,j})$ where $a_{i,j}=-1$ if $j=i-1$ or $j=i-p$, and $a_{i,j}=0$ otherwise. 
   That is, $A$ is the matrix with $-1$'s on the 1st and $(p-1)$-th subdiagonals, and 0 everywhere else, e.g. 
   \[A=\begin{bmatrix}
       0 & 0 & 0 & 0 & 0 & 0\\
       -1 & 0 & 0 & 0 & 0 & 0\\
       0 & -1 & 0 & 0 & 0 & 0\\
       0 & 0 & -1 & 0 & 0 & 0\\
       -1 & 0 & 0 & -1 & 0 & 0\\
       0 & -1 & 0 & 0 & -1 & 0
   \end{bmatrix}, \; p=5, \; r=6.\]

   We now modify $(E,\nabla)$ to an element $(F,\nabla_F)$ in $\sH^o_{\Fo'}$.
Let $L_E$ be the $\ov{\sD}(-D)$-module on the nilpotent spectral curve $S_{o^p}$ corresponding to $(E,\nabla)$
(cf. \eqref{diag: four corners on sp curves}).
The restriction $L_E|_C$ corresponds to a quotient log-connection $(E_1,\nabla_1)$ of $(E,\nabla)$.
Since $E_1=L_E|_C$ is of rank 1, we have that $\nabla_1$ has vanishing $p$-curvature and residue.
By Cartier descent, $E_1=Fr^*M$ for some line bundle $M$ on $C'$.
By \cite[Lemma 5.11]{LiuQ}, we can extend the dual line bundle $M^{\vee}$ to a line bundle $\wt{M^{\vee}}$ on $S_{\Fo'}$, thus defining an object in the fiber $\sP_{o'}$.
Let $(F,\nabla_F)$ be the image of $(E,\nabla_E)$ under the action of $\wt{M^{\vee}}$.
Let $L_F$ be the corresponding $\ov{\sD}(-D)$-module on $S_{o^p}$.
Since $L_F|_C\cong L_E|_C\otimes Fr^*M^{\vee}\cong \cO_C$ (c.f. \eqref{eq: other def act P}), we have that $\deg_{S_{o^p}}(L_F)=r\deg_C(L_F|_C)=0$ by \cite[\S9, Prop. 5]{blr-neron}.
Therefore, we have that $(F,\nabla_F)\in \sH^o$. 
\end{proof}

By \Cref{prop: pstorsor ho}, the morphism $\sH^o\to \Fc A$ has open image.
\begin{defn}
    Let $\Fc A^o_{im}\subset \Fc A$ be the open subvariety given by the image of $\sH^o$.
\end{defn}

\subsubsection{Semistable Non Abelian Hodge Theorem} $\;$

The following theorem is the log-analogue of \cite[Thm. 4.10]{herrero-zhang-semistable} in the no pole case.

\begin{thm}\label{thm: main sst}
Let $d\in \pi_1(G)$.
   The $\Fc A$-morphism  \eqref{eq: nah mor} restricts to a $\Fc A$-morphism
    \begin{equation}
    \label{eq: nss stack g}
        \mathbf{n}^{ss}: \sH^o\times^{\wt{\sP}^{o}} \wt{\sM}_{Dol}^{ss}(C',D',d) \to \sM_{dR}^{ss}(C, D,pd),
    \end{equation}
    which is an isomorphism over the open subscheme $\Fc A_{im}^o$ of  ${\Fc A}$ defined by the image of $\sH^o$. 

    Furthermore, if $G=GL_r$ or if $G$ satisfies (LH) as in \Cref{def: low height}, then every term in \eqref{eq: nss stack g} has an adequate moduli space and we have a $\Fc A$-morphism of schemes:
\begin{equation}
    \label{eq: very good morphism schemes}
    n^{ss}: H^o\times^{\widetilde{P}^{o}} \widetilde{M}_{Dol}^{ss}(C',D',d)\to M_{dR}^{ss}(C,D,pd), 
\end{equation}
which is an isomorphism over $\Fc A_{im}^o$.    
\end{thm}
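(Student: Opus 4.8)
The plan is to extract everything from the already-constructed $\Fc A$-morphism $\mathbf{n}$ of \Cref{thm: main thm general g}, which is an isomorphism over $\Fc A_{im}$. Since $\sH^o \subseteq \sH$, the image $\Fc A_{im}^o$ of $\sH^o$ satisfies $\Fc A_{im}^o \subseteq \Fc A_{im}$, so $\mathbf{n}$ is \emph{a priori} an isomorphism over $\Fc A_{im}^o$ and is in particular a monomorphism there. The entire content is therefore to verify that, after restricting the source to $\sH^o \times^{\wt{\sP}^{o}} \wt{\sM}_{Dol}^{ss}(C',D',d)$, the morphism $\mathbf{n}$ lands in the semistable degree-$pd$ locus $\sM_{dR}^{ss}(C,D,pd)$, and that over $\Fc A_{im}^o$ it is moreover surjective onto that locus. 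This is the logarithmic counterpart of \cite[Thm.~4.10]{herrero-zhang-semistable}, and I expect the no-pole argument there to transpose once the residue bookkeeping of \Cref{subs: residues} is carried along: residues are constant along the fibers of $\Fc A$ and are preserved by the $\wt{\sP}$-action (\Cref{rmk: preserves residue}), so they never enter the slope inequalities below.

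First I would settle the degree count. For $(E,\nabla_E)\in\sH^o$ over $(a',s)$ and a Higgs bundle $(F,\phi)$ of degree $d$, the bundle underlying $\mathbf{n}\big((E,\nabla_E),(F,\phi)\big)$ is $E\times^{J_{a^p}}Fr^*F$; Frobenius pullback multiplies degrees by $p$, so $\deg Fr^*F=pd$, and because $\sH^o$ is defined so that the forgetful morphism lands in the neutral component $\sP_p^o$, the twisting torsor $E$ contributes degree $0$. Hence the image has degree $pd$, as required. This is exactly the role of the neutral substacks $\sH^o$ and $\wt{\sP}^o$: a twist by a non-neutral component would shift multidegrees non-uniformly, breaking both the degree count and semistability. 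For the preservation of semistability itself, the key mechanism is that the $p$-curvature is a polynomial expression in the connection, so every $\nabla$-invariant subsheaf (resp. parabolic reduction) of $(M,\nabla):=\mathbf{n}\big((E,\nabla_E),(F,\phi)\big)$ is automatically $\Psi(\nabla)$-invariant. By the Frobenius/Cartier descent built into the construction of $\mathbf{n}$ (\cite[Thm.~5.1]{katz1970}, \cite[\S A.7]{chen-zhu}), such subobjects descend bijectively to $\phi$-invariant subobjects of $(F,\phi)$ on $C'$, with degrees scaled by $p$, while the neutral twist by $\wt{\sP}^o$ shifts all relevant degrees uniformly (multidegree $0$ on the spectral curve for $G=GL_r$, degree $0$ on every parabolic reduction in general) and hence preserves slopes. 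Comparing slopes, a subobject destabilizes $(M,\nabla)$ if and only if the corresponding subobject destabilizes $(F,\phi)$, since the defining inequalities differ only by the positive factor $p$. For general $G$ I would run this through parabolic reductions and dominant characters exactly as in \cite[\S4]{herrero-zhang-semistable}. This produces $\mathbf{n}^{ss}$ of \eqref{eq: nss stack g}; over $\Fc A_{im}^o$ the same slope dictionary, combined with the degree count and the neutral-component structure of $\sH^o$, identifies the semistable degree-$pd$ de Rham locus with $\sH^o\times^{\wt{\sP}^o}\wt{\sM}_{Dol}^{ss}(C',D',d)$, so the isomorphism of \Cref{thm: main thm general g} restricts to an isomorphism there.

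Finally, under the hypothesis $G=GL_r$ or (LH), the semistable stacks admit quasi-projective adequate moduli spaces by \cite[Thm.~1.1]{langer2014semistable} and \cite[Thm.~2.26]{herrero-zhang-mero}. Since $\sH^o$ is a smooth $\wt{\sP}^o$-pseudo-torsor (\Cref{prop: pstorsor ho}), the twisted product and the $\wt{\sP}^o$-action descend to the level of adequate moduli spaces, yielding $H^o\times^{\wt{P}^o}\wt{M}_{Dol}^{ss}(C',D',d)$; the universal property of adequate moduli spaces then produces the morphism $n^{ss}$ of \eqref{eq: very good morphism schemes} from $\mathbf{n}^{ss}$, and an isomorphism of stacks over $\Fc A_{im}^o$ descends to an isomorphism of their adequate moduli spaces.

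The hard part will be the semistability comparison for general $G$: checking that $\nabla$-invariant parabolic reductions descend precisely to $\phi$-invariant ones, that the neutral-component twist leaves every relevant degree unchanged so that the factor-$p$ slope dictionary is exact in both directions, and—most delicately—that the neutral source $\sH^o\times^{\wt{\sP}^o}\wt{\sM}_{Dol}^{ss}$ captures the \emph{entire} semistable degree-$pd$ de Rham locus over $\Fc A_{im}^o$ (surjectivity of $\mathbf{n}^{ss}$). The degree count and the descent to moduli spaces are comparatively formal once the cited existence theorems are invoked.
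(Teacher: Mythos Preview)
Your overall strategy matches the paper's: restrict $\mathbf{n}$ from \Cref{thm: main thm general g}, use the neutral-component condition defining $\sH^o$ to get the degree change and semistability preservation, then descend to adequate moduli spaces. The paper packages the semistability step slightly differently, via $\Theta$-semistability in the sense of \cite[\S1.F]{heinloth-hilbertmumford}: the key observation is that because an object $x\in\sH^o_{a',s}$ has underlying $J_{a^p}$-torsor in the neutral component $\sP_p^o$, the composite $forget\circ\mathbf{n}_x:\sM_{Dol}(C',D')_{a'}\to Bun_G(C)$ is \emph{algebraically equivalent} to $(F,\phi)\mapsto Fr^*F$. This single fact yields both the degree change $d\mapsto pd$ (same connected component of $Bun_G(C)$) and the weight identity $wt(f_x^*L_C)=p\cdot wt(f^*L_{Dol})$ for every $f:\Theta\to\sM_{Dol}$, hence the semistability comparison in both directions at once. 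Your direct subobject/parabolic-reduction route is equivalent---indeed the reference \cite[\S4]{herrero-zhang-semistable} you invoke already uses the $\Theta$-formalism---but your phrase ``degree $0$ on every parabolic reduction'' for general $G$ is exactly what the algebraic-equivalence-to-$Fr^*$ argument is needed to justify cleanly.

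One genuine omission: for the scheme-level morphism \eqref{eq: very good morphism schemes} you need the adequate moduli spaces $H^o$ and $\wt{P}^o$ to exist, not just $\wt{M}_{Dol}^{ss}$ and $M_{dR}^{ss}$. The references you cite cover only the latter two; for $H^o$ and $\wt{P}^o$ the paper invokes \cite[p.~716, \S5]{chaudouard-laumon} and \cite[Thm.~4.13]{dCGZ} in the $GL_r$ case, and \cite[Prop.~4.8]{herrero-zhang-semistable} under (LH).
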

\begin{proof}
    The proof of \eqref{eq: nss stack g} is the same as in the no-pole case in \cite[Thm. 4.10]{herrero-zhang-semistable}, which is based on the theory of $\Theta$-semistability.
    
    The idea is the following: for each object in $x$ in $\sH^o_{a',s}$, the isomorphism $\mathbf{n}$ induces an isomorphism of $k$-stacks $\mathbf{n}_x: \sM_{Dol}(C',D')_{a'}\xrightarrow{\sim}\sM_{dR}(C,D)_{a',s}$, and it suffices to show that $\mathbf{n}_x$ preserves semistability and changes degree $d$ to $pd$.
    There are line bundles $L_{Dol}$ and $L_{dR}$ on $\sM_{Dol}(C',D')_{a'}$ and $\sM_{dR}(C',D')_{a'}$ which are the pullbacks of the determinant bundles $L_{C'}$ and $L_C$ on $Bun_G(C')$ and $Bun_G(C)$ as in \cite[\S1.F.a]{heinloth-hilbertmumford}.
    Let $\Theta$ be the quotient stack $\bA^1/\bG_m$.
    Given a line bundle $L$ on $\Theta$, the central fiber $L|_{B\bG_m}$ gives rise to an integer $wt(L)$, which is the weight of the $\bG_m$-action.
    By \cite[\S1.F.c]{heinloth-hilbertmumford}, an object $e$ in $\sM_{Dol}$ is semistable, if for any morphism $f:\Theta\to \sM_{Dol}$ with $f(1)=e$, we have $wt(f^*L_{Dol})\leq 0$. Similarly for objects in $\sM_{dR}$.
    For each morphism $f: \Theta\to \sM_{Dol}(C',D')_{a'}$, we have a composition $f_x:=forget\circ \mathbf{n}_x\circ f: \Theta\to \sM_{dR}(C,D)_{a',s}\to Bun_G(C)$.
     The morphism $\mathbf{n}_x$ preserves semistability if $wt(f_x^*L_{C})=p\cdot wt(f^*L_{Dol})$. 
    
    Since $x$ is in $\sH_{a',s}^{o}$, the morphism $forget\circ \mathbf{n}_x: \sM_{Dol}(C',D')\to Bun_G(C)$ is algebraically equivalent to the morphism $(E,\phi)\mapsto J_{a^p}\times^{J_{a^p}}_{Fr^*c_{E,\phi}}Fr^*E=Fr^*E$.
    
    Let $g:= forget\circ f:\Theta\to Bun_G(C')$.
    We have the morphism $Fr^*\circ g: \Theta\to Bun_G(C)$.
    It then follows that $wt(f_x^*L_C)=wt((Fr^*\circ g)^*L_{C'})=p\cdot wt(g^*L_{C'})=p\cdot wt(f^*L_{Dol})$, as desired, so that $\mathbf{n}_x$ preserves semistability.
    The morphism $\sM_{Dol}\to Bun_G(C)$ defined by $(E,\phi)\mapsto Fr^*E$ changes degree from $d$ to $pd$, so that we also get the degree change claim. 
    
When $G=GL_r$, the existence of the adequate moduli spaces is shown in \cite[Thm. 1.1]{langer2014semistable} (for $\wt{M}_{Dol}^{ss}$ and $M_{dR}^{ss})$, the same proof as in \cite[p.716, \S5]{chaudouard-laumon} (for $\wt{P}^{o,\ov{0}}$), and the same proof as in \cite[Thm. 4.13]{dCGZ} (for $H^o$).
When $G$ is a reductive group satisfying (LH), the existence of the adequate moduli spaces is shown in \cite[Thm. 2.26]{herrero-zhang-mero} (for $\wt{M}_{Dol}^{ss}$ and $M_{dR}^{ss}$), and the same proof as in \cite[Prop. 4.8]{herrero-zhang-semistable} (for $\wt{P}^{o,\ov{0}}$ and $H^o$).
\end{proof}

\section{The vector bundle case: $G=GL_r$}

We unravel the definition of $\sH$, $\sH^o$, the Non Abelian Hodge morphism $\mathbf{n}$, and its semistable analogue $\mathbf{n}^{ss}$ in the $G=GL_r$, and refine the picture according to degrees of various objects.

\subsection{Log-$p$-de Rham-BNR}\label{subs: logdrbnr}\;

We first list the relevant sheaves of non-commutative algebras (cf.  \eqref{eq: noncommalg} and
\eqref{diag: four corners on sp curves}) and then we prove 
the Log-$p$-de Rham-BNR \Cref{lemma: log dr bnr}.

    We have the following Cartesian diagram of $k$-schemes:
    \begin{equation}
    \xymatrix{
    T^{1,*}C(pD)\ar[r]^-{W} \ar[d]_-{\pi^p} & T^*C'(D') \ar[d]^-{\pi'} \\
    C\ar[r]_-{Fr_C} & C',
    }       
    \end{equation}
    where $T^*C'(D')$ is the total space of the line bundle $\omega_{C'}(D')$.
    Note that $T^{1,*}C(pD)$ is the total  space of the line bundle $\omega_C^p(pD)$.
Let $\mathscr{D}_C(-D)$ be the universal enveloping algebra of the Lie algebroid $T_X(-D)$.
Let $Z_{\mathscr{D}_C(-D)}(\mathcal{O}_C)$ be the centralizer of $\mathcal{O}_C$ inside $\mathscr{D}_C(-D)$.
We have a canonical identification:
\begin{equation}\label{eq: centralizer of o}
\pi^p_{*}\mathcal{O}_{T^{1,*}C(pD)}= Z_{\mathscr{D}_C(-D)}(\mathcal{O}_C).
\end{equation}
The identification is the log-version of \cite[Lem. 2.1.1]{BMR08}, and the proof therein, after natural 
and straightforward modifications, also works in the log-case.
Therefore, there is a left $\mathcal{O}_{T^{1,*}C(pD)}$ algebra $\overline{\mathscr{D}}(-D)$ such that $\pi^p_* \overline{\mathscr{D}}(-D)=\mathscr{D}_C(-D)$. 

By \cite[top of p.24]{hablicsek2020hodge}, there is a canonical identification 
\begin{equation}\pi'_*\mathcal{O}_{T^*C'(D')}=Fr_* Z_{\mathscr{D}_C(-D)}(\mathscr{D}_C(-D)).
\end{equation}
Because the morphisms $\pi'$ and $Fr_C$ are affine, the above identification gives rise to a central $\mathcal{O}_{T^*C'(D')}$-algebra $\mathscr{D}(-D)$ such that $\pi'_* \mathscr{D}(-D)=Fr_*\mathscr{D}_C(-D)$.
This is the log-version of the 
celebrated Azumaya algebra in \cite[Thm. 2.2.3]{BMR08}.
The algebra $\sD(-D)$ is studied in detail in \cite{hablicsek2020hodge}, where it is shown that $\sD(-D)$ is a parabolic Azumaya algebra over the log-space $(T^*C'(D'),(\pi')^*D')$ \cite[Lem. 3.2]{hablicsek2020hodge}.
Note that we also have the identity of $\mathcal{O}_{T^*C'(D')}$-algebras
$W_*\overline{\sD}(-D)=\mathscr{D}(-D)$.
This identification gives $\sD(-D)$ a left $W_*\cO_{T^{1,*}C(D^p)}$-module structure.
In summary, we draw the following diagram, marking which algebra is where:
\begin{equation}\label{eq: noncommalg}
    \xymatrix{
    \overline{\mathscr{D}}(-D)& T^{1,*}C(pD)\ar[r]^-{W} \ar[d]_-{\pi^p} & T^*C'(D') \ar[d]^-{\pi'} &\mathscr{D}(-D)=W_*\overline{\mathscr{D}}(-D) \\
    \mathscr{D}_C(-D)=\pi^p_*\overline{\mathscr{D}}(-D) &C\ar[r]_-{Fr_C} & C' & \pi'_* \mathscr{D}(-D)=Fr_*\mathscr{D}_C(-D).
    }
    \end{equation}
 Note that the algebras on the left are not central, the one on the top right corner is central and, for the  bottom-right corner, $\cO_{C'}$ is contained in the center.

Given a $k$-point  $a'$ of $A(\omega_{C'}(D'))$, let $S_{a'}$ be the corresponding spectral curve
for $\omega_{C'}(D')$ in $T^*C'(D')$, and let $S_{a^p}$ be the
degree $r$ spectral curve
in the total space of $\omega_C^p (pD)$ corresponding to $Fr_C^* a'$. 
The corresponding Cartesian diagram is:
\begin{equation}
    \label{diag: four corners on sp curves}
    \xymatrix{
    \overline{\mathscr{D}}(-D)& S_{a^p}\ar[r]^-{W} \ar[d]_-{\pi^p} & S_{a'} \ar[d]^-{\pi'} &\mathscr{D}(-D)=W_*\overline{\mathscr{D}}(-D) \\
    \mathscr{D}_C(-D)=\pi^p_*\overline{\mathscr{D}}(-D) &C\ar[r]_-{Fr_C} & C' & \pi'_*\sD(-D)=Fr_*\mathscr{D}_C(-D).
    }
\end{equation}

The identities in the diagram 
\eqref{diag: four corners on sp curves}
yield the following 
\begin{lemma}[\bf Log-$p$-de Rham-BNR]
    \label{lemma: log dr bnr}
    The fiber category $h^{-1}_{dR}(a')\subset \mathscr{M}_{dR}(C,D)$ is identified with the groupoid of torsion free rank $1$  coherent sheaves on $S_{a^p}$ together with a $\overline{\mathscr{D}}(-D)$-module structure.
    It is also identified with the groupoid of torsion free rank $p$  coherent sheaves on $S_{a'}$ together with a $\sD(-D)$-module structure.
\end{lemma}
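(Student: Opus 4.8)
The plan is to reduce both identifications to the single statement that a log-connection in the fiber $h_{dR}^{-1}(a')$ is the same datum as an $\cO_C$-coherent, torsion-free $\mathscr{D}_C(-D)$-module whose $p$-curvature has Hitchin image $a^p = Fr_C^* a'$, and then to transport this category across the affine morphisms $\pi^p$ and $\pi'$ using the algebra identities recorded in \eqref{eq: noncommalg} and \eqref{diag: four corners on sp curves}.

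First I would recall from \Cref{section: two views on residues} that, since $\mathscr{D}_C(-D)$ is the universal enveloping algebra of the log tangent Lie algebroid $T_C(-D)$, giving a log-connection $(E,\nabla)$ on $C$ is equivalent to giving an $\cO_C$-coherent $\mathscr{D}_C(-D)$-module structure on $E$; the underlying sheaf $E$ being a vector bundle corresponds to $\cO_C$-torsion-freeness on the smooth curve $C$. The $p$-curvature of $\nabla$ is precisely the restriction of the module structure to the central subalgebra, so that under the identification \eqref{eq: centralizer of o} of $Z_{\mathscr{D}_C(-D)}(\cO_C)$ with $\pi^p_* \cO_{T^{1,*}C(pD)}$ (resp. of the center with $\pi'_* \cO_{T^*C'(D')}$) the $p$-curvature endows $E$ with the structure of a coherent sheaf on the total space of $\omega_C^p(pD)$ (resp., after Frobenius descent of the spectral data, of $\omega_{C'}(D')$). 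By Cayley--Hamilton for the $p$-curvature, this sheaf is scheme-theoretically supported on the spectral curve cut out by the characteristic polynomial; belonging to $h_{dR}^{-1}(a')$ is exactly the condition that this characteristic polynomial be $a^p$, i.e. that the support be $S_{a^p}$ (resp. $S_{a'}$).

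Next I would invoke the elementary fact that, for an affine morphism $f\colon X\to Y$ and a sheaf of algebras $\mathcal B$ on $X$, pushforward $f_*$ is an equivalence between $\mathcal B$-modules and $f_*\mathcal B$-modules. Applying this to $\pi^p$ with $\pi^p_*\overline{\mathscr{D}}(-D) = \mathscr{D}_C(-D)$ identifies $\overline{\mathscr{D}}(-D)$-modules with $\mathscr{D}_C(-D)$-modules; applying it to $\pi'$ together with $Fr_C$, and using $\pi'_*\mathscr{D}(-D) = Fr_*\mathscr{D}_C(-D)$, identifies $\mathscr{D}(-D)$-modules with $\mathscr{D}_C(-D)$-modules. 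Combining with the previous paragraph, an object of $h_{dR}^{-1}(a')$ becomes, on the one hand, an $\overline{\mathscr{D}}(-D)$-module whose underlying $\cO_{S_{a^p}}$-sheaf is coherent and torsion-free, and on the other hand a $\mathscr{D}(-D)$-module whose underlying $\cO_{S_{a'}}$-sheaf is coherent and torsion-free; the two descriptions are interchanged by the identity $W_*\overline{\mathscr{D}}(-D) = \mathscr{D}(-D)$.

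The rank count is a generic calculation: $\pi^p\colon S_{a^p}\to C$ is finite flat of degree $r$ and $E$ is locally free of rank $r$, so the $\cO_{S_{a^p}}$-sheaf has generic rank $1$; since $Fr_C$ has degree $p$ and $\pi'\colon S_{a'}\to C'$ has degree $r$, the $\cO_{S_{a'}}$-sheaf has generic rank $p$. Torsion-freeness in each case follows because a torsion subsheaf on the spectral curve would push forward to a torsion subsheaf of the $\cO_C$-locally free $E$, which is impossible; conversely, the pushforward of a torsion-free rank $1$ (resp. rank $p$) sheaf is $\cO_C$-coherent and $\cO_C$-torsion-free, hence locally free on the smooth curve $C$, producing a genuine log-connection. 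The main point requiring care is the verification that the support/characteristic-polynomial condition defining $h_{dR}^{-1}(a')$ matches scheme-theoretic support on $S_{a'}$ (equivalently $S_{a^p}$) -- this rests on the Cartier-type descent of the spectral data of the $p$-curvature from $C$ to $C'$ already used in the definition of $h_{dR}$ -- together with the bookkeeping ensuring these assignments are mutually inverse equivalences of groupoids rather than merely bijections on objects.
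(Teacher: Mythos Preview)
Your proposal is correct and is exactly the approach the paper has in mind: the paper simply states that the proof is the same as in the no-pole case, citing \cite[Prop.~3.15]{groechenig-moduli-flat-connections} and \cite[\S4.1]{dCGZ}, which is precisely the reduction to $\mathscr{D}_C(-D)$-modules followed by transport along the affine morphisms via the algebra identities in \eqref{diag: four corners on sp curves} that you have spelled out. The only subtlety the paper flags explicitly is that the rank in the statement is the Schaub rank of \cite{schaub-courbes-spectrales}, which is what your generic-rank computation delivers.
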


Note that here we need to use the definition of the rank of a torsion free  coherent  sheaf given in \cite[Def. 1.1, 1.2]{schaub-courbes-spectrales}, see also \cite[\S2.4]{de2017support-sln}.
The proof of \Cref{lemma: log dr bnr} is the same as the one  in the no-pole case;
 see \cite[Prop. 3.15]{groechenig-moduli-flat-connections} and \cite[\S4.1]{dCGZ}.

 \begin{notn}
     Given an object $(E,\nabla)$ in $\sM_{dR}(C,D)_{a'}$, we call the corresponding torsion free rank 1 sheaf $L_E$ on $S_{a^p}$ as the de Rham-BNR sheaf corresponding to $(E,\nabla)$.
 \end{notn}

 \begin{remark}\label{rmk: bnr sheaf}
     By the same reasoning as in \cite[Rmk. 2.1.2]{BMR08} in the no pole case, the sheaf $L_E$ is also the BNR sheaf of the $\omega_C^p(pD)$-twisted Higgs bundle $(E,\Psi(\nabla))$.
 \end{remark}
The lemma below follows from \Cref{lemma: constr m} and \Cref{rmk: bnr sheaf}.
\begin{lemma}\label{lem: h and drreg glr}
    The $A(\omega_{C'}(D'))$-stack $\sH_{a'}$ is canonically identified with the substack $\sM_{dR}^{reg}(C,D)$ of $\sM_{dR}(C,D)$ given by objects $(E,\nabla)$ whose de Rham-BNR sheaf is invertible.
\end{lemma}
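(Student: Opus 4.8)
The plan is to deduce the statement almost formally from \Cref{lemma: constr m} together with \Cref{rmk: bnr sheaf}, the only genuine input being the classical Beauville--Narasimhan--Ramanan characterization of regularity in terms of the spectral sheaf. First I would observe that $G=GL_r$ is a product of general linear groups, so hypothesis (i) of \Cref{lemma: constr m} is satisfied and we obtain a canonical $\sP$-equivariant isomorphism of $\Fc A$-stacks $\mathbf{m}\colon \sH\xrightarrow{\sim}\sM_{dR}^{reg}(C,D)$ (compatible with the structural morphisms to $A(\omega_{C'}(D'))$). Consequently it suffices to identify the substack $\sM_{dR}^{reg}(C,D)\subset\sM_{dR}(C,D)$ with the locus of those $(E,\nabla)$ whose de Rham-BNR sheaf $L_E$ on $S_{a^p}$ is invertible.

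By definition $(E,\nabla)$ lies in $\sM_{dR}^{reg}(C,D)$ exactly when its $p$-curvature $(E,\Psi(\nabla))$ is a regular $\omega_C^p(pD)$-twisted Higgs bundle, i.e. defines a section $C\to(\Fg^{reg}/G)_{\omega_C^p(pD)}$. By \Cref{rmk: bnr sheaf}, the de Rham-BNR sheaf $L_E$ coincides with the BNR spectral sheaf of the twisted Higgs bundle $(E,\Psi(\nabla))$ on the spectral curve $S_{a^p}$ (of degree $r$ in the total space of $\omega_C^p(pD)$, attached to $Fr_C^*a'$). Thus the statement reduces to the assertion that an $\omega_C^p(pD)$-twisted Higgs bundle with spectral curve $S_{a^p}$ has everywhere regular Higgs field if and only if its spectral sheaf is a line bundle.

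This last equivalence is the regularity criterion of \cite[Prop. 3.6]{bnr-spectral-curves}, in the torsion-free form of \cite[Prop. 2.1]{schaub-courbes-spectrales} needed because $S_{a^p}$ may be singular or non-reduced. I would record a local proof for completeness: working over a formal disk $\Spec k[[t]]$ around a point $x\in C$ and writing $\pi^p\colon S_{a^p}\to C$, the fiber value $\phi(x)$ is multiplication by the tautological coordinate $y$ on $E\otimes k(x)=L_E/tL_E$, viewed as a module over $k[y]/(\bar f)$ with $\bar f$ the characteristic polynomial of $\phi(x)$. Now $\phi(x)$ is regular precisely when $\bar f$ equals the minimal polynomial of $\phi(x)$, which holds precisely when $L_E/tL_E$ is a cyclic $k[y]/(\bar f)$-module; by Nakayama applied at each point of $(\pi^p)^{-1}(x)$ this is equivalent to $L_E$ being generated by one element there, and since $L_E$ is torsion-free of rank $1$ this is in turn equivalent to $L_E$ being locally free of rank $1$ along $(\pi^p)^{-1}(x)$. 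Letting $x$ range over $C$ gives the claimed equivalence.

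The construction is therefore purely a matter of threading together already-established identifications; the point that requires care, and hence the main obstacle, is the torsion-free BNR regularity criterion invoked in the last step, where one must ensure that the cyclicity-versus-local-freeness argument is valid over the possibly singular, possibly non-reduced spectral curve $S_{a^p}$ rather than only over its smooth locus.
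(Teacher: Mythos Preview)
Your proof is correct and follows exactly the approach the paper indicates: the paper simply states that the lemma follows from \Cref{lemma: constr m} and \Cref{rmk: bnr sheaf}, and you have unpacked precisely this deduction, including the additional (standard) step of identifying regularity of the $p$-curvature Higgs field with invertibility of the BNR spectral sheaf. Your local cyclicity/Nakayama argument for this last equivalence is a welcome clarification that the paper leaves implicit.
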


\subsubsection{}\label{sssec: action of p on mdr}
 In view of \Cref{lemma: log dr bnr}, we can
recast the action \eqref{eq: p action mdr a} of $\mathscr P$ on $\mathscr M_{dR}(C,D)$
as follows: let $L_E$ be a $\ov{\mathscr D}(-D)$ torsion free rank $1$ coherent sheaf on $S_{a^p}$; let $M$ be a line bundle on $S_{a'}$ and define the action by the assignment
\begin{equation}\label{eq: other def act P}
( M, L_E) \mapsto L_E \otimes W^* M.
\end{equation}
 The rhs in \eqref{eq: other def act P}  is indeed a
$\ov{\mathscr D}(-D)$-module on $S_{a^p}$
because, by the projection formula, its $W_*$ push-forward is a ${\mathscr D}(-D)$-module on $S_{a'}$
(cf. \eqref{diag: four corners on sp curves}).

\subsection{$\sH$ dominates $\Fc A$}\label{sub: heth0}$\;$

Recall that $\Fc A = \coprod_{\delta \in \bF_p} \Fc A^{\delta}$ (cf. \eqref{diag: ca and wtmdol}).

\begin{remark}\label{rmk:hd conn}
Let $\sH = \coprod_d \sH (d)$ be the disjoint union according to the degree of connections.
Then $\sH (d) \to \Fc A^{-\ov{d}}$. Each $\sH(d)$ is a union of connected components of $\sH$.  Since $\wt{\sP}(0)^{-\ov{d}}$  is connected (it is so over the dense open 
in $\Fc A^{-\ov{d}}$ of smooth spectral curves and it is smooth over the whole $\Fc A^{-\ov{d}}$), if $\sH(d)$
is non-empty, then it is a $\wt{\sP}(0)^{-\ov{d}}$-torsor by \Cref{prop: H pstorsor}, thus connected.
\end{remark}

By the forthcoming \Cref{prop: irreducible}, $\mathscr M^{ss}_{dR}(C,D,d)$ is integral, hence connected.
If
$\mathscr M_{dR}(C,D,d)$ is connected, then so is $\sH(d)$. We ignore if $\mathscr M_{dR}(C,D,d)$ is connected. In the case $p>r$, we have the following

\begin{lemma}\label{lm: hd}
 Let $p> r$.
The morphism $\sH\to A(\omega_{C'}(D'))$ is surjective.
  For every $d \in \bZ$ the stack $\sH (d)$ is non-empty  and connected  and it dominates $\Fc A^{-\ov{d}}$. In particular, $\sH$ dominates $\Fc A$.
\end{lemma}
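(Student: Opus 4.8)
The plan is to reduce everything to the single claim that $\sH(d)$ is non-empty for every $d \in \bZ$; the remaining assertions then follow formally. The surjectivity of $\sH \to A(\omega_{C'}(D'))$ is already recorded in \Cref{cor: not surj} and needs no hypothesis on $p$. Granting non-emptiness, the connectedness of each $\sH(d)$ is \Cref{rmk:hd conn}, while domination of $\Fc A^{-\ov d}$ follows because $\sH$ is smooth over $\Fc A$ (\Cref{prop: H pstorsor}), so the image of $\sH(d) \to \Fc A^{-\ov d}$ is open, hence dense in the irreducible affine space $\Fc A^{-\ov d}$ once it is non-empty. Summing over $d$ and using $\Fc A = \coprod_{\delta} \Fc A^\delta$ then shows that $\sH$ dominates $\Fc A$.

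To prove non-emptiness in all degrees I would combine two independent ways of moving a given object of $\sH$ between degree components, exploiting $\gcd(r,p)=1$ (which holds since $0 < r < p$). First, fix any $(E_1,\nabla_1) \in \sH(d_1)$, which exists by \Cref{cor: not surj}. Tensoring by a rank-one log-connection $(\cL,\nabla_\cL)$ on $C$ with $\deg \cL = m$ produces $(E_1 \otimes \cL, \nabla_1 \otimes \nabla_\cL)$ of degree $d_1 + rm$, and shifts the $p$-curvature by a scalar, $\Psi(\nabla_1 \otimes \nabla_\cL) = \Psi(\nabla_1) + \Psi(\nabla_\cL)\cdot \mathrm{Id}$. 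Since adding a scalar to a regular element of $\Fgl_r$ preserves regularity, the result again lies in $\sH = \sM_{dR}^{reg}(C,D)$ by \Cref{lem: h and drreg glr}. Crucially, line bundles carrying a log-connection exist in every degree $m$, because $H^1(C,\omega_C(D))=0$ for $D>0$ kills the obstruction to a log-connection. This yields objects of $\sH$ in all degrees $d_1 + r\bZ$, sweeping through every residue class $\delta \in \bF_p$.

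Second, I would use the $\wt{\sP}$-action. Over a point $a'$ whose spectral curve $S_{a'}$ is smooth and integral, tensoring the de Rham--BNR sheaf by a degree-$e$ line bundle via $(M, L_E) \mapsto L_E \otimes W^* M$ (see \eqref{eq: other def act P}) changes the connection degree by $pe$, since $W\colon S_{a^p}\to S_{a'}$ has degree $p$ and Frobenius pullback multiplies degrees of line bundles by $p$, exactly as in the proof of \Cref{prop: pstorsor ho}. To secure a usable point I would apply the domination already available for the non-empty components $\sH(d_1+rm)$ to find inside each an object lying over a point with smooth integral spectral curve, which is a dense open condition on $\Fc A^{\delta}\cong A(\omega_{C'}(D'))$. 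Combining the two moves, the degrees reached form $d_1 + r\bZ + p\bZ = d_1 + \gcd(r,p)\bZ = \bZ$, so $\sH(d)\neq\emptyset$ for all $d$, and the lemma follows by the reductions of the first paragraph.

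The main obstacle I anticipate is the careful bookkeeping of degrees under the two operations: verifying that the $\wt{\sP}$-action changes the connection degree by precisely $p$ times the degree of the acting line bundle forces one to work over points with integral spectral curves and to reconcile the multidegree conventions for possibly non-reduced spectral data; and one must confirm that tensoring by a rank-one log-connection preserves regularity of the $p$-curvature \emph{pointwise} rather than only generically. Both points are essentially the arguments used in \Cref{prop: pstorsor ho} and \Cref{thm: main sst}, so I expect them to go through, the sole role of the hypothesis $p>r$ being to guarantee $p\nmid r$.
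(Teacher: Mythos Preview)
Your proposal is correct and takes a genuinely different route from the paper. The paper invokes Shen's explicit local construction \cite[Prop.~4.6]{shen2018tamely} (this is where $p>r$ enters in the paper's proof) to produce, over a point $a'$ with generic residue at $D$, regular log-connections whose residue eigenvalues realise any prescribed class $\delta\in\bF_p$; this shows each $\Fc A^\delta$ is dominated by some $\sH(d(\delta))$, and then the $\wt\sP$-action over a smooth spectral curve fills in the remaining degrees $d(\delta)+p\bZ$. You replace Shen's construction by the observation that tensoring with a rank-one log-connection of degree $m$ (available in every degree since $H^1(C,\omega_C(D))=0$ for $D>0$) shifts the degree by $rm$ while preserving regularity of the $p$-curvature, and then combine this with the same $\wt\sP$-action and a B\'ezout argument for $\gcd(r,p)=1$. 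Your approach is more self-contained, avoiding the local calculation in \cite{shen2018tamely}, and as you observe it uses only $p\nmid r$; the paper's route has the minor advantage of landing directly over any chosen closed point $a'$ with generic residue, though that extra information is not needed for the lemma. Your remark that the surjectivity of $\sH\to A(\omega_{C'}(D'))$ is already contained in \Cref{cor: not surj}, without any hypothesis on $p$, is also correct.
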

\begin{proof}
The surjectivity assertion
follows from 
\cite[Prop. 4.6]{shen2018tamely},  whose proof uses the hypothesis $p>r$. To prove the second assertion, we first recall what is accomplished in \cite[Prop. 4.6]{shen2018tamely}. For ease of notation, we discuss
the case of one pole $D=\{x\}$; the general case is treated similarly. Fix a closed point $a'\in A(\omega_{C'}(D'))$; choose an $r$-tuple $\alpha=\{a_i\}$ of scalars  in $k$ subject to the constraint $\sum_i a_i=0$; choose a set-theoretic section $\sigma$ of the the Artin-Schreier function $AS: k \to k$, $t \mapsto t^p-t$;
take the $r$-tuple $\beta:=\{b_i:=\sigma (a_i)\}$.
By means of a clever local calculation, S. Shen constructs a regular log-connection $(E,\nabla)$ on $C$ with log-poles at $D$ only, with  characteristic polynomial of its $p$-curvature $h_{dR}((E,\nabla))= a'$, with eigenvalues of the residue of the $p$-curvature $\alpha$, with eigenvalues of the residue of the connection $\beta$, and with dR-BNR sheaf a line bundle $L$ on $S_{a^p}$. 
Note that by the Fuchs' relations, the residual class in $\bF_p$  of $\deg (E)$ equals $-\sum_i b_i$, and that, by Riemann-Roch, we have the following relation on the degrees:
\begin{equation}\label{eq: rel degs}
\deg (L) = p\frac{1}{2}r(r-1)(2g-2 + \deg(D)) + \deg (E).
\end{equation}

If $\alpha$ is chosen so that at least two of the scalars $a_i$ are distinct, then, by choosing the section $\sigma$, we can ensure that the value $\sum_i b_i$ equals any pre-fixed value $\delta \in \bF_p$. In summary, by choosing $\alpha$ generically, we can find elements in $\sH_{a'}$ that map to any pre-assigned connected component $\Fc A^{\delta}.$ By the smoothness of $\sH$ over $\Fc A$, we conclude that
every connected component $\Fc A^{\delta}$ is dominated by some connected component 
 $\sH (d(\delta))$, with $d(\delta) \equiv - \delta \mod (p)$. This proves the dominance assertion.

 In view of \Cref{rmk:hd conn},
it remains to show that $\sH (d)$ is non-empty for every $d \in \bZ$.
 Let $\delta := - \ov{d}$.
Then $d= d(\delta)+ mp$, for some $m \in \bZ$.
Let $a'$ be general; in particular $S_{a'}$ is smooth. Let $M$ be a degree $m$ line bundle on $S_a'$ and let $M_{a^p}$ be its pull-back to $S_{a^p}$.
If $(E,\nabla) \in \sH_{a'}$ is a regular log-conection as above of degree $d(\delta)$,  with associated dR-BNR sheaf $L$, then $L\otimes M_{a^p}$  is the dR-BNR sheaf of a regular connection in $\sH_{a'}$ of degree $d= d(\delta)+mp$ (cf.
\eqref{eq: rel degs}).
\end{proof}

\subsection{The refined theorems in the case $G=GL_r$.}

\begin{thm}\label{thm: main thm glr}
   Assume that $G=GL_r$. When taking into account the degrees, the morphism $\mathbf{n}$ in \Cref{thm: main thm general g} restricts in the following way:
For each $d,d'\in\bZ$, we have a morphism of $\Fc A^{-\ov{d}}$-stacks
\begin{equation}
\label{eq: nah mor -d}
    \mathbf{n}^{-\ov{d}}: \sH(d)\times^{\wt{\sP} (0)^{\ov{0}}}\wt{\sM}_{Dol}(C',D',d')^{-\ov{d}}\to \sM_{dR}\Big(C,D,pd'+d-p\frac{r(1-r)}{2}(2g-2+\mathrm{deg}(D)\Big),
\end{equation}
which restricts to an isomorphism over the open image $\Fc A_{im}^{-\ov{d}}$ of $\sH(d)$ in $\Fc A^{-\ov{d}}$.
\end{thm}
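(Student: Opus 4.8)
The morphism $\mathbf{n}$ and its isomorphy over $\Fc A_{im}$ are already supplied by \Cref{thm: main thm general g}; the content of this refinement is the bookkeeping of degrees, for which the plan is to pass to the Log-$p$-de Rham-BNR picture. First I would record the effect of $\mathbf{n}$ on de Rham-BNR sheaves. Recall from \Cref{lem: h and drreg glr} that an object of $\sH$ over $a'$ is a regular log-connection $(E,\nabla_E)$ whose de Rham-BNR sheaf $L_E$ is an invertible sheaf on $S_{a^p}$, and that under the correspondence of \Cref{lemma: log dr bnr} an object $(F,\phi)$ of $\wt{\sM}_{Dol}(C',D')$ over $a'$ corresponds to a torsion free rank one sheaf $L_F$ on $S_{a'}$, a line bundle over the locus of integral spectral curves. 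Writing $(E_F,\nabla_{E_F})=\mathbf{n}((E,\nabla_E),(F,\phi))$, I claim that
\begin{equation}
L_{E_F}=L_E\otimes W^*L_F
\end{equation}
as sheaves on $S_{a^p}$, where $W\colon S_{a^p}\to S_{a'}$ is the finite morphism of \eqref{diag: four corners on sp curves}. To prove this I would use the canonical section $s$ of \Cref{sssec: two embeddings}, whose de Rham-BNR sheaf is $\cO_{S_{a'}}$, so that $W^*\cO_{S_{a'}}=\cO_{S_{a^p}}$. On the $\wt{\sP}$-orbit of $s$ the morphism $\mathbf{n}$ agrees, by construction and \Cref{lemma: constr m}, with the identification $\mathbf{m}\colon\sH\xrightarrow{\sim}\sM_{dR}^{reg}(C,D)$, which is the identity on de Rham-BNR sheaves. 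Writing a general $L_F$ as the image of $\cO_{S_{a'}}$ under a line bundle $M$ on $S_{a'}$, and invoking $\wt{\sP}$-equivariance together with the description \eqref{eq: other def act P} of the $\sP$-action on $\sM_{dR}(C,D)$ as $L_E\mapsto L_E\otimes W^*M$, the displayed formula follows.

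Next I would compute degrees, which, being locally constant on connected components, may be evaluated on a single representative with integral (indeed smooth) spectral curves over the generic point of the Hitchin base, so that all sheaves involved are honest line bundles. The morphism $W$ is the base change of the relative Frobenius $Fr_C\colon C\to C'$ along $\pi'\colon S_{a'}\to C'$ (see \eqref{diag: four corners on sp curves}), hence finite of degree $p$; thus $\deg(W^*L_F)=p\deg(L_F)$. The Riemann-Roch relation \eqref{eq: rel degs} on $S_{a^p}\to C$, for $\omega_C^p(pD)$ of degree $p(2g-2+\deg(D))$, gives $\deg(L_E)=d+\tfrac{1}{2}r(r-1)\,p(2g-2+\deg(D))$, while the analogous relation on $S_{a'}\to C'$, for $\omega_{C'}(D')$ of degree $2g-2+\deg(D)$, gives $\deg(L_F)=d'+\tfrac{1}{2}r(r-1)(2g-2+\deg(D))$. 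Combining the claim with these identities yields $\deg(L_{E_F})=\deg(L_E)+p\deg(L_F)$, and then, inverting \eqref{eq: rel degs} once more on $S_{a^p}$ via $\deg(E_F)=\deg(L_{E_F})-\tfrac{1}{2}r(r-1)\,p(2g-2+\deg(D))$, a direct substitution produces $\deg(E_F)=pd'+d-p\tfrac{r(1-r)}{2}(2g-2+\deg(D))$, as asserted.

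Finally I would check compatibility of the twisted product with the degree decompositions. Acting on $\sM_{dR}(C,D)$ by a line bundle $M$ of degree $m$ on $S_{a'}$ changes the connection degree by $pm$ (by the same computation), while acting on $\wt{\sM}_{Dol}(C',D')$ by $M$ changes the Higgs degree by $m$; hence the anti-diagonal $\wt{\sP}$-action preserves the pair of fixed degrees $(d,d')$ exactly on the degree-zero component $\wt{\sP}(0)$, which is why the twisted product is formed over $\wt{\sP}(0)$ and lands in a single degree component of $\sM_{dR}(C,D)$, and the isomorphy over $\Fc A_{im}^{-\ov d}$ is inherited from \Cref{thm: main thm general g} upon restricting to the relevant components. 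The step I expect to be the main obstacle is the identification $L_{E_F}=L_E\otimes W^*L_F$: one must ensure that $\mathbf{n}\vert_{\sH}=\mathbf{m}$ and the $\wt{\sP}$-equivariance are matched globally over $\Fc A$ rather than merely fiberwise, and that the passage through $W^*$ (and the degree relation $\deg(W^*L_F)=p\deg(L_F)$) is handled with care away from the integral locus of spectral curves, where $L_F$ is only torsion free of rank one; the degree computation itself is then routine Riemann-Roch.
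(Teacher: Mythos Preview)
Your approach is essentially the same as the paper's: both identify the de Rham-BNR sheaf of $\mathbf{n}\big((E,\nabla_E),(F,\phi)\big)$ as $L_E\otimes W^*L_F$ and then extract the degree via Riemann--Roch. The paper asserts this identification ``by construction'' and runs a short chain of Euler characteristics (using $\pi^p_*W^* = Fr_C^*\pi'_*$ by flat base change) to read off $\deg(Q)=p\deg(E)+\deg(L_F)$ without ever computing $\deg(L_F)$ on the Dolbeault side; your route via $\mathbf{m}$, the canonical section, and $\wt{\sP}$-equivariance is a valid and slightly more explicit justification of the same formula, and your separate Riemann--Roch computations of $\deg(L_E)$ and $\deg(L_F)$ reach the identical conclusion.
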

\begin{proof}
    Let $(F,\nabla_F)$ be an object in $\sH(d)$,  viewed as a regular log-connection on $C$, over $a'$ in $A(\omega_{C'}(D'))$.
Let $(E,\phi)$ be an object in $\sM_{Dol}(C',D',d')$ over $a'$.
Let $(Q,\nabla_Q)$ be the resulting object in $\sM_{dR}(C,D)$ over $a'$.
We need to show that $Q$ is of degree $pd'+d-p\frac{r(1-r)}{2}(2g-2+\mathrm{deg}(D))$. 

Let $L_F$ be the  dR-BNR line bundle on the spectral curve $S_{a^p}$  corresponding to the regular connection $(F, \nabla_F)$; in particular,
$L_F$ pushes-forward to $F$ via the projection $\pi^p: S_{a^p}\to C$.
    Let $L_E$ be the BNR sheaf of $(E,\phi)$ on the spectral curve $S_{a'}$.
    By  construction, we have an identification of   vector bundles $Q=\pi^p_*(L_F\otimes W^*L_E)$.
    In summary, we draw below which sheaf  is on which curve:
    \begin{equation}
    \xymatrix{
    L_F & S_{a^p}\ar[r]^-{W} \ar[d]_-{\pi^p} & S_{a'} \ar[d]^-{\pi'} & L_E \\
    Q=\pi^p_*(L_F\otimes W^*L_E), \;\; F=\pi^p_* L_F &C\ar[r]_-{Fr_C} & C' & E=\pi'_*L_E.
    }
\end{equation}

Let $\chi(-)$ denote the Euler characteristics.  We have the following chain of equality of numbers:
\begin{equation}
\label{eq: many chi calc}
    \chi(Q) \stackrel{1}{=}\chi(L_F\otimes W^*L_E)\stackrel{2}{=}\chi(W^*L_E) +\deg(L_F) \stackrel{3}{=}\chi(\pi^p_* W^*L_E)+\deg(L_F)\stackrel{4}{=}\chi(Fr_C^*\pi'_*L_E)+\deg(L_F)\stackrel{5}{=}\chi(Fr^*E)+\deg(L_F),
\end{equation}
where:
Equalities 1 and 5 are by definition;
    Equality 2: Riemann-Roch;
     Equality 3: $\pi^p$ is affine, thus has no effect on $\chi$;
   Equality 4: Flat base change.

Therefore, we have that $\deg(Q)=p\, \deg(E)+\deg(L_F)=pd'+\deg(L_F)$.
Finally, since $\chi(F)=\chi(L_F)$, Riemann-Roch gives us that $\deg(L_F)=\chi(F)-\chi(\cO_{S_{a^p}})=d-p\frac{r(1-r)}{2}(2g-2+\deg(D))$, and we are done.
\end{proof}

\subsubsection{The stack of very good regular log-connections}\label{subs: reg conn}\;

If $a'$ is a $k$-point, then $\mathscr{H}^o_{a'}$ is the groupoid of rank $r$ log-connections on $C$
with poles at $D$ and with 
log-dR-BNR sheaf a line bundle with class in $Pic^o(S_{a^p})$ (cf. \Cref{lemma: log dr bnr}).  
The same argument as in \cite[Thm. 4.11]{dCGZ} shows that $\sH^o$ is an open substack in the stable part of the de Rham moduli space $\sM_{dR}^{stable}(C,D)$.
A Riemann-Roch calculation,
 relating the degree of a log-connection to the degree of its dR-BNR sheaf, shows that 
    $\mathscr{H}^o$ is an open substack of
\begin{equation}
\label{eq: open in}
\mathscr{M}_{dR}^{stable}\Big(C,D,  d^o:=p\frac{r(1-r)}{2}(2g-2+\mathrm{deg}(D))\Big),
\end{equation}
 so that $\sH^o$
is also an open substack of $\sH(d^o)$. However, $\sH$
is not contained in the stable locus $\sM_{dR}^{stable}(C,D)$. 

The open substack $\sH^o$ of $\sH$ is contained in $\sH(d^o)$ (cf. \eqref{eq: open in}).
We have that $\mathscr P^o \subset 
 \mathscr P (0)$ is the open substack
over $A(\omega_{C'}(D'))$ of line bundles on the spectral curves $S_{a'}$ over $C'$ that are of degree zero on every geometric irreducible component
of $S_{a'}$. Let $\widetilde{\mathscr P}^o\subset \widetilde{\mathscr P} (0)$ be the 
analogous open substack after the base change $\Fc A \to A(\omega_{C'}(D')).$
Let  $\widetilde{\mathscr P}^{o,\ov{0}}\subseteq \widetilde{\mathscr P}^o$ be the 
open substack sitting over  $\Fc A^{\ov 0}$.
\Cref{prop: pstorsor ho} entails that
     $\sH^o$ is a $\wt{\sP}^{o,\ov{0}}$-torsor over a dense open subset of $\Fc A^{\ov 0}$. In particular, $\sH^o$ is integral.
     
We can remember that each term in \Cref{thm: main sst} lies over $\Fc A^{\ov{0}}$ and obtain:

\begin{thm}[\bf Semistable Log-NAHT]
\label{thm: sst dr}
    The $\Fc A$-morphism  \eqref{eq: nah mor} restricts to a $\Fc A^{\ov 0}$-morphism
    \begin{equation}
    \label{eq: nss stack}
        \mathbf{n}^{ss}: \sH^o\times^{\wt{\sP}^{o,\ov{0}}} \wt{\sM}_{Dol}^{ss}(C',D',d)^{\ov{0}} \to \sM_{dR}^{ss}(C, D,pd),
    \end{equation}
    which is an isomorphism over the open and dense subscheme $\Fc A_{im}^{0}$ of  ${\Fc A}^{0}$ defined by the image of $\sH^o$. 
    Furthermore, every term in \eqref{eq: nss stack} has an adequate moduli space and we have a $\Fc A^{0}$-morphism of schemes:
\begin{equation}
    \label{eq: very good morphism schemes}
    N^{ss}: H^o\times^{\widetilde{P}^{o,\ov{0}}} \widetilde{M}_{Dol}^{ss}(C',D',d)^{\ov{0}}\to M_{dR}^{ss}(C,D,pd), 
\end{equation}
which is an isomorphism over $\Fc A_{im}^{0}$.
\end{thm}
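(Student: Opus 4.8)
The plan is to deduce the statement from the general semistable Log-$p$-NAHT of \Cref{thm: main sst} by specializing to $G=GL_r$ and carrying out the degree and $\delta$-component bookkeeping, with the degree formula supplied by \Cref{thm: main thm glr}.

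First I would record where the three factors sit over $\Fc A$. As recalled in \Cref{subs: reg conn}, the substack $\sH^o$ is contained in $\sH(d^o)$ with $d^o=p\frac{r(1-r)}{2}(2g-2+\deg(D))$; since $d^o\equiv 0 \pmod p$ we have $-\ov{d^o}=\ov{0}$, so by \Cref{rmk:hd conn} the morphism $\sH^o\to\Fc A$ factors through the component $\Fc A^{\ov{0}}$. Consequently the base change $\wt{\sP}^o$ restricts over $\Fc A^{\ov{0}}$ to $\wt{\sP}^{o,\ov{0}}$, and the relevant Dolbeault factor is the component $\wt{\sM}_{Dol}^{ss}(C',D',d)^{\ov{0}}$ carrying the label $\delta=\ov{0}=-\ov{d^o}$. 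Since the anti-diagonal action is compatible with the $\delta$-decomposition (cf. \eqref{diag: ca and wtmdol}), the twisted product $\sH^o\times^{\wt{\sP}^{o,\ov{0}}}\wt{\sM}_{Dol}^{ss}(C',D',d)^{\ov{0}}$ is a well-defined $\Fc A^{\ov{0}}$-stack, and the morphism \eqref{eq: nah mor} restricts to the $\mathbf{n}^{ss}$ of \eqref{eq: nss stack}.

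Next I would pin down the target degree. Specializing \Cref{thm: main thm glr} to the values $d=d^o$ (the degree of the connections in $\sH^o$) and $d'=d$ (the Dolbeault degree), the output degree is $pd+d^o-p\frac{r(1-r)}{2}(2g-2+\deg(D))=pd$, so $\mathbf{n}^{ss}$ lands in $\sM_{dR}^{ss}(C,D,pd)$. That $\mathbf{n}^{ss}$ preserves semistability and is an isomorphism over the open image $\Fc A_{im}^{0}$ of $\sH^o$ is inherited verbatim from \Cref{thm: main sst}. By \Cref{prop: pstorsor ho} and the torsor property recorded in \Cref{subs: reg conn}, $\sH^o$ is a $\wt{\sP}^{o,\ov{0}}$-torsor over a dense open subset of $\Fc A^{\ov{0}}$; hence the isomorphism locus $\Fc A_{im}^{0}$ is open and dense in $\Fc A^{\ov{0}}$.

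Finally, for the statement on adequate moduli spaces I would invoke the existence results already used in \Cref{thm: main sst}: the spaces $\wt{M}_{Dol}^{ss}$ and $M_{dR}^{ss}$ exist by \cite[Thm. 1.1]{langer2014semistable}, the quotient for $\wt{P}^{o,\ov{0}}$ by the argument of \cite[\S5]{chaudouard-laumon}, and $H^o$ by \cite[Thm. 4.13]{dCGZ}; then $\mathbf{n}^{ss}$ descends to the morphism of schemes $N^{ss}$ of \eqref{eq: very good morphism schemes}, which is an isomorphism over $\Fc A_{im}^{0}$ because adequate moduli spaces commute with this open restriction. The only genuinely delicate point is the degree and $\delta$-component bookkeeping, namely checking that $\sH^o$, $\wt{\sP}^{o,\ov{0}}$ and $\wt{\sM}_{Dol}^{ss}(C',D',d)^{\ov{0}}$ are mutually compatible over $\Fc A^{\ov{0}}$ and that the twisted product lands in exactly the degree $pd$ component; the substantive content, namely semistability preservation and the existence of the moduli spaces, is already supplied by \Cref{thm: main sst} and the cited works.
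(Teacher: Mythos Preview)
Your proposal is correct and takes essentially the same approach as the paper. The paper's own treatment is a single sentence (``We can remember that each term in \Cref{thm: main sst} lies over $\Fc A^{\ov{0}}$ and obtain:\,\ldots''), and you have simply made explicit the degree and $\delta$-bookkeeping that justifies this sentence: that $d^o\equiv 0\pmod p$ places $\sH^o$ over $\Fc A^{\ov 0}$, that the degree formula of \Cref{thm: main thm glr} yields exactly $pd$ on the de Rham side, and that density of $\Fc A_{im}^{0}$ comes from \Cref{prop: pstorsor ho} as recorded in \Cref{subs: reg conn}.
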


\section{Consequences}\label{sect conseq}

In this section, we record some consequences of \Cref{thm: sst dr} about the geometry of the de Rham-Hitchin morphism.
Throughout this section, we assume $G=GL_r$.

\subsection{Irreducibility, surjectivity, connected fibers}
\label{subs: geom conseq}\;

\subsubsection{The Hodge-Hitchin morphism} $\;$

In some of the proofs below, we use the Hodge-Hitchin morphism $h_{Hod}:\sM_{Hod}(C,D)\to A(\omega_{C'}(D'))\times \bA^1$.
The moduli stack $\sM_{Hod}(C,D)$ parametrize vector bundles with $t$-log-connections, where $t\in k$.
The morphism $h_{Hod}$ interpolates the de Rham-Hitchin morphism (when $t=1$) and a morphism that is universally homeomorphic to the Hitchin morphism (when $t=0$). We refer the reader to \cite[\S2]{deC-H-2022geometry} for more details.

Recall the de Rham-Hitchin morphism $h_{dR}$ and the de Rham-Hitchin-residue morphism $h_{dR}^{\Fc A}$  in \eqref{diag: hcdr bis}.
The following two propositions are used in the proof of \Cref{prop: surj and conn fib}.
In the case $r$ and $d$ are coprime, the proposition below is a special case of \cite[Thm. 1.1]{deC-H-2022geometry}.
\begin{prop}[\bf Irreducibility of de Rham moduli stack/space]
\label{prop: irreducible}
    The semistable stack $\sM_{dR}^{ss}(C,D,d)$ and its moduli space $M_{dR}^{ss}(C,D,d)$ are integral and normal.
    The same is true if we replace ``dR" with ``Dol."
\end{prop}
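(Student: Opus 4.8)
The plan is to reduce both assertions to four local/global properties and to move them between the two sides via the Hodge--Hitchin degeneration. Recall that integrality means reduced and irreducible, while normality is Serre's $(\mathrm{R}_1)+(\mathrm{S}_2)$. I would first establish the four properties on the Dolbeault side, where they are closest to classical, and then transport them to the de Rham side through the Hodge moduli $\sM_{Hod}(C,D)$ and its scaling $\bG_m$-action. The statement for the stacks is then recovered from the statement for the spaces, since over the stable locus the stack is a gerbe over its coarse space, and the strictly semistable locus will be shown to have codimension at least $2$.

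\textbf{The Dolbeault side.} Irreducibility I would get from the Hitchin morphism $h_{Dol}\colon M_{Dol}^{ss}(C,D,d)\to A(\omega_C(D))$, which is proper with irreducible affine base: over the dense open locus of integral spectral curves the fibres are compactified Jacobians of integral curves, hence irreducible, and since $h_{Dol}$ is equidimensional this forces $M_{Dol}^{ss}(C,D,d)$ to be irreducible. For $(\mathrm{R}_1)$, the stable locus is smooth by the deformation theory of log-Higgs bundles on a curve (the obstruction space is the top de Rham/Dolbeault hypercohomology of $\mathrm{ad}(E)$, which vanishes), and a Jordan--H\"older/Harder--Narasimhan dimension count using $g\geq 2$ shows that $M_{Dol}^{ss}\setminus M_{Dol}^{s}$ has codimension $\geq 2$; this gives $(\mathrm{R}_1)$ and generic reducedness. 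For $(\mathrm{S}_2)$ I would invoke the spectral description of \Cref{lemma: log dr bnr}, which realizes the total space as a relative compactified Jacobian over the Hitchin base; such relative compactified Jacobians are Cohen--Macaulay, giving $(\mathrm{S}_2)$. Together these yield that $M_{Dol}^{ss}(C,D,d)$ is normal, hence (being irreducible) integral.

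\textbf{Transfer to the de Rham side.} The projection $t\colon \sM_{Hod}^{ss}(C,D,d)\to \bA^1$ is flat, the fibre over $t=1$ is $\sM_{dR}^{ss}(C,D,d)$, and the fibre over $t=0$ is (universally homeomorphic to) $\sM_{Dol}^{ss}(C,D,d)$. The scaling action $\lambda\cdot(E,\nabla,t)=(E,\lambda\nabla,\lambda t)$ covers multiplication on $\bA^1$, so it trivializes the family away from the origin, giving $\sM_{Hod}^{ss}(C,D,d)\rvert_{\bG_m}\cong \sM_{dR}^{ss}(C,D,d)\times\bG_m$, and its $\lambda\to 0$ limits land in the $t=0$ fibre. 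Consequently every point of $\sM_{Hod}^{ss}$ specializes into the $t=0$ fibre, so the whole space is connected and its stable locus is connected (everything flows to the connected stable Dolbeault locus); being smooth along this dense connected stable locus, $\sM_{Hod}^{ss}(C,D,d)$ is irreducible. An irreducible total space flat over the smooth curve $\bA^1$ has irreducible generic fibre, and the $\bG_m$-action identifies all fibres over $t\neq 0$ with $\sM_{dR}^{ss}(C,D,d)$, which is therefore irreducible. Finally, $(\mathrm{R}_1)$ and $(\mathrm{S}_2)$ are open conditions on the fibres of the flat finite-type morphism $t$; as they hold over $t=0$ they hold on a neighborhood of the origin, and the $\bG_m$-symmetry spreads them to $t=1$, giving normality of $\sM_{dR}^{ss}(C,D,d)$, hence integrality.

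The hard part will be the verification of $(\mathrm{S}_2)$ across the strictly semistable locus, where smoothness fails and one must control the local structure of the adequate moduli space at $S$-equivalence classes; in the coprime case this difficulty is absent and the statement is \cite[Thm.~1.1]{deC-H-2022geometry}, so the real work is to push the Cohen--Macaulay input through the spectral/BNR picture of \Cref{lemma: log dr bnr} for arbitrary $d$. A secondary technical point is to ensure that the $\bG_m$-limits used above actually exist on the semistable moduli space, i.e.\ that the relevant semistable-reduction/properness holds in the present logarithmic characteristic-$p$ setting, so that the specialization-to-$t=0$ argument for connectedness and irreducibility is legitimate.
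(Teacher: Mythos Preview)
Your connectedness/irreducibility argument via the Hodge family and the $\bG_m$-flow is essentially the paper's approach, and it is correct: one shows $\sM_{Hod}^{ss}$ is smooth and connected (all objects flow into the connected Dolbeault fibre by semistable reduction), hence irreducible, and then the $\bG_m$-equivariance forces the $t=1$ fibre $\sM_{dR}^{ss}$ to be connected.

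Where you diverge from the paper, and where your proposal becomes unnecessarily hard, is normality. You restrict the deformation-theoretic smoothness to the \emph{stable} locus and then try to supply $(\mathrm{S}_2)$ on the strictly semistable locus of the \emph{moduli space} via Cohen--Macaulayness of relative compactified Jacobians. The paper avoids this entirely by observing that the whole semistable \emph{stack} $\sM_{dR}^{ss}(C,D,d)$ is smooth, not just its stable part: the obstruction space is the $H^2$ of the two-term complex $\mathrm{ad}(E)\to \mathrm{ad}(E)\otimes\omega_C(D)$, which vanishes on a curve for every object, stable or not (this is \cite[Prop.~5.6]{deC-H-2022geometry}). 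Once the stack is smooth, normality of the adequate moduli space is a one-line citation of Alper \cite[Prop.~5.4.1]{alper_adequate}, and integrality follows from connectedness plus normality via \cite[Prop.~4.13]{laumon2018champs}.

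So the ``hard part'' you flag --- pushing $(\mathrm{S}_2)$ across the strictly semistable locus through the spectral picture --- is not actually needed, and is in fact a genuine difficulty in your route: Cohen--Macaulayness of compactified Jacobians over non-integral spectral curves is not a standard statement, and your openness-in-flat-families argument for $(\mathrm{S}_2)$ would also require knowing that $M_{Hod}^{ss}\to\bA^1$ is flat, which is not automatic from flatness of the stack. The paper's shortcut (smooth stack $\Rightarrow$ normal adequate moduli) sidesteps all of this.
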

\begin{proof}
The ``Dol" case is already known
by  \cite[Prop. 2.9.(2)]{maulik-shen-chi}. We prove the ``dR" case; the same proof works in the ``Dol" case.

By \cite[Prop. 5.6]{deC-H-2022geometry}, we have that $\sM_{dR}^{ss}(C,D,d)$  is smooth and non-empty.
By \cite[Prop. 5.4.1]{alper_adequate}, the adequate moduli space $M_{dR}^{ss}(C,D,d)$ is normal.
    It suffices to prove that $\sM_{dR}^{ss}(C,D,d)$ is irreducible.
By \cite[Prop. 4.13]{laumon2018champs}, a normal connected Noetherian algebraic stack is integral. 
We are thus reduced to showing that $\sM_{dR}^{ss}$ is connected.

By contradiction, assume that $\sM_{dR}^{ss}$ has two nonempty connected components $X$ and $Y$.
View $\sM_{dR}^{ss}$ as a closed substack of the $\bA^1$-stack $\sM^{ss}_{Hod}$.
Consider the $\bG_m$-orbits $\bG_m\cdot X$ and $\bG_m\cdot Y$ of $X$ and $Y$ in $\sM_{Hod}^{ss}$. 
By taking their closure $\ov{\bG_m\cdot X}$ and $\ov{\bG_m\cdot Y}$ in $\sM_{Hod}^{ss}$, we get two irreducible components of $\sM_{Hod}^{ss}$, thus making $\sM_{Hod}^{ss}$ reducible.
Recall that $\sM_{Hod}^{ss}$ is also smooth \cite[Prop. 5.6]{deC-H-2022geometry} 
and note that it is connected:  indeed, $\sM_{Dol}^{ss}$ is connected \cite[Prop. 2.9.(2)]{maulik-shen-chi}
and we can flow objects in $\sM_{Hod}^{ss}$ into $\sM_{Dol}^{ss}$ using the $\bG_m$-action and  semistable reduction \cite[Thm. 5.1]{langer2014semistable}.
We thus obtain a contradiction with \cite[Prop. 4.13]{laumon2018champs}.
\end{proof}

\begin{prop}[\bf Surjectivity of $h_{dR}$ and of $h_{dR}^{\Fc A}$]\label{prop: surj of drhr}
    The  de Rham-Hitchin morphism $h_{dR}:M_{dR}^{ss}(C,D,d)\to A(\omega_{C'}(D'))$ and the  de Rham-Hitchin-residue morphism $h_{dR}^{\Fc A}: M_{dR}^{ss}(C,D,d)\to \Fc A^{- \ov{d}}$ are surjective.
    The same is true if we replace ``dR" with ``Dol."
\end{prop}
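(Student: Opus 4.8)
The plan is to first establish surjectivity of the de Rham–Hitchin morphism $h_{dR}$ onto $A(\omega_{C'}(D'))$ by a degeneration argument through the Hodge moduli, and then to bootstrap to the finer $h_{dR}^{\Fc A}$ by a dimension count. The first thing I would record is that, by \eqref{diag: hcdr bis}, $h_{dR}=pr_1\circ h_{dR}^{\Fc A}$, and that $pr_1\colon\Fc A^{-\ov d}\to A(\omega_{C'}(D'))$ is the base change of $\Fas$ along $ev_{D'}$. Since $AS_{\Fc}$, hence $AS_{\Fc_D}$ and $\Fas$, is finite and surjective by \Cref{lemma: no conn fib}, while $ev_{D'}$ is a (split) surjection of affine spaces, the morphism $pr_1$ is finite and surjective and $\Fc A^{-\ov d}$ is an affine space; in particular $\Fc A^{-\ov d}$ is integral of dimension $\dim A(\omega_{C'}(D'))$.

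For the surjectivity of $h_{dR}$ I would deform to the Dolbeault side via the Hodge–Hitchin morphism $h_{Hod}\colon \sM_{Hod}^{ss}(C,D,d)\to A(\omega_{C'}(D'))\times\bA^1$. The source is smooth and connected, hence integral, by \cite[Prop. 5.6]{deC-H-2022geometry} together with the connectedness argument already used in the proof of \Cref{prop: irreducible}; and $h_{Hod}$ is proper (the $t=1$ restriction is the proper $h_{dR}$, cf. \cite[Thm. 5.9]{langer2021moduli}). Thus its image $Z\subseteq A(\omega_{C'}(D'))\times\bA^1$ is closed and irreducible. Over $t=0$ the morphism $h_{Hod}$ is universally homeomorphic to the (classically surjective) Hitchin morphism, so $Z\supseteq A(\omega_{C'}(D'))\times\{0\}$ and $\dim Z\geq\dim A(\omega_{C'}(D'))$; on the other hand $\sM_{dR}^{ss}(C,D,d)=\sM_{Hod}^{ss}(C,D,d)|_{t=1}$ is non-empty, so $Z$ contains a point with $t=1$ and hence strictly contains $A(\omega_{C'}(D'))\times\{0\}$. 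Irreducibility then forces $\dim Z\geq\dim A(\omega_{C'}(D'))+1$, so $Z=A(\omega_{C'}(D'))\times\bA^1$; restricting to $t=1$ gives that $h_{dR}$ is surjective.

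To upgrade to $h_{dR}^{\Fc A}$ I would argue by dimension. The morphism $h_{dR}^{\Fc A}\colon M_{dR}^{ss}(C,D,d)\to\Fc A^{-\ov d}$ is proper (it is the first link of the proper $h_{dR}$, see the discussion around \eqref{diag: hcdr bis}), and $M_{dR}^{ss}(C,D,d)$ is integral by \Cref{prop: irreducible}; hence its image $Z'$ is a closed integral subvariety of $\Fc A^{-\ov d}$. Since $pr_1\circ h_{dR}^{\Fc A}=h_{dR}$ is surjective and $pr_1$ is finite, $pr_1|_{Z'}$ is finite and surjective, so $\dim Z'=\dim A(\omega_{C'}(D'))=\dim\Fc A^{-\ov d}$. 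A closed integral subvariety of full dimension in the integral $\Fc A^{-\ov d}$ is everything, so $Z'=\Fc A^{-\ov d}$ and $h_{dR}^{\Fc A}$ is surjective. The Dolbeault statements are simpler: there is no Artin–Schreier twist, $\Fc A$ is replaced by the image of $ev_D$ (resp. $ev_{D'}$), and surjectivity of the Hitchin morphism onto $A(\omega_C(D))$ (resp. $A(\omega_{C'}(D'))$) in each degree is classical via the BNR correspondence, while the residue refinement follows from the same finite-cover dimension count together with the Fuchs-type surjectivity of $\chi_D\circ\mathrm{res}_D$ \cite[Thm. 1.2]{manikandan2022criterion}.

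The crux is the surjectivity of $h_{dR}$ itself, i.e. the degeneration step; once it is in place the passage to $h_{dR}^{\Fc A}$ is a formal consequence of properness, integrality and the finiteness of $pr_1$. The delicate inputs in the degeneration are (i) the irreducibility of $\sM_{Hod}^{ss}(C,D,d)$, which rests on its smoothness and connectedness, and (ii) the identification, up to universal homeomorphism, of the fibre of $h_{Hod}$ over $t=0$ with the surjective Hitchin morphism; both are imported from \cite{deC-H-2022geometry}. I would also verify that the properness of $h_{Hod}$ on the semistable locus is genuinely available in the logarithmic setting, since it is exactly what makes the image $Z$ closed and thereby drives the dimension argument.
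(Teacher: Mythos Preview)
Your argument is correct, and it differs from the paper's in the key step. Both proofs reduce to showing that $h_{dR}$ is surjective (the passage to $h_{dR}^{\Fc A}$ via properness, finiteness of $pr_1$, and irreducibility of $\Fc A^{-\ov d}$ is the same in both, though you spell out the dimension count more carefully than the paper does). For the surjectivity of $h_{dR}$ itself, the paper argues by contradiction through fiber dimensions: it invokes \cite[Cor.~8.2]{chaudouard-laumon} to know that all fibers of $h_{Dol}$ on the stable locus have the same dimension $d_h$, and then observes that if $h_{dR}$ were not dominant some de Rham fiber would have dimension $>d_h$; flowing that fiber to $t=0$ under the $\bG_m$-action and using upper semicontinuity of fiber dimension for the proper $h_{Hod}$ produces a contradiction with the known dimension of the nilpotent cone. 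Your argument instead uses only the irreducibility of the Hodge moduli and the closedness of its image: the image $Z\subseteq A(\omega_{C'}(D'))\times\bA^1$ is closed irreducible, contains the whole $t=0$ slice, and meets $t=1$, hence must be everything. This is cleaner and avoids the appeal to Chaudouard--Laumon's equidimensionality; the paper's route, on the other hand, yields fiber-dimension information that is reused later in \Cref{prop: fiber dim of hdr}.

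Two small points to tighten. First, you oscillate between the stack $\sM_{Hod}^{ss}$ and the claim that $h_{Hod}$ is proper; properness holds for the adequate moduli space $M_{Hod}^{ss}$, not for the stack, so the argument should be run on the space (irreducibility there follows from normality plus connectedness, as in \Cref{prop: irreducible}). Second, ``the $t=1$ restriction is proper'' does not by itself imply properness of $h_{Hod}$ over all of $\bA^1$; you correctly flag this at the end, and indeed Langer's results in \cite{langer2021moduli} cover the $t$-connection case, so a direct citation there suffices.
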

\begin{proof}
The ``Dol" case is well-known, due to the BNR correspondence and the properness of the Hitchin morphism, and it is stated here for completeness.
We prove the ``dR" case.
  The morphism $h_{dR}: M_{dR}^{ss}(C,D,d)\to A(\omega_{C'}(D'))$ is proper by \cite[Thm. 5.9]{langer2021moduli}.
    Therefore, the morphism $h_{dR}^{\Fc A}:M_{dR}^{ss}(C,D,d)\to \Fc A^{-\ov{d}}$,
    through which $h_{dR}$ factors, is also proper.
    Since the projection morphism  $\Fc A^{\delta} \to A(\omega_{C'}(D')) $ is finite surjective,
    to reach both surjectivity 
    conclusions, it suffices to show that $h_{dR}: M_{dR}^{ss}(C,D,d)\to A(\omega_{C'}(D'))$ is dominant.

It suffices to show that the restriction of $h_{dR}$ on the stable locus $M_{dR}^s(C,D,d)\to A(\omega_{C'}(D'))$ is dominant.
Since the stable locus $M_{Dol}^s(C,D,d)$, containing the moduli of stable vector bundles of degree $d$, is nonempty,
we have that the stable locus $M_{Hod}^s(C,D,d)$, being open, is necessarily non-empty.
By virtue of the $\mathbb G_m$-action, the de Rham stable locus $M_{dR}^s(C,D,d)$ is also non-empty.
By the same argument as in the proof of \cite[Lem. 5.3]{deC-H-2022geometry}, the stable moduli space $M_{Hod}^s(C,D,d)$ is the $\bG_m$-rigidification of the stack $\sM_{Hod}^s(C,D,d)$, which is smooth over $\bA^1$ by \cite[Prop. 5.6]{deC-H-2022geometry}.
Therefore, the morphism $M_{Hod}^s(C,D,d)\to \bA^1$ is also smooth.
It follows that the stable moduli spaces $M_{dR}^s(C,D,d)$ and $M_{Dol}^s(C,D,d)$,  both being irreducible by \Cref{prop: irreducible} and \cite[Prop. 2.9.(2)]{maulik-shen-chi},  have the same dimension.

By \cite[Cor. 8.2]{chaudouard-laumon}, all fibers of $h_{Dol}: M_{Dol}^{s}(C,D,d)\to A(\omega_{C}(D))$ have the same dimension $d_h$.
Therefore, all fibers of the 0-Hodge-Hitchin $h_{Hod,0}: M_{Dol}^{s}(C,D,d)\to A(\omega_{C'}(D'))$ also have dimension $d_h$, because  $h_{Hod,0}$   differs from $h_{Dol}$ by a universal homeomorphism of the targets, see \cite[\S 7]{deC-H-2022geometry}.

Assume $h_{dR}:M_{dR}^s(C,D,d)\to A(\omega_{C'}(D'))$ is not dominant.
Then there must be a fiber $F_{dR}$ of $h_{dR}$ whose dimension is larger than $d_h$.
Now the 0-limit of $F_{dR}$ under the natural $\bG_m$-action on $M_{Hod}$ lies in the nilpotent fiber $h_{Hod,0}^{-1}({\Fo}')$, which has dimension $d_h$.
We thus obtain a contradiction with the upper-semicontinuity \cite[\href{https://stacks.math.columbia.edu/tag/0D4I}{Tag 0D4I}]{stacks-project} of the fiber dimension of the proper morphism $h_{Hod}:M_{Hod}^{s}(C,D,d)\to A(\omega_{C'}(D'))\times\bA^1$.
\end{proof}

\subsubsection{Connected fibers:
degree $pd$} $\;$

\label{subs: geom conseq bis}\;

\begin{prop}[\bf Degree $pd$: Stein factorization of $h_{dR}(dp)$]\label{prop: surj and conn fib}
The composition of morphisms $M_{dR}^{ss}(C,D,pd) \to cA^{\ov{0}} \to A(\omega_{C'}(D'))$ is the Stein factorization of $h_{dR}:M_{dR}^{ss}(C,D,pd)  \to A(\omega_{C'}(D'))$. In particular, the de Rham-Hitchin-residue morphism $h_{dR}^{\Fc A}: M_{dR}^{ss}(C,D,pd)\to \Fc A^{\ov 0}$ is surjective with geometrically connected fibers.
\end{prop}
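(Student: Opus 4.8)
The plan is to identify the displayed factorization with the Stein factorization of the proper morphism $h_{dR}$, and all connectedness will then come for free from the uniqueness of Stein factorizations. First I would collect the structural inputs. The morphism $pr_1\colon \Fc A^{\ov 0}\to A(\omega_{C'}(D'))$ is finite, being the base change along $ev_{D'}$ of the restriction of $\Fas$, which is finite by \Cref{lemma: no conn fib} and \Cref{defn: base cA}. The source $M_{dR}^{ss}(C,D,pd)$ is integral by \Cref{prop: irreducible}, and $\Fc A^{\ov 0}$ is an integral, normal affine space (cf.\ \eqref{diag: ca and wtmdol}). Finally $h_{dR}$ is proper by \cite[Thm. 5.9]{langer2021moduli}; since $pr_1$ is finite, hence separated, the cancellation property shows $h_{dR}^{\Fc A}$ is proper as well, and it is surjective by \Cref{prop: surj of drhr} (as $-\ov{pd}=\ov 0$).

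The heart of the argument is to show that the \emph{generic} fiber of $h_{dR}^{\Fc A}$ is geometrically integral, and for this I would invoke the semistable Log-$p$-NAHT over the dense open $\Fc A^0_{im}\subset \Fc A^{\ov 0}$. Over $\Fc A^0_{im}$, \Cref{thm: sst dr} provides the $\Fc A^{\ov 0}$-isomorphism $N^{ss}$ between $M_{dR}^{ss}(C,D,pd)$ and the twisted product $H^o\times^{\wt{P}^{o,\ov 0}}\wt{M}_{Dol}^{ss}(C',D',d)^{\ov 0}$, compatibly with the structure morphisms to $\Fc A^{\ov 0}$. Since $\sH^o$ is a $\wt{\sP}^{o,\ov 0}$-torsor over $\Fc A^0_{im}$ by \Cref{prop: pstorsor ho}, the fiber of this twisted product over a geometric point $(a',s)$ is a torsor-twist of the Hitchin fiber $h_{Dol}^{-1}(a')$ of $M_{Dol}^{ss}(C',D',d)$, hence is geometrically isomorphic to it. The generic point of $\Fc A^{\ov 0}$ maps (via the dominant finite $pr_1$) to the generic point of $A(\omega_{C'}(D'))$, where the spectral curve $S_{a'}$ is smooth, so the corresponding Hitchin fiber is a torsor under the abelian variety $\Pic^0(S_{a'})$ and is therefore geometrically integral. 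Consequently the generic fiber of $h_{dR}^{\Fc A}$ is geometrically integral.

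I would then run the Stein formalism. Because $h_{dR}^{\Fc A}$ is proper with integral source, $(h_{dR}^{\Fc A})_*\cO$ is a coherent sheaf of $\cO_{\Fc A^{\ov 0}}$-algebras that is a domain (its sections embed into the function field of $M_{dR}^{ss}(C,D,pd)$), and $\pi\colon \tilde Z:=\Spec_{\Fc A^{\ov 0}}\big((h_{dR}^{\Fc A})_*\cO\big)\to \Fc A^{\ov 0}$ is finite with $\tilde Z$ integral. Over the generic point $\eta$ of $\Fc A^{\ov 0}$, geometric integrality of the fiber $M_\eta$ gives $\cO(\tilde Z_\eta)=H^0(M_\eta,\cO_{M_\eta})=k(\eta)$, so $\pi$ is birational. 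A finite birational morphism onto the normal integral $\Fc A^{\ov 0}$ is an isomorphism, whence $(h_{dR}^{\Fc A})_*\cO=\cO_{\Fc A^{\ov 0}}$. This shows at once that the Stein factorization of $h_{dR}^{\Fc A}$ is trivial, so every fiber of $h_{dR}^{\Fc A}$ is geometrically connected, and that $h_{dR,*}\cO=pr_{1,*}(h_{dR}^{\Fc A})_*\cO=pr_{1,*}\cO_{\Fc A^{\ov 0}}$, so $\Spec_A(h_{dR,*}\cO)=\Fc A^{\ov 0}$; combined with surjectivity this makes the displayed chain the Stein factorization of $h_{dR}$.

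The main obstacle I anticipate is the passage from the dense open $\Fc A^0_{im}$ to all of $\Fc A^{\ov 0}$, and in particular a subtlety special to characteristic $p$: geometric \emph{connectedness} of the generic fiber alone would only force $\pi$ to be a finite universal homeomorphism, which need not be an isomorphism onto a normal base (Frobenius being the cautionary example). This is precisely why I would insist on geometric \emph{integrality} (equivalently, geometric reducedness on top of connectedness) of the generic fiber, which the Log-$p$-NAHT supplies by identifying it with an abelian-variety torsor; this reducedness yields $\cO(\tilde Z_\eta)=k(\eta)$ and hence the birationality needed to conclude via normality.
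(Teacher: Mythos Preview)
Your argument is correct and reaches the same conclusion via a closely related but slightly different route. The paper takes the Stein factorization $M_{dR}^{ss}\xrightarrow{r} B\xrightarrow{x} A(\omega_{C'}(D'))$ of the composite $h_{dR}$, then factors the induced finite surjection $y\colon B\to \Fc A^{\ov 0}$ as a radicial map $f$ followed by a generically \'etale map $g$; it kills $g$ using connectedness of the general fiber of $h_{dR}^{\Fc A}$ (from \Cref{thm: sst dr}) and then kills $f$ using smoothness of $h_{dR}^{\Fc A}$ over a dense open (again from \Cref{thm: sst dr} via BNR). You instead take the Stein factorization of $h_{dR}^{\Fc A}$ itself and argue in one stroke that the generic fiber is geometrically \emph{integral}, which makes $\tilde Z\to \Fc A^{\ov 0}$ finite birational onto a normal target, hence an isomorphism. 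Your single input (geometric integrality of the generic fiber, coming from its identification with an abelian-variety torsor) packages together exactly the two inputs the paper uses separately (geometric connectedness for the separable part, geometric reducedness/smoothness for the radicial part). Your explicit acknowledgment of the characteristic-$p$ pitfall---that mere geometric connectedness would only yield a universal homeomorphism---is precisely the reason the paper needs the extra smoothness step to dispose of $f$; you avoid the radicial/separable decomposition altogether at the cost of needing the slightly stronger generic-fiber statement, which \Cref{thm: sst dr} readily supplies.
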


\begin{proof}
The surjectivity of $h^{\Fc A}_{dR}$ follows from \Cref{prop: surj of drhr}.
    Since the base field $k$ is algebraically closed, by \cite[\href{https://stacks.math.columbia.edu/tag/0387}{Tag 0387}, \href{https://stacks.math.columbia.edu/tag/055H}{Tag 055H}]{stacks-project}, to show that $h_{dR}^{\Fc A}$ has geometrically connected fibers,
    it suffices to show that $h_{dR}^{\Fc A}$ has connected fibers.
    Consider the following diagram of $k$-schemes:
    \begin{equation}
    \xymatrix{
        B\ar[r]^-{f} \ar@/^3pc/[rrd]^-{x} \ar[dr]^-{y} & Z\ar[dr] \ar[d]^-{g}  &\\
        M_{dR}^{ss}(C,D,pd) \ar[r]_-{h_{dR}^{\Fc A}} \ar[u]^-{r} 
        \ar@/_2pc/[rr]^-{h_{dR}}& \Fc A^{\ov 0}\ar[r]_-{pr_1} & A(\omega_{C'}(D')),
        }
    \end{equation}
    where 
    \begin{enumerate}
        \item the morphisms $M_{dR}^{ss}(C,D,pd)\xrightarrow{r} B\xrightarrow{x} A(\omega_{C'}(D'))$ is the Stein factorization \cite[I. (9.1.21.1)]{ega1-springer} of $pr_1\circ h_{dR}^{\Fc A}=h_{dR}$; 
        \item since $pr_1: \Fc A^{\ov 0}\to A(\omega_{C'}(D'))$ is affine, the universal property of the Stein factorization \cite[I. (9.1.21.4)]{ega1-springer} induces a morphism $y: B\to \Fc A^{\ov 0}$;
        \item since $x: B\to A(\omega_{C'}(D'))$ is 
        finite \cite[\href{https://stacks.math.columbia.edu/tag/03H0}{Tag 03H0}]{stacks-project},   $h_{dR}^{\Fc A}$ is 
        surjective  and $r$ is surjective,  we have 
        that $y$ is finite surjective. 
        By  \Cref{prop: irreducible},  $M_{dR}^{ss}(C,D,pd)$ is integral, so that  $B$ is an integral variety. Therefore, we can apply \cite[Lemma 4.4.2]{deCHaiLi} and factor $y$ as $B\xrightarrow{f} Z\xrightarrow{g} \Fc A^{\ov 0}$,
        where $f$ is radicial, $g$ is separable and generically \'etale over $\Fc A^{\ov 0}$, and both $f$ and $g$ are finite surjective.
    \end{enumerate}
    
    By \Cref{thm: sst dr}, the general fiber of $h_{dR}^{\Fc A}$ is connected, thus $g:Z\to \Fc A^{\ov 0}$ has general connected fibers.
    Therefore, the morphism $g$ is finite birational, thus an isomorphism \cite[\href{https://stacks.math.columbia.edu/tag/0AB1}{Tag 0AB1}]{stacks-project}.
     Since $f$ is  radicial, finite, and surjective, it is a homeomorphism \cite[\href{https://stacks.math.columbia.edu/tag/01S4}{Tag 01S4}]{stacks-project}
    and \cite[\href{https://stacks.math.columbia.edu/tag/04DF}{Tag 04DF}]{stacks-project}, so that   $r$ has connected fibers.
    Therefore, the composition $h_{dR}^{\Fc A}=g\circ f\circ r$ has connected fibers.

    To prove the remaining Stein factorization assertion, since $g$ is an isomorphism,  it suffices to show that $f$ is an isomorphism. By \Cref{thm: sst dr}, the morphism $h^{\Fc A}_{dR}$ is smooth over a dense open subset of $\Fc A^{\ov{0}}$ because the same is true for the morphism $h^{\Fc A}_{Dol}$ in view of the BNR correspondence. It follows that the finite radicial  surjective morphism $f$ is birational, hence an isomorphism by \cite[\href{https://stacks.math.columbia.edu/tag/0AB1}{Tag 0AB1}]{stacks-project}.
\end{proof}

\subsection{Fiber dimensions and flatness: degree $pd$}
\label{subs: fltns}\;

 We thank Roberto Fringuelli
for explaining to us the proof of the following lemma.

\begin{lemma}\label{lm: dim fib hitch}
Every irreducible component of every fiber of the Hitchin morphism $h_{Dol}: M_{Dol}^{ss}(C,D,d)\to A(\omega_{C}(D))$ has the same dimension
\begin{equation}
    d_h(r):=r(g-1)+\frac{r(r-1)}{2}\mathrm{deg}(\omega_C(D))+1.
\end{equation}
\end{lemma}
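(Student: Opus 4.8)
The plan is to identify the target value $d_h(r)$ with the arithmetic genus of the spectral curve and then to trap the fiber dimensions between a lower bound coming from irreducibility and an upper bound coming from a contraction to the nilpotent cone. First I would record that, for any $k$-point $a\in A(\omega_C(D))$, the associated degree $r$ spectral cover $\pi\colon S_a\to C$ in the total space of $\omega_C(D)$ satisfies $\pi_*\cO_{S_a}=\bigoplus_{i=0}^{r-1}\omega_C(D)^{-i}$, whence its Euler characteristic is $\chi(\cO_{S_a})=-\tfrac{r(r-1)}{2}\deg(\omega_C(D))-r(g-1)$ and its arithmetic genus is $p_a(S_a)=1-\chi(\cO_{S_a})=d_h(r)$, a quantity independent of $a$. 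When $a$ is general the curve $S_a$ is smooth and $h_{Dol}^{-1}(a)$ is a torsor under $\Pic^0(S_a)$, of dimension $p_a(S_a)=d_h(r)$, so the generic fiber already has the asserted dimension. I would combine this with the equality $\dim M_{Dol}^{ss}(C,D,d)=r^2\deg(\omega_C(D))+1=\dim A(\omega_C(D))+d_h(r)$, where the first equality follows from the smoothness of the dense open stable locus (using integrality from \Cref{prop: irreducible}) and the second from $\dim A(\omega_C(D))=\sum_{i=1}^r h^0(\omega_C(D)^{\otimes i})$ together with $h^1(\omega_C(D)^{\otimes i})=0$ for all $i\ge 1$. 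This pins down $d_h(r)$ as the expected relative dimension.

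With the value in hand the lower bound is automatic: since $M_{Dol}^{ss}(C,D,d)$ is integral by \Cref{prop: irreducible} and $h_{Dol}$ is surjective by \Cref{prop: surj of drhr}, every irreducible component of every nonempty fiber has dimension at least $\dim M_{Dol}^{ss}(C,D,d)-\dim A(\omega_C(D))=d_h(r)$. It therefore remains only to prove the matching upper bound, namely that no component of any fiber exceeds $d_h(r)$.

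For the upper bound I would exploit the scaling action $t\cdot(E,\phi)=(E,t\phi)$ of $\bG_m$. The Hitchin morphism is equivariant for the induced $\bG_m$-action on $A(\omega_C(D))$, which acts with weight $i$ on the summand $H^0(\omega_C(D)^{\otimes i})$; all weights being positive, the origin $0\in A(\omega_C(D))$ is an attracting fixed point with $\lim_{t\to 0}t\cdot a=0$ for every $a$. Since $h_{Dol}$ is projective, hence proper, the orbit family over $\bA^1$ extends across $t=0$, and upper-semicontinuity of fiber dimension for proper morphisms yields $\dim h_{Dol}^{-1}(a)\le \dim h_{Dol}^{-1}(0)$ for all $a$, where $h_{Dol}^{-1}(0)$ is the global nilpotent cone. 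Everything thus reduces to the estimate $\dim h_{Dol}^{-1}(0)\le d_h(r)$, which is the main obstacle: it is precisely the statement that the semistable nilpotent cone has the expected dimension. I would deduce it from the equidimensionality of the fibers of $h_{Dol}$ over the stable locus, of dimension $d_h$, established in \cite[Cor.~8.2]{chaudouard-laumon}, after checking that the value $d_h$ there agrees with $d_h(r)$ via the generic smooth fiber above, and that the strictly semistable part of $h_{Dol}^{-1}(0)$ contributes no component of larger dimension — the latter by stratifying according to Jordan–Hölder type and applying the stable-locus bound in each lower rank. Combining the two bounds shows that every irreducible component of every fiber has dimension exactly $d_h(r)$.
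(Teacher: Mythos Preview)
Your proof is correct and rests on the same two pillars as the paper's: the stable-fiber equidimensionality from \cite[Cor.~8.2]{chaudouard-laumon}, and the estimate $\sum_i d_h(r_i)<d_h(r)$ on the Jordan--H\"older strata of the strictly semistable locus. The packaging differs in two minor ways. First, your detour through the nilpotent cone via the $\bG_m$-action is unnecessary: once you have the lower bound $\ge d_h(r)$ on every component (from integrality), the JH-stratification estimate already gives a contradiction for a hypothetical strictly semistable component of \emph{any} fiber, not just of $h_{Dol}^{-1}(0)$; this is exactly how the paper argues. Second, you decompose a polystable point all the way into its stable JH factors and cite \cite[Cor.~8.2]{chaudouard-laumon} in each lower rank, whereas the paper splits into two summands and inducts on $r$; both routes lead to the same inequality. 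Neither difference affects correctness, but dropping the $\bG_m$-reduction would streamline your write-up.
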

\begin{proof}
    If we replace $M_{Dol}^{ss}(C,D,d)$ with its stable part $M_{Dol}^s(C,D,d)$, then this Lemma is \cite[Cor. 8.2]{chaudouard-laumon}.
     To prove the lemma, it suffices to prove that no Hitchin fiber contains an irreducible component made of strictly semistable Higgs bundles.
     We now prove this statement by induction on the rank $r$.

     When $r=1$, we have that $M_{Dol}^{ss}=M_{Dol}^s$, so that the base case is true.
     
     Suppose that the case $<r$ is proved.
     To derive a contradiction let $a$ be a geometric point of $A(\omega_{C}(D))$ whose Hitchin fiber $h_{Dol}^{-1}(a)$ contains an irreducible component $Y$ made of strictly semistable Higgs bundles.
     The restrictions of $M_{Dol}^{ss}(C,D,d)$ and $M_{Dol}^s(C,D,d)$ coincides over the open $A^{int}(\omega_C(D))\subset A(\omega_C(D))$, parametrizing integral spectral curves.
     Therefore, the spectral curve $S_a$ corresponding to $a$ can be written as $\sum_i n_i S_{m_i}$, where each $S_{m_i}$ is an integral spectral curve of degree $m_i<r$, and $n_i\in\bN_{>0}$ indicating the mulitiplicity of $S_{m_i}$ inside $S_a$.
     The generic point $\eta$ of $Y$ is represented by a polystable Higgs bundle $(E,\phi)$ on $C_{k(\eta)}$.
     Write $(E,\phi)$ as $(E_1,\phi_1)\oplus (E_2,\phi_2)$, where $(E_i,\phi_i)$ is a polystable Higgs bundle of rank $r_i<r$.
     Let $a_i$ be the Hitchin image of $(E_i,\phi_i)$.
     Let $S_{a_i}$ be the spectral curve corresponding to $a_i$.
     We have an identification of spectral curves $S_a=S_{a_1}+S_{a_2}$.
     Therefore, the direct sum defines a morphism $s: h_{Dol}^{-1}(a_1)\times h_{Dol}^{-1}(a_2)\to h_{Dol}^{-1}(a)$.
     The restriction of $s$ over the component $Y$ maps dominantly to the reduction $Y_{red}$ since its image contains $\eta$.
     Therefore, we have that 
     \begin{equation}
         \mathrm{dim}(Y)=d_{h}(r_1)+d_h(r_2)<d_h(r),
     \end{equation}
     where the last inequality follows from an elementary calculation.
     However, because that every fiber of $h_{Dol}$ over the open $A^{int}(\omega_{C'}(D'))$ has fiber dimension $d_h(r)$, and that $M_{Dol}^{ss}(C,D,r)$ is irreducible \cite[Prop. 2.9.(3)]{maulik-shen-chi}, the existence of $Y$ contradicts the upper semicontinuity \cite[\href{https://stacks.math.columbia.edu/tag/02FZ}{Tag 02FZ}]{stacks-project}.
     Therefore, $Y$ does not exist.
\end{proof}

\begin{prop}[\bf Equidimensionality of the fibers
of $h_{dR}$]
\label{prop: fiber dim of hdr}
    Every  irreducible component of every  fiber of the de Rham-Hitchin morphism $h_{dR}: M_{dR}^{ss}(C,D,pd)\to A(\omega_{C'}(D'))$ has  the same dimension $\dim (h_{dR})$.
\end{prop}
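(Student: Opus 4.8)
The plan is to prove that the common fiber dimension equals $d_h(r)$, the value computed in \Cref{lm: dim fib hitch}, by combining a global dimension count (which yields a uniform lower bound) with a $\bG_m$-degeneration inside the Hodge moduli space (which yields the matching upper bound). The argument runs in close parallel to the dimension estimate already used in the proof of \Cref{prop: surj of drhr}, and relies on \Cref{prop: irreducible} for the integrality of the source.

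First I would pin down the dimension of the source and its generic fiber. By \Cref{prop: irreducible} the space $M_{dR}^{ss}(C,D,pd)$ is integral, and its stable locus is a dense open subset; since the smoothness of $M_{Hod}^{s}(C,D,pd)$ over $\bA^1$ (as invoked in \Cref{prop: surj of drhr}) forces the de Rham and Dolbeault stable moduli spaces to have equal dimension, we obtain $\dim M_{dR}^{ss}(C,D,pd)=\dim M_{Dol}^{ss}(C,D,pd)=\dim A(\omega_{C'}(D'))+d_h(r)$, using \Cref{lm: dim fib hitch} and the equality $\dim A(\omega_C(D))=\dim A(\omega_{C'}(D'))$. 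As $h_{dR}$ is surjective by \Cref{prop: surj of drhr}, the dimension theorem for a dominant morphism of irreducible varieties gives, for every irreducible component $Z$ of every fiber, the lower bound $\dim Z\geq \dim M_{dR}^{ss}(C,D,pd)-\dim A(\omega_{C'}(D'))=d_h(r)$.

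For the matching upper bound I would exploit the $\bG_m$-action on $M_{Hod}^{ss}(C,D,pd)$ covering the scaling action on the $\bA^1$-factor of $A(\omega_{C'}(D'))\times\bA^1$. Fix $a'\in A(\omega_{C'}(D'))$ and an irreducible component $Z\subseteq h_{dR}^{-1}(a')=h_{Hod}^{-1}(a',1)$, and set $W:=\overline{\bG_m\cdot Z}$ inside $M_{Hod}^{ss}(C,D,pd)$, so that $\dim W=\dim Z+1$. Since $\{t=0\}$ is an effective Cartier divisor not containing $W$, Krull's principal ideal theorem gives that every component of $W_0:=W\cap\{t=0\}$ has dimension $\geq \dim W-1=\dim Z$, and $W_0$ is nonempty because $h_{Hod}$ is proper, so every point admits a $\bG_m$-limit at $t=0$. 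Under the scaling, the characteristic polynomial of the $p$-curvature is scaled by strictly positive weights, whence $W_0$ lies in the nilpotent fiber $h_{Hod,0}^{-1}({\Fo}')$; by \Cref{lm: dim fib hitch}, transported to $t=0$ through the universal homeomorphism relating $h_{Hod,0}$ to $h_{Dol}$, this fiber has dimension $d_h(r)$. Therefore $\dim Z\leq \dim W_0\leq d_h(r)$, and together with the lower bound we conclude $\dim Z=d_h(r)$, independent of $a'$ and of the chosen component.

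The main obstacle is the upper-bound step: one must verify that the $\bG_m$-flow of $Z$ genuinely limits into the \emph{nilpotent} Hitchin fiber at $t=0$ and that the Cartier-divisor dimension estimate is applied to the correct closure $W$. This hinges on the properness of $h_{Hod}$ together with semistable reduction (as in \Cref{prop: irreducible} and \Cref{prop: surj of drhr}) to guarantee existence of the $0$-limit, and on the positivity of the weights of the scaling action on $A(\omega_{C'}(D'))$, which forces the limiting Hitchin image to be the origin ${\Fo}'$. Once these two points are secured, the remaining estimates are purely dimension-theoretic.
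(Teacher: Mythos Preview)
Your proposal is correct and follows essentially the same approach as the paper: both obtain the upper bound by flowing an irreducible fiber component into the nilpotent cone via the $\bG_m$-action on $M_{Hod}^{ss}$ and invoking \Cref{lm: dim fib hitch}. The only cosmetic difference is in the lower bound, where the paper cites the generic fiber dimension directly from \Cref{thm: sst dr} together with upper semicontinuity, whereas you pass through a global dimension count; both routes are standard and equivalent here.
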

\begin{proof}
 By \cite[Cor. 8.2]{chaudouard-laumon} and \Cref{thm: sst dr},  there is a nonempty open  subset $V \subset A(\omega_{C'}(D'))$ over which $h_{dR}$ has equidimensional fibers of the same dimension $=\dim (h_{dR})$.
    
   By the upper semicontinuity \cite[\href{https://stacks.math.columbia.edu/tag/02FZ}{02FZ}]{stacks-project}, we have that the dimension of every irreducible component of every  fiber of $h_{dR}$ over a point outside $V$ must be larger than or equal to $dim(h_{dR})$.
   If it were larger, just like in the proof of \Cref{prop: irreducible}, we can take any such component $F_{dR}$
   over a closed point $a' \in A(\omega_{C'}(D'))$, form the closure
   $\ov{\mathbb{G}_m \cdot F_{dR}}$. By the properness of $h_{Hod}$, the image 
   $h_{Hod}(\ov{\mathbb{G}_m \cdot F_{dR}})$
   meets $(0_{\mathbb A^1}, 0_{A(\omega_{C'}(D'))})$, so that
   $\ov{\mathbb{G}_m \cdot F_{dR}}$
   meets non-trivially  the nilpotent cone $N_{Dol}^{ss}$ in $M_{Dol}^{ss}$. We obtain a contradiction by the upper-semicontinuity of the dimension fiber of the proper morphism $h_{Hod}$.
\end{proof}

\begin{cor}[\bf Equidimensionality of the fibers
of $h_{dR}^{\Fc A}$]
\label{cor: fiber dim of hdrc}
    The de Rham-Hitchin-residue morphism $h_{dR}^{\Fc A}: M_{dR}^{ss}(C,D,pd)\to \Fc A^{\ov 0}$ has equidimensional fibers of the same dimension.
\end{cor}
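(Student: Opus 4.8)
The plan is to deduce the corollary formally from the equidimensionality of the fibers of $h_{dR}$ established in \Cref{prop: fiber dim of hdr}, exploiting the factorization $h_{dR}=pr_1\circ h_{dR}^{\Fc A}$ through the finite morphism $pr_1\colon \Fc A^{\ov 0}\to A(\omega_{C'}(D'))$ of \eqref{diag: hcdr bis}. Finiteness of $pr_1$ is the crucial input: it is the base change along $ev_{D'}$ of the morphism $\Fas$, which is finite by \Cref{lemma: no conn fib} (recall that $Fr_D^*$ is an isomorphism), and this finiteness is also recorded in the proof of \Cref{prop: surj of drhr}. In particular, every fiber of $pr_1$ is a finite set of points.

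Fixing a geometric point $a'$ of $A(\omega_{C'}(D'))$, I would first decompose the de Rham-Hitchin fiber over $a'$. Writing $pr_1^{-1}(a')=\{x_1,\dots,x_m\}$ for the finitely many points of the topological fiber, one has the disjoint decomposition
\begin{equation*}
h_{dR}^{-1}(a')=\bigsqcup_{i=1}^{m}\,(h_{dR}^{\Fc A})^{-1}(x_i),
\end{equation*}
since preimages of distinct points $x_i$ under $h_{dR}^{\Fc A}$ are disjoint and their union is $(pr_1\circ h_{dR}^{\Fc A})^{-1}(a')=h_{dR}^{-1}(a')$. Each piece $(h_{dR}^{\Fc A})^{-1}(x_i)$ is closed, being a fiber, and its complement is the finite union of the remaining pieces; hence it is clopen inside $h_{dR}^{-1}(a')$.

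The clopen-ness is what makes the argument work: it guarantees that the irreducible components of $h_{dR}^{-1}(a')$ are precisely the irreducible components of the individual pieces $(h_{dR}^{\Fc A})^{-1}(x_i)$, with no component of a fiber of $h_{dR}^{\Fc A}$ either splitting or merging when passed to the larger fiber of $h_{dR}$. Since every point of $\Fc A^{\ov 0}$ arises as such an $x_i$ (for $a'=pr_1(x_i)$), every irreducible component of every fiber of $h_{dR}^{\Fc A}$ is an irreducible component of some fiber of $h_{dR}$. By \Cref{prop: fiber dim of hdr} all of these have the common dimension $\dim(h_{dR})$, which yields the claimed equidimensionality of the fibers of $h_{dR}^{\Fc A}$.

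This reduction is essentially formal, so I do not expect a substantive obstacle; the only subtlety is the one flagged above, namely that one must use the clopen (and not merely closed) nature of the pieces $(h_{dR}^{\Fc A})^{-1}(x_i)$ in order to ensure that the dimensions of the irreducible components of the fibers are unchanged under the finite base change $pr_1$.
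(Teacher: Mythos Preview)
Your argument is correct and is precisely the natural deduction the paper has in mind: the corollary is stated without proof immediately after \Cref{prop: fiber dim of hdr}, and the finiteness of $pr_1$ (hence the clopen decomposition of each $h_{dR}$-fiber into $h_{dR}^{\Fc A}$-fibers) is exactly what makes it an immediate consequence.
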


\begin{cor}[\bf Flatness of $h_{dR}$ and $h_{dR}^{\Fc A}$]
\label{cor: hdr flat}
    Assume $(pd,r)=1$.
    Then the de Rham-Hitchin morphism $h_{dR}:M_{dR}^{ss}(C,D,pd)\to A(\omega_{C'}(D'))$ and the de Rham-Hitchin-residue morphism $h_{dR}^{\Fc A}: M_{dR}^{ss}(C,D,pd)\to \Fc A^{\ov 0}$ are flat.
\end{cor}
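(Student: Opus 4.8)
The plan is to obtain both flatness statements from miracle flatness, whose hypotheses are a regular target, a Cohen--Macaulay source, and fibers of the expected (constant) relative dimension. I would treat $h_{dR}^{\Fc A}$ first and then $h_{dR}$.

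First I would exploit the coprimality hypothesis to control the source. Since $(pd,r)=1$, semistability and stability coincide, so $M_{dR}^{ss}(C,D,pd)=M_{dR}^{s}(C,D,pd)$. As recalled in the proof of \Cref{prop: surj of drhr}, the stable moduli space is the $\bG_m$-rigidification of the stack $\sM_{dR}^{s}(C,D,pd)$, which is smooth by \cite[Prop. 5.6]{deC-H-2022geometry}; hence $M_{dR}^{ss}(C,D,pd)$ is smooth, in particular Cohen--Macaulay, and by \Cref{prop: irreducible} it is integral. On the target side, $\Fc A^{\ov 0}$ is an affine space (cf. the discussion around \eqref{diag: ca and wtmdol}), hence regular, and $A(\omega_{C'}(D'))$ is likewise an affine space, hence regular.

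Next I would pin down the relative dimension. By \Cref{cor: fiber dim of hdrc} every irreducible component of every fiber of $h_{dR}^{\Fc A}$ has one and the same dimension; since the source is integral and $h_{dR}^{\Fc A}$ is surjective by \Cref{prop: surj of drhr}, this common value equals the generic fiber dimension $\dim M_{dR}^{ss}(C,D,pd)-\dim \Fc A^{\ov 0}$, i.e. the expected relative dimension. With the source Cohen--Macaulay, the target regular, and each fiber of expected dimension, miracle flatness (e.g. \cite[\href{https://stacks.math.columbia.edu/tag/00R4}{Tag 00R4}]{stacks-project}) applies at every point and yields flatness of $h_{dR}^{\Fc A}$. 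The same argument, using the equidimensionality of \Cref{prop: fiber dim of hdr} in place of \Cref{cor: fiber dim of hdrc}, proves flatness of $h_{dR}$. Alternatively one factors $h_{dR}=pr_1\circ h_{dR}^{\Fc A}$ and observes that $pr_1\colon \Fc A^{\ov 0}\to A(\omega_{C'}(D'))$ is a finite surjective morphism of smooth varieties of equal dimension, hence flat (again by miracle flatness), so that $h_{dR}$ is a composition of flat morphisms.

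The substantive step is guaranteeing that the source is Cohen--Macaulay: this is precisely where the hypothesis $(pd,r)=1$ enters, as it removes the strictly semistable locus, along which the adequate moduli space is only known to be normal by \Cref{prop: irreducible} and might fail to be Cohen--Macaulay. Once smoothness of the source is secured, the remaining content is the bookkeeping of dimensions feeding into the miracle flatness criterion.
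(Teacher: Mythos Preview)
Your proposal is correct and is essentially the same as the paper's proof: both invoke miracle flatness \cite[\href{https://stacks.math.columbia.edu/tag/00R4}{Tag 00R4}]{stacks-project}, using the smoothness of $M_{dR}^{ss}(C,D,pd)$ under the coprimality hypothesis (the paper cites \cite[Corollary 5.5]{deC-H-2022geometry} directly, while you derive it via the rigidification argument and \cite[Prop. 5.6]{deC-H-2022geometry}), together with the equidimensionality statements \Cref{prop: fiber dim of hdr} and \Cref{cor: fiber dim of hdrc}. Your additional explanation of where the hypothesis $(pd,r)=1$ enters, and the alternative factorization through $pr_1$, are helpful elaborations but do not change the underlying argument.
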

\begin{proof}
    This follows from the smoothness of $M_{dR}^{ss}(C,D,pd)$ \cite[Corollary 5.5]{deC-H-2022geometry}, \Cref{prop: fiber dim of hdr}, \Cref{cor: fiber dim of hdrc}, and miracle flatness \cite[\href{https://stacks.math.columbia.edu/tag/00R4}{Tag 00R4}]{stacks-project}.
\end{proof}

\subsection{Weak Abelian Fibrations}
\label{subs: waf}\;

Recall B.C. Ng\^o's notion
\cite[\S7.1.4]{ngo-lemme-fondamental} of a weak Abelian fibration; see also \cite[\S1.1]{maulik-shen-chi}. 

 Let $S$ be a $k$-scheme. Let $f:M\to S$ be a proper  morphism of  $k$-schemes with $M$ quasi-projective.
Let $g: P\to S$ be a smooth commutative group scheme.
Suppose that there is an action
$S$-morphism $act: P\times_S M\to M$.
Let $P^o$ be the group scheme of neutral components of $P$.
    Let $g^o: P^o\to S$ be the restricted morphism.
    Let $T(P^o)=\cH^{2d-1}(g^0_!\oql)(d)$ be the Tate module.
\begin{defn}[\bf Weak Abelian fibration]
\label{def: weak Abelian fib}
    The data $(S,P,M)$ is called a weak Abelian fibration if the following conditions  are satisfied:
\begin{enumerate}
    \item 
    Every fiber of $g$ is of pure dimension $d$. 
    \item 
    $M$ is of pure dimension $d+dim(S)$.
    \item The action of $P$ on $M$ has affine stabilizers.
    \item 
   The  Tate module  $T(P^o)$ is polarizable as in \cite[\S7.1.4]{ngo-lemme-fondamental}.
\end{enumerate}
\end{defn}

It follows from \cite[\S3.1-3.3]{de2017support-sln} that we have the following weak Abelian fibration: $$(S,P,M)=(A(\omega_{C'}(D')), P^o, M_{Dol}^{ss}(C',D',d)).$$

\begin{prop}[\bf Weak Abelian fibrations]
    \label{prop: weak ab fib}
We have the following weak Abelian fibrations:
    \begin{enumerate}
    \item[(A)] $(S,P,M)=(\Fc A^{\ov{0}},  \wt{P}^{o,\ov{0}}, M_{dR}^{ss}(C,D,pd))$.
    \item[(B)] $(S,P,M)=(A(\omega_{C'}(D')),  P^o, M_{dR}^{ss}(C,D,pd))$.
\end{enumerate}
\end{prop}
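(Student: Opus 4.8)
The plan is to derive both weak Abelian fibrations from the Dolbeault weak Abelian fibration $(A(\omega_{C'}(D')),P^o,M_{Dol}^{ss}(C',D',d))$ recorded just above, feeding in the global geometry of $h_{dR}$ and $h_{dR}^{\Fc A}$ proved in \Cref{prop: surj and conn fib} and \Cref{cor: fiber dim of hdrc}, together with the semistable Log-$p$-NAHT isomorphism of \Cref{thm: sst dr} over the dense open $\Fc A_{im}^{0}\subset\Fc A^{\ov 0}$. Throughout I would use that $pr_1\colon\Fc A^{\ov 0}\to A(\omega_{C'}(D'))$ is finite surjective (\Cref{defn: base cA}, \Cref{lemma: no conn fib}), that $\wt{P}^{o,\ov 0}$ is the base change $pr_1^{*}P^o$, and that $h_{dR}=pr_1\circ h_{dR}^{\Fc A}$.

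First I would dispose of conditions (1) and (4) of \Cref{def: weak Abelian fib}. For (B) they are immediate, since $P^o$ and its action are exactly the Dolbeault ones. For (A) the group scheme $\wt{P}^{o,\ov 0}=pr_1^{*}P^o$ is obtained by finite base change, which preserves purity of the fibre dimension, giving (1); and $T(\wt{P}^{o,\ov 0})=pr_1^{*}T(P^o)$, so a polarization of $T(P^o)$ pulls back to one of $T(\wt{P}^{o,\ov 0})$, giving (4). For condition (2), \Cref{prop: irreducible} shows that $M_{dR}^{ss}(C,D,pd)$ is integral, hence of pure dimension, so only its value must be computed. Over $\Fc A_{im}^{0}$ the isomorphism of \Cref{thm: sst dr} identifies $M_{dR}^{ss}(C,D,pd)$ with the $\sH^o$-twist of $\wt{M}_{Dol}^{ss}(C',D',d)^{\ov 0}$, whose fibres over $\Fc A^{\ov 0}$ have the common Dolbeault fibre dimension $d$; by the equidimensionality of \Cref{cor: fiber dim of hdrc} every fibre of $h_{dR}^{\Fc A}$ then has dimension $d$. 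Since $h_{dR}^{\Fc A}$ is surjective (\Cref{prop: surj and conn fib}) and both $M_{dR}^{ss}(C,D,pd)$ and the affine space $\Fc A^{\ov 0}$ are irreducible, $\dim M_{dR}^{ss}(C,D,pd)=d+\dim\Fc A^{\ov 0}$; as $pr_1$ is finite this equals $d+\dim A(\omega_{C'}(D'))$, proving (2) for both (A) and (B).

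The hard part is condition (3): affineness of the stabilizers must hold over the whole base, whereas \Cref{thm: sst dr} only reduces it to the Dolbeault case over $\Fc A_{im}^{0}$. To reach the remaining points I would use the $p$-curvature morphism $\Psi\colon\sM_{dR}(C,D)\to\sM_{Dol}(C,\omega_C^p(pD))$, which is equivariant for the Picard actions along the Frobenius-pullback homomorphism $Fr^{*}\colon\sP(\omega_{C'}(D'))\to\sP(\omega_C^p(pD))$ (since $\nabla^{can}_{Fr^{*}F}$ has vanishing $p$-curvature, twisting $(E,\nabla)$ by $F$ twists $\Psi(\nabla)$ by $Fr^{*}F$). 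Hence $g\mapsto Fr^{*}(g)$ defines a homomorphism from $\mathrm{Stab}_{P^o}(m)$ to $\mathrm{Stab}_{\sP(\omega_C^p(pD))}(\Psi(m))$ whose kernel lies in $\ker(Fr^{*})$. The target is affine, as affine stabilizers hold for the $GL_r$ meromorphic Hitchin system twisted by $\omega_C^p(pD)$ by the argument of \cite[\S3.1--3.3]{de2017support-sln}; and in BNR terms $\ker(Fr^{*})=\ker\big(W^{*}\colon\Pic(S_{a'})\to\Pic(S_{a^p})\big)$ is finite, being contained in $\Pic(S_{a'})[p]$ because $\mathrm{Nm}\circ W^{*}=[p]$.

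Consequently $\mathrm{Stab}_{P^o}(m)$ is an extension of a subgroup scheme of an affine group scheme by a finite group scheme, hence affine; the identical statement for $\wt{P}^{o,\ov 0}$ follows since base change along $pr_1$ does not change the stabilizer subgroup schemes. This yields (3) for both (A) and (B) and completes the plan. The main obstacle, as indicated, is precisely this extension of the affine-stabilizer property past $\Fc A_{im}^{0}$: the semistable Log-$p$-NAHT is silent outside the image of $\sH^o$, so one genuinely needs the $p$-curvature comparison with the companion $\omega_C^p(pD)$-Hitchin system rather than a direct transport from the Dolbeault side.
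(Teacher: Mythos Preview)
Your argument is correct. For conditions (1), (2) and (4) you proceed essentially as the paper does, with a bit more detail; the paper simply cites \cite[Prop.~4.3.5]{ngo-lemme-fondamental} and \cite[Thm.~3.3.1]{de2017support-sln} for (1) and (4), and for (2) invokes \Cref{prop: irreducible} together with \Cref{thm: sst dr} to pin down the dimension.

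The genuine difference is in (3). You treat it as the hard part and devise an indirect argument: push forward along the $p$-curvature morphism $\Psi$ to the companion $\omega_C^p(pD)$-Hitchin system, use equivariance along $W^{*}=Fr^{*}$, and conclude via affineness of Dolbeault stabilizers plus finiteness of $\ker(W^{*})\subset\Pic(S_{a'})[p]$. This works. The paper, by contrast, dispatches (3) directly and uniformly using the Log-$p$-de Rham-BNR \Cref{lemma: log dr bnr}: a point of $M_{dR}^{ss}$ over $a'$ is a torsion-free $\sD(-D)$-module on $S_{a'}$, the $\sP$-action is tensoring by line bundles, and the standard ``pull back to the normalization of $S_{a'}$ and take determinants'' argument of \cite[Prop.~3.2.1]{de2017support-sln} shows every stabilizer is an extension of a finite group scheme by an affine one. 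So the paper never needs to leave the spectral curve $S_{a'}$ or invoke the companion system; the dR-BNR description is available over the whole base, making your concern about ``extending past $\Fc A_{im}^{0}$'' moot. Your route buys a conceptual explanation via the $p$-curvature compatibility, while the paper's is shorter and parallels the Dolbeault proof verbatim.
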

\begin{proof} 
The criteria (1) and (4) are about the fibers of $P^o$.
(1) is standard, see, e.g. \cite[Proposition 4.3.5]{ngo-lemme-fondamental}.
(4) is given by \cite[Thm. 3.3.1]{de2017support-sln}.

For (3), we use the BNR correspondence in case (A),
the dR-BNR \Cref{lemma: log dr bnr} in case (B), to relate log-Higgs or log-connections to certain torsion free sheaves on spectral curves.
By pulling-back to the normalization of $S_{a'}$, and by then taking determinants, we see that the stabilizer of the action is an extension of a finite group scheme by an affine group scheme, so that the stabilizer is affine. See also \cite[Prop. 3.2.1]{de2017support-sln}.

Item (2)  follows from the irreducibility (hence pure-dimensionality) of the de Rham moduli space \Cref{prop: irreducible}, and from \Cref{thm: sst dr}, which ensures that the relative dimension is as predicated. 
\end{proof}

\subsection{Cohomological Consequences}\;

\subsubsection{Full support in prime characteristic} $\;$

Given a proper morphism of $k$-varieties $f:X\to Y$,
The Decomposition Theorem of \cite{bbdg} entails that the derived direct image  complex $f_*\IC_X$ decomposes into a direct sum of shifted simple perverse sheaves, each having  support an integral closed subvariety of $Y$.
We say that the complex $f_*\IC_X$ has full support if
the simple perverse sheaves appearing in the Decomposition Theorem  are supported on $Y$.

For any degree $d$, let $h_{Dol}:M_{Dol}(C,D,d,r)\to A(\omega_{C}(D))$ be the Hitchin morphism on the moduli space.
When the field $k =\bC$, it is proved in \cite[Thm. 0.4]{maulik-shen-chi} that $h_{Dol,*}\IC$ has full support.
We note in \Cref{tanto} that the same result holds in characteristic $p>r$. Whereas no essential new idea is needed, we include a proof for completeness. We use \Cref{tanto}
in the proof of \Cref{thm: summary}.(4).

\begin{thm}[\bf Full support when $p>r$]\label{tanto}
    Assume that $p>r$.
    Then $h_{Dol,*}\IC$ has full support.
\end{thm}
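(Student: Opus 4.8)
The plan is to transport the proof of the full support theorem over $\bC$ \cite[Thm.~0.4]{maulik-shen-chi} to characteristic $p$, checking that every ingredient it uses is available and characteristic-independent once $p>r$. The two structural tools are the Decomposition Theorem and Ng\^o's support inequality. First I would record the decomposition: by \Cref{prop: irreducible} the moduli space $M_{Dol}^{ss}(C,D,d,r)$ is integral and normal, so $\IC$ is a pure (shifted) perverse sheaf; since the Dolbeault Hitchin morphism on the semistable moduli space is proper (well known, via the BNR correspondence and properness of the compactified Jacobian fibration), BBDG \cite{bbdg} applies with $\oql$-coefficients and yields $h_{Dol,*}\IC \cong \bigoplus_i \IC_{Z_i}(\cL_i)[n_i]$ for integral closed subvarieties $Z_i \subseteq A(\omega_C(D))$. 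The assertion to prove is that every such support $Z_i$ equals $A(\omega_C(D))$.

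Next I would invoke the weak abelian fibration structure. By \cite[\S3.1--3.3]{de2017support-sln}, together with the equidimensionality of the Hitchin fibers \Cref{lm: dim fib hitch} and the pure-dimensionality of $M_{Dol}^{ss}$ \Cref{prop: irreducible}, the triple $(A(\omega_C(D)),P^o,M_{Dol}^{ss}(C,D,d,r))$ is a weak abelian fibration in the sense of \Cref{def: weak Abelian fib}, exactly like the Dolbeault example recorded immediately before \Cref{prop: weak ab fib}. Ng\^o's support inequality \cite[\S7.1--7.2]{ngo-lemme-fondamental} then bounds, for each support $Z$, its codimension by the generic value $\delta_Z$ along $Z$ of the dimension of the affine part of the fibers of $P^o$, while $\delta$-regularity of the Hitchin base (the codimension estimate $\on{codim}(A_{\geq\delta})\geq\delta$, which holds verbatim in char $p>r$ since the degree-$r$ spectral curves and their $\delta$-stratification are insensitive to the characteristic once $r<p$) bounds these same codimensions from below. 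Combining the two forces each support $Z$ to be either all of $A(\omega_C(D))$ or a component of the locus $A_{\geq\delta_Z}$ of codimension exactly $\delta_Z>0$.

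To eliminate the proper supports I would run the $GL_r$-specific step of \cite{maulik-shen-chi}: the freeness of the translation action by the relative Picard, together with the absence of proper endoscopic contributions for $GL_r$, shows that a summand supported on a positive-codimension stratum would be incompatible with the Poincar\'e--Verdier self-duality of $h_{Dol,*}\IC$ and with the dimension count forced by the abelian-variety fibers over the elliptic (smooth, integral spectral curve) locus. This argument is purely cohomological: it uses only the smoothness of $M_{Dol}^{ss}$ \cite[Cor.~5.5]{deC-H-2022geometry}, duality, and the geometry of the generic fibers, all of which are available unchanged in characteristic $p$. Hence no proper support occurs and $h_{Dol,*}\IC$ has full support.

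The main obstacle, and the only real content beyond bookkeeping, is to confirm that no step of the $\bC$-argument secretly relies on transcendental input (mixed Hodge theory, the homeomorphism with a character variety, or characteristic-zero vanishing theorems). The hypothesis $p>r$ is precisely what makes the spectral-curve constructions, the integrality and smoothness of the moduli stack, the equidimensionality of the Hitchin fibers, and the weak abelian fibration package coincide with their characteristic-zero counterparts; once these geometric facts are in place---and they are, by the results cited above---Ng\^o's inequality and the Decomposition Theorem supply a characteristic-free proof.
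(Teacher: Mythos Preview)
Your overall strategy---Decomposition Theorem plus Ng\^o support inequality, checking that the inputs survive in characteristic $p>r$---is correct, and matches the paper's. But the execution has a structural gap and a misidentification of where $p>r$ enters.

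The paper's proof is cleaner and has two distinct steps. First, one shows that every support lies in the \emph{elliptic locus} $A^{ell}(\omega_C(D))$ of integral spectral curves; this is the content of \cite[\S4.5]{maulik-shen-chi}, whose argument is characteristic-free. Second, over $A^{ell}$ one has semistable $=$ stable (integral spectral curves have no sub-Higgs bundles), so the moduli space is smooth there and $\IC=\oql$. Now Ng\^o's theorem says that for any support $Z$, $\operatorname{codim}(Z)\le\delta_Z$, with equality forcing $Z$ to be a support of the top-degree sheaf $R^{top}h_{Dol,*}^{ell}\oql$. The Severi inequality \cite[Thm.~3.3]{MRV2019fourier} gives the reverse bound $\operatorname{codim}(Z)\ge\delta_Z$, and \emph{this} is precisely the step that requires $p>r$. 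Hence equality holds for every support, so every support appears in the top sheaf; but the fibers over $A^{ell}$ are compactified Jacobians of integral planar curves, hence integral, so $R^{top}h_{Dol,*}^{ell}\oql=\oql$ has full support.

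Your proposal skips the reduction to $A^{ell}$ and instead tries to rule out proper supports by an appeal to ``freeness of the translation action, absence of endoscopy, and Poincar\'e--Verdier duality.'' This is too vague to be a proof: the actual mechanism is the top-cohomology criterion combined with irreducibility of the elliptic fibers, not duality. You also cite global smoothness of $M_{Dol}^{ss}$, which fails for arbitrary degree $d$; smoothness is only available over $A^{ell}$, which is another reason the reduction step is essential. Finally, you attribute the role of $p>r$ to the $\delta$-stratification being ``insensitive to characteristic,'' but the sharp input is the Severi inequality, which genuinely needs the bound on $p$.
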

\begin{proof}
Let $A^{ell}(\omega_C(D))$ be the elliptic locus of $A(\omega_{C}(D))$ parametrizing integral spectral curves. It is an open and dense subscheme of $A(\omega_{C}(D))$.
The theorem follows from:
\begin{enumerate}
    \item All the supports of the complex $h_{Dol,*}\IC$ are contained in the elliptic locus. $A^{ell}(\omega_{C}(D))$.
    
    \item  The restriction of the complex  $h_{Dol,*}\IC$
    to the elliptic locus has full support.
\end{enumerate}

The proof of (1) is identical to the one in  \cite[\S4.5]{maulik-shen-chi}:
loc. cit. works over the complex numbers, but the argument is characteristic free
(cf. \cite[\S9]{chaudouard-laumon}).

We show that (2) follows from Ng\^o's Support theorem when $p>r$.
Over $A^{ell}(\omega_{C}(D))$, the Higgs bundles in the Hitchin fibers do not have proper sub-Higgs-bundles, since the spectral curves are integral.
Thus, $h_{Dol}^{-1}(A^{ell}(\omega_{C}(D))$ is contained in the stable locus of $M_{Dol}^{ss}(\omega_{C}(D))$, which is smooth \cite[Thm. 4.3]{chaudouard-laumon}.
Therefore, over $A^{ell}(\omega_{C}(D))$, we have that $\IC=\oql$, i.e. the intersection complex is the constant sheaf.

Let $h^{ell}_{Dol}$ be the restriction of $h_{Dol}$ to $A^{ell}(\omega_{C}(D))$.
We now apply Ng\^o's Support Theorem \cite[Thm. 7.3.1]{ngo-lemme-fondamental} to $h^{ell}_{Dol,*}\IC=h^{ell}_{Dol,*}\oql$:
Given any closed point $a\in A(\omega_{C}(D))$, let $\delta(a)$ be the dimension of the affine part of the Jacobian of the spectral curve $S_a$.
Given any closed subvariety $Z\subset A^{ell} (\omega_{C}(D))$, let $\delta(Z)$ to be the minimal 
(= general) value of $\delta$ on $Z$
\cite[Thm. 7.3.1]{ngo-lemme-fondamental} shows that, for any support $Z$ of $h^{ell}_{Dol,*}\oql$, we have the inequality $\mathrm{codim}(Z)\leq \delta_Z$.
Furthermore, the equality holds if and only if $Z$ is the  support of a non-trivial direct summand of the top degree cohomology sheaf
$R^{top}h_{Dol,*}^{ell}\oql$ (here, $top$ is twice the dimension of the fibers of the Hitchin morphism).

For any closed subvariety $Z$ of $A^{ell}(\omega_{C}(D))$, the equality does hold in characteristic $p>r$, because of the Severi inequality \cite[Thm. 3.3]{MRV2019fourier}. It is here that we need the lower bound $p>r$.
Therefore, any support of $h_{Dol,*}^{ell}\oql$ is the support of a non-trivial direct summand of  $R^{top}h_{Dol,*}^{ell}\oql$.
The geometric fibers of $h_{Dol}^{ell}$ are compactified Jacobians of integral curves lying on a smooth surface, thus they are integral by \cite[Thm. 9]{altman-iarrobino-kleiman-irreducibility}.
Therefore, we have $R^{top}h_{Dol,*}^{ell}\oql=\oql$,
which is full support, and (2) follows.
\end{proof}

\subsubsection{Known cohomological results}$\;$

The following \Cref{thm: summary} summarizes
the various cohomological results
we are aware of that relate the various moduli spaces we deal with in this paper.
In the second cohomological NAHT \Cref{thm: coh pnaht2} below, we establish yet another additive isomorphism of the same form as
\Cref{thm: summary}.\ref{m5}, in the case $d'=dp$, $C_2=C$, and $C_1=C'$, the Frobenius twist of $C$. The proof hinges on 
the canonical semistable NAHT morphism \eqref{eq: very good morphism schemes}.

\begin{thm}\label{thm: summary}$\;$

\begin{enumerate}
\item\label{m1}
\cite[\bf  Thm. 2.1, No-poles Cohomological NAHT]{de-zhang-naht}
    Let the degree $d=d'p$ be a multiple of $p$ and that $\gcd (r,d)=1$. Let $C$ be a curve.
There is a natural isomorphism of  cohomology rings 
\[
H^*(M_{Dol}^{ss}(C, d'p))\cong H^*(M_{dR}^{ss}(C,d'p)),
\]
that is a filtered isomorphism for  the perverse Leray filtrations induced by the Hitchin and by the de Rham-Hitchin morphisms.

\item\label{m2}
\cite[\bf Thm. 3.6, Cohomological Log-NAHT]{deC-H-2022geometry}
    Let $\gcd (r,d)=1$. Let $C$ be a curve.
    There is a canonical isomorphism of cohomology rings:
\[
        H^*(M_{Dol}^{ss}(C,D, d),\oql)\cong H^*(M_{dR}^{ss}(C,D,d),\oql)
\]
    that is a filtered isomorphism for  the perverse Leray filtrations induced by the Hitchin and by the de Rham-Hitchin morphisms.

\item\label{m3} \cite[\bf 
Thm. 3.8, Changing curves]{deC-H-2022geometry}
    Let $\gcd(r,d)=1$.
    Let $C_1$ and $C_2$ be two  curves of the same genus.
    Let $D_i$ be a collection of mutually distinct points of $C_i, i=1,2$.
    Assume that $\deg(D_1)=\deg(D_2)$.
    Then there are isomorphisms of cohomology rings:
\[
        H^*(M_{Dol}^{ss}(C_1,D_1,d),\oql)\cong H^*(M_{Dol}^{ss}(C_2,D_2,d),\oql),
\]
\[
        H^*(M_{dR}^{ss}(C_1,D_1,d),\oql)\cong H^*
        (M_{dR}^{ss}(C_2,D_2,d),\oql),
\]
    that are  filtered isomorphisms for  the perverse Leray filtrations induced by the Hitchin and  by the de Rham-Hitchin morphisms.

\item\label{m4}
\cite[\bf  Thm. 0.1, Changing degrees]{maulik-shen-chi}
Assume that $p>r$.
    For any integer $d,d'$, we have an isomorphism of intersection cohomology groups:
\[
        I\!H^*(M_{Dol}^{ss}(C,D,d),\oql)\cong I\!H^*(M_{Dol}^{ss}(C,D,d'),\oql).
\]
    Furthermore, the isomorphism preserves the perverse Leray filtrations induced by the Hitchin morphisms.

    \item\label{m5}
    {\bf [Comparing Dolbeault and de Rham: different curves, different poles, different degrees]}
    Let  $\gcd(r,d)=\gcd (r,d')=1$, and that $p>r$. Let $C_1$ and $C_2$ be two  curves of the same.
    Let $D_i$ be a collection of mutually distinct points of $C_i,i=1,2$. Assume that $\deg(D_1)=\deg(D_2)$.
    Then we have an isomorphism of cohomology groups:
\[
    \label{eqn: dol c1 d dr c2 d'}
        H^*(M_{A}^{ss}(C_1, D_1, d),\oql)\cong H^*(M_{B}^{ss}(C_2, D_2, d'),\oql),
\]
    where $A$ (resp. B) can be either $Dol$, or $dR$.
    Furthermore, the isomorphism preserves the perverse Leray filtrations induced by the Hitchin and by the  de Rham-Hitchin morphisms.
    \end{enumerate}
\end{thm}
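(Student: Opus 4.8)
The plan is to obtain the isomorphism by concatenating the three previously established comparison isomorphisms \ref{m2}, \ref{m3}, and \ref{m4}, after first reducing every de Rham end of the comparison to a Dolbeault one. Throughout, the hypotheses $\gcd(r,d)=\gcd(r,d')=1$ and $p>r$ are exactly what is needed to invoke each cited result.

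First I would reduce to the all-Dolbeault setting. If $A=dR$, the Cohomological Log-NAHT \ref{m2}, applicable because $\gcd(r,d)=1$, gives a filtered isomorphism $H^*(M_{dR}^{ss}(C_1,D_1,d),\oql)\cong H^*(M_{Dol}^{ss}(C_1,D_1,d),\oql)$, matching the perverse Leray filtration of the de Rham-Hitchin morphism with that of the Hitchin morphism. Hence it suffices to construct a filtered isomorphism $H^*(M_{Dol}^{ss}(C_1,D_1,d),\oql)\cong H^*(M_{Dol}^{ss}(C_2,D_2,d'),\oql)$; should $B=dR$, I would then apply \ref{m2} once more on $C_2$, this time using $\gcd(r,d')=1$, to pass to $M_{dR}^{ss}(C_2,D_2,d')$.

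Within the Dolbeault world I would change the degree and the curve in turn. Because $\gcd(r,d)=\gcd(r,d')=1$, semistability coincides with stability, so $M_{Dol}^{ss}(C_1,D_1,d)$ and $M_{Dol}^{ss}(C_1,D_1,d')$ are smooth; consequently $I\!H^*=H^*$ for both, and the Changing Degrees theorem \ref{m4} (which only needs $p>r$) supplies a filtered isomorphism $H^*(M_{Dol}^{ss}(C_1,D_1,d),\oql)\cong H^*(M_{Dol}^{ss}(C_1,D_1,d'),\oql)$ for the Hitchin perverse Leray filtrations. Finally, since $C_1$ and $C_2$ have the same genus, $\deg(D_1)=\deg(D_2)$, and $\gcd(r,d')=1$, the Changing Curves theorem \ref{m3} yields a filtered isomorphism $H^*(M_{Dol}^{ss}(C_1,D_1,d'),\oql)\cong H^*(M_{Dol}^{ss}(C_2,D_2,d'),\oql)$.

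Composing these filtered isomorphisms proves the statement, the filtration on either end being the perverse Leray filtration of the Hitchin or de Rham-Hitchin morphism according to whether $A$, resp.\ $B$, is $Dol$ or $dR$. No genuinely new difficulty arises: the entire content is housed in the cited theorems. The only points that demand attention, and which I regard as the main (if mild) obstacle, are the identification $I\!H^*=H^*$ in the coprime, hence smooth, case\,---\,this is what licenses the use of the intersection-cohomology statement \ref{m4} for ordinary cohomology\,---\,and the verification that the perverse Leray filtrations agree at each junction of the chain, which holds because every cited isomorphism is filtered precisely for the Hitchin-type morphism attached to the moduli space appearing at that stage.
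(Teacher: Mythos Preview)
Your argument for part \ref{m5} is correct and matches the paper's one-line derivation: the paper also obtains \ref{m5} by concatenating \ref{m2}, \ref{m3}, and \ref{m4} (it mentions \ref{m1} as well, but as you implicitly note, the log version \ref{m2} already covers what is needed). Your remarks about $I\!H^*=H^*$ in the coprime, hence smooth, case and about the perverse Leray filtrations matching at each junction are exactly the checks required.

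However, you treat part \ref{m4} as a black-box citation, and this is a genuine gap. The cited result \cite[Thm.~0.1]{maulik-shen-chi} is proved over $\mathbb{C}$, not in positive characteristic. The paper explicitly singles out \ref{m4} as the part ``left to prove'' and supplies the missing ingredient: one must establish that $h_{Dol,*}\IC_{M_{Dol}^{ss}(C,D,d)}$ has full support on $A(\omega_C(D))$ when $p>r$ (\Cref{tanto} in the paper). Granted full support, the characteristic-zero argument goes through verbatim: over the locus of smooth spectral curves the Hitchin fibers for different degrees are torsors under the same relative Jacobian, so the pushforwards agree there, and full support then promotes this to an isomorphism of the full complexes \eqref{eq: chi ind on ic new}. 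The full-support statement itself is proved via Ng\^o's Support Theorem together with the Severi inequality \cite[Thm.~3.3]{MRV2019fourier}, and it is precisely the latter that imposes the bound $p>r$. Without this step, the degree-changing link in your chain has no justification in characteristic $p$.
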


\begin{proof}
Parts \ref{m1}, \ref{m2} and \ref{m3} are proved in the given references.
Part \ref{m5} is obtained by combining parts \ref{m1}, \ref{m2}, \ref{m3} and \ref{m4}, and is listed as a convenient summary.
We are left with proving \ref{m4}.

In view of \Cref{tanto} on full supports, the proof of this theorem is identical to the one of the characteristic zero case in  \cite[Thm. 0.1]{maulik-shen-chi}. We sketch the proof for the convenience of the reader.

Let $h(d)$ and $h(d')$ be the Hitchin morphisms for the degree $d$ and $d'$ Dolbeault moduli spaces.
The theorem follows from the existence of an isomorphism  in $D^b_c(A(\omega_{C}(D)),\oql):$
\begin{equation}
\label{eq: chi ind on ic new}
    h(d)_*\IC_{M_{Dol}^{ss}(C,D,d)}\cong h(d')_*\IC_{M_{Dol}^{ss}(C,D,d')}.
\end{equation}
In turn, the isomorphism \eqref{eq: chi ind on ic new} follows from:
\begin{enumerate}
    \item \eqref{eq: chi ind on ic new} holds over a dense open of $A(\omega_{C}(D))$.
    \item Both sides of \eqref{eq: chi ind on ic new} have full supports when $p>r$ (cf. \Cref{tanto}).
\end{enumerate}
Item (1)  holds in characteristic $p>0$: 
over the open and dense subset 
$A(\omega_{C}(D))^{sm} \subset A(\omega_{C}(D))$ of smooth spectral curves, the fibers of $h(d)$ and $h(d')$ are both torsors under the relative Jacobian of the smooth spectral curves, in which case (1) is well-known: for example, it follows from the same argument as in the proof of \cite[Thm. 5.1]{dCGZ}, which uses the Decomposition Theorem of \cite{bbdg} to break both sides of \eqref{eq: chi ind on ic new} into direct sums of shifted perverse cohomology sheaves, and then uses the Homotopy Lemma \cite[Lem. 3.2.3]{laumon-ngo2008lemme} to show that the perverse cohomology sheaves are isomorphic.
\end{proof}

\subsubsection{Cohomological Embedding} $\;$

In this subsection, by building on 
\Cref{prop: over caim}, we prove
\Cref{thm: injection} and its \Cref{thm: coh pnaht2}.

\begin{thm}[\bf Split monomorphism]
\label{thm: injection}
    Assume that $p>r$.
    The semistable NAHT morphism \eqref{eq: very good morphism schemes} induces a split monomorphism in $D^b_c(A(\omega_{C'}(D')),\oql)$
    \begin{equation}
    \label{eq: inclusion ic whole}
    h_{Dol,*}\IC_{M_{Dol}^{ss}(C',D',d)}\hookrightarrow h_{dR,*}\IC_{M_{dR}^{ss}(C,D,pd).}
    \end{equation}
\end{thm}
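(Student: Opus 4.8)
The plan is to work over the variety of residues and descend along the factorization $h_{dR}=pr_1\circ h_{dR}^{\Fc A}$ of the de Rham-Hitchin morphism. First I would record, by \Cref{prop: surj and conn fib}, that $h_{dR}^{\Fc A}\colon M_{dR}^{ss}(C,D,pd)\to\Fc A^{\ov 0}$ is proper with geometrically connected fibers, so that the Decomposition Theorem \cite{bbdg} presents $h_{dR}^{\Fc A}{}_*\IC_{M_{dR}^{ss}(C,D,pd)}$ as a direct sum of shifted intersection complexes on $\Fc A^{\ov 0}$. Over the dense open $\Fc A_{im}^{0}\subseteq\Fc A^{\ov 0}$ the semistable NAHT morphism \eqref{eq: very good morphism schemes} is an isomorphism by \Cref{thm: sst dr}; the role of \Cref{prop: over caim} is to upgrade this to an isomorphism of complexes
\begin{equation*}
h_{dR}^{\Fc A}{}_*\IC_{M_{dR}^{ss}(C,D,pd)}\big|_{\Fc A_{im}^{0}}\;\cong\;pr_1^{*}\,h_{Dol,*}\IC_{M_{Dol}^{ss}(C',D',d)}\big|_{\Fc A_{im}^{0}},
\end{equation*}
the key input being that the twist of $\wt{\sM}_{Dol}^{ss}$ by the $\wt{\sP}^{o,\ov 0}$-torsor $\sH^o$ is cohomologically trivial; this follows from the connectedness of the fibers of $\wt{\sP}^{o,\ov 0}$ via the Homotopy Lemma \cite[Lem. 3.2.3]{laumon-ngo2008lemme}.

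Next I would promote this open comparison to a global split summand on $\Fc A^{\ov 0}$. Because $p>r$, \Cref{tanto} shows that $h_{Dol,*}\IC_{M_{Dol}^{ss}(C',D',d)}$ has full support on $A(\omega_{C'}(D'))$. Its pullback $pr_1^{*}h_{Dol,*}\IC$ is again semisimple with full support $\Fc A^{\ov 0}$, either directly (over the étale locus of the finite flat, generically étale $pr_1$ the pullback is a shifted pure local system, and the preimage of each full support is all of $\Fc A^{\ov 0}$) or, cleanly, by identifying it via proper base change along the Cartesian square \eqref{eq: try1} with $h_{Dol}^{\Fc A}{}_*\IC_{\wt{\sM}_{Dol}^{ss}}$ and re-running the argument of \Cref{tanto} over $\Fc A^{\ov 0}$. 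Let $K_{\mathrm{full}}$ be the full-support part of $h_{dR}^{\Fc A}{}_*\IC_{M_{dR}^{ss}(C,D,pd)}$, a canonical split direct summand. Since a full-support intersection complex is the intermediate extension of its restriction to any dense open, the displayed isomorphism over $\Fc A_{im}^{0}$ forces $K_{\mathrm{full}}\cong pr_1^{*}h_{Dol,*}\IC_{M_{Dol}^{ss}(C',D',d)}$ on all of $\Fc A^{\ov 0}$, giving a split monomorphism
\begin{equation*}
pr_1^{*}\,h_{Dol,*}\IC_{M_{Dol}^{ss}(C',D',d)}\;\hookrightarrow\;h_{dR}^{\Fc A}{}_*\IC_{M_{dR}^{ss}(C,D,pd)}\quad\text{in }D^b_c(\Fc A^{\ov 0},\oql).
\end{equation*}
The remaining summands of the de Rham side are supported on $\Fc A^{\ov 0}\smallsetminus\Fc A_{im}^{0}$ and account for the difference between the two moduli.

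The final step is to descend along $pr_1$. Applying the exact functor $pr_{1,*}$ to the split monomorphism above and using $pr_1\circ h_{dR}^{\Fc A}=h_{dR}$ yields a split monomorphism $pr_{1,*}pr_1^{*}h_{Dol,*}\IC\hookrightarrow h_{dR,*}\IC_{M_{dR}^{ss}(C,D,pd)}$. By the projection formula $pr_{1,*}pr_1^{*}h_{Dol,*}\IC\cong h_{Dol,*}\IC\otimes pr_{1,*}\oql$, and the unit $\oql\to pr_{1,*}\oql$ is split by the normalized trace of the finite flat $pr_1$, whose degree is a power of $p$ (by \Cref{lemma: no conn fib}, $\Fas$ is built from $AS_{\Fc}$, of degree $p^{r}$ at each puncture) and hence invertible in $\oql$ since $\ell\neq p$. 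Composing
\begin{equation*}
h_{Dol,*}\IC_{M_{Dol}^{ss}(C',D',d)}\;\hookrightarrow\;pr_{1,*}pr_1^{*}h_{Dol,*}\IC\;\hookrightarrow\;h_{dR,*}\IC_{M_{dR}^{ss}(C,D,pd)}
\end{equation*}
produces the desired split monomorphism \eqref{eq: inclusion ic whole}; splitness is automatic throughout, as every complex in sight is semisimple by \cite{bbdg}.

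I expect the principal obstacle to be the open comparison packaged in \Cref{prop: over caim}: one must trivialize the torsor twist of $\wt{\sM}_{Dol}^{ss}$ by $\sH^o$ at the level of derived pushforwards, which hinges on the connectedness of the Picard fibers and the Homotopy Lemma. The second essential, genuinely characteristic-$p$, point is that the full-support theorem \Cref{tanto} requires $p>r$ through the Severi inequality; it is exactly full support that allows the Dolbeault complex to embed as the \emph{full-support part} of the de Rham complex, rather than merely agreeing with it over $\Fc A_{im}^{0}$. A subsidiary technical check, flagged above, is the semisimplicity and full support of the pulled-back complex $pr_1^{*}h_{Dol,*}\IC$, which is most safely obtained through the base change \eqref{eq: try1}.
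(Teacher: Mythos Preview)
Your outline is close in spirit to the paper's argument, but there is a genuine gap in your step that identifies, over $\Fc A_{im}^{0}$, the de Rham pushforward with $pr_1^{*}\,h_{Dol,*}\IC_{M_{Dol}^{ss}}$. \Cref{prop: over caim} gives an isomorphism with $h_{Dol,*}^{\Fc A}\IC_{\wt{M}_{Dol}^{ss}}$, and proper base change along \eqref{eq: try1} yields $pr_1^{*}\,h_{Dol,*}\IC_{M_{Dol}^{ss}}\cong h_{Dol,*}^{\Fc A}\bigl(\xi^{*}\IC_{M_{Dol}^{ss}}\bigr)$. These coincide only if $\xi^{*}\IC_{M_{Dol}^{ss}}=\IC_{\wt{M}_{Dol}^{ss}}$, which fails in general: $\xi$ is the base change of $pr_1$, and $pr_1$ is \emph{not} \'etale for $r\ge 2$ (\Cref{lemma: no conn fib}), so the fiber product $\wt{M}_{Dol}^{ss}$ can acquire singularities over the ramification locus even when $M_{Dol}^{ss}$ is smooth. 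Relatedly, your claim that $pr_1^{*}\,h_{Dol,*}\IC$ is semisimple with full support is not automatic: for a ramified finite map between smooth varieties, the pullback of a simple intersection complex need not be semisimple (e.g.\ $t\mapsto t^2$ pulling back $\IC_{\bA^1}(L_{\mathrm{sign}})$ yields $j_!\oql[1]$, a nonsplit length-two perverse sheaf).

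The paper sidesteps both issues by reversing the order of operations: it first chooses an open $A^{U}\subset A(\omega_{C'}(D'))$ with $pr_1^{-1}(A^{U})\subset \Fc A_{im}$, pushes the isomorphism of \Cref{prop: over caim} \emph{down} via $p_{1,*}$ (equivalently $\xi_*$), and then uses that $\IC_{M_{Dol}^{ss}}$ is a direct summand of $\xi_*\IC_{\wt{M}_{Dol}^{ss}}$ because $\xi$ is the quotient by the finite abstract group $(\bZ/p\bZ)^{r\deg(D)}$. This produces the split monomorphism $h_{Dol,*}\IC\hookrightarrow h_{dR,*}\IC$ directly over $A^{U}$, and the full-support theorem \Cref{tanto} extends it to all of $A(\omega_{C'}(D'))$. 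Your trace argument for $pr_{1,*}\oql$ is essentially the same mechanism as the paper's finite-group-invariants, but the paper applies it \emph{before} invoking full support rather than after, which is what makes the comparison with $\IC_{\wt{M}_{Dol}^{ss}}$ go through cleanly. Your route can be repaired by restricting further to the \'etale locus of $pr_1$ inside $\Fc A_{im}^{0}$ and then arguing as the paper does; but once you do that, you have essentially reproduced the paper's proof.
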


The proof of \Cref{thm: injection} is postponed after the proof of \Cref{prop: over caim}.

The following corollary  is an immediate consequence of \Cref{thm: injection} and  \Cref{thm: summary}.(\ref{m4}). The perhaps surprising aspect is that, while the split monomorphism \eqref{eq: inclusion ic whole} is certainly not an isomorphism, it  induces an isomorphism on the global intersection cohomology groups.

\begin{cor}[\bf The Second Cohomological Log-NAHT]
\label{thm: coh pnaht2}
Assume that $(pd,r)=1$ and that $p>r$. 
Then the semistable NAHT morphism \eqref{eq: very good morphism schemes} induces a filtered  isomorphism of  cohomology groups
    \begin{equation}
    \label{eq: dol d to dr pd coh}
        \Big(H^*(M_{Dol}^{ss}(C',D',d)),P^h\Big)\cong 
        \Big(H^*(M_{dR}^{ss}(C,D,pd)), P^{h_{dR}}\Big),
    \end{equation}
    where the filtrations are the perverse Leray filtrations induced by the Hitchin and by the  de Rham-Hitchin morphisms.
\end{cor}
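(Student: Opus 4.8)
The plan is to deduce the statement by combining the split monomorphism of \Cref{thm: injection} with a dimension count extracted from \Cref{thm: summary}. First I would invoke \Cref{thm: injection}, which applies since $p>r$: the morphism \eqref{eq: very good morphism schemes} induces a split monomorphism in $D^b_c(A(\omega_{C'}(D')),\oql)$
\[
h_{Dol,*}\IC_{M_{Dol}^{ss}(C',D',d)}\hookrightarrow h_{dR,*}\IC_{M_{dR}^{ss}(C,D,pd)}.
\]
Taking derived global sections over $A(\omega_{C'}(D'))$ and using the identifications $\bH^*(A(\omega_{C'}(D')),h_{Dol,*}\IC)=I\!H^*(M_{Dol}^{ss}(C',D',d))$ and $\bH^*(A(\omega_{C'}(D')),h_{dR,*}\IC)=I\!H^*(M_{dR}^{ss}(C,D,pd))$ produces a filtered split injection of intersection cohomology groups, the perverse Leray filtrations being induced by $h_{Dol}$ and $h_{dR}$, both over the common base $A(\omega_{C'}(D'))$ and hence compatible with the morphism of complexes. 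Since $(pd,r)=1$ forces $(d,r)=1$, both moduli spaces are smooth (for the de Rham side by \cite[Corollary 5.5]{deC-H-2022geometry}), so $\IC=\oql$ up to shift in each case and $I\!H^*=H^*$; the injection thus reads as a filtered split injection $\big(H^*(M_{Dol}^{ss}(C',D',d)),P^h\big)\hookrightarrow\big(H^*(M_{dR}^{ss}(C,D,pd)),P^{h_{dR}}\big)$.

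It then remains to upgrade this injection to a filtered isomorphism, for which it suffices to prove equality of the total dimensions of the two graded vector spaces. Here I would use the abstract isomorphisms of \Cref{thm: summary}: by the cohomological Log-NAHT \Cref{thm: summary}.(\ref{m2}) in degree $pd$ (coprime to $r$), $H^*(M_{dR}^{ss}(C,D,pd))\cong H^*(M_{Dol}^{ss}(C,D,pd))$; by the changing-degrees isomorphism \Cref{thm: summary}.(\ref{m4}) (valid since $p>r$) together with $I\!H^*=H^*$ in the coprime cases, $H^*(M_{Dol}^{ss}(C,D,pd))\cong H^*(M_{Dol}^{ss}(C,D,d))$; and by the changing-curves isomorphism \Cref{thm: summary}.(\ref{m3}), applied to the pair $C,C'$ of equal genus with $\deg(D)=\deg(D')$ in degree $d$ coprime to $r$, $H^*(M_{Dol}^{ss}(C,D,d))\cong H^*(M_{Dol}^{ss}(C',D',d))$. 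Equivalently, this composite is packaged directly as \Cref{thm: summary}.(\ref{m5}). Composing gives $\dim H^*(M_{dR}^{ss}(C,D,pd))=\dim H^*(M_{Dol}^{ss}(C',D',d))$ in every cohomological degree.

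Finally I would conclude from the splitting. Because the monomorphism is split in the triangulated category $D^b_c(A(\omega_{C'}(D')),\oql)$, its cofiber $K'$ yields a direct sum decomposition $h_{dR,*}\IC\cong h_{Dol,*}\IC\oplus K'$. As perverse truncation is additive over direct sums, taking $\bH^*$ gives a decomposition of filtered vector spaces $H^*(M_{dR}^{ss}(C,D,pd))\cong H^*(M_{Dol}^{ss}(C',D',d))\oplus\bH^*(K')$ respecting the perverse Leray filtrations. The dimension equality of the previous paragraph forces $\bH^*(K')=0$; hence the split filtered injection is bijective, and since the complementary filtered summand vanishes it is a filtered isomorphism, as claimed. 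I expect the only delicate point to be the bookkeeping of the perverse Leray filtrations under the direct-sum splitting over the common base $A(\omega_{C'}(D'))$, together with the identification $I\!H^*=H^*$ in all the coprime cases; the dimension chain through \Cref{thm: summary} is then routine.
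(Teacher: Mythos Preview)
Your proposal is correct and follows essentially the same route as the paper: invoke the split monomorphism of \Cref{thm: injection}, pass to (intersection) cohomology to obtain a filtered split injection, and then use the dimension equality coming from \Cref{thm: summary} to conclude that the injection is a filtered isomorphism. The paper's own proof is terser, citing only \Cref{thm: summary}.(\ref{m4}) for the dimension match, whereas you spell out the full chain (\ref{m2})+(\ref{m4})+(\ref{m3}), or equivalently (\ref{m5}); your version makes the logic more transparent but is the same argument.
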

\begin{proof}
The split monomorphism  \eqref{eq: inclusion ic whole}  induces a split inclusion of intersection cohomology groups
    \begin{equation}
        \label{eq: the desired inc}
        I\!H^*(M_{Dol}^{ss}(C',D',d),\oql)\hookrightarrow I\!H^*(M_{dR}^{ss}(C,D,pd),\oql),
    \end{equation}
    which is filtered strict  for  the perverse Leray filtrations induced by the Hitchin and by the  de Rham-Hitchin morphisms. \Cref{thm: summary}.(\ref{m4})
    implies that this split filtered strict inclusion is a filtered isomorphism.
\end{proof}

Let us start with one consequence of the semistable Log-NAH
\Cref{thm: sst dr}.
Recall that $\Fc A_{im}$ is the open subscheme of $\Fc A$ which is the image of $\sH^o$.
Recall the morphisms
$h^{\Fc A}_{Dol}:\wt{M}_{Dol}^{ss}(C',D',d)\to \Fc A$, and $ h_{dR}^{\Fc A}: M_{dR}^{ss}(C, D, pd)\to \Fc A$.
\begin{prop}
\label{prop: over caim}
    We have the following isomorphism in $D^b_c(\Fc A_{im},\oql):$
    \begin{equation}
\label{eq: over caim}
    (h^{\Fc A}_{Dol,*}\IC_{\wt{M}_{Dol}^{ss}})_{\Fc A_{im}}\cong (h^{\Fc A}_{dR,*}\IC_{M_{dR}^{ss}})_{\Fc A_{im}}.
\end{equation}
\end{prop}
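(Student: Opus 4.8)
The plan is to deduce the isomorphism from the semistable Log-$p$-NAHT \Cref{thm: sst dr} together with smooth descent along the torsor $\sH^o$. Write $S:=\Fc A_{im}$ and restrict every space over $S$. Since the scheme-level morphism $N^{ss}$ of \eqref{eq: very good morphism schemes} is an isomorphism of $S$-schemes from $\cT:=H^{o}\times^{\widetilde{P}^{o}}\wt M_{Dol}^{ss}$ (the source of \eqref{eq: very good morphism schemes}, with its structure map $h_{\cT}$) onto $M_{dR}^{ss}$ over $S$, it identifies $h_{dR}^{\Fc A}$ with $h_{\cT}$ and hence gives $h_{dR,*}^{\Fc A}\IC_{M_{dR}^{ss}}\cong h_{\cT,*}\IC_{\cT}$ for free. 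Thus the real content of the proposition is the twist-invariance statement $h_{Dol,*}^{\Fc A}\IC_{\wt M_{Dol}^{ss}}\cong h_{\cT,*}\IC_{\cT}$: forming the $\widetilde{P}^{o}$-twist $\cT$ of $\wt M_{Dol}^{ss}$ by the torsor $H^{o}$ does not change the pushforward of the intersection complex.

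To prove this, let $\rho\colon H^{o}\to S$ be the structural morphism; by \Cref{prop: pstorsor ho} (and \Cref{thm: sst dr}) it is a $\widetilde{P}^{o}$-torsor over $S$, in particular smooth and surjective of relative dimension $n$ (the fiber dimension of $\widetilde{P}^{o}$). I would form $\Phi:=H^{o}\times_{S}\wt M_{Dol}^{ss}$ and record its two $\widetilde{P}^{o}$-torsor structures: the projection $\alpha\colon\Phi\to\wt M_{Dol}^{ss}$, which is the base change of $\rho$, and the quotient map $\beta\colon\Phi\to\cT$ by the anti-diagonal action used to define $\cT$. The standard fact that a torsor trivializes the twist it induces gives a canonical $S$-isomorphism $\Phi\cong\cT\times_{S}H^{o}$ under which $\beta$ is the base change of $h_{\cT}$ and the two projections to $H^{o}$ coincide; call this common map $q\colon\Phi\to H^{o}$. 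Because $\wt M_{Dol}^{ss}$ and $\cT$ are integral over each component of $S$ (by \Cref{prop: irreducible} and the connectedness of $\widetilde{P}^{o}$), and $\alpha,\beta$ are smooth surjective of relative dimension $n$ with irreducible source and target, the canonical smooth-pullback identifications yield $\alpha^{*}\IC_{\wt M_{Dol}^{ss}}[n]\cong\IC_{\Phi}\cong\beta^{*}\IC_{\cT}[n]$.

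Next I would apply proper base change along $\rho$ to the two proper Hitchin-type morphisms $h_{Dol}^{\Fc A}$ and $h_{\cT}$ (both proper: the Dolbeault one because the Hitchin morphism is proper and properness is stable under base change, the de Rham one by the properness recorded after \eqref{eq: drres st}). Using the two Cartesian squares sharing $q$, this gives
\[
\rho^{*}h_{Dol,*}^{\Fc A}\IC_{\wt M_{Dol}^{ss}}
\;\cong\; q_{*}\alpha^{*}\IC_{\wt M_{Dol}^{ss}}
\;\cong\; q_{*}\bigl(\IC_{\Phi}[-n]\bigr)
\;\cong\; q_{*}\beta^{*}\IC_{\cT}
\;\cong\; \rho^{*}h_{dR,*}^{\Fc A}\IC_{M_{dR}^{ss}},
\]
so the two complexes on $S$ become canonically isomorphic after $\rho^{*}$. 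It then remains to descend this isomorphism along $\rho$. As $\rho$ is smooth and surjective it is a morphism of universal cohomological descent, so I only need the cocycle condition on $H^{o}\times_{S}H^{o}$; this holds because every isomorphism entering the chain above (proper base change and the smooth-pullback identification of intersection complexes) is natural with respect to further base change, whence the two pullbacks of the displayed isomorphism to $H^{o}\times_{S}H^{o}$ are each identified with the isomorphism obtained by rerunning the construction over $H^{o}\times_{S}H^{o}$. Granting the cocycle condition, $D^b_c$-descent along $\rho$ produces the desired isomorphism in $D^b_c(\Fc A_{im},\oql)$.

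I expect the descent step to be the main obstacle: one must use the smooth-pullback identification $f^{*}\IC[n]\cong\IC$ in its canonical form, pinned down by the tautological identification over the smooth locus where both sides are $\oql$ in a single degree, so that no scalar ambiguity obstructs the cocycle condition. A robust alternative that sidesteps this entirely is to run the whole argument in the $\widetilde{P}^{o}$-equivariant constructible derived category over $S$, where $\alpha$ and $\beta$ identify the equivariant pushforwards and descent along the torsor $\rho$ is automatic. The remaining points — irreducibility of $\Phi$, $\wt M_{Dol}^{ss}$ and $\cT$, and the matching of relative dimensions — are routine consequences of \Cref{prop: irreducible}, \Cref{prop: pstorsor ho}, and the connectedness of $\widetilde{P}^{o}$.
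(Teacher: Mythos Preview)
Your approach is different from the paper's and is morally correct, but the descent step you flag really is the crux, and the paper handles it by a different mechanism. The paper first applies the Decomposition Theorem to both complexes over $\Fc A_{im}$, breaking each into a direct sum of shifted perverse cohomology sheaves. It then trivializes the torsor $H^{o}\to S$ \'etale-locally: a local section of $H^{o}$ gives, via \eqref{eq: very good morphism schemes}, a local isomorphism of the two pushforwards. On overlaps, two such sections differ by a section of the connected group scheme $\wt P^{o}$, and the Homotopy Lemma of Laumon--Ng\^o \cite[Lem.~3.2.3]{laumon-ngo2008lemme} says a connected group acts trivially on perverse cohomology sheaves. Hence the local isomorphisms of perverse cohomology sheaves agree on overlaps and glue, because perverse sheaves form a stack for the \'etale topology \cite[Prop.~3.2.2]{bbdg}. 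Summing the shifted pieces gives \eqref{eq: over caim}.

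Your route---proper base change along $\rho$ to obtain $\rho^{*}A\cong\rho^{*}B$ and then descend---runs into exactly the issue you identify: a 1-cocycle condition does not suffice to descend a morphism in $D^{b}_{c}$ along a smooth surjection, since $D^{b}_{c}$ is not a 1-stack (one needs coherent higher data, or an $\infty$-categorical enhancement). Your equivariant-derived-category alternative would work but is heavier than needed. The economical fix is precisely what the paper does: use the Decomposition Theorem to reduce descent from $D^{b}_{c}$ to the abelian category of perverse sheaves, where \'etale descent is classical, and use the Homotopy Lemma to show that the ambiguity in the local isomorphisms (your ``choice of section of $H^{o}$'') is trivial at the perverse level. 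In effect, the Homotopy Lemma is the rigorous substitute for your naturality heuristic. A minor additional point: your appeal to irreducibility of $\wt M_{Dol}^{ss}$ needs care, since it is a base change of $M_{Dol}^{ss}$ along a finite non-\'etale map; working perverse-cohomology-sheaf by perverse-cohomology-sheaf, as the paper does, sidesteps this as well.
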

\begin{proof}
    The proof is the same as the proof of \cite[Thm. 5.1]{dCGZ}. Namely, we first use the Decomposition Theorem to break both sides into the direct sum of shifted respective perverse cohomology sheaves over $\Fc A_{im}$.
The morphism \eqref{eq: very good morphism schemes} entails that \'etale locally over $\Fc A_{im}$ we have the desired isomorphism \eqref{eq: over caim}.
    Over the fiber products (``intersection") of the \'etale neighborhoods, the local isomorphisms differ from each other by an action of the connected group scheme $H^o/\Fc A_{im}$.
    However, the homotopy lemma \cite[Lem. 3.2.3]{laumon-ngo2008lemme} entails that connected group schemes must act trivially on the perverse cohomology sheaves.
    Therefore, we can glue the local isomorphisms together and get a global one, indeed the category perverse sheaves satisfies \'etale descent \cite[Prop. 3.2.2]{bbdg}.
\end{proof}

\begin{proof}[Proof of \Cref{thm: injection}]
Recall the projection morphism $p_1:\Fc A\to A(\omega_{C'}(D'))$ as in \eqref{diag: ca and wtmdol}.
Let $A^U(\omega_{C'}(D'))\subset A(\omega_{C'}(D'))$ be a nonempty open whose inverse image 
$$p_1^{-1}(A^U(\omega_{C'}(D')))=:\Fc A^U$$ 
lies in $\Fc A_{im}$.

Let $\wt{M}_{Dol}(C')^U, M_{Dol}^U, \Fc A^U, M_{dR}^U$
be the inverse images of $A^U(\omega_{C'}(D'))$ under the corresponding maps to $A(\omega_{C'}(D'))$.
We have the following commutative diagram where the top square is Cartesian:
\begin{equation}
\label{diag: xi hc hdrc}
\xymatrix{
M_{dR}^U \ar[r]^-{h_{dR}^{\Fc A,U}} \ar[dr]_-{h_{dR}^U} &
\Fc A^U \ar[d]_-{p_1^U} &
\wt{M}_{Dol}^U(C') \ar[d]_-{\xi^U} \ar[l]_-{h^{\Fc A,U}_{Dol}} 
    \\
     &    A^U(C')&
             M_{Dol}^U. \ar[l]_-{h^U_{Dol}}
}
\end{equation}

Apply $p_{1,*}$ to the restriction of \eqref{eq: over caim} over $\Fc A^U$, we obtain $h^U_{Dol,*}\xi^U_*\IC_{\wt{M}_{Dol}^U}\cong h_{dR,*}^U\IC_{M_{dR}^U}$.

Note that $\xi^U$ is a quotient by an action of an abstract finite group.
Indeed, the Artin-Schreier morphism on $\bA^1$ is the quotient by the finite abstract group $\bZ/p\bZ$, acting by translation.
    Therefore, the morphisms $AS_{\Ft}$ and $AS_{\Fc}$ are both quotient morphisms by $(\bZ/p\bZ)^{\oplus r}$.
By the construction of $\Fc A$ and $\wt{M}_{Dol}^{ss}$ as in \eqref{diag: ca and wtmdol}, we have that $\xi^U$ is a quotient by $(\bZ/p\bZ)^{\oplus r\deg(D)}$. 

\cite[Lem. 4.1.4.(ii)]{decataldo-cambridge} entails that $\IC_{M_{Dol}^U}$ is a direct summand of $\xi_*^U\IC_{\wt{M}_{Dol}^U}$.
Therefore, we have the following monomorphism  and isomorphism in $D^b_c(A^U(\omega_{C'}(D'))):$
\begin{equation}
\label{eq: inclusions on u}
    h_{Dol,*}^U\IC_{M_{Dol}^U}\hookrightarrow h_{Dol,*}^U\xi_*^U\IC_{\wt{M}_{Dol}^U}\cong h_{dR,*}^U\IC_{M_{dR}^U}.
\end{equation}

By \Cref{tanto}, the complex $h_{Dol,*}\IC_{M_{Dol}^{ss}}$ has full support.
Therefore the Decomposition Theorems for $h_{Dol,*}\IC_{M_{Dol}^{ss}}$ and $h_{dR,*}\IC_{M_{dR}^{ss}}$ are of the following form:
\[h_{Dol,*}\IC_{M_{Dol}^{ss}}=\bigoplus_i \IC(L_{i,Dol})[i],\quad h_{dR,*}\IC_{M_{dR}^{ss}}=\bigoplus_i\IC(L_{i,dR})[i]\oplus\bigoplus_{Z\in\cZ} F_Z,\]
 where $L_{i,Dol}, L_{i,dR}$'s are lisse sheaves on some open of $A(\omega_{C'}(D'))$, $\cZ$ is a finite set of proper closed subschemes of $A(\omega_{C'}(D'))$, and each $F_Z$ is a shifted perverse sheaf supported on $Z$.
 The inclusion \eqref{eq: inclusions on u} then entails that each $L_{i,Dol}$ is a direct summand of $L_{i,dR}$.
 Therefore, the inclusion \eqref{eq: inclusions on u} extends to the desired inclusion \eqref{eq: inclusion ic whole} over $A(\omega_{C'}(D'))$.
\end{proof}

\appendix
\section{Log-connections on torsors}
\label{section: two views on residues}

In this appendix, we develop the formalism of log-connections on affine schemes, affine group schemes, and their torsors. 
We confirm that the residues and $p$-curvatures of them behave as one may expct when taking duals and twisted products \eqref{eq: pcuvr twi prod}-\eqref{eq: res dual}.
The no pole case, where of course there are no residues,  is developed in \cite[\S A.1, A.5-6]{chen-zhu}.

In this appendix, we let $C$  be a smooth $k$-variety and $D$  be a simple normal crossing divisor on $C$.
By log-connections, we always mean log-connections with log-poles at $D$.
We denote by $\Omega^1_C({\rm log} D)$
the locally free $\mathcal O_C$-module
of log-$1$-forms on $C$  with simple poles along $D$ and by $Der_D(C)$ its dual $\mathcal O_C$-module; cf. \cite[\S4.1, 4.2]{katz1970}.
In the body of the paper,   we only use the case when $C$ is a curve, and we use the notation $\omega_C(D)$ for $\Omega^1_C({\rm log} D)$
and $T_C(-D)$ for $Der_D(C)$.

\subsubsection{Log-connections on affine schemes} $\;$
Let $f:Y\to C$ be an affine morphism of $k$-schemes.
A log-connection $\nabla$ on $Y$ relative to $C$ is a log-connection $\nabla_Y$ on $f_*\cO_Y$ which, in addition, satisfies the Leibnitz rule for the multiplication on $f_*\cO_Y$, i.e., for each local section $\partial$ of $Der_D(C)$, and local sections $y_1$ and $y_2$ of $f_*\cO_Y$, we have

\begin{equation}
\label{eq: derivation}
    \nabla_Y(\partial)(y_1y_2)=y_1\nabla_Y(\partial)(y_2)+y_2\nabla_Y(\partial)(y_1)
    \quad {\rm in} \;\;
    f_* \cO_Y.
\end{equation}

Given two affine $C$-schemes $f_Y: Y\to C$ and $f_Z: Z\to C$ with log-connections $\nabla_Y$ and $\nabla_Z$ and a $C$-morphism $g:Y\to Z$, we say that $g$ is compatible with the log connections if for each local section $\partial$ of $Der_D(C)$ and local section $z$ of $f_{Z,*}\cO_Z$, we have $g^{\#} (\nabla_Z(\partial)(z)  )=\nabla_Y(\partial)(g^{\#}z)$.

\subsubsection{Product log-connections} $\;$
Given two affine morphisms $f: Y\to C$ and $h: Z\to C$,  with log-connections $\nabla_{Y}$ and $\nabla_{Z},$ the product connection $\nabla_{Y}\times \nabla_{Z}$ on the fiber product $Y\times_C Z$ is defined as follows:
for local sections $\partial, y$ and $z$ of $Der_D(C), f_*\cO_Y$
and $h_*\cO_Z$ respectively, we set
\begin{equation}
\label{eq: prod conn}
    (\nabla_Y\times\nabla_Z)(\partial)(y\otimes z) :=
\Big(\nabla_Y(\partial)(y)\Big)\otimes z+y\otimes \nabla_Z(\partial)(z).
\end{equation}

\subsubsection{Log-connections on affine group schemes} $\;$

Let $\cG$ be an affine group scheme over $C$.
A log-connection on $\cG$ is a log-connection on the  affine $C$-scheme underlying $\cG$ which is furthermore compatible with the unit
$u: g_*{\cO}_{\cG} \to {\cO}_C$
(i.e. $\partial \circ u = u 
 \circ \nabla(\partial)$)
 and with the multiplication $\cG\times_C \cG\to \cG$ where we put the product log-connection on the source.

\subsubsection{Actions compatible with log-connections} $\;$

Let $\nabla_{\cG}$ be a log-connection  $\cG$.
Given an affine $C$-scheme with a log-connection $(f:Y\to C,\nabla_Y)$,
we say that $(\cG,\nabla_{\cG})$ acts on $(Y,\nabla_Y)$ if $\cG$ acts on $Y$ and if the following diagram is commutative:
\begin{equation}
\label{diag: conn on tors}
    \xymatrix{
f_*\cO_Y \ar[r]^-{act^{\#}} \ar[d]_-{\nabla_Y} &
f_*\cO_Y\otimes g_*\cO_{\cG} \ar[d]^-{\nabla_Y\otimes 1+1\otimes \nabla_{\cG}}\\
f_*\cO_Y\otimes \Omega^1_{C}(\mathrm{log}D) \ar[r]_-{act^{\#}\otimes 1}  & f_*\cO_Y\otimes g_*\cO_{\cG}\otimes \Omega^1_C(\mathrm{log}D).
}
\end{equation}

\subsubsection{Twisted products} $\;$
Let $f:Y\to C$ and $h: Z\to C$ be two affine $C$-schemes with $\cG$-actions.
We define the twisted product $Y\times^{\cG}Z$ to be the GIT quotient $(Y\times Z)\git \cG$ over $C$, i.e. we have
\begin{equation}
\label{eq: equalizer}
    Y\times^{\cG} Z:=\mathrm{Spec}_{\cO_C}(\mathrm{Eq}(f_*\cO_Y\otimes h_*\cO_Z\rightrightarrows f_*\cO_Y\otimes h_*\cO_Z\otimes g_*\cO_{\cG})),
\end{equation}
where Eq is the equalizer and the two arrows are the action and projection comorphisms.
The twisted product $Y\times^{\cG} Z$ is well-defined because the formation of the equalizer respects Zariski localizations over $C$.
Indeed, more is true: \cite[p.243, Lemma 2]{seshadri1977geometric} shows that the formation of the equalizer (thus of $Y\times^{\cG} Z$) commutes with flat base change to $C$.

\subsubsection{Log-connections on twisted products} $\;$
Suppose that there are log-connections $\nabla_Y, \nabla_Z$ and  $\nabla_{\cG}$ on $Y,Z$ and $G$ respectively, and that the actions respect the log-connections.
We thus obtain the log-connection $\nabla_Y \times \nabla_Z$ on $Y\times Z$. By using the definitions above, one can check that for every log-derivation $\partial$, $(\nabla_Y\times\nabla_Z) (\partial)$
preserves  the equalizer in \eqref{eq: equalizer}, thus inducing a connection 
on the twisted product
\begin{equation}
\label{eq: twi prod conn}
    \nabla_Y\otimes \nabla_Z \;\text{ on } Y\times^{\cG} Z.
\end{equation}

\subsubsection{Log-connections on torsors} $\;$

Let $s: E\to C$ be a $\cG$-torsor over $C$. Since affineness satisfies (fpqc, hence) fppf descent \cite[\href{https://stacks.math.columbia.edu/tag/0244}{Tag 0244}]{stacks-project}, it follows that $E$ is affine over $C$.
A log-connection on the $\cG$-torsor $E$ is a log-connection $\nabla_E$ such that the $\cG$-action on $E$ gives rise to a $(\cG,\nabla_{\cG})$-action on $(E,\nabla_E)$.

\subsubsection{Adjoint bundle} $\;$

The $\cO_C$-linear derivations on $s_*\cO_E$ form a quasi-coherent sheaf $Der_{\cO_C}(s_*\cO_E),$ which is a subsheaf of the quasi-coherent sheaf $End_{\cO_C}(s_*\cO_E)$.
Given a derivation $v: s_*\cO_E\to s_*\cO_E$ in $Der_{\cO_C}(s_*\cO_E)$, we say that $v$ is $\cG$-invariant if for any local section $e$ of $s_*\cO_E$, we have that $act^{\#}(v(e))=(v\otimes 1)(act^{\#}(e)),$ where $act^{\#}: s_*\cO_E\to s_*\cO_E\otimes g_*\cO_{\cG}.$
The $\cG$-invariant derivations form a subsheaf $\mathrm{ad}(E)$ of $Der_{\cO_C}(s_*\cO_E)$.
When $E=\cG$, we have $\mathrm{Lie}(\cG)=\mathrm{ad}(\cG)$ by the definition of the former in \cite[\S A.7.1-2]{conrad2015pseudo}.
 There is a canonical isomorphism of $C$-schemes $\alpha: E\times^{\cG} \mathrm{Lie}(\cG)\xrightarrow{\sim} \mathrm{ad}(E)$:
 given local sections $e$ and $\partial$ of $E$ and $\mathrm{Lie}(\cG)$, we have an isomorphism $f_e:G\xrightarrow{\sim} E$, then $\alpha$ sends $(e,\partial)$ to the pushforward derivation $f_{e,*}\partial$.
 In particular, if $\mathcal G$ is commutative, then the twisted product is $Lie (\mathcal G)$ and we have the
 \begin{lemma}
 \label{lemma: comm ad}
     If $\cG$ is commutative, then there is a canonical isomorphism $\mathrm{Lie}(\cG)\xrightarrow{\sim} \mathrm{ad}(E)$. 
 \end{lemma}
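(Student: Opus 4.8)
The plan is to reduce the statement to the canonical isomorphism $\alpha\colon E\times^{\cG}\mathrm{Lie}(\cG)\xrightarrow{\sim}\mathrm{ad}(E)$ that has just been constructed, so that it suffices to produce a canonical isomorphism $E\times^{\cG}\mathrm{Lie}(\cG)\cong\mathrm{Lie}(\cG)$ of $C$-schemes whenever $\cG$ is commutative; the desired map is then $\alpha$ precomposed with the inverse of this identification. The entire point is that the $\cG$-action on $\mathrm{Lie}(\cG)$ used to form this twisted product is the adjoint action, and that this action is trivial precisely when $\cG$ is commutative.

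First I would record the triviality of the adjoint action. The $\cG$-action on $\mathrm{Lie}(\cG)=\mathrm{ad}(\cG)$ entering $\alpha$ is obtained by differentiating the conjugation morphism $\mathrm{Inn}\colon\cG\times_C\cG\to\cG$, $(g,h)\mapsto ghg^{-1}$, along the second factor at the identity; concretely it records how a left-invariant derivation transforms under the change of trivialization $e\mapsto eg$. For commutative $\cG$ one has $ghg^{-1}=h$, so $\mathrm{Inn}$ is just the second projection and its differential is the identity, i.e. $\cG$ acts trivially on $\mathrm{Lie}(\cG)$. Equivalently, at the level of comodule algebras, writing $M:=h_*\cO_{\mathrm{Lie}(\cG)}=\Sym(\mathrm{Lie}(\cG)^{\vee})$ for the coordinate algebra of the total space, the coaction $M\to M\otimes g_*\cO_{\cG}$ is $m\mapsto m\otimes 1$.

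Next I would run the equalizer computation from \eqref{eq: equalizer}. Set $A:=s_*\cO_E$ and $B:=g_*\cO_{\cG}$, so that the torsor structure makes $A$ a $B$-comodule algebra with coaction $\rho\colon A\to A\otimes B$ and coinvariants $A^{\mathrm{co}B}=\cO_C$. By definition $E\times^{\cG}\mathrm{Lie}(\cG)=\mathrm{Spec}_{\cO_C}$ of the equalizer of the two maps $A\otimes M\rightrightarrows A\otimes M\otimes B$ given by the diagonal coaction and by $x\mapsto x\otimes 1$. Since the coaction on $M$ is trivial, the diagonal coaction equals $\rho\otimes\mathrm{id}_M$ up to reordering, so the equalizer is exactly $\ker\big((\rho-(\mathrm{id}\otimes 1))\otimes\mathrm{id}_M\big)=(A\otimes M)^{\mathrm{co}B}$. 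As $M=\Sym(\mathrm{Lie}(\cG)^{\vee})$ is a flat (indeed locally free) $\cO_C$-module, tensoring by $M$ is exact and commutes with this kernel, whence $(A\otimes M)^{\mathrm{co}B}=A^{\mathrm{co}B}\otimes M=\cO_C\otimes M=M$. Applying $\mathrm{Spec}_{\cO_C}$ gives the canonical identification $E\times^{\cG}\mathrm{Lie}(\cG)\cong\mathrm{Lie}(\cG)$, and composing with $\alpha$ produces the asserted canonical isomorphism $\mathrm{Lie}(\cG)\xrightarrow{\sim}\mathrm{ad}(E)$.

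The main obstacle is the single genuinely non-formal point: forming $\cG$-invariants (the equalizer, i.e. $B$-coinvariants) does not commute with $-\otimes_{\cO_C}M$ for an arbitrary $M$, so one cannot extract $A^{\mathrm{co}B}=\cO_C$ from the tensor product for free. This is rescued by the flatness of $\Sym(\mathrm{Lie}(\cG)^{\vee})$, which holds because $\mathrm{Lie}(\cG)$ is locally free in the smooth settings of interest, e.g. for $\cG=J_a$. As an alternative to the explicit equalizer computation one could argue by fppf descent: by Seshadri the twisted product commutes with flat base change along the cover $E\to C$, over which the torsor trivializes and $\cG\times^{\cG}\mathrm{Lie}(\cG)\cong\mathrm{Lie}(\cG)$ tautologically, and the descent datum glues precisely because the transition $\cG$-action on $\mathrm{Lie}(\cG)$ is trivial. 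Either route isolates commutativity as the sole input beyond formal manipulation.
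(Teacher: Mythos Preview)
Your approach is correct and is exactly the one-line argument the paper gives immediately before the lemma: since $\cG$ is commutative the adjoint action on $\mathrm{Lie}(\cG)$ is trivial, so the twisted product $E\times^{\cG}\mathrm{Lie}(\cG)$ collapses to $\mathrm{Lie}(\cG)$, and composing with the already-established $\alpha$ finishes. You have supplied considerably more detail (the explicit coinvariant computation, the flatness check, the alternative fppf descent route), but the underlying idea is identical.
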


 \subsubsection{$L$-twisted $\cG$-Higgs bundles} $\;$
 
Let $\cG$ be an affine group scheme over $C$.
Let $L$ be an invertible sheaf on $C$.
An $L$-twisted $\cG$-Higgs bundle is a pair $(E,\phi_E)$, where $E$ is a $\cG$-torsor over $C$ and $\phi_E$ is a section in $\Gamma(C, \mathrm{ad}(E)\otimes L)$.

\subsubsection{Residues}\label{sssec: residues} $\;$

Let $\wt{D}$ be the normalization of $D$.
Let $(s:E\to C,\nabla_E)$ be an affine $C$-scheme with a log-conneciton.
Let $\partial^{can}$ be the canonical section of $Der_D(C)|{\wt{D}}$.
We have that  $\nabla_E(\partial^{can})$ lands in $End_{\cO_{\wt{D}}}(s_*\cO_{E_D}|\wt{D})$.
The condition \eqref{eq: derivation} implies that $\nabla_E(\partial^{can})$ lands in the subsheaf of derivations $Der_{\cO_{\wt{D}}}(s_*\cO_{E_D}|\wt{D})$.
If furthermore $(E,\nabla_E)$ is a torsor under $(\cG,\nabla_{\cG})$, then \eqref{diag: conn on tors} implies that $\nabla_E(\partial^{can})$ is a $\cG_D$-invariant derivation.
Since $\mathrm{ad}(E)$ is given by the $\cG$-invariant derivations, we have a well-defined section $\mathrm{res}_D(\nabla_E)\in \Gamma(\wt{D}, \mathrm{ad}(E)|{\wt{D}})$
(cf. \cite[(3.8.4)]{deligne1970equations}).

\subsubsection{$p$-Curvature}\label{sssec: def p curv} $\;$

 Let $(E,\nabla_E)$ be a torsor under $(\cG,\nabla_{\cG})$, where both $\nabla$s are log-connections.
Consider the assignment $\wt{\Psi}(\nabla_E): Der_D(C) \to End_k(\cO_E)$ given by $\partial\mapsto \nabla(\partial)^p-\nabla(\partial^{[p]})$.
The same proof as \cite[Lem. 5.4]{herrero-zhang-mero} shows that $\wt{\Psi}(\nabla_E)$  is a section of the sheaf $\mathrm{ad}(E)$ of $\cG$-invariant derivations, that $\wt{\Psi}$ is $p$-linear, so that
 $\wt{\Psi}(\nabla_E)$ induces a
 section   $\Psi(\nabla_E)\in \Gamma(C, \mathrm{ad}(E)\otimes \mathrm{fr}_C^*(\Omega_C^1(\mathrm{log}D)))$ (recall that $\mathrm{fr}_C:C\to C$ is the absolute Frobenius morphism),
which we name the $p$-curvature
of $(E, \nabla_E)$ as a $(\cG, \nabla_{\cG})$-torsor.
The following lemma is an enhancement of Frobenius descent.

\subsubsection{Residue and $p$-curvature of twisted products} $\;$

Let $(\cG,\nabla_{\cG})$ be an affine commutative group scheme over $C$ with a log-connection.
Let $(E,\nabla_E)$ and $(F,\nabla_F)$ be torsors under $(\cG,\nabla_{\cG})$.
The product formula \eqref{eq: prod conn} and \Cref{lemma: comm ad} entail that we have the following identities on $p$-curvatures and residues:
\begin{equation}
    \label{eq: pcuvr twi prod}
    \Psi(\nabla_E\otimes\nabla_F) =\Psi(\nabla_E)+\Psi(\nabla_F)\in \Gamma(C, \mathrm{Lie}(\cG)\otimes \mathrm{fr}_C^*(\Omega_C^1(\mathrm{log}D))),
\end{equation}
\begin{equation}
\label{eq: res twi prod}
    \mathrm{res}_D(\nabla_{E}\otimes \nabla_F)=\mathrm{res}_D(\nabla_E)+\mathrm{res}_D(\nabla_F) \in \Gamma(\wt{D}, \mathrm{Lie}(\cG)|\wt{D}).
\end{equation}

\subsubsection{Residues and $p$-curvature for morphisms of group schemes} $\;$

Let $(\cJ,\nabla_{\cJ})$ and $(\cG,\nabla_{\cG})$ be two affine group schemes over $C$ with log-connections.
Let $c:\cJ\to \cG$ be a morphism of group schemes compatible with the connections.
Consider the left action of $\cJ$ on $\cJ\times_C \cG$ by $j_1\cdot (j_2,g)=(j_2j_1^{-1},c(j_1)g)$.
The morphism $m:\cJ\times_C\cG\to \cG$ defined by $(j,g)\mapsto c(j)g$ is $\cJ$-invariant, thus inducing a morphism $\cJ\times^{\cJ}\cG\to \cG$, which is indeed an isomorphism.
It is clear that the differential $dm:\mathrm{Lie}(\cJ)\times_C \mathrm{Lie}(\cG)\to \mathrm{Lie}(\cG)$ maps $(\partial_j, \partial_g)$ to $dc(\partial_j)+\partial_g$.
Combined with product formula \eqref{eq: prod conn}, we have the following identities on $p$-curvatures and residues:
\begin{equation}
    \label{eq: p cj}\Psi(\nabla_{\cJ}\otimes\nabla_{\cG})=dc(\Psi(\nabla_{\cJ}))+\Psi(\nabla_{\cG})\in \Gamma(C, \mathrm{Lie}(\cG)\otimes\mathrm{fr}_C^*(\Omega_C^1(D)))
\end{equation}
\begin{equation}
    \label{eq: res cj}
    \mathrm{res}_D(\nabla_{\cJ}\otimes\nabla_{\cG})=dc(\mathrm{res}_D(\nabla_{\cJ}))+\mathrm{res}_D(\nabla_{\cG})\in \Gamma(\wt{D}, \mathrm{Lie}(\cG)|\wt{D}).
\end{equation}

\subsubsection{Dual log-connections}\label{sssec: dual log conn} $\;$

Let $g:\cG\to C$ be an affine commutative group scheme on $C$ with a log-connection $\nabla_{\cG}$.
Let $(s:E\to C,\nabla_E)$ be a torsor under $(\cG,\nabla_{\cG})$.
The dual torsor $E^{\vee}$ is defined as the presheaf that sends a $C$-scheme $U$ to the set of $\cG(U)$-equivariant isomorphisms $E(U)\to \cG(U)$, where $\cG(U)$ and $E(U)$ are the $\cG(U)$-sets of $U$-points of $\cG$ and $E$.
Since $\cG$ is commutative, we have that $E^{\vee}$ is a  torsor under $\cG$.
Since $\cG$ is affine, we have that $E^{\vee}$ is representable by an affine scheme over $C$ by \cite[\href{https://stacks.math.columbia.edu/tag/049C}{Tag 049C}]{stacks-project}.
Furthermore, there is a canonical $C$-isomorphism of $\cG$-torsors $\epsilon: E^{\vee}\times^{\cG}E\xrightarrow{\sim}\cG$.

\begin{lemma}
\label{lemma: dual is dual}
    There is a unique log-connection $\nabla_{E^{\vee}}$ on $E^{\vee}$ such that $\epsilon$ induces an isomorphism of $(\cG,\nabla_{\cG})$-torsors:
    \begin{equation}
    \label{eq: dual is dual}
        (E^{\vee}\times^{\cG}E, \nabla_{E^{\vee}}\otimes \nabla_E)\xrightarrow{\sim} (\cG,\nabla_{\cG}).
    \end{equation}
\end{lemma}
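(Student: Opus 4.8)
The plan is to pin down $\nabla_{E^\vee}$ by the single requirement that the canonical isomorphism $\epsilon\colon E^\vee\times^{\cG}E\xrightarrow{\sim}\cG$ be horizontal, and to show that this requirement is met by exactly one log-connection. First I would dispose of uniqueness. Once $\nabla_E$ is fixed, the set of log-connections on the torsor $E^\vee$ is, whenever nonempty, an affine space over $\Gamma(C,\mathrm{ad}(E^\vee)\otimes\Omega^1_C(\mathrm{log}D))$, since any two differ by an $\mathrm{ad}(E^\vee)$-valued log-$1$-form; and because $\cG$ is commutative, \Cref{lemma: comm ad} gives $\mathrm{ad}(E^\vee)\cong\mathrm{Lie}(\cG)$. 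The assignment $\nabla\mapsto \epsilon_{*}(\nabla\otimes\nabla_E)$ lands in the log-connections on $\cG$, and by the $\cO_C$-linear dependence of the product connection on its factors in \eqref{eq: prod conn} and \eqref{eq: twi prod conn}, adding a form $\theta$ to $\nabla$ adds the image of $\theta$ under the canonical identification $\mathrm{ad}(E^\vee)\cong\mathrm{Lie}(\cG)=\mathrm{ad}(\cG)$ to $\epsilon_{*}(\nabla\otimes\nabla_E)$. Thus this assignment is equivariant for the identity map of $\Gamma(C,\mathrm{Lie}(\cG)\otimes\Omega^1_C(\mathrm{log}D))$, hence injective; two connections on $E^\vee$ producing $\nabla_{\cG}$ must coincide.

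For existence I would argue locally and then glue. The torsor $E$, and therefore $E^\vee$, trivializes over an fppf cover $\{U_i\}$ of $C$, and log-connections may be compared after such base change. Over each $U_i$ a trivialization identifies $\nabla_E|_{U_i}$ with $\nabla_{\cG}+\omega_i$ for a unique $\omega_i\in\Gamma(U_i,\mathrm{Lie}(\cG)\otimes\Omega^1_C(\mathrm{log}D))$ and induces a trivialization of $E^\vee|_{U_i}$. I would then set $\nabla_{E^\vee}|_{U_i}:=\nabla_{\cG}-\omega_i$, check using \eqref{eq: prod conn} and \eqref{eq: twi prod conn} that under $\epsilon$ the product $\nabla_{E^\vee}|_{U_i}\otimes\nabla_E|_{U_i}$ is carried to $\nabla_{\cG}+(\omega_i-\omega_i)=\nabla_{\cG}$, and verify that $\nabla_{\cG}-\omega_i$ is a bona fide log-connection on the torsor $E^\vee|_{U_i}$, i.e. that it satisfies the action-compatibility \eqref{diag: conn on tors}.

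By the uniqueness established above, the intrinsic characterization "$\epsilon$ is horizontal" forces $\nabla_{E^\vee}|_{U_i}$ and $\nabla_{E^\vee}|_{U_j}$ to agree on overlaps and on the self-fiber-products of the cover; the local data therefore form a descent datum and glue to a global log-connection $\nabla_{E^\vee}$ on $E^\vee$, which by construction realizes the isomorphism \eqref{eq: dual is dual}. The main obstacle I anticipate is the bookkeeping behind the equivariance claim used in both halves of the argument: namely that twisting by the fixed $(E,\nabla_E)$ acts as a genuine translation on the space of log-connections of $E^\vee$ under the identifications $\mathrm{ad}(E^\vee)\cong\mathrm{Lie}(\cG)\cong\mathrm{ad}(\cG)$, and that the naive prescription $\nabla_{\cG}-\omega_i$ indeed respects the $\cG$-action. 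Once this naturality of the twisted-product connection in each factor is nailed down, both existence and uniqueness follow at once, and the residue and $p$-curvature formulas for the dual are then immediate by applying \eqref{eq: res twi prod} and \eqref{eq: pcuvr twi prod} to the isomorphism \eqref{eq: dual is dual}.
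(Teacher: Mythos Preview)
Your argument is correct and rests on the same core observation as the paper: the assignment $\nabla\mapsto\epsilon_*(\nabla\otimes\nabla_E)$ from log-connections on $E^\vee$ to log-connections on $\cG$ is equivariant for the translation action of $\Gamma(C,\mathrm{Lie}(\cG)\otimes\Omega^1_C(\mathrm{log}D))$, using \Cref{lemma: comm ad} to identify all the adjoint bundles with $\mathrm{Lie}(\cG)$. Where you differ is in the handling of existence. The paper does not separate existence from uniqueness: it simply notes that an equivariant map between two affine spaces over the same vector space is automatically a bijection, so once equivariance is checked, the unique preimage of $\nabla_{\cG}$ is the desired $\nabla_{E^\vee}$, with no local trivialization or descent required. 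Your fppf-local construction and gluing via uniqueness is more explicit, and in fact supplies something the paper's one-line argument leaves tacit---namely that $Conn(E^\vee)$ is nonempty in the first place, which is what makes it an honest affine space rather than merely a pseudo-torsor. So your route is a bit longer but arguably more self-contained; the paper's is slicker once one grants nonemptiness. Either way, having established equivariance for uniqueness, you could have shortcut the descent step by invoking the affine-space bijection directly.
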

\begin{proof}
  Given any $\cG$-torsor $F$, let $Conn(F)$ be the set of log-connections on $F$ compatible with $\nabla_{\cG}$.
   Since $\cG$ is commutative, by \Cref{lemma: comm ad}, we have that $Conn(F)$ is an affine space over the vector space $Higgs(\cG):=\Gamma(C, \mathrm{Lie}(\cG)\otimes\Omega_C^1(D))$.
   Given a connection $\nabla$ on $E^{\vee}\times^{\cG}E$, we obtain a connection $\epsilon_*\nabla$ on $\cG$ given by the formula $\epsilon^{\#,-1}\nabla(\epsilon^{\#}-)$, where $\epsilon^{\#}: \cO_{\cG}\xrightarrow{\sim}\cO_{E^{\vee}\times^{\cG}E}$ is the comorphism.
   There is a map of $Higgs(\cG)$-affine spaces $f: Conn(E^{\vee})\to Conn(\cG)$ given by $\nabla'\mapsto \epsilon_*(\nabla'\otimes\nabla_E)$; in particular,
    $f$ is automatically a bijection. It follows that there is a unique $\nabla_{E^{\vee}}$ such that $f(\nabla_{E^{\vee}})=\nabla_{\cG}$, i.e. \eqref{eq: dual is dual} is satisfied. 
\end{proof}

Equations \eqref{eq: dual is dual}, \eqref{eq: pcuvr twi prod} and \eqref{eq: res twi prod}, taken  together, tell us that we have the following identity of $p$-curvatures and residues:
\begin{equation}
\label{eq: pcurv dual}
    \Psi(\nabla_{E^{\vee}})=\Psi(\nabla_{\cG})-\Psi(\nabla_E),
\end{equation}
\begin{equation}
\label{eq: res dual}
    \mathrm{res}_D(\nabla_{E^{\vee}})=\mathrm{res}_D(\nabla_{\cG})-\mathrm{res}_D(\nabla_E).
\end{equation}

\bibliographystyle{alphaCap}
\footnotesize{\bibliography{logp.bib}}

\begin{thebibliography}{dCMSZ23}

\bibitem[AGV73]{SGA4}
M~Artin, A~Grothendieck, and JL~Verdier.
\newblock Th{\'e}orie des topos et cohomologie {\'e}tale des sch{\'e}mas. Tomes
  1--3.
\newblock {\em Lecture Notes in Mathematics}, 269:270, 1973.

\bibitem[AIK77]{altman-iarrobino-kleiman-irreducibility}
Allen~B Altman, Anthony Iarrobino, and Steven~L Kleiman.
\newblock Irreducibility of the compactified Jacobian.
\newblock {\em Real and complex singularities, Oslo 1976}, pages 1--12, 1977.

\bibitem[Alp14]{alper_adequate}
Jarod Alper.
\newblock Adequate moduli spaces and geometrically reductive group schemes.
\newblock {\em Algebr. Geom.}, 1(4):489--531, 2014.

\bibitem[BB07]{bezrukavnikov2007geometric}
Roman Bezrukavnikov and Alexander Braverman.
\newblock Geometric Langlands Correspondence for D-modules in Prime
  Characteristic: the GL (n) Case.
\newblock {\em Pure and Applied Mathematics Quarterly}, 3(1):153--179, 2007.

\bibitem[BBDG18]{bbdg}
Alexander Beilinson, Joseph Bernstein, Pierre Deligne, and Ofer Gabber.
\newblock {\em Faisceaux pervers}, volume~4.
\newblock Soci{\'e}t{\'e} math{\'e}matique de France Paris, 2018.

\bibitem[BD91]{beilinson-drinfeldquantization}
Alexander Beilinson and Vladimir Drinfeld.
\newblock Quantization of Hitchin’s integrable system and Hecke eigensheaves,
  1991.

\bibitem[BDP17]{balaji2017complete}
Vikraman Balaji, Pierre Deligne, and AJ~Parameswaran.
\newblock On complete reducibility in characteristic $ p$.
\newblock {\em {\'E}pijournal de G{\'e}om{\'e}trie Alg{\'e}brique}, 1, 2017.

\bibitem[BL22]{bhatt2022prismatization}
Bhargav Bhatt and Jacob Lurie.
\newblock The prismatization of $ p $-adic formal schemes.
\newblock {\em arXiv preprint arXiv:2201.06124}, 2022.

\bibitem[BLR90]{blr-neron}
Siegfried Bosch, Werner L\"{u}tkebohmert, and Michel Raynaud.
\newblock {\em N\'{e}ron models}, volume~21 of {\em Ergebnisse der Mathematik
  und ihrer Grenzgebiete (3) [Results in Mathematics and Related Areas (3)]}.
\newblock Springer-Verlag, Berlin, 1990.

\bibitem[BMRR08]{BMR08}
Roman Bezrukavnikov, Ivan Mirkovi{\'c}, Dmitriy Rumynin, and Simon Riche.
\newblock Localization of modules for a semisimple Lie algebra in prime
  characteristic.
\newblock {\em Annals of Mathematics}, pages 945--991, 2008.

\bibitem[BNR89]{bnr-spectral-curves}
Arnaud Beauville, M.~S. Narasimhan, and S.~Ramanan.
\newblock Spectral curves and the generalised theta divisor.
\newblock {\em J. Reine Angew. Math.}, 398:169--179, 1989.

\bibitem[Boa18]{boalch2018wild}
Philip Boalch.
\newblock Wild character varieties, meromorphic Hitchin systems and Dynkin
  diagrams.
\newblock {\em Geometry and physics}, 2:433--454, 2018.

\bibitem[Bou02]{bourbaki-lie}
Nicolas Bourbaki.
\newblock {\em Lie groups and {L}ie algebras. {C}hapters 4--6}.
\newblock Elements of Mathematics (Berlin). Springer-Verlag, Berlin, 2002.
\newblock Translated from the 1968 French original by Andrew Pressley.

\bibitem[Bou04]{bourbaki2004lie}
N.~Bourbaki.
\newblock {\em Lie Groups and Lie Algebras: Chapters 7-9}.
\newblock Number v. 3; v. 7 in Actualit{\'e}s scientifiques et industrielles.
  Springer Berlin Heidelberg, 2004.

\bibitem[CGP15]{conrad2015pseudo}
Brian Conrad, Ofer Gabber, and Gopal Prasad.
\newblock {\em Pseudo-reductive groups}, volume~26.
\newblock Cambridge University Press, 2015.

\bibitem[CL16]{chaudouard-laumon}
Pierre-Henri Chaudouard and G\'{e}rard Laumon.
\newblock Un th\'{e}or\`eme du support pour la fibration de {H}itchin.
\newblock {\em Ann. Inst. Fourier (Grenoble)}, 66(2):711--727, 2016.

\bibitem[CZ15]{chen-zhu}
Tsao-Hsien Chen and Xinwen Zhu.
\newblock Non-abelian {H}odge theory for algebraic curves in characteristic
  {$p$}.
\newblock {\em Geom. Funct. Anal.}, 25(6):1706--1733, 2015.

\bibitem[CZ17]{chzh17}
Tsao-Hsien Chen and Xinwen Zhu.
\newblock Geometric Langlands in prime characteristic.
\newblock {\em Compositio Mathematica}, 153(2):395--452, 2017.

\bibitem[Dal17]{dalakov2017lectures}
Peter Dalakov.
\newblock Lectures on {H}iggs moduli and abelianisation.
\newblock {\em Journal of Geometry and Physics}, 118:94--125, 2017.

\bibitem[dC17]{de2017support-sln}
Mark~Andrea de~Cataldo.
\newblock A support theorem for the Hitchin fibration: the case of $SL_n$.
\newblock {\em Compositio Mathematica}, 153(6):1316--1347, 2017.

\bibitem[dC22]{decataldo-cambridge}
Mark Andrea~A. de~Cataldo.
\newblock Perverse {L}eray filtration and specialisation with applications to
  the {H}itchin morphism.
\newblock {\em Math. Proc. Cambridge Philos. Soc.}, 172(2):443--487, 2022.

\bibitem[dCGZ22]{dCGZ}
Mark Andrea~A de~Cataldo, Michael Groechenig, and Siqing Zhang.
\newblock The de Rham stack and the variety of very good splittings of a curve.
\newblock {\em arXiv preprint arXiv:2211.09630, to appear in Math. Res. Lett.},
  2022.

\bibitem[dCH22]{deC-H-2022geometry}
Mark Andrea~A de~Cataldo and Andres~Fernandez Herrero.
\newblock Geometry of the logarithmic Hodge moduli space (with an Appendix
  joint with Siqing Zhang).
\newblock {\em arXiv preprint arXiv:2211.06754}, 2022.

\bibitem[dCHL18]{deCHaiLi}
Mark~Andrea de~Cataldo, Thomas~J Haines, and Li~Li.
\newblock Frobenius semisimplicity for convolution morphisms.
\newblock {\em Mathematische Zeitschrift}, 289(1):119--169, 2018.

\bibitem[dCHM12]{de2012topology}
Mark Andrea~A de~Cataldo, Tam{\'a}s Hausel, and Luca Migliorini.
\newblock Topology of Hitchin systems and Hodge theory of character varieties:
  the case A 1.
\newblock {\em Annals of Mathematics}, pages 1329--1407, 2012.

\bibitem[dCMS22]{de2022abeliansurface}
Mark de~Cataldo, Davesh Maulik, and Junliang Shen.
\newblock Hitchin fibrations, abelian surfaces, and the P= W conjecture.
\newblock {\em Journal of the American Mathematical Society}, 35(3):911--953,
  2022.

\bibitem[dCMSZ23]{dCMSZ}
Mark~Andrea de~Cataldo, Davesh Maulik, Junliang Shen, and Siqing Zhang.
\newblock Cohomology of the moduli of Higgs bundles on a curve via positive
  characteristic.
\newblock {\em Journal of the European Mathematical Society}, pages 1--21,
  2023.

\bibitem[dCZ22]{de-zhang-naht}
Mark~Andrea de~Cataldo and Siqing Zhang.
\newblock A cohomological nonabelian Hodge theorem in positive characteristic.
\newblock {\em Algebraic Geometry}, 9(5), 2022.

\bibitem[Del70]{deligne1970equations}
Pierre Deligne.
\newblock Equations diff{\'e}rentielles {\`a} points singuliers r{\'e}guliers.
\newblock {\em Lecture notes in mathematics}, 163, 1970.

\bibitem[DG71]{ega1-springer}
Jean Dieudonn{\'e} and Alexandre Grothendieck.
\newblock {\em {\'E}l{\'e}ments de g{\'e}om{\'e}trie alg{\'e}brique}, volume
  166.
\newblock Springer Berlin Heidelberg New York, 1971.

\bibitem[DP06]{donagi2006langlands}
Ron Donagi and Tony Pantev.
\newblock Langlands duality for Hitchin systems.
\newblock {\em arXiv preprint math/0604617}, 2006.

\bibitem[DPS24]{donagi2024twistor}
Ron Donagi, Tony Pantev, and Carlos Simpson.
\newblock Twistor Hecke eigensheaves in genus 2.
\newblock {\em arXiv preprint arXiv:2403.17045}, 2024.

\bibitem[EG20]{esnault2020rigid}
H{\'e}lene Esnault and Michael Groechenig.
\newblock Rigid connections and F-isocrystals.
\newblock {\em Acta Math}, 225:103--158, 2020.

\bibitem[Gro16]{groechenig-moduli-flat-connections}
Michael Groechenig.
\newblock Moduli of flat connections in positive characteristic.
\newblock {\em Math. Res. Lett.}, 23(4):989--1047, 2016.

\bibitem[Hab20]{hablicsek2020hodge}
M{\'a}rton Hablicsek.
\newblock Hodge theorem for the logarithmic de Rham complex via derived
  intersections.
\newblock {\em Research in the Mathematical Sciences}, 7:1--21, 2020.

\bibitem[Hau05]{hausel2005mirror}
Tam{\'a}s Hausel.
\newblock Mirror symmetry and Langlands duality in the non-Abelian Hodge theory
  of a curve.
\newblock In {\em Geometric methods in algebra and number theory}, pages
  193--217. Springer, 2005.

\bibitem[Hei17]{heinloth-hilbertmumford}
Jochen Heinloth.
\newblock Hilbert-{M}umford stability on algebraic stacks and applications to
  {$\mathcal G$}-bundles on curves.
\newblock {\em \'{E}pijournal Geom. Alg\'{e}brique}, 1:Art. 11, 37, 2017.

\bibitem[Hit87]{hitchin1987self}
Nigel~J Hitchin.
\newblock The self-duality equations on a Riemann surface.
\newblock {\em Proceedings of the London Mathematical Society}, 3(1):59--126,
  1987.

\bibitem[HKSZ22]{huang2022tame}
Pengfei Huang, Georgios Kydonakis, Hao Sun, and Lutian Zhao.
\newblock Tame parahoric nonabelian Hodge correspondence on curves.
\newblock {\em arXiv preprint arXiv:2205.15475}, 2022.

\bibitem[HMMS22]{hausel2022p}
Tam{\'a}s Hausel, Anton Mellit, Alexandre Minets, and Olivier Schiffmann.
\newblock $ P= W $ via $ H\_2$.
\newblock {\em arXiv preprint arXiv:2209.05429}, 2022.

\bibitem[Hof10]{hoffmann-connected-components}
Norbert Hoffmann.
\newblock On moduli stacks of {$G$}-bundles over a curve.
\newblock In {\em Affine flag manifolds and principal bundles}, Trends Math.,
  pages 155--163. Birkh\"{a}user/Springer Basel AG, Basel, 2010.

\bibitem[Hos23]{hoskins2023two}
Victoria Hoskins.
\newblock TWO PROOFS OF THE P= W CONJECTURE.
\newblock {\em S{\'e}minaire BOURBAKI}, page 76e, 2023.

\bibitem[HT04]{hausel-thaddeus-04}
Tam\'{a}s Hausel and Michael Thaddeus.
\newblock Generators for the cohomology ring of the moduli space of rank 2
  {H}iggs bundles.
\newblock {\em Proc. London Math. Soc. (3)}, 88(3):632--658, 2004.

\bibitem[HX24]{heuer-xu2024padic}
Ben Heuer and Daxin Xu.
\newblock $p$-adic non-abelian Hodge theory for curves via moduli stacks.
\newblock {\em arXiv preprint arXiv:2402.01365}, 2024.

\bibitem[HZ23a]{herrero-zhang-mero}
Andres~Fernandez Herrero and Siqing Zhang.
\newblock Meromorphic Hodge moduli spaces for reductive groups in arbitrary
  characteristic.
\newblock {\em arXiv preprint arXiv:2307.16755}, 2023.

\bibitem[HZ23b]{herrero-zhang-semistable}
Andres~Fernandez Herrero and Siqing Zhang.
\newblock Semistable Non Abelian Hodge theorem in positive characteristic.
\newblock {\em arXiv preprint arXiv:2310.09923}, 2023.

\bibitem[Jan04]{jantzen2004nilpotent}
Jens~Carsten Jantzen.
\newblock Nilpotent orbits in representation theory.
\newblock {\em Lie theory: Lie algebras and representations}, pages 1--211,
  2004.

\bibitem[Kat70]{katz1970}
Nicholas~M Katz.
\newblock Nilpotent connections and the monodromy theorem: Applications of a
  result of Turrittin.
\newblock {\em Publications Math{\'e}matiques de l'IH{\'E}S}, 39:175--232,
  1970.

\bibitem[Kat89]{kato1989logarithmic}
Kazuya Kato.
\newblock Logarithmic structures of Fontaine-Illusie, Algebraic analysis,
  geometry, and number theory (Baltimore, MD, 1988), 1989.

\bibitem[Kos63]{kostant1963lie}
Bertram Kostant.
\newblock Lie group representations on polynomial rings.
\newblock {\em American Journal of Mathematics}, 85(3):327--404, 1963.

\bibitem[Lan14]{langer2014semistable}
Adrian Langer.
\newblock Semistable modules over {L}ie algebroids in positive characteristic.
\newblock {\em Doc. Math.}, 19:509--540, 2014.

\bibitem[Lan15]{langer2015bogomolov}
Adrian Langer.
\newblock Bogomolov’s inequality for Higgs sheaves in positive
  characteristic.
\newblock {\em Inventiones mathematicae}, 199(3):889--920, 2015.

\bibitem[Lan21]{langer2021moduli}
Adrian Langer.
\newblock Moduli spaces of semistable modules over Lie algebroids.
\newblock {\em arXiv preprint arXiv:2107.03128}, 2021.

\bibitem[Lan24]{langer2024bogomolov}
Adrian Langer.
\newblock Bogomolov’s inequality and Higgs sheaves on normal varieties in
  positive characteristic.
\newblock {\em Journal f{\"u}r die reine und angewandte Mathematik (Crelles
  Journal)}, 2024(810):1--48, 2024.

\bibitem[Liu02]{LiuQ}
Qing Liu.
\newblock {\em Algebraic geometry and arithmetic curves}, volume~6.
\newblock Oxford University Press on Demand, 2002.

\bibitem[LMB18]{laumon2018champs}
G{\'e}rard Laumon and Laurent Moret-Bailly.
\newblock {\em Champs alg{\'e}briques}, volume~39.
\newblock Springer, 2018.

\bibitem[LN08]{laumon-ngo2008lemme}
G{\'e}rard Laumon and Bao~Ch{\^a}u Ng{\^o}.
\newblock Le lemme fondamental pour les groupes unitaires.
\newblock {\em Annals of mathematics}, pages 477--573, 2008.

\bibitem[LS24]{li2024tame}
Mao Li and Hao Sun.
\newblock Tame Parahoric Nonabelian Hodge Correspondence in Positive
  Characteristic over Algebraic Curves, 2024.

\bibitem[McN02]{mcninch2002abelian}
George~J McNinch.
\newblock Abelian unipotent subgroups of reductive groups.
\newblock {\em Journal of Pure and Applied Algebra}, 167(2-3):269--300, 2002.

\bibitem[McN03]{mcninch2003sub}
George~J McNinch.
\newblock Sub-principal homomorphisms in positive characteristic.
\newblock {\em Mathematische Zeitschrift}, 244:433--455, 2003.

\bibitem[Mil17]{milne2017algebraic}
James~S Milne.
\newblock {\em Algebraic groups: the theory of group schemes of finite type
  over a field}, volume 170.
\newblock Cambridge University Press, 2017.

\bibitem[Moc04]{mochizuki2004kobayashi}
Takuro Mochizuki.
\newblock Kobayashi-Hitchin correspondence for tame harmonic bundles and an
  application.
\newblock {\em arXiv preprint math/0411300}, 2004.

\bibitem[MRV19]{MRV2019fourier}
Margarida Melo, Antonio Rapagnetta, and Filippo Viviani.
\newblock Fourier--Mukai and autoduality for compactified Jacobians. I.
\newblock {\em Journal f{\"u}r die reine und angewandte Mathematik (Crelles
  Journal)}, 2019(755):1--65, 2019.

\bibitem[MS22]{manikandan2022criterion}
S~Manikandan and Anoop Singh.
\newblock A criterion for the existence of logarithmic connections on curves
  over a perfect field.
\newblock {\em Indian Journal of Pure and Applied Mathematics}, 53(2):330--339,
  2022.

\bibitem[MS23]{maulik-shen-chi}
Davesh Maulik and Junliang Shen.
\newblock Cohomological $\chi$--independence for moduli of one-dimensional
  sheaves and moduli of Higgs bundles.
\newblock {\em Geometry \& Topology}, 27(4):1539--1586, 2023.

\bibitem[MS24]{maulik2024p}
Davesh Maulik and Junliang Shen.
\newblock The P=W conjecture for GL\_n.
\newblock {\em Annals of Mathematics}, 200(2):529--556, 2024.

\bibitem[Ng{\^o}06]{ngo2006fibration}
Bao~Ch{\^a}u Ng{\^o}.
\newblock Fibration de Hitchin et endoscopie.
\newblock {\em Inventiones mathematicae}, 164:399--453, 2006.

\bibitem[Ng{\^o}10]{ngo-lemme-fondamental}
Bao~Ch{\^a}u Ng{\^o}.
\newblock Le lemme fondamental pour les alg\`ebres de {L}ie.
\newblock {\em Publ. Math. Inst. Hautes \'{E}tudes Sci.}, (111):1--169, 2010.

\bibitem[Ogu22]{ogus2022crystalline}
Arthur Ogus.
\newblock Crystalline prisms: Reflections on the present and past.
\newblock {\em arXiv preprint arXiv:2204.06621}, 2022.

\bibitem[OV07]{ov07}
Arthur Ogus and Vadim Vologodsky.
\newblock Nonabelian Hodge theory in characteristic p.
\newblock {\em Publications math{\'e}matiques}, 106(1):1--138, 2007.

\bibitem[Ric17]{riche2017kostant}
Simon Riche.
\newblock Kostant section, universal centralizer, and a modular derived Satake
  equivalence.
\newblock {\em Mathematische Zeitschrift}, 286(1):223--261, 2017.

\bibitem[Sch98]{schaub-courbes-spectrales}
Daniel Schaub.
\newblock Courbes spectrales et compactifications de jacobiennes.
\newblock {\em Math. Z.}, 227(2):295--312, 1998.

\bibitem[Sch05]{schepler2005logarithmic}
Daniel~Kenneth Schepler.
\newblock {\em Logarithmic nonabelian Hodge theory in characteristic p}.
\newblock University of California, Berkeley, 2005.

\bibitem[Ser96]{serre1996exemples}
Jean-Pierre Serre.
\newblock Exemples de plongements des groupes PSL2 (Fp) dans des groupes de Lie
  simples.
\newblock {\em Inventiones mathematicae}, 124:525--562, 1996.

\bibitem[Ser05]{serre2005complete}
J~Serre.
\newblock Compl{\`e}te r{\'e}ductibilit{\'e}.
\newblock {\em ASTERISQUE-SOCIETE MATHEMATIQUE DE FRANCE}, 299:195, 2005.

\bibitem[Ses77]{seshadri1977geometric}
Conjeevaram~S Seshadri.
\newblock Geometric reductivity over arbitrary base.
\newblock {\em Advances in Mathematics}, 26(3):225--274, 1977.

\bibitem[She24]{shen2018tamely}
Shiyu Shen.
\newblock {Tamely Ramified Geometric Langlands Correspondence in Positive
  Characteristic}.
\newblock {\em International Mathematics Research Notices}, page rnae005, 02
  2024.

\bibitem[Sim90]{simpson1990harmonic}
Carlos~T Simpson.
\newblock Harmonic bundles on noncompact curves.
\newblock {\em Journal of the American Mathematical Society}, 3(3):713--770,
  1990.

\bibitem[Sim94a]{Simpson-repnI}
Carlos~T. Simpson.
\newblock Moduli of representations of the fundamental group of a smooth
  projective variety. {I}.
\newblock {\em Inst. Hautes \'{E}tudes Sci. Publ. Math.}, (79):47--129, 1994.

\bibitem[Sim94b]{simpson-repnII}
Carlos~T. Simpson.
\newblock Moduli of representations of the fundamental group of a smooth
  projective variety. {II}.
\newblock {\em Inst. Hautes \'{E}tudes Sci. Publ. Math.}, (80):5--79 (1995),
  1994.

\bibitem[Sob15]{sobaje2015springer}
Paul Sobaje.
\newblock Springer isomorphisms in characteristic p.
\newblock {\em Transformation Groups}, 20:1141--1153, 2015.

\bibitem[Spr98]{springer1998linear}
Tonny~Albert Springer.
\newblock {\em Linear algebraic groups}, volume~9.
\newblock Springer, 1998.

\bibitem[{Sta}]{stacks-project}
The {Stacks Project Authors}.
\newblock \textit{Stacks Project}.
\newblock \url{https://stacks.math.columbia.edu}.

\bibitem[Tes95]{testerman1995a_1}
Donna Testerman.
\newblock $ A\_1 $-type overgroups of elements of order $ p $ in semisimple
  algebraic groups and the associated finite groups.
\newblock {\em Journal of Algebra}, 177(1):34--76, 1995.

\end{thebibliography}

 \textsc{Mark Andrea de Cataldo, Department of Mathematics, Stony Brook University}\par\nopagebreak
  \texttt{mark.decataldo@stonybrook.edu}

\textsc{Siqing Zhang, School of Mathematics, Institute for Advanced Study}\par\nopagebreak
  \texttt{szhang@ias.edu}

\end{document}